\documentclass[11pt]{amsart}

\usepackage[top=1in, bottom=1in, left=1in, right=1in]{geometry}
\usepackage[dvipsnames]{xcolor}
\usepackage[colorlinks=true, pdfstartview=FitV, linkcolor=RoyalBlue,citecolor=ForestGreen, urlcolor=blue]{hyperref}
\usepackage{xfrac}

\usepackage[english]{babel}
\usepackage{graphicx} %figure, includegraphics
\usepackage{amsthm} %theoremstyle
\usepackage{amsmath}
\usepackage{amssymb}
\usepackage{amsthm} %proof
\usepackage{mathrsfs} %mathscr
\usepackage{bbm} %mathbbm
\usepackage{enumerate} %(i)
\usepackage[shortlabels]{enumitem}
\usepackage{xcolor}
\usepackage{nicefrac}
\usepackage{subcaption}
\usepackage{comment}
\usepackage{hyperref}

\newcommand{\N}{\mathbb{N}}

\newcommand{\R}{\mathbb{R}}

\newcommand{\T}{\mathbb{T}}

\usepackage{amsmath}
\usepackage{amssymb}
\usepackage{amsthm}
\usepackage{amsfonts}
\usepackage{calligra}
\usepackage{appendix}
\usepackage{graphicx}
\usepackage{enumerate}
\usepackage{mathrsfs}
\usepackage[english]{babel}
\usepackage{mathtools}

\newcommand{\defeq}{\mathrel{:\mkern-0.25mu=}}

\newcommand{\norm}[1]{\left\lVert#1\right\rVert}
\newcommand{\abs}[1]{\left\lvert#1\right\rvert}

\def\R{\mathbb R}

\def\N{\mathbb N}

\def\T{\mathbb T}

\def\tp{\textup}
\usepackage{mathtools}

\newcommand{\dif}{\,\mathrm{d}}

\DeclareMathOperator{\dive}{div}

\def\Xint#1{\mathchoice
	{\XXint\displaystyle\textstyle{#1}}%
	{\XXint\textstyle\scriptstyle{#1}}%
        {\XXint\scriptstyle\scriptscriptstyle{#1}}%
	{\XXint\scriptscriptstyle\scriptscriptstyle{#1}}%
	\!\int}
\def\XXint#1#2#3{{\setbox0=\hbox{$#1{#2#3}{\int}$}
		\vcenter{\hbox{$#2#3$}}\kern-.5\wd0}}
\def\dashint{\Xint-}  %mean integral

\newtheorem{lemma}{Lemma}[section]
\newtheorem{prop}[lemma]{Proposition}
\newtheorem{thm}[lemma]{Theorem}
\newtheorem{defi}[lemma]{Definition}
\newtheorem{cor}[lemma]{Corollary}
\newtheorem{Rem}[lemma]{Remark}

\title[Turbulent dynamos and the geometric transport equation]{Turbulent Dynamos on Bounded Domains and Their Generalization to the Geometric Transport Equation}
\author{Giacomo Del Nin, Daniel Faraco, Sauli Lindberg, and Francisco Mengual}
\date{\today}

\begin{document}

\begin{abstract} 
For any smooth bounded domain $\Omega \subset \mathbb{R}^3$, we construct a divergence-free velocity field $u \in L_t^1 W^{1,p}(\Omega)$ for all $p < \infty$, and magnetic fields $B^\epsilon \in L_t^p C^{m}(\Omega)$ for all $p < \infty$ and $m\in \mathbb{N}$, that solve the kinematic dynamo equation and exhibit arbitrarily fast growth of any magnetic energy mode, uniformly in the vanishing-diffusivity limit $\epsilon \to 0$. 
The construction is based on the convex integration scheme of Modena-Sz\'ekelyhidi and Cheskidov–Luo. The main novelty lies in the introduction of explicit potentials, which allow the solutions to be localized and avoid the need to work with the anti-curl operator. In addition, we present a unified scheme for the geometric transport equation (GTE), which encompasses both the transport and Maxwell equations.
\end{abstract}
	
\maketitle

\setcounter{tocdepth}{1}
\tableofcontents

\section{Introduction and main results}

The aim of this work is threefold: (i) to construct a broad class of magnetic fields exhibiting turbulent dynamo action in smooth bounded domains, (ii) to present a simple convex integration scheme, inspired by \cite{ModenaSzekelyhidi18,BBV20,CheskidovLuo24}, but based on explicit potentials rather than antiderivatives, and (iii) to show its robustness by extending it to the GTE, which applies to both Maxwell and transport systems, thereby recovering earlier results with the added feature of compact spatial support. 

\subsection{Kinematic dynamos}
\textit{Dynamo theory} studies the mechanisms by which electrically conducting fluids generate and sustain magnetic fields in celestial bodies \cite{ArnoldKhesin98,ChildressGilbert03,Gilbert03}. 
Since most planetary, stellar, and galactic flows are much slower than the speed of light,
the electromagnetic field $(E,B)$ is well described by the non-relativistic Maxwell equations. The homogeneous part 
reads
\begin{align}
\partial_t B+\nabla \times E=0, 
\label{Faraday}\tag{Faraday} \\
\nabla\cdot B=0, \label{Gauss}\tag{Gauss}
\end{align}
while the inhomogeneous part relates the charge and current densities to the fields through Gauss’s and Amp\`ere’s laws: $\rho=\nabla\cdot E$, $J=\nabla\times B$.   
In a highly conducting fluid, such as the Sun or the Earth’s core, Amp\`ere’s and Ohm’s laws combine into
\begin{equation}\label{AmpereOhm}
\nabla\times B=
J=\epsilon^{-1}(E+u\times B),
\tag{Amp\`ere-Ohm}
\end{equation}
where $\epsilon > 0$ is the magnetic resistivity and $u$ is the incompressible fluid velocity field ($\nabla\cdot u=0$).
Thus, the electric field $E$ is 
a secondary quantity determined by $(u,B)$ through the \eqref{AmpereOhm} law. 
Substituting this expression into the \eqref{Faraday} law and using the vector calculus identity valid for solenoidal fields, 
$\nabla\times(\nabla\times B)
=-\Delta B$,
yields the well-known \textit{kinematic dynamo equation}
\begin{equation}\label{dynamo1}\tag{dynamo}
\begin{split}
\partial_tB =
\nabla\times(u\times B)+\epsilon\Delta B,\\
\nabla\cdot u=
\nabla\cdot B=0.
\end{split}
\end{equation}

The \textit{kinematic} dynamo model is primarily useful for studying the onset of dynamo action during the early stages of a seed magnetic field, when the Lorentz force $J \times B$ is too weak to noticeably affect the fluid motion \cite[\textsection 1.2.4]{ChildressGilbert03}. Beyond this regime, this model is replaced by the \textit{non-linear} dynamo equation, which extends \eqref{dynamo1} to the full magnetohydrodynamic (MHD) system by incorporating the Cauchy momentum equation with the Lorentz force. In general, we will omit the word \textit{kinematic} in this work and refer to it simply as the \textit{dynamo equation} for brevity. We will also refer to the case $\epsilon=0$ as the \textit{ideal dynamo equation}. 

The central question in dynamo theory is whether there exist velocity fields $u$ whose associated dynamics can generate a large magnetic field $B$ from a given initial condition
\begin{equation}\label{dynamo:Bt=0}
B|_{t=0}=B^\circ.
\end{equation}
The dynamo problem has attracted considerable attention, not only in the physics literature (see e.g.~\cite{ADN02,BouyaDormy13}) but also in mathematics; see, for example, the book of Arnold–Khesin \cite{ArnoldKhesin98}, the treatments of Moffatt–Dormy \cite{MD19}, or Childress and Gilbert \cite{Gilbert03,ChildressGilbert03}.

In recent years, there has been a revival in the rigorous mathematical construction of kinematic dynamos, primarily in $\mathbb{R}^3$ and $\mathbb{T}^3$ (see the discussion below). 
However, the dynamo problem in more general domains, which is of both physical and mathematical interest, remains largely open. 
In this paper, we consider bounded domains and therefore introduce the appropriate boundary conditions, the corresponding function spaces, and the weak formulation of \eqref{dynamo1}.

\subsubsection{Dynamos on bounded domains}
We consider velocity and magnetic fields $u,B:[0,\infty)\times\Omega\to\mathbb{R}^3$ defined on a smooth bounded domain $\Omega\subset\mathbb{R}^3$, together with the corresponding boundary conditions.
First, we consider domains with \textit{impermeable} and \textit{magnetically closed walls}. 
That is, we assume the tangential field conditions
\begin{equation}\label{dynamo:tangentialcondition}
u\cdot n = B\cdot n = 0,
\end{equation}
where $n$ denotes the outward unit normal vector on the boundary $\partial\Omega$.
Second, for domains with \textit{perfectly conducting walls} ($E \times n = 0$), 
Ohm's law together with \eqref{dynamo:tangentialcondition} implies the vanishing of the tangential current, $J\times n=0$, that is,
\begin{equation}\label{dynamo:curlBxn}
(\nabla\times B)\times n = 0.
\end{equation}
From now on, we consider \eqref{dynamo1} together with the initial and boundary conditions
\eqref{dynamo:Bt=0}-\eqref{dynamo:curlBxn}.

The growth of $B$ is typically measured in terms of the \textit{magnetic energy}
$$
\mathcal{E}(B) := \frac{1}{2} \int_{\Omega} |B|^2\dif x.
$$
Using the vector calculus identity valid for solenoidal fields, $\nabla\times(u\times B)=B\cdot\nabla u-u\cdot\nabla B$, a straightforward integration by parts shows that classical solutions of \eqref{dynamo1} satisfy
\begin{equation}\label{eq:energyestimate}
\frac{\dif}{\dif t}\mathcal{E}(B)
=\int_\Omega (B\cdot\nabla u)\cdot B\dif x
-\epsilon\int_\Omega|\nabla\times B|^2\dif x
+\epsilon\int_{\partial\Omega}
(B\times n)\cdot\nabla\times B\dif s.
\end{equation}
The first two integrals in \eqref{eq:energyestimate} correspond to \emph{stretching} and \emph{diffusion}, respectively.  
These effects compete: if stretching dominates, the magnetic energy may grow (dynamo action), whereas if diffusion prevails, the field decays. 

The last integral in \eqref{eq:energyestimate} represents a \emph{boundary flux}, which vanishes whenever the vectors $B$, $\nabla\times B$, and $n$ are linearly dependent on $\partial\Omega$. In particular, this occurs for the perfectly conducting walls introduced above, since condition \eqref{dynamo:curlBxn} implies that $\nabla\times B \parallel n$. Other examples of magnetic fields with vanishing boundary flux include \textit{normal fields} and magnetic fields associated with the Taylor conjecture (\textit{Beltrami fields}), which will be discussed later. This issue is discussed in detail throughout the paper, especially in Section \ref{sec:generalizations}.

We now formulate the dynamo problem in mathematical terms. 
It is well known (see e.g.~\cite[Lemma 3.1]{CSVpp}) that if the velocity $u$ belongs to the Beale–Kato–Majda class $L^1_t W^{1,\infty}$, then there exists a unique Leray–Hopf type solution 
$B^\epsilon \in L^\infty_t L^2 \cap L^2_t H^1$ 
to \eqref{dynamo1}.
Moreover, applying Gr\"onwall's inequality to \eqref{eq:energyestimate}, the solution satisfies the uniform-in-$\epsilon$ energy bound
\begin{equation} \label{eq:Energyestimate}
\mathcal{E}(B^\epsilon(t)) \leq \mathcal{E}(B^\circ) \exp \left( \int_0^t \|\nabla u(s)\|_{L^\infty} \,\mathrm{d}s \right).
\end{equation}
In particular, within these regularity classes, exponential growth is the optimal rate for the dynamo. 
This observation motivates the following definition of a \textit{dynamo}, commonly used in the literature (see e.g.~\cite{ChildressGilbert03}). 
Below, we denote by $L_\sigma^2(\Omega)$ the space of weakly divergence-free vector fields in $L^2(\Omega)$ satisfying the tangential boundary condition \eqref{dynamo:tangentialcondition}. We also consider the Sobolev spaces $H_\sigma^k(\Omega) := H^k(\Omega) \cap L_\sigma^2(\Omega)$.

\begin{defi}[Dynamos]\label{def:dynamo}
Given $u:[0,\infty)\to L_\sigma^2(\Omega)$ and $\epsilon \geq 0$, we define
\begin{equation} \label{e:gamma}
\gamma(\epsilon) := \sup_{B^\circ\in L^2_\sigma(\Omega)} 
\limsup_{t \to \infty} \frac{1}{t}\log \mathcal{E}(B^\epsilon(t)),
\end{equation}
where the supremum is taken over all initial data $B^\circ\in L^2_\sigma(\Omega)$ and the corresponding solutions $B^\epsilon$ of \eqref{dynamo1}. 

\noindent The velocity field $u$ is called a \emph{(kinematic) dynamo} if $\gamma(\epsilon) > 0$ for some $\epsilon > 0$. 
In this case, $u$ is called a \emph{fast dynamo} if $\liminf_{\epsilon \to 0} \gamma(\epsilon) > 0$, and a \emph{slow dynamo} otherwise. 
Likewise, $u$ is called an \emph{ideal dynamo} if $\gamma(0) > 0$. 
\end{defi}

The problem of finding kinematic and fast dynamos dates back to Zeldovich and Sakharov in the 1970s and appears as one of Arnold’s problems \cite[Problem 1994–28]{Arn04}. 
Celebrated antidynamo theorems \cite{Zeldovich57,ZMRS84} show that the velocity field must exhibit a certain degree of complexity. 
For various examples of ideal dynamos in the applied mathematics literature, see e.g.~\cite{ChildressGilbert03}. 
Classical examples of kinematic dynamos include linear flows $u(x,t)=A(t)x$ with $\operatorname{tr} A(t)=0$ \cite{ZMRS84}, the Ponomarenko dynamo \cite{Ponomarenko73}, and the Soward dynamo \cite{Soward87}. 
Related interesting results in the mathematical literature include, for instance, the work of Vishik and Friedlander \cite{VishikFriedlander93} and, more recently, that of Drivas, Elgindi, Iyer, and Jeong \cite{DEIJ22}.

In recent years, renewed interest has emerged in the rigorous mathematical construction of kinematic dynamos. 
Coti Zelati and Navarro-Fern\'andez constructed (universal) ideal dynamos on $\mathbb{T}^3$ \cite{CZNF}, and Coti Zelati, Sorella, and Villringer constructed stationary $W^{1,\infty}$-regular fast dynamos on $\mathbb{R}^3$ \cite{CSVpp}
(see also the recent work \cite{CSVpp2} for the pulsed
kinematic dynamo equation in $\T^3$). 
In fact, the latter authors proved a stronger result, replacing $\limsup_{t \to \infty}$ with $\inf_{t>0}$ in \eqref{e:gamma}. 
In contrast, the linear flows $u(x,t)=A(t)x$ are unbounded, while the Ponomarenko and Soward dynamos are discontinuous. 
These works make elegant use of the $\alpha$-effect, and dynamo action occurs at very large scales.
Later, Rowan constructed \emph{subsequentially fast dynamos}
$u \in W^{1,\infty}([0,\infty)\times \mathbb{T}^3)$,
where the condition $\liminf_{\epsilon \to 0} \gamma(\epsilon) > 0$ is relaxed to $\limsup_{\epsilon \to 0} \gamma(\epsilon) > 0$ \cite{Row25}. 
Building on \cite{Gilbert88}, a rigorous mathematical construction of a smooth Ponomarenko dynamo was finally obtained in \cite{NavarroFernandezVillinger25} by Navarro-Fern\'andez and Villringer. 
In that work, the excited frequencies are of order $\epsilon^{1/3}$, corresponding to slow dynamo action, as predicted by Gilbert.

Another recent development is due to Sorella and Villringer \cite{SorellaVillingerpp}, who constructed a fast dynamo $u \in L^\infty([0,\infty); W^{1,\infty}(\mathbb{T}^3))$. In their paper, such a dynamo is referred to as a \emph{limsup fast dynamo}, and it is noted that Arnold’s original problem \cite{Arn04,ArnoldKhesin98} requires a \emph{liminf} in the definition of $\gamma(\epsilon)$.

In this paper we construct velocity fields for which the corresponding magnetic energy grows (super)exponentially, except on a set of times of asymptotic density zero. 
We refer to this as an \emph{almost sure fast dynamo}. 
Here ``almost sure'' is understood in the asymptotic sense $t\to\infty$, meaning that the proportion of times without dynamo action becomes negligible (see Figure~\ref{fig:energy}).

\begin{defi}[Almost sure dynamos]
We say that $u:[0,\infty)\to L_\sigma^2(\Omega)$ is an \emph{almost sure (a.s.)~dynamo} if there exists $\gamma>0$ such that
$$
\sup_{B^\circ\in L^2_\sigma(\Omega)}
\liminf_{T\to\infty}\frac{1}{T}
|\{t\in(0,T]\,:\,\frac{1}{t}\log\mathcal{E}(B^\epsilon(t))\geq\gamma\}|
=1.
$$
We denote by $\bar{\gamma}(\epsilon)$ the supremum of all such $\gamma$. 
Likewise, we speak of slow, fast, and ideal a.s. dynamos depending on the values of $\bar{\gamma}(\epsilon)$ and $\liminf_{\epsilon \to 0} \bar{\gamma}(\epsilon)$.
\end{defi}

The almost sure property implies exponential growth in any variant of the Ces\`aro sense.
That is, if we adopt the viewpoint of ergodic theory and define
$$
\gamma^{\textup{avg}}(\epsilon) := 
\sup_{B^\circ\in L^2_\sigma(\Omega)} 
\liminf_{T \to \infty} 
\frac{1}{T}\int_1^T \frac{1}{t} \log^+ \mathcal{E}(B^\epsilon(t)) \,\mathrm{d}t,
$$
then one obtains the corresponding notions of slow, fast and ideal dynamos. 
It is straightforward to check that $\gamma^{\textup{avg}}(\epsilon) \ge \bar{\gamma}(\epsilon)$.

\subsubsection{Fast dynamos on bounded domains}

The first contribution of this paper is to show that, if the flow is allowed to be irregular, then on any smooth bounded domain one can construct a.s.~dynamos whose magnetic energy grows arbitrarily fast in time.
More precisely, such dynamos exist if one replaces $\infty$ by any $p<\infty$ in 
$(u,B^\epsilon)\in L_t^1 W^{1,\infty}\times L_t^\infty L^2$. 
For irregular vector fields, the definitions require additional comments. 
First, when working with weak solutions $(u,B^\epsilon)$—for which uniqueness need not hold—we allow the supremum in \eqref{e:gamma} to be taken over all possible solutions. 
Second, since $\mathcal{E}(B^\epsilon)$ is only integrable in time, the $\limsup$ in \eqref{e:gamma} is taken over its Lebesgue points. 
Finally, we require that the initial data be attained in a stronger sense than for general weak solutions. 
Namely, $t=0$ is a Lebesgue–Bochner point of $B^\epsilon$, that is,
$$
\lim_{t\to 0}\dashint_0^t\|B^\epsilon(s)-B^\circ\|_{L^2}\dif s=0,
$$
where $\dashint\equiv$ average integral. Indeed, a stronger technical property holds (see Remark \ref{rem:fastdynamo}).

\begin{thm}[Fast dynamos on bounded domains]\label{thm:Main}
Let $\Omega \subset \R^3$ be a smooth bounded domain. Then, there exists an a.s.~fast dynamo  
$u \in L_t^1W^{1,p}(\Omega)$ for all $p<\infty$.
Furthermore, the corresponding magnetic fields $B^\epsilon$ are spatially smooth up to a null set of times, namely $B^\epsilon\in L_t^pC^m(\Omega)$ for all $p<\infty$ and $m\in\N$.
\end{thm}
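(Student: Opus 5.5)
We will obtain Theorem \ref{thm:Main} by gluing together countably many finite-time constructions. Each one is a convex-integration step in the spirit of Modena--Sz\'ekelyhidi and Cheskidov--Luo, localized by explicit potentials.

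\textbf{The basic proposition.} The core statement we aim to prove is the following. Given a smooth, divergence-free magnetic field $B_0$ compactly supported in $\Omega$, a time interval $[T_0,T_1]$, a target profile $\bar B(t)$ (for instance $\bar B(t)=e^{\Lambda t}B_0$ rescaled, or more generally any smooth divergence-free curve supported in a fixed compact $K\Subset\Omega$), and tolerances $\delta>0$, $p<\infty$, $m\in\N$, there exists a velocity $u\in C_c^\infty((T_0,T_1)\times\Omega)$, divergence-free, with the following properties. For \emph{every} $\epsilon\in[0,1]$, the solution $B^\epsilon$ of \eqref{dynamo1} with data $B_0$ at $T_0$ satisfies
$$\|B^\epsilon-\bar B\|_{L^p_tC^m}<\delta,$$
and also $\|u\|_{L^1_tW^{1,p}}<\delta$ whenever the target is "cheap".

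We will build it by iteration. Write $B_q$ for an approximate solution with error $R_q$, given through an explicit potential: $\partial_tB_q-\nabla\times(u_q\times B_q)-\epsilon\Delta B_q=\nabla\times R_q$, with $R_q$ compactly supported. To cancel $R_q$ we add Mikado-type building blocks $u_{q+1}-u_q=\sum_k a_k W_k$ and $B_{q+1}-B_q=\sum_k b_k \tilde W_k$. These are concentrated, intermittent, and time-oscillating, so that the quadratic interaction $u\times B$ reproduces $-R_q$ at low frequency. The remaining errors are written as $\nabla\times(\cdot)$ of explicit vector potentials, namely products of amplitudes with potentials of the Mikado profiles. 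Using these explicit potentials instead of an anti-curl keeps all supports inside $K$. This is the reason the boundary conditions \eqref{dynamo:tangentialcondition}--\eqref{dynamo:curlBxn} are trivially satisfied: everything vanishes near $\partial\Omega$, so the boundary flux term is zero.

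\textbf{Intermittency and uniformity in $\epsilon$.} Intermittency is chosen so that $u$ has small $L^1_tW^{1,p}$ norm while $B$ is concentrated in time but smooth in space; hence $B\in L^p_tC^m$ rather than $L^\infty_t$. The diffusion error $\epsilon\Delta B_{q+1}$ is treated as a linear error. Since $\epsilon\le1$ and $B$ lives at moderate spatial frequency with temporal intermittency, this term is absorbed uniformly in $\epsilon$. Similarly, we will use the linearity of \eqref{dynamo1} in $B$ given $u$ to arrange that the constructed $B$ coincides with \emph{the} solution: we make $B$ an exact weak solution at the end of the iteration and invoke a uniqueness/selection remark.

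\textbf{Gluing and the a.s.~property.} Choose times $T_n\to\infty$ with $T_{n+1}-T_n$ comparable to $T_n$. On $[T_n,T_{n+1}]$ we apply the proposition with target $\bar B$, which grows the energy by a factor $e^{\Lambda_n (T_{n+1}-T_n)}$ with $\Lambda_n\to\infty$. The failure set (where $B$ deviates because of temporal concentration) is chosen with measure $o(T_{n+1}-T_n)$, which gives asymptotic density one. Summable tolerances make $u\in L^1_tW^{1,p}$ on every bounded time interval for all $p$ simultaneously, by a diagonal choice $p_n,m_n\to\infty$. The Lebesgue--Bochner initial condition comes from taking $u\equiv0$ near $t=0$ except for small corrections.

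\textbf{Main obstacle.} I expect the hardest step to be obtaining estimates uniform in $\epsilon\to0$ while keeping $B$ an exact solution. The iteration must converge for the whole family $\epsilon\in[0,1]$ with a \emph{single} $u$, so the correctors should be chosen independent of $\epsilon$, with the $\epsilon\Delta$ error absorbed at the next step. The first thing I would try is building $B^\epsilon$ as a perturbation of the $\epsilon$-independent ideal solution and solving a linear parabolic correction equation, with small forcing, at each stage.
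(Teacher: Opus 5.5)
You have the paper's overall architecture: intermittent shear-flow building blocks with explicit potentials, so all perturbations stay compactly supported in $\Omega$ and the boundary conditions are inherited; temporal intermittency \`a la Cheskidov--Luo to absorb the diffusion error; gluing along a time partition; and a Chebyshev/density argument based on $L^{p}$-in-time closeness of $\mathcal{E}(B^\epsilon)$ to a prescribed profile. However, the step you flag as the main obstacle, a single velocity field for all $\epsilon$, is the key idea of the proof, and it is missing.

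The defect depends on $\epsilon$ from the start, since $R_0^\epsilon=(\partial_t-\epsilon\Delta)\bar A-\bar u\times\bar B$. Every diffusion error you absorb at the next step is again $\epsilon$-dependent. With the usual amplitude splitting $\tilde u_k\sim|R_{0,k}|^{1/p}$, $\tilde B_k\sim|R_{0,k}|^{1/q}$, the velocity therefore inherits the $\epsilon$-dependence. Nor can the correctors be ``independent of $\epsilon$'' on the magnetic side: if $\tilde u$ is fixed, $\tilde u\times\tilde B$ can only reproduce $R^\epsilon$ if $\tilde B$ depends on $\epsilon$.

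The paper resolves this as follows:
\begin{itemize}
\item It takes the endpoint exponents $p=\infty$, $q=1$ in the spatial blocks. Then $\tilde u_k=\chi_\sigma$ is a fixed boundary cutoff that never sees the defect, while $\tilde B_k=\chi_\sigma R^\epsilon_{0,k}$ carries all the $\epsilon$-dependence.
\item Integrability is recovered by time-intermittent coefficients with $\|\mathbf u_0\mathbf B_0\|_{L^1}=1$, while $(\mathbf u_0)_{\lambda_0}$ is small in $L^1_t$ and $(\mathbf B_0)_{\lambda_0}$ is small in $L^\gamma_t$.
\item A temporal corrector $H=-\lambda_0^{-1}(\mathbf H_0)_{\lambda_0}\chi_\sigma^2R_0$ removes the time-oscillation error.
\item The term $\epsilon\nabla\times\nabla\times(\tilde A_{\text{dyn}}+H)$ goes into the new defect and is small once $\lambda\mu\ll\mu_0^{1/p_0}$. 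This is compatible with the acceleration condition $\lambda_0\mu_0^{1/q_0}\ll\lambda\mu$ for suitable exponents.
\item $\sigma$ and all parameters are fixed uniformly in $\epsilon$ using continuity of $\epsilon\mapsto R_0^\epsilon$.
\end{itemize}

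Your fallback, perturbing the ideal solution by solving a linear parabolic correction at each stage, would not close. The solution operator of $\partial_t-\nabla\times(u_q\times\,\cdot\,)-\epsilon\Delta$ is controlled only by $\exp\int\|\nabla u_q\|_{L^\infty}\dif t$, which diverges along the iteration. Moreover, the forcing $\epsilon\Delta(B_{q+1}-B_q)$ is of relative size $\epsilon(\lambda\mu)^2$. Diffusion therefore genuinely destroys the high-frequency magnetic perturbations unless they are made short in time, which is exactly what the temporal intermittency achieves.

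Second, there is no uniqueness to invoke. The limiting $u$ lies only in $L^1_tW^{1,p}$ for $p<\infty$, not in $C_c^\infty$ as your basic proposition states. Stopping at a smooth stage $u_q$ does not help either: by the same stability failure, the exact solution driven by $u_q$ need not be close to $B_q$.

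In this class weak solutions are genuinely non-unique. The same construction with $\bar u=0$ and $\bar B$ vanishing near $t=0$ (cf.\ Theorem~\ref{thm:DGTE:intermittent} with $k=1$, $d=3$) produces nontrivial solutions with zero data, alongside $B\equiv0$. So you cannot ``arrange that the constructed $B$ coincides with the solution.'' As the paper does, the dynamo notions must be read with the supremum over all weak solutions, and over Lebesgue points in time. It then suffices to exhibit, for each $\epsilon\in(0,1]$, one weak solution $B^\epsilon$ with the prescribed growth, all driven by the same $\epsilon$-independent $u$.
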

\begin{proof}
See Section~\ref{Proof:thm:fastdynamo}, and in particular Subsection~\ref{sec:proof:thm:main}.
\end{proof}

In view of \eqref{eq:Energyestimate}, the result is almost sharp in terms of regularity. It is still open  whether superexponential growth can still be proved, or excluded, when relaxing the $\infty$ only in $u$ or $B^\epsilon$. Note that the solutions $B^\epsilon$ that we construct undergo superexponential growth, but are spatially much more regular than $L^2_\sigma$. This additional smoothness allows us to incorporate the boundary conditions
into the relevant function spaces on bounded domains in a pointwise sense and allows us to describe precisely what weak solutions mean in this context, which we do after Theorem~\ref{thm:turbulent}.

\subsubsection{Turbulent dynamos on bounded domains}

Next, we refine Theorem~\ref{thm:Main} to produce a genuine \emph{turbulent} dynamo. In the applied mathematics literature, so-called turbulent dynamos are frequently discussed, and it is plausible that astrophysical dynamos are indeed turbulent. The precise meaning of ``turbulent'' in this context requires clarification; nevertheless, following Tobias \cite{Tobias21}, we may loosely define a turbulent dynamo as one in which the velocity field is highly irregular and the dynamo action is ``multiscale'', that is, present at both very large and very small scales. Mathematically, this corresponds to rapid growth at both low and high frequencies as time evolves. See \cite[\textsection 4]{Tobias21} for a detailed description of the various turbulent regimes in terms of the magnetic Prandtl number (and, in nonlinear dynamo theory, the Reynolds number), as well as the potential advantages of such flexibility for realizing dynamos in the laboratory. See also \cite[p.~4]{Row25} for a discussion of the mathematical challenges involved in constructing solutions whose energy modes grow simultaneously at large and small scales, uniformly in the vanishing-diffusivity limit $\epsilon\to 0$.
 
In the full space $\R^3$ or on the torus $\T^3$, scales are naturally expressed in terms of Fourier frequencies. 
In a bounded domain, however, it is more natural to formulate this notion in terms of the eigenfrequencies of $\Omega$ associated with electromagnetic waves. 
For magnetic fields, the natural frequencies are the square roots of the eigenvalues $\lambda_k$ of the \textit{magnetostatic operator}, which admits an orthonormal basis of eigenfields $B_k$. 
We briefly review this theory in Appendix \ref{sec:magnetostatic}.

\begin{figure}[htbp]
    \centering
    \includegraphics[width=0.65\textwidth]{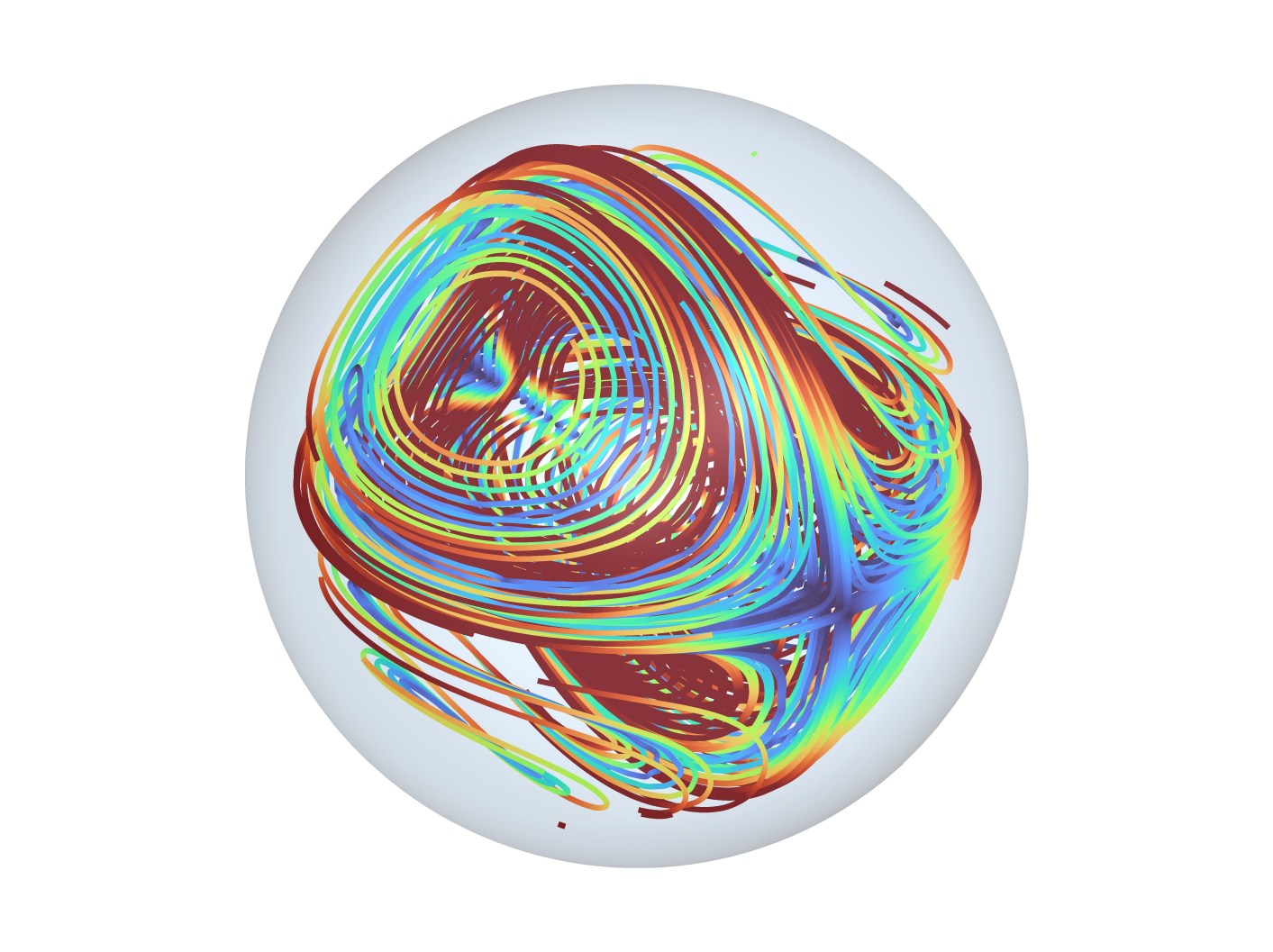}
    \caption{Streamlines of a magnetostatic eigenfield $B_k$ in the unit ball $\Omega$.}
    \label{fig:streamlines_B2}
\end{figure}

In line with this viewpoint, we define the \textit{magnetic energy modes}
$$
\mathcal{E}_k(B):=\frac{1}{2}\langle B,B_k\rangle_{L^2}^2.$$
By Parseval’s identity, the total magnetic energy $\mathcal{E}$ is the sum of all $\mathcal{E}_k$.
In this way, the magnetic energy modes encode the dynamo scales.

\begin{defi}[Turbulent dynamos]
Given $u:[0,\infty)\to L_\sigma^2(\Omega)$, $\epsilon \geq 0$ and $k\in\N$, we define
\begin{equation} \label{e:gamma:k}
\gamma_k(\epsilon) := \sup_{B^\circ\in L^2_\sigma(\Omega)} \limsup_{t \to \infty} \frac{\log \mathcal{E}_k(B^\epsilon(t))}{t}.
\end{equation}
We say that $u$ \emph{exhibits dynamo action at mode $k$} if $\gamma_k(\epsilon)>0$ for some $\epsilon > 0$.
In particular, we say that $u$ is a \emph{turbulent dynamo} if $\gamma_k(\epsilon)>0$ for infinitely many modes $k$ with the same $\epsilon > 0$. These definitions extend naturally to slow, fast, ideal, and a.s.~dynamos.
\end{defi}

In the following theorem, we show that the set of modes $K$ in which dynamo action occurs in Theorem \ref{thm:Main} can be prescribed. Choosing $K$ to consist of low frequencies yields \emph{large-scale} dynamos, while choosing $K$ to consist of high frequencies yields \emph{small-scale} dynamos. Finally, combining low and high frequencies yields \emph{multiscale}, and hence turbulent, dynamos.

\begin{thm}[Turbulent dynamos on bounded domains]\label{thm:turbulent} 
Let $K\subset\mathbb{N}$.
On any smooth bounded domain $\Omega \subset \mathbb{R}^3$, the a.s.~fast dynamos in Theorem \ref{thm:Main} can be constructed to exhibit dynamo action precisely at the modes in $K$. In particular, there exist a.s.~turbulent dynamos on $\Omega$.
\end{thm}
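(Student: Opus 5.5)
The plan is to reduce Theorem~\ref{thm:turbulent} to the construction behind Theorem~\ref{thm:Main}, choosing the \emph{target profile} of the magnetic field spectrally. The convex integration scheme of Section~\ref{Proof:thm:fastdynamo} produces, for a prescribed smooth, divergence-free, compactly supported (hence tangential) target curve $\bar B(t)$, a pair $(u,B^\epsilon)$ solving \eqref{dynamo1} such that $B^\epsilon-\bar B$ is small in $L^2$ (indeed in $C^m$ after time-averaging) on all times outside a set of density zero, uniformly in $\epsilon$. I therefore expect that the only new input is the choice of $\bar B$, together with a weak-convergence control on the mode projections.

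First, fix $K\subset\N$ and pick weights $c_k>0$ for $k\in K$ with $\sum_{k\in K} c_k^2<\infty$. Consider the (formally) superexponentially growing target
\[
\bar B(t)=e^{g(t)}\sum_{k\in K}c_k\,\chi_k B_k, \qquad g(t)/t\to\infty,
\]
where $\chi_k B_k$ are smooth compactly supported divergence-free approximations of the magnetostatic eigenfields $B_k$ (Appendix~\ref{sec:magnetostatic}). These approximations can be built via explicit potentials, $\chi_kB_k\approx\nabla\times(\phi_k A_k)$ with a cutoff $\phi_k$. They are chosen so that $\langle\chi_kB_k,B_j\rangle$ is close to $\delta_{jk}$ for $j\in K$. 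Orthogonality to the modes $j\notin K$ is imposed exactly, by subtracting the finitely many relevant projections while keeping the support compact. This is a finite-dimensional linear correction, solvable because compactly supported solenoidal fields are dense in $L^2_\sigma$. If $K$ is infinite, I would truncate at stage $q$ to $K\cap\{1,\dots,q\}$ and add modes progressively, so that every stage is finite.

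Second, run the iteration with this target. Then $\langle B^\epsilon(t),B_j\rangle=\langle\bar B(t),B_j\rangle+\langle B^\epsilon-\bar B,B_j\rangle$. For $j\in K$ the first term is of size $e^{g(t)}c_j$, and the error is at most $\|B^\epsilon-\bar B\|_{L^2}$, which the scheme makes smaller than $e^{g(t)}c_j/2$ off a null-density set. This gives $\gamma_j(\epsilon)=\infty>0$, uniformly in $\epsilon$. For $j\notin K$ the target contributes nothing.

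The main obstacle is the converse: showing that there is \emph{no} dynamo action at modes $j\notin K$, since the perturbations are of large amplitude relative to $e^{g}$ in $L^2$ only in a weak sense. Here I would use that the highly oscillatory building blocks (Mikado-type fields at frequency $\lambda_q$) pair with the fixed smooth $B_j$ with gain $\lambda_q^{-N}$ after integrating by parts on the potentials. Choosing $\lambda_q$ large depending on $j\le q$ then forces $|\langle B^\epsilon(t)-\bar B(t),B_j\rangle|\lesssim e^{-t}$ for $t$ in the $q$-th time window. Hence $\limsup_t t^{-1}\log\mathcal{E}_j\le 0$ for $j\notin K$. Taking $K$ infinite and containing both low and high indices then yields the turbulent dynamo.
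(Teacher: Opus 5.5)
Your route works, but it differs from the paper's in both steps, and one estimate needs a justification you skipped.

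\textbf{Choice of target.} Theorem~\ref{thm:hprinciple} does not require a compactly supported target. It accepts any $\bar B$ that is smooth up to $\partial\Omega$ with $\bar B\cdot n=0$ and $(\nabla\times\bar B)\times n=0$, and the magnetostatic eigenfields satisfy exactly these conditions. So the paper simply takes $\bar B=\sum_k\sqrt{2\bar{\mathcal E}_k(t)}\,B_k$, with $\bar{\mathcal E}_k=e^{\bar\gamma t-k}$ for $k\in K$ and $0$ otherwise. Exact orthogonality to every mode outside $K$, at all times, is then automatic. Smoothness of the infinite sum comes from $\|B_k\|_{H^m}\lesssim k^{m/3}$ (Corollary~\ref{corappendix:magneticfields}). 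The modal statement follows from $|\langle B^\epsilon-\bar B,B_k\rangle|\le\|B^\epsilon-\bar B\|_{L^2}$ plus the time-averaged closeness, exactly as for the total energy. Your misreading is what forces the window-dependent finite-dimensional corrections. In your route they are essential, not optional: for finite $K$, no nonzero compactly supported field can be orthogonal to all $B_j$ with $j\notin K$, since it would be a finite combination of eigenfields and hence real-analytic in $\Omega$.

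\textbf{What your route gains.} You pair the increments $B_n-B_{n-1}=\nabla\times(\tilde A_{\text{dyn}}+H)$, which are curls of compactly supported potentials, with the smooth $B_j$. This controls $\langle B^\epsilon(t)-\bar B(t),B_j\rangle$ for almost every $t$, not only in $L^{p_j}$-average. As a result, both the absence of growth off $K$ and the growth on $K$ hold at almost every sufficiently large time, rather than only on a set of density one.

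\textbf{What you must add.} This is not a black-box consequence of Theorem~\ref{thm:hprinciple}. You need to add to Proposition~\ref{prop:dynamo} the estimate $\sup_t|\langle (B_n-B_{n-1})(t),B_j\rangle|\le 2^{-n}\eta_q$ for $j\le q$. Your justification misses the real issue. The potential $\tilde A_{\text{dyn}}=(\mathbf B_0)_{\lambda_0}\tilde A$ carries the time-concentrated factor $(\mathbf B_0)_{\lambda_0}$, whose pointwise size is $\mu_0^{1/q_0}$. One integration by parts therefore gives only
\[
|\langle\nabla\times\tilde A_{\text{dyn}}(t),B_j\rangle|\lesssim C(R_0)\,\mu_0^{1/q_0}(\lambda\mu)^{-1}\|\nabla\times B_j\|_{L^\infty}.
\]
This is small precisely because the scheme already imposes the acceleration condition $\lambda_0\mu_0^{1/q_0}\ll\lambda\mu$. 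The term $H$ is handled by its factor $\lambda_0^{-1}$. No $\lambda^{-N}$ gain is available with these building blocks, since $\Phi$ has no vanishing moments, but this single gain is enough.
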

\begin{proof}
See Section~\ref{sec:Turbulent coarse-grained magnetic fields}, and in particular Subsection~\ref{sec:proof:thm:turbulent}.
\end{proof}

As promised, now we set the notation for function spaces and weak solutions.

\begin{defi}
We define the space corresponding to the perfectly conducting walls:
$$
H_{\mathrm{pc}}(\Omega):=\{B\in H_\sigma^2(\Omega)\,:\,(\nabla\times B)\times n=0\}.
$$
When $1 \leq p \leq \infty$, we define $B \in L^p(0,T;H_{\mathrm{pc}}(\Omega))$ to be a \emph{weak solution} if $B \times u \in L^1_{t,x}$ and
\[\int_0^T \int_\Omega (B \cdot \partial_t \varphi - (B \times u + \epsilon  \nabla \times B) \cdot \nabla \times \varphi) \dif x \dif t + \int_\Omega B^0 \cdot \varphi(0) \dif x = 0\]
for every $\varphi \in C^\infty([0,T]\times\bar{\Omega})$.
\end{defi}

\subsection{Convex integration}
Theorems~\ref{thm:Main} and \ref{thm:turbulent} will be obtained as a corollary of an \textit{$H$-principle}. 
Since the pioneering work of De Lellis and Sz\'ekelyhidi \cite{DeLellisSzekelyhidi09}, convex integration has become a powerful framework for constructing weak solutions to the equations
of fluid dynamics that replicate features of turbulent flows. 
By now, a wide variety of convex integration schemes have been developed, some of them genuine pieces of mathematical craftsmanship. 
See, for example, works modelling instabilities \cite{CCF21, CFM19, CFM22, MengualSzekelyhidi23, GKS21, ForsterSzekelyhidi18}, and the celebrated works on the Onsager conjecture and K41 theory \cite{GR23, GKN23, DeLellisSzekelyhidi09, Isett18, BDSV19, BCKpp2, CheskidovLuo22, BuckmasterVicol19, DeLellisSzekelyhidi13}.
In the context of MHD and plasma relaxation see \cite{BLL15, BBV20, FLSz24, LZZpp, MiaoYe24, MYYpp, EPPpp, CZZpp, GiardiSzekelyhidipp}, and for transport equations see \cite{CheskidovLuo24, ModenaSattig20, ModenaSzekelyhidi18, BCKpp}.

Often, convex integration is viewed merely as a mechanism to prove nonuniqueness, and the full strength of the $H$-principle is overlooked. 
This terminology, borrowed from differential geometry, refers to the fact that wild solutions constructed via convex integration often resemble, at large scales, a macroscopic or coarse-grained evolution $(\bar{u},\bar{B})$. 
We now state the $H$-principle in the tradition of convex integration 
\cite{DeLellisSzekelyhidi10, ChiodaroliFeireislKreml2015, DeLellisSzekelyhidi17, DaneriSzekelyhidi17, GKS21, GebhardKolumban22a, Markfelder21, Mengual22, SattigSzekelyhidi2023, FazekasKolumban2025}.

\begin{thm}[H-principle]\label{thm:hprinciple}
Let $\Omega\subset\mathbb{R}^3$ be a smooth bounded domain, and let $\bar{u}$ and $\bar{B}$ be smooth velocity and magnetic fields on $[0,\infty)\times\bar{\Omega}$ satisfying the boundary conditions  \eqref{dynamo:tangentialcondition}-\eqref{dynamo:curlBxn}.
Let $I_j$ be a partition of $(0,\infty)$ into bounded intervals, and consider three sequences: $p_j,m_j\geq 1$ and $\delta_j>0$. 
Then there exists a velocity field 
\begin{equation}\label{thm:hprinciple:u}
u\in \bigcap_{p<\infty}L_t^1 W^{1,p}(\Omega)
\end{equation}
such that, for any magnetic diffusivity $0<\epsilon\leq 1$, there exists a weak solution
\begin{equation}\label{thm:hprinciple:B}
B^\epsilon\in\bigcap_{\substack{p<\infty \\ m\in\N}}L_t^p C^m(\Omega)
\end{equation}
of the \eqref{dynamo1} equation in $[0,\infty)\times\Omega$.
Moreover,  
\begin{equation}\label{thm:hprinciple:barB}
 \dashint_{I_j}\|(u-\bar{u})(t)\|_{W^{1,p_j}}\dif t
 +
\left(\dashint_{I_j}\|(B^\epsilon-\bar{B})(t)\|_{C^{m_j}}^{p_j}\dif t\right)^{\nicefrac{1}{p_j}}
\leq\delta_j,
\end{equation}
for all $j$, uniformly in $0<\epsilon\leq 1$.
\end{thm}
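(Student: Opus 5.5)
The proof proceeds interval by interval. On each $I_j$ we run a Modena--Sz\'ekelyhidi / Cheskidov--Luo type convex integration step, working with potentials instead of anti-curls, and then glue the pieces in time. Since $(u,B)$ enter \eqref{dynamo1} bilinearly through the electric field $E=-u\times B+\epsilon\nabla\times B$, we write the equation as
$$\partial_t B+\nabla\times(-u\times B+\epsilon\nabla\times B)=0.$$
The coarse-grained pair $(\bar u,\bar B)$ solves this with an error. Concretely, we look for a smooth vector potential defect $R$ with
$$\partial_t\bar B-\nabla\times(\bar u\times\bar B)-\epsilon\Delta\bar B=\nabla\times R.$$
Because $(\bar u,\bar B)$ are smooth, tangential and perfectly conducting, $R$ is explicit, namely $R=-\partial_t A-\bar u\times\bar B+\epsilon\nabla\times\bar B$ for a vector potential $A$ of $\bar B$. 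We then decompose $R$ as $\sum_i a_i(t,x)e_i$ along finitely many fixed directions. After localizing by a partition of unity to balls compactly contained in $\Omega$ (near the boundary we first modify $\bar B$ slightly), each coefficient $a_i$ is supported away from $\partial\Omega$.

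Next we build the perturbations:
$$u=\bar u+\sum_i \chi_i\,\theta_{\mu}\, W_{i},\qquad B^\epsilon=\bar B+\nabla\times\Big(\sum_i \chi_i\, \tilde\theta_\mu\, \Psi_i\Big)+\text{corrector}.$$
Here $W_i$ and $\Psi_i$ are Mikado-type building blocks, i.e.\ concentrated divergence-free pipe flows along $e_i$ with explicit potentials. The functions $\theta_\mu$ and $\tilde\theta_\mu$ are temporal intermittency profiles with large $L^1$ versus $L^\infty$ ratio, as in Cheskidov--Luo, so that $u$ is small in $L^1_tW^{1,p}$ while $B$ is small in $L^p_tC^m$. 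The cut-offs $\chi_i\sim\sqrt{|a_i|}$ are chosen so that the low-frequency part of $W_i\times\Psi$-type products reproduces $-R$. Writing the magnetic perturbation as an explicit curl keeps $\nabla\cdot B^\epsilon=0$ and gives compact support in $\Omega$. Since the perturbation vanishes near $\partial\Omega$, the boundary conditions \eqref{dynamo:tangentialcondition}--\eqref{dynamo:curlBxn} are inherited from $(\bar u,\bar B)$.

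The new error splits into three pieces:
\begin{itemize}
\item an oscillation part, handled by stationary-phase/improved H\"older and by inverting the curl only on high-frequency mean-zero terms, which we do explicitly through the potentials;
\item a temporal part, absorbed by a time-corrector built from $\theta_\mu\tilde\theta_\mu-1$;
\item linear and diffusion parts, where the factor $\epsilon\le1$ only helps, giving uniformity in $\epsilon$.
\end{itemize}
Iterating (or doing a single step with an exact solve, as in the transport literature) produces a Cauchy sequence converging in the stated spaces. The parameters are chosen on each $I_j$ separately so that the increments sum below $\delta_j$ in the norms of \eqref{thm:hprinciple:barB}.

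The main obstacle is the simultaneous scaling requirement. We need $u\in L^1_tW^{1,p}$ for every $p<\infty$ and $B\in L^p_tC^m$ for every $m$, while keeping the error small uniformly in $\epsilon$. This forces the concentration, oscillation and temporal intermittency parameters to satisfy conditions that improve as $p\to\infty$ and $m\to\infty$. We first fix the relevant finite $p_j,m_j$ at stage $j$ and diagonalize over stages, letting the intersection over $p$ and $m$ come from the summability of the increments. A second delicate point is gluing across the interval endpoints and at $t=0$ without losing the weak formulation. We handle this with temporal cut-offs that vanish near $\partial I_j$, so that on a neighborhood of each endpoint the solution coincides with $(\bar u,\bar B)$, and the boundary-in-time errors are then controlled by the smallness of $R$ there.
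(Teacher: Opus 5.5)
There is a genuine gap, and it is in the order of quantifiers: the statement asks for a single velocity $u$ that works for every $\epsilon\in(0,1]$. In your scheme the velocity perturbation $\sum_i\chi_i\theta_\mu W_i$ carries the amplitude $\chi_i\sim\sqrt{|a_i|}$ of the defect, and the defect depends on $\epsilon$. Already $R_0=\partial_t\bar A+\epsilon\nabla\times\bar B-\bar u\times\bar B$ (note the sign of $\partial_t\bar A$; yours is off). Every later defect also contains the diffusion error $\epsilon\nabla\times\nabla\times(\cdots)$ and the $\epsilon$-dependent magnetic iterates. So you construct $u^\epsilon$, and uniform-in-$\epsilon$ estimates do not repair this.

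The paper avoids this by putting the entire amplitude on the magnetic side: $\tilde u=\sum_k\chi_\sigma(\mathbf{u}_k)_\lambda$ with a fixed spatial cut-off, and $\tilde B=\sum_k\chi_\sigma R_{0,k}(\mathbf{B}_k)_\lambda$. The spatial blocks are normalised as ``$p=\infty$, $q=1$'' ($\mathbf{u}_k=\phi_\mu(x_k)e_i$, $\mathbf{B}_k=\ell\mu\,\phi_\mu(x_k)e_j$), and the $L^{p_0}$/$L^{q_0}$ splitting is carried only by the time profiles $(\mathbf{u}_0,\mathbf{B}_0)$. The new velocity then depends only on $(\sigma,\lambda,\mu,\lambda_0,\mu_0,p_0)$. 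These are fixed independently of $\epsilon$ because the inductive step is formulated for families $(u_0,B_0^\epsilon,R_0^\epsilon)$ continuous in $\epsilon\in[0,1]$, so all constants involving derivatives of $B_0^\epsilon,R_0^\epsilon$ are uniform by compactness.

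You also leave unresolved the parameter bookkeeping you call the main obstacle. The binding term is the acceleration error $\partial_t(\mathbf{B}_0)_{\lambda_0}\tilde A$, of size $\lambda_0\mu_0^{1/q_0}/(\lambda\mu)$. It caps $\mu_0$ from above, while the $L^1_tW^{1,p}$ and $L^\gamma_tC^m$ bounds push it up. The window is nonempty once $\mu\gg\lambda_0^{p}$ and $q_0$ is large.

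Two further steps fail as written.

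First, you cannot make $R$ vanish near $\partial\Omega$ by ``modifying $\bar B$ slightly''. Your velocity equals $\bar u$ wherever the $a_i$ vanish, so the modified field would have to solve the dynamo equation exactly in a boundary layer. Testing the equation there shows this is incompatible with $C^m$-closeness to $\bar B$ at the arbitrary tolerances $\delta_j$, unless $\nabla\times R$ already vanishes near the wall, which is not assumed. The paper does not cancel that part of the defect at all. It keeps $(1-\chi_\sigma^2)R_0$ in the new error and chooses $\sigma$, uniformly in $\epsilon$, by dominated convergence applied to $\max_{\epsilon\in[0,1]}|R_0^\epsilon|$, so that this piece has $L^1$ norm at most $\delta/8$. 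The iteration drives it to zero. The boundary conditions pass to the limit because every increment is compactly supported in $\Omega$ and the convergence is in $C^m$. This is precisely the ``leave an $L^1$-small remainder'' device you propose at the time endpoints, now used in space.

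Second, convergence of $u_n$ in $L^1_tW^{1,p}$ and of $B_n$ in $L^p_tC^m$ with $p<\infty$ does not control $u_n\times B_n$, since the product of an $L^1_t$ and an $L^p_t$ function need not be integrable. So neither $u\times B\in L^1_{t,x}$, which the definition of weak solution requires, nor passage to the limit in the weak formulation follows from your Cauchy property. The paper adds to the inductive step the bound $\|u_1\times B_1-u_0\times B_0\|_{L^1_{t,x}}\le M\|R_0\|_{L^1_{t,x}}$. This holds because the velocity and magnetic time profiles are co-supported with $\|\mathbf{u}_0\mathbf{B}_0\|_{L^1}=1$, combined with the improved H\"older inequality in space and then in time. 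It makes $u_n\times B_n$ Cauchy in $L^1_{t,x}$, and its limit is identified with $u\times B$ along an a.e.\ convergent subsequence.
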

\begin{proof}
See Section \ref{Proof:thm:hprinciple}.
\end{proof}

\begin{figure}[htbp]
\centering
\includegraphics[width=0.30\linewidth]{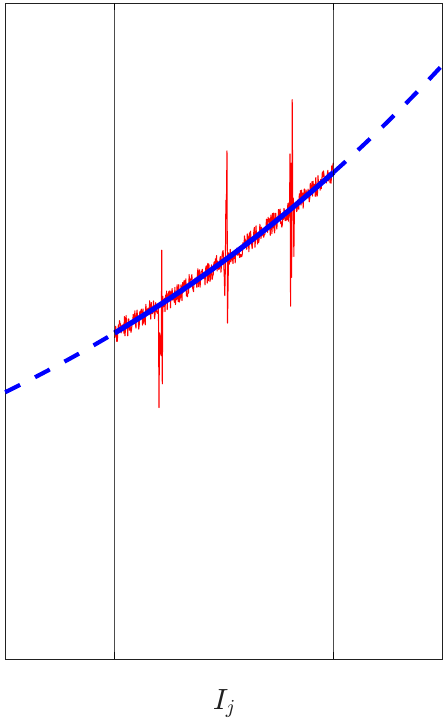}
        \caption{Cartoon of the ``intermittent'' magnetic energy $\mathcal{E}(B^\epsilon(t))$ (red), which remains close to the macroscopic magnetic energy $\mathcal{E}(\bar{B}(t))$ (blue) except on a set of times that can be taken arbitrarily small as the time interval $I_j$ tends to infinity.}
        \label{fig:energy}
\end{figure}

After the $H$-principle, the existence of a fast and turbulent dynamo only requires prescribing a coarse-grained evolution with the desired energy growth and an appropriate choice of the aforementioned sequences, which we do in Theorems \ref{thm:fastdynamo} and \ref{thm:turbulentdynamo}, respectively.

The convex integration scheme presented here has the advantage of being comparatively simple, and we believe it offers an accessible introduction to intermittent convex integration. Readers primarily interested in this aspect, the second goal of the paper, are referred to Sections~\ref{sec:localized}--\ref{sec:idealdynamos}.

\begin{Rem}\label{rem:Times} We record some remarks concerning Theorem \ref{thm:hprinciple}.
\begin{enumerate}[(i)]
    \item The solutions exist globally in time, but their time integrability is only local, namely $u \in L^1([0,T], W^{1,p})$ and $B^\epsilon \in L^p([0,T], C^m)$ for all $T > 0$, $p < \infty$, and $m \in \mathbb{N}$. In Section \ref{sec:idealdynamos}, we present variants that are uniformly bounded in time in the ideal case, by bargaining with spatial regularity (see Theorems \ref{Thm:lebesgue} and \ref{Thm:Sobolev}).
\item The flexibility in choosing the partition $I_j$ is crucial both near $0$ and $\infty$, as well as for optimizing the set of times exhibiting dynamo action in Theorem~\ref{thm:Main}.
By taking $p_j,m_j \nearrow \infty$, we obtain better control of the time integrability and spatial regularity as $t \to 0$ and $t \to \infty$. Nevertheless, one may also consider the constant case $p_j=p$ and $m_j=m$, since it still contains relevant information and is easier to understand on a first reading.
\end{enumerate}
\end{Rem}

The next remark explains why we can get approximate continuity at $t=0$.
\begin{Rem}[Attainment of the initial data]\label{rem:fastdynamo}
By adding and subtracting $B^\circ:=\bar{B}(0)$ in  Theorem \ref{thm:fastdynamo}\ref{thm:fastdynamo:(a)}, the triangle inequality yields
    \begin{equation}\label{rem:fastdynamo:(a)}
    \left(\dashint_0^{a_j}
    \|B^\epsilon(t)-B^\circ\|_{C^{m_j}}^{p_j} \, dt 
    \right)^{\nicefrac{1}{p_j}}\leq\delta_j
    +a_j\|\bar{B}\|_{C^1([0,a_j],C^{m_j})}.
    \end{equation}
    Since $\bar{B}$ depends only on $\Omega$ and $\bar{\mathcal{E}}=\mathcal{E}(\bar{B})$, one can choose $a_j$ in advance so that \eqref{rem:fastdynamo:(a)} tends to zero as $j\to\infty$. Indeed, by refining the partition if necessary, it is possible to replace $a_j$ in the averaged integral by any $t\in[a_{j+1},a_j]$. This shows that $B^\epsilon$ is approximately continuous at $t=0$ in the Lebesgue–Bochner sense, with $B^\epsilon(0)=B^\circ$ independently of $0<\epsilon\leq 1$.
\end{Rem}

\subsubsection{Generalizations}\label{sec:generalizations}
In this section we explain some variants  which might be of interest.\\

%\smallskip

\paragraph{\bf Domains and boundary conditions}

Theorems \ref{thm:hprinciple}, \ref{thm:fastdynamo}, and \ref{thm:turbulentdynamo} also apply in the periodic case $\Omega=\mathbb{T}^3$, where no boundary conditions are required. They also apply to $\Omega=\mathbb{R}^3$, provided one imposes suitable decay at infinity. In fact, for Theorem \ref{thm:fastdynamo} one can construct a fast dynamo with spatially compact support. For Theorem \ref{thm:turbulentdynamo}, since there is no eigenbasis, we instead consider $\hat{B}_k(\xi)=\xi\times\hat{A}_k(\xi)$ where $\hat{A}_k$ is a bump function concentrated around mode $k$. The corresponding $B_k$ is then a Schwartz-class curl field.

As mentioned earlier, Theorems \ref{thm:hprinciple}, \ref{thm:fastdynamo}, and \ref{thm:turbulentdynamo} also apply if $H_{\text{pc}}(\Omega)$ is replaced by normal fields
$$
H_{\mathrm{nf}}(\Omega):=\{B\in H_\sigma^1(\Omega)\,:\,B\times n=0\}.
$$
Notice that  $H_{\mathrm{nf}}(\Omega)$ is simply the space of solenoidal fields in $H^{1}(\Omega)$ with zero trace.

More generally, since $B$ is obtained as the limit of a Cauchy sequence $(B_n)$ in $L_t^p C^m$, for any $p<\infty$ and $m\in\mathbb{N}$, with $B_n - \bar{B}$ supported in the interior of $(0,\infty)\times\Omega$, it follows that $B$ inherits the boundary conditions of $\bar{B}$.\\

\paragraph{\bf Fast and furious dynamos}
The flexibility of the $H$-principle allows the construction of a wide variety of dynamos at the level of weak solutions. Indeed, one can prescribe exponential growth (fast dynamo), or even superexponential growth. Notice that this does not contradict the inequality \eqref{eq:Energyestimate}, since the velocity lies below the Beale–Kato–Majda regularity class. Regularity is also why our results are not  in contradiction with  the explicit quantitative relaxation of the dynamo equation obtained in \cite{HitruhinLindberg24}. See also \cite{FazekasKolumban2025} for estimates on the hull in full MHD.\\

\paragraph{\bf Saturation of the dynamo}
As mentioned earlier in the introduction, in practice the kinematic dynamo typically saturates at a finite time $T$, as a consequence of the nontrivial Lorentz force acting on the fluid through the Navier–Stokes equation \cite{Tobias21}. For this reason, some authors (see e.g.~\cite{NunezSanz00}) replace Definition \ref{def:dynamo} by taking the limit as $t \to 0$ in \eqref{e:gamma}, instead of as $t \to \infty$, namely
    $$
    \tilde{\gamma}(\epsilon) := \sup_{B^\circ\in L^2_\sigma(\Omega) \setminus \{0\}} \limsup_{t \to 0} \frac{1}{t}\log\left(\frac{\mathcal{E}(B^\epsilon(t))}{\mathcal{E}(B^\circ)}\right).
    $$
    Although we have stated our results according to the standard definition of the dynamo, it follows analogously that one may, if desired, adjust the statements for $t \to 0$.\\

\paragraph{\bf Taylor's conjecture}\label{sec:Taylorconjecture}
In the context of plasma relaxation and the Taylor conjecture~\cite{Tay74,Ber84,FL20,FLMV21}, which concerns the long-time behavior of solutions to MHD, we can also prescribe the long-time behavior of the weak magnetic field. For example, there exist weak solutions to \eqref{dynamo1} that converge to force-free (Beltrami) fields, as predicted by Taylor's relaxation.

The method is flexible and also allows one to prescribe convergence to other, possibly nonphysical, magnetic configurations. Moreover, our weak solutions lie in a regime where magnetic helicity does not need to be conserved in the ideal case. The main point is that the construction of dynamos can be made compatible with a natural long-time behavior of the magnetic field.

More precisely, we show in Theorem~\ref{thm:Beltrami} that for any smooth solenoidal field $\bar{u}$, any initial datum $B^\circ$, and any Beltrami field $\mathscr{B}$, there exists another velocity field $u$, arbitrarily close to $\bar{u}$ in $L_t^1 W^{1,p}$, such that the associated \eqref{dynamo1} equation admits a solution starting from $B^\circ$ that converges to $\mathscr{B}$ in the long-time limit
(and such that the boundary flux vanishes at almost all times). We prove this result in Section~\ref{sec:Taylor}.

\subsection{Geometric transport equation}
The third aim of this paper is to present dynamo theory in its appropriate geometric context, namely the geometric transport equation. 
This equation was considered in \cite{Kampschulte} in the setting of gradient flows, and studied extensively in \cite{BDR-TAMS}, to which we refer for the basic theory and notation (see also Section~\ref{sec:GTE}). It represents an evolution equation for a class of generalized surfaces, that can be used for instance to describe the movement of dislocations in materials, see e.g. \cite{Bonicatto-Rindler} which is based on the space-time approach introduced in \cite{Rindler-Space-time}. This equation was also used in \cite{Bonicatto-Frobenius} to derive a link with Frobenius's theorem on the commutativity of flows. 
Our results are complemented by nonuniqueness theorems in the spirit of convex integration, and they couple sharply with the recent well-posedness result of \cite{BDR-JFA, BonicattoDelNinpp}.

The \textit{Geometric Transport Equation} (GTE) 
\begin{equation}\label{eq:GTE}
\partial_t T+\mathcal{L}_u T=0,
\end{equation}
describes the transport by a given velocity field $u$ of a \textit{$k$-current} $T$, representing a generalized surface in $\R^d$ that may be rough and spread out. More precisely, for any $k=0,\ldots,d$, the space of $k$-currents is the dual of the space of $k$-forms. 
The Lie derivative $\mathcal{L}_u T$ can be expressed in terms of the \textit{boundary operator} $\partial$ (the dual of the exterior derivative) and the wedge product $\wedge$ through the Cartan formula
\[
\mathcal{L}_u T
=-\partial(u\wedge T)-u\wedge\partial T.
\]

In this paper, we focus on \textit{boundaryless} currents, i.e.~$\partial T=0$, for which $\mathcal{L}_u T=-\partial(u\wedge T)$. We further assume that they are absolutely continuous with respect to the $d$-dimensional Lebesgue measure.
Assuming in addition that the flow is incompressible, the GTE takes the form
\begin{equation}\label{GTE}\tag{GTE}
\begin{split}
\partial_t T=\partial(u\wedge T),\\
\nabla\cdot u=0,\,\partial T=0.
\end{split}
\end{equation}

A feature of this equation is that it reduces to well-known PDEs for specific values of $k$. Here we mention only its connection to the dynamo and transport equations, as well as the main results concerning uniqueness for the associated initial value problem. For futher details on the derivation of the specific cases from the GTE, see \cite[\textsection 3.4]{BDR-TAMS}.

\subsubsection{The extreme cases}
When $k=0$ and $k=d$, the GTE reduces to the continuity equation and the transport equation, respectively. 
For divergence-free velocity fields, the two systems are formally equivalent:
\begin{equation}\label{transport}\tag{transport}
\begin{split}
\partial_t \rho+\nabla\cdot(\rho u) &=0,\\
\nabla \cdot u &= 0,
\end{split}
\end{equation}
Above, we have identified $T$ with the density $\rho$. 

\subsubsection{The case $k=1$}
Of particular relevance to the present paper is the case of $1$-currents $T$, which can be identified with vector fields, represented here by the magnetic field $B$.
When $k=1$, writing the explicit coordinate form of equation \eqref{eq:GTE} in $\R^d$, one obtains the following expression
\[
\partial_t B+\dive(u\otimes B-B\otimes u)+u\,\nabla\cdot B=0,
\]
where the divergence is taken row-wise. Assuming that $\nabla\cdot B=0$ (this corresponds to the case of boundaryless currents), and specializing to $d=3$, the equation reduces to
\[
\partial_t B=\nabla\times (u\times B).
\]
From this point of view, the ideal \eqref{dynamo1} equation can be regarded as the three-dimensional case of the GTE for boundaryless $1$-currents.

\subsubsection{Nonuniqueness for GTE}
Uniqueness for the initial value problem
\begin{align*}
    \partial_t T+\mathcal{L}_u T&=0,\\
    T|_{t=0}&=T^\circ,
\end{align*}
was shown to hold in \cite{BDR-JFA} in the class of \textit{normal currents} if the vector field $u$ is Lipschitz and autonomous (i.e., time-independent). More recently, in \cite{BonicattoDelNinpp} the uniqueness was extended also to non-autonomous vector fields that belong to the class $L^1_tW^{1,\infty}$. We point out that in these results the currents can be singular (analogous to considering the continuity equation for measures instead of functions), and the vector field $u$ does not have to satisfy any further constraint on the divergence.
If one requires the currents to be absolutely continuous, then it seems that uniqueness results could be pushed further than Lipschitz vectorfields, similar to what happens in the DiPerna-Lions theory for the transport equation, where one has uniqueness for $W^{1,p}$ vector fields and $L^q$ densities if $p$ and $q$ are suitably related. This is part of a forthcoming separate ongoing research project. We also remark that in \cite{Cortopassipp} the author proposes an approach to the classical DiPerna-Lions uniqueness that is based on the theory of currents.

In this work, we show that the convex integration schemes of \cite{ModenaSzekelyhidi18,CheskidovLuo21} do extend to treat general currents
proving nonuniqueness of weak solutions to \eqref{GTE} with certain Sobolev regularity and compact spatial support, in the class of boundaryless currents. 
We state our results for the case $0\le k\le d-2$.
We remark that the case $k=d$ is trivial, since the only boundaryless $d$-currents in $\R^d$ are constants. The case $k=d-1$ instead presents some difficulties that are discussed in Section~\ref{sec:k=d-1}, and will be left to future investigations. 

\begin{thm}[Nonuniqueness for GTE]\label{thm:GTE}
Let $0\le k\le d-2$ and let $\Omega\subset\R^d$ be an open domain. Let $1<p,q<\infty$ be a pair of conjugate H\"older exponents 
$$
\frac{1}{p}+\frac{1}{q}=1.
$$
Consider $r,s\geq 0$ and $1<\tilde{p},\tilde{q}<\infty$ in the regime
$$
\frac{r}{d-(k+1)}
+\frac{1}{p}
<\frac{1}{\tilde{p}},
\quad\quad
\frac{s}{d-(k+1)}
+\frac{1}{q}
<\frac{1}{\tilde{q}}.
$$
Then there exists a non-trivial weak solution to the \eqref{GTE} in the class
$$
u\in L_t^\infty (L^p\cap W^{r,\tilde{p}}),\quad\quad
T\in L_t^\infty (L^q\cap W^{s,\tilde{q}}).
$$
Moreover, $(u,T)$ is supported inside $(0,1)\times\Omega$. 
\end{thm}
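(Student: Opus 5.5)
We will prove the theorem by a Modena--Sz\'ekelyhidi / Cheskidov--Luo type convex integration scheme for the linear system \eqref{GTE}, adapted to $k$-currents. The unknowns are smooth triples $(u_n,T_n,R_n)$ with $R_n$ a $(k+1)$-current, all compactly supported in $(0,1)\times\Omega$. They solve the relaxed equation
$$
\partial_t T_n-\partial(u_n\wedge T_n)=\partial R_n,\qquad \partial T_n=0,\qquad \nabla\cdot u_n=0 .
$$
The start is a nontrivial pair with $u_0=0$ and $T_0=\chi(t)\,\partial\omega$, where $\omega$ is a compactly supported smooth $(k+1)$-current and $\chi\in C_c^\infty(0,1)$. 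The corresponding stress is $R_0=\chi'\omega$, which is supported inside $(0,1)\times\Omega$. The iteration aims for $\|R_{n+1}\|_{L^1_{t,x}}\le\delta_{n+1}$, where $\delta_{n+1}$ is much smaller than $\delta_n$. It also aims for increments with
$$
\|u_{n+1}-u_n\|_{L^\infty_t(L^p\cap W^{r,\tilde p})}+\|T_{n+1}-T_n\|_{L^\infty_t(L^q\cap W^{s,\tilde q})}\lesssim \delta_n^{1/2}.
$$
Summability then gives the limit $(u,T)$, and $\partial R_n\to0$ distributionally, so $(u,T)$ is a weak solution. Nontriviality follows by keeping $\|T-T_0\|_{L^1}$ small.

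For the building blocks we use explicit potentials instead of antiderivatives. Decompose $R_n$ locally as $\sum_\xi a_\xi(t,x)\,e_{\xi}$, where the $e_\xi$ run over finitely many constant simple $(k+1)$-vectors $\xi_0\wedge\xi_1\wedge\dots\wedge\xi_k$, as in Nash-type decomposition. This is possible because constant simple multivectors span $\Lambda_{k+1}\R^d$. For each $\xi$ take a concentrated profile $\Phi_\xi$ that depends only on the $d-(k+1)$ coordinates transverse to $\mathrm{span}(\xi_0,\dots,\xi_k)$ (a Mikado-type tube of dimension $k+1$), concentrated at scale $\mu^{-1}$ and periodized at frequency $\lambda$. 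Set
$$
W_\xi=\mu^{\alpha}\Phi_\xi(\lambda x)\,\xi_0,\qquad \Theta_\xi=\mu^{\beta}\Phi_\xi(\lambda x)\,\xi_1\wedge\dots\wedge\xi_k,\qquad \alpha+\beta=d-(k+1),
$$
with $\alpha=(d-k-1)/p$ and $\beta=(d-k-1)/q$. Both fields are divergence-free and boundaryless, since the profile is constant along the directions of $\xi$. Moreover $\partial(W_\xi\wedge\Theta_\xi)=0$, and $W_\xi\wedge\Theta_\xi=\mu^{d-k-1}\Phi_\xi^2\,e_\xi$ has mean $e_\xi$. The hypothesis $k\le d-2$ guarantees that there is at least one transverse direction, so these concentrated tubes exist; this is exactly what fails when $k=d-1$. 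To keep $T$ boundaryless after multiplying by the amplitude $a_\xi^{1/2}$, we write $\Theta_\xi=\partial\Psi_\xi$ for an explicit compactly supported potential $\Psi_\xi$ with an extra factor $\lambda^{-1}$. We then define the perturbation as $\partial(a_\xi^{1/2}\Psi_\xi)$ rather than $a_\xi^{1/2}\Theta_\xi$. The velocity is handled the same way, writing it as the divergence of a compactly supported antisymmetric potential to preserve incompressibility. Temporal intermittency is not needed here, since we only claim $L^\infty_t$ bounds.

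The new error splits into three parts. The oscillation error $\sum a_\xi(W_\xi\wedge\Theta_\xi-e_\xi)$ is inverted by an explicit $\partial$-antiderivative gaining $\lambda^{-1}$. The linear errors $\partial_t\theta$, $u_n\wedge\theta$ and $w\wedge T_n$ are small in $L^1$ by Hölder, since $\|\Theta_\xi\|_{L^1}\sim\mu^{\beta-(d-k-1)}$ and $\|W_\xi\|_{L^1}\sim\mu^{\alpha-(d-k-1)}$ are tiny. The corrector terms are smaller by a factor $\lambda^{-1}$. The norms of the new pieces scale as
$$
\|W\|_{W^{r,\tilde p}}\sim\lambda^r\mu^{r+\alpha-(d-k-1)/\tilde p},\qquad \|\Theta\|_{W^{s,\tilde q}}\sim\lambda^s\mu^{s+\beta-(d-k-1)/\tilde q}.
$$
Taking $\lambda=\mu^{\eta}$ with $\eta$ small, the two strict inequalities in the hypotheses make both exponents negative; they are precisely the conditions $r+(d-k-1)(1/p-1/\tilde p)<0$ and its analogue for $s$. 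All pieces are supported in the support of $a_\xi$, hence inside $(0,1)\times\Omega$.

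The main obstacle is the algebraic step: proving the geometric decomposition of arbitrary $(k+1)$-currents into simple directions whose transverse profiles and explicit potentials remain compatible with $\partial$ and $\wedge$. In particular, we must check that $\partial(W_\xi\wedge\Theta_{\xi'})$ vanishes or is negligible for $\xi\neq\xi'$ by disjointness of supports, and that the potential-based antidivergence is bounded on $L^1$ with the claimed gains. We would first try the construction with coordinate multivectors $e_{i_0}\wedge\dots\wedge e_{i_k}$ and disjointly translated tubes, which settles the interaction issue directly.
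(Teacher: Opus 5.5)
Your architecture is the paper's: coordinate indices $\alpha\in I(d,k+1)$, shear blocks $\mu^{\frac{d-k-1}{p}}\varphi_\mu(x^\alpha)e_{\alpha_1}$ and $\mu^{\frac{d-k-1}{q}}\varphi_\mu(x^\alpha)e_{\alpha^1}$, explicit $\partial$-potentials, no time intermittency, and the same exponent count. Your initial triple $(0,\chi\,\partial\omega,\chi'\omega)$ is a good way to get support in $(0,1)\times\Omega$.

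\textbf{The amplitude step fails as written.} With $W_\xi$ normalised in $L^p$, $\|a_\xi^{1/2}W_\xi\|_{L^p}\approx\|a_\xi\|_{L^{p/2}}^{1/2}$. For $p>2$ this is not bounded by $\|a_\xi\|_{L^1}^{1/2}$, and for $p<2$ the same happens to $T$ in $L^q$. Nothing in the scheme makes $\|R_n\|_{L^{p/2}}$ small: the new defect contains terms such as $u_n\wedge\tilde T$ with sup norm $\sim\mu^{(d-k-1)/q}$. So the increment bound $\lesssim\delta_n^{1/2}$ is unjustified, and convergence in $L^p\times L^q$ is lost.

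You need the asymmetric Modena--Sz\'ekelyhidi split used in the paper: $\tilde u_\alpha=\chi_\alpha|R_\alpha|^{1/p}$ and $\tilde T_\alpha=\chi_\alpha\operatorname{sgn}(R_\alpha)|R_\alpha|^{1/q}$. Then $\tilde u_\alpha\tilde T_\alpha=\chi_\alpha^2R_\alpha$, and the improved H\"older inequality gives increments $\lesssim\|R_n\|_{L^1}^{1/p}$ and $\lesssim\|R_n\|_{L^1}^{1/q}$. This split also settles three side issues:
\begin{enumerate}
\item The sign factor makes $R=\sum_\alpha R_\alpha e_\alpha$ a trivial decomposition, so this is not the main obstacle you anticipate.
\item The cutoff $\chi_\alpha=\chi(N|R_\alpha|/\delta)$, absent from your proposal, is indispensable. $|R_\alpha|^{1/p}$, like your $a^{1/2}$, is not $C^1$ at zeros of $R_\alpha$. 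The cutoff costs an error $(1-\chi_\alpha^2)R_\alpha e_\alpha$ of $L^1$ norm $\lesssim\delta$.
\item The first increment has size $\|R_0\|_{L^1}^{1/q}$, which is not small, so nontriviality does not follow as you claim. Use the paper's rebalancing $\eta^{-1}\tilde u$, $\eta\tilde T$, or replace $\omega$ by $c\omega$ with $c\gg1$: $T_0$ scales like $c$, while this increment scales like $c^{1/q}$.
\end{enumerate}

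\textbf{Translating the tubes does not remove interactions in general.} If $d\le 2(k+1)$, take two indices with $\alpha\cup\alpha'=\{1,\dots,d\}$. Their tubes sit around $\{x^\alpha=c\}$ and $\{x^{\alpha'}=c'\}$, conditions on disjoint sets of variables, so they always intersect. This range includes the dynamo case $d=3$, $k=1$, where the blocks are slabs. What is needed is that the cross terms $\tilde u_\alpha\tilde T_{\alpha'}(\mathbf u_\alpha\wedge\mathbf T_{\alpha'})_\lambda$ themselves, not their boundaries, are small in $L^1$. Since $x^\alpha$ and $x^{\alpha'}$ share at most $d-k-2$ variables, $\|\mu^{d-k-1}\varphi_\mu(x^\alpha)\varphi_\mu(x^{\alpha'})\|_{L^1}\lesssim\mu^{-1}$.

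\textbf{The potential you only assert is the real content.} Take $U=f(g_\alpha)_\lambda e_\beta$ with $\beta\subset\alpha$ and $g$ of zero mean on $\T^{d-k-1}$. Set $G_\alpha=\operatorname{div}^{-1}g_\alpha$, a function of $x^\alpha$ only, and $W=\lambda^{-1}\sum_j f(G_{\alpha,j})_\lambda\,e_\beta\wedge e_j$. Then $\partial W=U+V$, where $V$ collects the terms with a derivative on $f$ and is $O(\lambda^{-1})$. Applications:
\begin{enumerate}
\item With $\beta=\alpha^1$ (resp.\ $\beta=(\alpha_1)$) this gives the correctors making $T$ boundaryless (resp.\ $u$ divergence-free).
\item With $\beta=\alpha$ and $g=1-\mu^{d-k-1}\varphi_\mu^2$ it gives the new oscillation error.
\end{enumerate}
This requires $\int\varphi=0$, which you never impose; without it $\Theta_\xi$ is not $\partial$ of a periodic potential. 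For the oscillation error it also requires an $L^1$ bound on $G_\alpha$ uniform in $\mu$. That bound holds because $\nabla\Delta^{-1}$ maps finite measures on $\T^{d-k-1}$ into $L^1$, and you need it since you take $\lambda=\mu^\eta$ with $\eta$ small.
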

\begin{proof}
See Section~\ref{sec:GTE}, and in particular Subsection~\ref{sec:proof:thm:GTE}.
\end{proof}
The transport case $k=0$, $r=1$ is covered by \cite[Theorem 1.2]{ModenaSzekelyhidi18}, and thus, in this regime, the only novelty of Theorem~\ref{thm:GTE} is the compact support. In fact, for the transport equation, this regime has been further improved in \cite{ModenaSattig20,BCKpp}, and we wonder if a similar approach could improve the range also for the \eqref{GTE}.

Theorem~\ref{thm:GTE} includes the case of an ideal dynamo as a particular case. However, since some readers may be primarily interested in the dynamo equation, we find it more illustrative to begin with the case $k=1$ and $d=3$ in Section~\ref{sec:idealdynamos}. The reason why this case is simpler is that the corresponding intermittent shear flows introduced in Section~\ref{sec:localized} are one-dimensional, which facilitates the construction of potentials. A useful exercise for readers new to convex integration is to repeat this construction for the 2D \eqref{transport} equation.

For general $k$-currents, recall that the boundary operator $\partial$ generalizes both the divergence and curl operators. This observation allows us to give a unified expression for the potential for all $k$, which may be of independent interest. Its construction is more involved than in the dynamo case and is presented in Section~\ref{sec:potential}. 

\subsubsection{The Diffusion GTE}\label{sec:DGTE}
Next, we consider the diffusive version of the GTE
\begin{equation}\label{DGTE}\tag{DGTE}
\begin{split}
\partial_t T=\partial(u\wedge T)+\epsilon\Delta T,\\
\nabla\cdot u=0,\,\partial T=0,
\end{split}
\end{equation}
which we refer to as the \textit{Diffusion Geometric Transport Equation} (DGTE).

In adapting the ideas from \cite{ModenaSzekelyhidi18} to incorporate diffusion into the transport equation for $k$-dimensional currents, one is led to the decomposition for boundaryless currents $T$:
$$
\Delta T=\partial\partial^\dagger T,
$$
where $\partial^\dagger$ denotes the adjoint of $\partial$. Hence, the diffusion term can be incorporated into Theorem~\ref{thm:GTE}, provided that we can control the $L^1$-norm of $\partial^\dagger T$. This essentially requires that $T \in W^{1,1}$, which corresponds to the case $s=\tilde q=1$, and occurs in the regime
\begin{equation}\label{d>k+1+p}
0\leq k <d-1-p.
\end{equation}

\begin{thm}[Nonuniqueness for DGTE]\label{thm:DGTE}
In the setting of Theorem~\ref{thm:GTE}, if in addition \eqref{d>k+1+p} holds, then the pair $(u,T)$ can be constructed to satisfy the \eqref{DGTE}.
\end{thm}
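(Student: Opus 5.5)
We follow the same iterative convex integration scheme used for Theorem~\ref{thm:GTE}, now with the diffusion term added to the defect. The relaxed system at step $n$ is
\[
\partial_t T_n-\partial(u_n\wedge T_n)-\epsilon\Delta T_n=\partial R_n,
\]
where $R_n$ is a $(k+1)$-current defect, assumed small in $L^1_{t,x}$. At each step we add perturbations $\theta_{n+1},w_{n+1}$ built from the intermittent building blocks and the explicit potentials of Section~\ref{sec:potential}. They are chosen so that the low-frequency part of $w_{n+1}\wedge\theta_{n+1}$ cancels $R_n$.

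The new defect $R_{n+1}$ is then computed term by term, all terms written in divergence form via the potentials. The terms are the time-derivative (corrector) error, the linear errors $u_n\wedge\theta_{n+1}$ and $w_{n+1}\wedge T_n$, the oscillation error, and one new term coming from diffusion.

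\textbf{Diffusion term.} For boundaryless $T$ we use the identity $\Delta T=\partial\partial^\dagger T$. Hence
\[
\epsilon\Delta\theta_{n+1}=\partial\bigl(\epsilon\,\partial^\dagger\theta_{n+1}\bigr),
\]
so the diffusion error can be absorbed into $R_{n+1}$ by setting $R_{n+1}^{\mathrm{diff}}:=\epsilon\,\partial^\dagger\theta_{n+1}$. The perturbation $\theta_{n+1}$ is boundaryless by construction: it is $\partial$ of an explicit potential, up to a small corrector. Its $L^1$ norm is controlled by $\|\nabla\theta_{n+1}\|_{L^1}$. No anti-divergence operator is needed, and compact support is preserved since $\partial^\dagger$ is a local differential operator.

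\textbf{Main obstacle: making $\|\nabla\theta_{n+1}\|_{L^1}$ small.} This is the key quantitative step. The building blocks are concentrated on sets of codimension $d-(k+1)$ with concentration parameter $\mu$ and oscillation frequency $\lambda$. The densities $\theta$ and $w$ are normalized in $L^q$ and $L^p$ respectively. This normalization gives
\[
\|\nabla\theta\|_{L^1}\lesssim \lambda\,\mu^{\,1+(d-(k+1))(\frac1q-1)}=\lambda\,\mu^{\,1-\frac{d-(k+1)}{p}}.
\]
The second expression follows from $1-\frac1q=\frac1p$. Setting $\mu=\lambda^{\alpha}$ with $\alpha$ large, this bound tends to zero exactly when $1<\frac{d-(k+1)}{p}$, i.e.\ when
\[
k<d-1-p,
\]
which is condition \eqref{d>k+1+p}. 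Equivalently, this is the case $s=1$, $\tilde q=1$ (borderline) of the constraint $\frac{s}{d-(k+1)}+\frac1q<\frac1{\tilde q}$ in Theorem~\ref{thm:GTE}.

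Two points need checking. First, the choice of $\alpha$ must stay compatible with the constraints already imposed in Theorem~\ref{thm:GTE}. All conditions there are strict inequalities, so under \eqref{d>k+1+p} there is room to choose $\alpha$ and $\lambda$ satisfying all of them at once. Second, the temporal oscillation used in the scheme, if any, must not spoil this estimate. It enters only through the time-corrector, which is unaffected by diffusion.

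\textbf{Conclusion.} With these choices, every component of $R_{n+1}$, including $R_{n+1}^{\mathrm{diff}}$ (which is bounded uniformly in $\epsilon\le 1$), can be made below any prescribed $\delta_{n+1}$. The sequence $(u_n,T_n)$ is then Cauchy in the spaces of Theorem~\ref{thm:GTE}. The limit solves \eqref{DGTE} in the weak sense, with support in $(0,1)\times\Omega$. Non-triviality follows as before, by prescribing the energy profile of the first iterate.
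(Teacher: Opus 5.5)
Your proposal is correct and follows essentially the same route as the paper. The paper likewise absorbs the diffusion into the defect via $\Delta T=\partial\partial^\dagger T$ for the boundaryless perturbation $\tilde T+\tilde T_c=\partial\tilde S$, and controls $\|\partial^\dagger\partial\tilde S\|_{L^1}$ through the $W^{1,1}$ bound $\lambda\mu^{1-\frac{d-(k+1)}{p}}$ (the endpoint case $s=\tilde q=1$), which can be made small for $\mu\gg\lambda$ exactly when $k<d-1-p$. Your remark on temporal oscillation is unnecessary and slightly inaccurate: this scheme has no time intermittency, and in the intermittent scheme of Theorem~\ref{thm:DGTE:intermittent} the time coefficient does enter the diffusion estimate, so you can simply drop that remark.
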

\begin{proof}
See Section~\ref{sec:GTE}, and in particular Subsection~\ref{sec:proof:thm:GTE}.
\end{proof}

The case $k=0$ with $r=1$ in Theorem~\ref{thm:DGTE} recovers the regularity of the nonunique solutions obtained in \cite[Theorem 1.9]{ModenaSzekelyhidi18} for the diffusion–transport equation, while extending the result to arbitrary domains.

Unfortunately, the regime \eqref{d>k+1+p} does not cover, for example, the dynamo case $k=1$ and $d=3$, not even for $p=1$. Indeed, in order to control the magnetic energy, we need to take $p=2$. We circumvent this obstacle by incorporating the time intermittency introduced in \cite{CheskidovLuo24}. Moreover, we implement a scheme in which the velocity does not depend on $\epsilon$, as required in the definition of fast dynamos.

\begin{thm}[Time-Intermittent Nonuniqueness for DGTE]\label{thm:DGTE:intermittent}
Let $0\le k \le d-2$ and let $\Omega\subset\R^d$ be an open domain. 
Then there exists a non-trivial weak solution $(u,T)$ to the \eqref{DGTE} such that
\[
(u,T)\in \bigcap_{\substack{p<\infty \\ m\in\N}} L_t^1 W^{1,p}\times L_t^p C^m.
\]
Moreover, $(u,T)$ is supported inside $(0,1)\times\Omega$.
\end{thm}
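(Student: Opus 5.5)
We follow the Modena–Székelyhidi / Cheskidov–Luo iteration, with time intermittency added and with explicit potentials replacing anti-divergence operators. The scheme works with the relaxed system
\[
\partial_t T_n-\partial(u_n\wedge T_n)-\epsilon\Delta T_n=\partial R_n,\qquad \partial T_n=0,\quad \nabla\cdot u_n=0,
\]
where the defect $R_n$ is a $(k+1)$-current supported in a fixed compact subset of $(0,1)\times\Omega$.

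\textbf{Starting point.} Pick a nontrivial smooth boundaryless $T_0$ with compact support in $(0,1)\times\Omega$, for example $T_0=\chi(t)\,\partial\omega$ for some compactly supported $(k+1)$-current $\omega$. Set $u_0=0$. Then the starting defect is
\[
R_0=\chi'\omega-\epsilon\,\partial^\dagger\!\ldots
\]
which is obtained explicitly, because $\Delta$ commutes with $\partial$. In particular $R_0$ has compact support and is independent of any antiderivative.

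\textbf{Perturbation step.} Given $(u_n,T_n,R_n)$, decompose $R_n$ in a finite basis of simple $(k+1)$-vectors $\xi_i$ with smooth coefficients $a_i$. To each direction we attach:
\begin{itemize}
\item an intermittent shear/jet velocity $W_i$, concentrated on tubes of codimension $d-(k+1)$ with concentration parameter $\mu$;
\item a companion current $\Theta_i$, chosen so that $W_i\wedge\Theta_i$ has mean $\xi_i$.
\end{itemize}
Both are oscillating at frequency $\lambda$ and normalized in $L^p$ and $L^q$ with $1/p+1/q=1$. We also multiply by a time-intermittent profile $g_\kappa(\nu t)$ with $\|g_\kappa\|_{L^1_t}\lesssim \kappa^{-1/2}$ on the velocity side and the dual normalization on the current side, as in \cite{CheskidovLuo24}.

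Crucially, $\Theta_i$ is written as $\partial$ of the explicit potential from Section~\ref{sec:potential}. Therefore the perturbation $\theta=\sum_i \partial(a_i\Phi_i)$ is automatically boundaryless and compactly supported, and no anti-curl is ever needed. The error is then split into the standard terms:
\begin{itemize}
\item \emph{Oscillation error.} Here $u_p\wedge\theta-\text{mean}$ is handled by writing the high-frequency part as $\partial$ of an explicit potential, gaining $\lambda^{-1}$.
\item \emph{Temporal error.} We add a time corrector $\theta_t=\sum_i h_\kappa(\nu t)\partial(a_i\xi_i)$, where $h_\kappa'=g_\kappa^2-1$, which absorbs the mean part of the oscillation.
\item \emph{Linear and transport errors.} These are $u_n\wedge\theta$ and $u_p\wedge T_n$.
\item \emph{Diffusion error.} This is $\epsilon\,\partial^\dagger\theta$, using $\Delta\theta=\partial\partial^\dagger\theta$.
\end{itemize}

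\textbf{Estimates and parameters.} Using the parameter hierarchy $\lambda\gg\mu\gg\nu\gg\kappa$, we aim for three bounds:
\begin{itemize}
\item $\|R_{n+1}\|_{L^1_{t,x}}\le\delta_{n+1}$, uniformly in $\epsilon\le1$, since the velocity is chosen independently of $\epsilon$;
\item $\|u_p\|_{L^1_tW^{1,p_n}}\le\delta_n^{1/2}$;
\item $\|\theta\|_{L^{p_n}_tC^{m_n}}\le\delta_n^{1/2}$.
\end{itemize}
Here $p_n,m_n\nearrow\infty$. The Cauchy property in every space $L_t^1W^{1,p}\times L_t^pC^m$ then gives a limit $(u,T)$ with $R=0$. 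Nontriviality follows from $\|T-T_0\|$ being small. Compact support in $(0,1)\times\Omega$ is preserved because every cutoff $a_i$ and every potential is localized.

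\textbf{Main obstacle.} The hardest step is controlling the diffusion error $\epsilon\,\partial^\dagger\theta$ in $L^1$ while simultaneously making $\theta$ small in $L^p_tC^m$ and $u_p$ small in $L^1_tW^{1,p}$. Spatial intermittency alone fails here, because the regime \eqref{d>k+1+p} does not hold. The fix we would try first is to load the smallness onto time concentration. The velocity carries the large factor $g_\kappa$, which is small in $L^1_t$. The current instead uses $g_\kappa$ normalized so that it is small in $L^p_t$ for $p<\infty$; this is possible since it lives on a set of times of measure $\kappa^{-1}$. The diffusion term costs $\lambda$ spatially, and this cost is compensated by the $L^1_t$ gain from time concentration.

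We would verify that the exponents close by taking $\kappa$ a large power of $\lambda$. The time corrector's error $\nu^{-1}\|h_\kappa\|\,\|\partial_t a_i\|$ has to be checked separately. It is controlled because $|h_\kappa|\le1$ and $\nu\gg1$.
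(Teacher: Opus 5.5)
Your overall architecture matches the paper's: blocks depending on the $d-(k+1)$ complementary variables, the explicit potential of Proposition~\ref{prop:potential:GTE} for both the $\partial$-corrector and the oscillation error, a temporal corrector with $h'=g^2-1$, Cheskidov--Luo time intermittency, and the start $u_0=0$, $T_0=\chi(t)\,\partial\omega$. The gap is in the temporal bookkeeping, which is the real content of the proof, and as written it does not close.

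\textbf{The time exponents.} Since $u_p\wedge\theta$ must have unit temporal mean, the two time profiles are tied. The velocity carries $\kappa^{1/p_0}g(\kappa\,\cdot)$ and the current carries $\kappa^{1/q_0}g(\kappa\,\cdot)$, with $1/p_0+1/q_0=1$. The velocity then gains $\kappa^{-1/q_0}$ in $L^1_t$, while the current has size $\kappa^{1/q_0-1/\gamma}$ in $L^\gamma_t$. Both are small only if $\gamma<q_0<\infty$.
\begin{itemize}
\item Your choice $\kappa^{-1/2}$ with the dual normalization is $q_0=2$. Then $\theta$ is large in $L^{p_n}_tC^{m_n}$ as soon as $p_n>2$.
\item Your fallback, a current profile small in every $L^p_t$, is $q_0=\infty$. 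This kills the velocity gain and leaves $\|u_p\|_{W^{1,p}}\sim\lambda\mu^{1-(d-k-1)/p}$, which is small only in the regime \eqref{d>k+1+p}.
\end{itemize}
So $q_0$ must be finite and re-chosen at every step in terms of $(p_n,m_n)$.

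\textbf{The missing acceleration error.} What dictates the choice of $q_0$ is the acceleration error, which is absent from your error list. We have $\partial_t\theta=\partial(\partial_t S)$ for the current potential $S$. When $\partial_t$ falls on the time profile, it contributes about $\nu\kappa^{1/q_0}/(\lambda\mu)$ to $\|R_{n+1}\|_{L^1_{t,x}}$. So ``take $\kappa$ a large power of $\lambda$'' cannot close on its own. With the paper's spatial normalization (velocity blocks $L^\infty$-normalized, current blocks $L^1$-normalized), you need simultaneously
\[
\lambda\mu^{1-\frac{d-k-1}{p}}\ll\kappa^{\frac{1}{q_0}},\qquad
(\lambda\mu)^{m}\mu^{d-k-1}\ll\kappa^{\frac{1}{\gamma}-\frac{1}{q_0}},\qquad
\nu\,\kappa^{\frac{1}{q_0}}\ll\lambda\mu .
\]
\begin{itemize}
\item The first and third conditions force $\nu\ll\mu^{(d-k-1)/p}$. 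This is where $k\le d-2$ is used.
\item The second and third are compatible only when $q_0/\gamma$ is large in terms of $m$ and the exponents of $\mu,\nu$ relative to $\lambda$. This is the GTE analogue of \eqref{fastdynamo:compatibility13}.
\item In particular $\kappa$ must dominate every other parameter, contrary to your stated hierarchy $\nu\gg\kappa$.
\item The diffusion error, of size $\lambda\mu\,\kappa^{-1/p_0}$, is controlled automatically by the second condition, since $1/\gamma-1/q_0\le 1/p_0$. It is not the main obstacle; the acceleration error is.
\end{itemize}

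\textbf{Passing to the limit in the nonlinear term.} Cauchy convergence in $L^1_tW^{1,p}\times L^p_tC^m$ does not let you pass to the limit in $\partial(u_n\wedge T_n)$. Knowing $u\in L^1_tL^\infty$ and $T\in\bigcap_{p<\infty}L^p_tC^m$ does not even give $u\wedge T\in L^1_{t,x}$. Moreover, the increment $u_p\wedge\theta$ is not small: it carries $\kappa g^2(\kappa\,\cdot)$, which has unit $L^1_t$ norm.

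You need an additional inductive estimate, as the paper does following \cite{CheskidovLuo24}:
\[
\|u_{n+1}\wedge T_{n+1}-u_n\wedge T_n\|_{L^1_{t,x}}\le M\|R_n\|_{L^1_{t,x}}
\]
up to arbitrarily small terms. It follows from the improved H\"older inequality (Proposition~\ref{prop:IHI}), applied first in space and then in time. Together with $\sum_n\|R_n\|_{L^1}<\infty$, it makes $u_n\wedge T_n$ Cauchy in $L^1_{t,x}$. Almost everywhere convergence along a subsequence then identifies the limit as $u\wedge T$, so $(u,T)$ is a weak solution.
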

\begin{proof}
See Section~\ref{sec:GTE}, and in particular Subsection~\ref{sec:proof:thm:DGTE}.
\end{proof}

The case $k=0$ recovers the regularity of the nonunique solutions to the transport equation obtained in \cite[Theorem 1.2]{CheskidovLuo24}, and extends that result to arbitrary domains. 

All the theorems for the \eqref{GTE} also admit a form of the $H$-principle in terms of a coarse-grained flow $(\bar{u}, \bar{T})$, as we show in Section~\ref{sec:GTE}. Moreover, for the last theorem for the \eqref{DGTE}, we can choose $\bar{u}$ to be independent of the diffusion parameter $\epsilon$. As noted in Section~\ref{sec:generalizations}, the solution is obtained as the limit of a Cauchy sequence that preserves the boundary conditions.\\

\textbf{Organization of the paper.} In Section \ref{sec:localized}, we sketch the localized convex integration scheme and introduce the main building blocks and potentials. In Section \ref{sec:idealdynamos}, we present the simplest version of the construction for ideal dynamos. In Section \ref{sec:fastdynamos}, we introduce time intermittency to address the case with diffusion. In Section \ref{sec:GTE}, we generalize the construction to the GTE. We conclude with some remarks in Section \ref{sec:conclusion}.

\section{Localized convex integration}\label{sec:localized}

This section is divided into two parts. In Section~\ref{sec:buildingblocks}, we first define the main building blocks and potentials. In Section~\ref{sec:scheme}, we introduce the localized convex integration scheme, which will be treated in detail in Sections~\ref{sec:idealdynamos} and~\ref{sec:fastdynamos}.

\subsection{Building blocks and potentials}\label{sec:buildingblocks}

Let us begin by introducing some notation involving three parameters ($\ell,\mu,\lambda$): the side length $\ell>0$ of the periodic domain, the concentration $\mu\geq \ell^{-1}$, and the oscillation $\lambda\in\mathbb{N}$.
\begin{itemize}
\item $\T_\ell^d := [0,\ell)^d$ denotes the $d$-dimensional flat torus of side length $\ell>0$.
	\item Given $f\in C_c^\infty(\R^d)$ with $\operatorname{supp}f\subset (0,1)^d$ and $\mu\geq\ell^{-1}$, we denote
	$$
	f_\mu(y):=f(\mu y),\quad y\in\R^d,
	$$
	which is supported on $(0,\ell)^d$.
	Let $g\in C^\infty(\T_\ell^d)$ be the $\ell$-periodic extension of $f_\mu$. 
    \item Given $g\in C^\infty(\T_\ell^d)$ and $\lambda\in\N$, we denote
	$$
	g_\lambda(y):=g(\lambda y),\quad y\in\T_\ell^d.
	$$
    We abuse the notation $g=f_\mu$ and so $g_\lambda=(f_\mu)_\lambda$.
\end{itemize}

A key insight of the intermittent variant of convex integration \cite{BuckmasterVicol19} is that adding concentration to localized oscillations yields improved estimates, as illustrated by the following lemma, which will be used repeatedly throughout Sections~\ref{sec:idealdynamos}--\ref{sec:GTE}.

\begin{lemma}\label{lemma:buildingblock}
For any $f\in C_c^\infty(\R^d)$  with $\operatorname{supp}f\subset (0,1)^d$, $m\geq 0$ and $1\leq r\leq\infty$,
we have
$$
\|((f_\mu)_\lambda)^{(m)}\|_{L^r}
=(\lambda\mu)^m\mu^{-\frac{d}{r}}\|f^{(m)}\|_{L^r}.
$$
\end{lemma}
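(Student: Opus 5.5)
The identity follows from the chain rule combined with two changes of variables. One exploits the periodicity of $g=f_\mu$ on $\T_\ell^d$ under the fast oscillation $\lambda\in\N$. The other rescales the concentration $\mu$.

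Throughout, $\|\cdot\|_{L^r}$ means $L^r(\T_\ell^d)$, and $f^{(m)}$ stands for any fixed $m$-th order partial derivative $\partial^\alpha f$ with $|\alpha|=m$. The chain rule gives
\[
\partial^\alpha (f_\mu)_\lambda(y)=(\lambda\mu)^m(\partial^\alpha f)(\mu\lambda y) \quad \text{(taken $\ell$-periodically)}.
\]
This holds because $(f_\mu)_\lambda(y)=g(\lambda y)$, where $g$ is the periodization of $f(\mu\,\cdot)$. So it suffices to show
\[
\|(h_\mu)_\lambda\|_{L^r(\T_\ell^d)}=\mu^{-d/r}\|h\|_{L^r(\R^d)}
\]
for $h=\partial^\alpha f$. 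This $h$ still has support in $(0,1)^d$, so $h_\mu$ is supported in $(0,\ell)^d$ since $\mu\ge\ell^{-1}$, and its periodization is well defined.

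\textbf{Step 1 (oscillation).} For an $\ell$-periodic $G$ and $\lambda\in\N$, I claim $\|G(\lambda\cdot)\|_{L^r(\T_\ell^d)}=\|G\|_{L^r(\T_\ell^d)}$. To see this, substitute $z=\lambda y$. The cube $[0,\ell)^d$ is mapped onto $[0,\lambda\ell)^d$, which is the union of $\lambda^d$ translated copies of the period cell. The Jacobian factor $\lambda^{-d}$ exactly cancels this count. The case $r=\infty$ is immediate because the map is a surjection onto the same values.

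\textbf{Step 2 (concentration).} On one period cell, the periodization of $h_\mu$ coincides with $h(\mu\,\cdot)$, which is supported in $(0,\mu^{-1})^d\subset(0,\ell)^d$. Substituting $z=\mu y$ gives
\[
\int_{[0,\ell)^d}|h(\mu y)|^r\,dy=\mu^{-d}\int_{\R^d}|h|^r\,dz,
\]
hence the factor $\mu^{-d/r}$. For $r=\infty$ the sup is unchanged, consistent with $\mu^{0}$.

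Combining the chain-rule factor with Steps 1 and 2 yields the stated identity. The only point requiring care is the support hypothesis. It guarantees that the periodic copies of $f_\mu$ do not overlap, so the norm on one cell is exactly the rescaled full-space norm. It also guarantees that differentiation commutes with periodization. The integrality of $\lambda$ is what makes Step 1 exact. I do not expect a genuine obstacle.
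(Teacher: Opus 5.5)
Your proposal is correct and follows the paper's own argument: the factor $(\lambda\mu)^m$ comes from the chain rule, and $\mu^{-d/r}$ comes from the change of variables $y'=\mu y$. You also make explicit two points the paper leaves implicit. One is that the $L^r(\T_\ell^d)$-norm is invariant under oscillation by an integer $\lambda$; the other is the role of $\operatorname{supp} f\subset(0,1)^d$ together with $\mu\geq\ell^{-1}$ in making the periodization commute with differentiation.
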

\begin{proof}
The factor $(\lambda\mu)^m$ comes from the derivative, whereas the term $\mu^{-d/r}$ arises from the Jacobian of the change of variables $y'=\mu y$.
\end{proof}

Next, we fix a cutoff $\Phi\in C_c^\infty(\R)$ with $\operatorname{supp}\Phi\subset  (0,1)$ and satisfying
\begin{equation}\label{cutoff:mean1}
\int_{\R}\phi^2 \dif s = 1,
\quad\quad
\phi:=\dot{\Phi}.
\end{equation}
Given $\mu\geq \ell^{-1}$, we define the concentrated cutoffs $\phi_\mu(s)
:=\phi(\mu s)$ and $\Phi_\mu(s)
:=\Phi(\mu s)$, and we extend both functions  periodically into $\T_\ell$. By the mean property \eqref{cutoff:mean1}, we can define the smooth potential $\mathbf{H}\in C^\infty(\T_\ell)$:
\begin{equation}\label{def:potentialH}
\mathbf{H}(s):=\int_{0}^{s}(1-\ell\mu\phi_\mu^2)\dif s'.
\end{equation}
We do not make explicit the dependence of $\mathbf{H}$ on $\mu$ in order to alleviate the notation, since it does not affect the estimates; namely, $\mathbf{H}$ is uniformly bounded in $\mu$
\begin{equation}\label{eq:Hbounded}
\|\mathbf{H}\|_{L^\infty} \leq 2\ell.
\end{equation}

\begin{figure}[htbp]
    \centering
    \begin{subfigure}{0.45\textwidth}
        \centering
        \includegraphics[width=\linewidth]{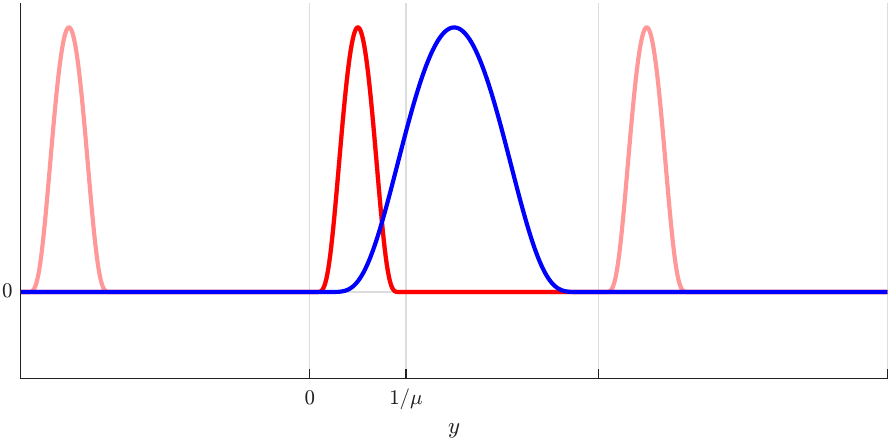}
        \caption{Concentration}
    \end{subfigure}
    \hfill
    \begin{subfigure}{0.45\textwidth}
        \centering
        \includegraphics[width=\linewidth]{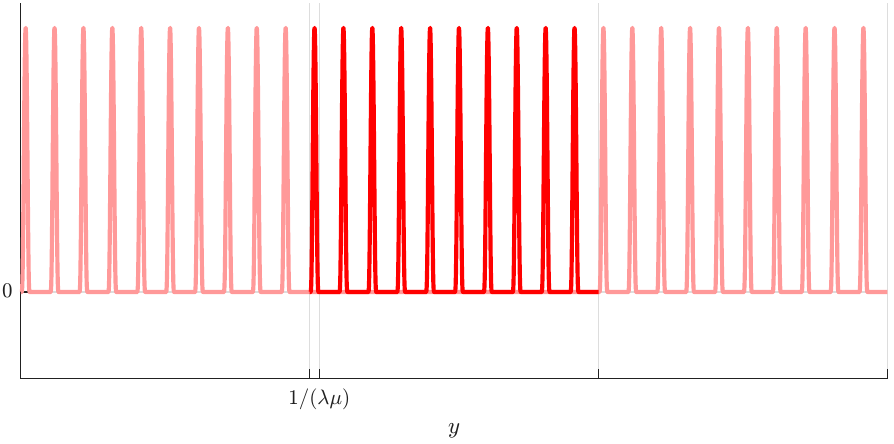}
        \caption{Oscillation}
    \end{subfigure}
    \caption{\textbf{Left:} The cutoff $\Phi$ (blue), its concentrated version $\Phi_\mu$ (red), and its periodic extension (light red). \textbf{Right:} The oscillatory version $(\Phi_\mu)_\lambda$. The combination of concentration and oscillation leads to intermittency.}
    \label{fig:mikado_flows}
\end{figure}

Next, we define the spatial and temporal building blocks in terms of $\phi_\mu$, $\Phi_\mu$, and $\mathbf{H}$ in Sections~\ref{sec:spatialintermittency} and~\ref{sec:temporalintermittency}, respectively.
We adopt the notation $(\ell,\mu,\lambda)$ for the spatial building blocks and $(\ell_0,\mu_0,\lambda_0)$ for the temporal ones. Unlike the concentration and oscillation parameters, the side lengths $\ell$ and $\ell_0$ play no essential role in the estimates and are introduced only to make the construction more explicit. For a first reading, one may simply set $\ell=\ell_0=1$.

\subsubsection{Spatial intermittency}\label{sec:spatialintermittency} 
In Section \ref{sec:idealdynamos}, we consider a smooth domain $\Omega\subset(0,\ell)^3$, 
a concentration parameter $\mu\geq\ell^{-1}$, an oscillation parameter $\lambda \in \N$, and a pair of conjugate H\"older exponents
$$
\frac{1}{p}+\frac{1}{q}=1.
$$
For any index $k=1,2,3$, we define the spatial-intermittent velocity and magnetic fields
\begin{subequations}\label{def:intermittentshearflows}
\begin{align}
	\mathbf{u}_k(x)
	&:=(\ell\mu)^{1/p}\phi_{\mu}(x_k)e_i,\\
	\mathbf{B}_k(x)
	&:=(\ell\mu)^{1/q}\phi_{\mu}(x_k)e_j,
\end{align}
\end{subequations}
where  $\{e_1,e_2,e_3\}$ is the standard basis of $\R^3$, and the indices $(i,j)$ are uniquely determined by $k$ through the identity
$$e_i\times e_j=e_k.$$
Notice that the only difference between $\mathbf{u}_{k}$ and $\mathbf{B}_{k}$ is the direction ($e_i$ and $e_j$) and the exponent of $\mu$ ($1/p$ and $1/q$). 
Similarly, we define the smooth velocity and magnetic potentials
\begin{subequations}\label{def:magneticvectorpotentials}
\begin{align}
{\boldsymbol{\psi}}_k(x)
&:=-(\ell\mu)^{1/p}\Phi_{\mu}(x_k) e_j,\\
\mathbf{A}_k(x)
&:=(\ell\mu)^{1/q}\Phi_{\mu}(x_k) e_i.
\end{align}
\end{subequations}
In addition, we define
$$
\mathbf{H}_k(x):=\mathbf{H}(x_k).
$$

Next, we derive some basic properties of these intermittent shear flows that will be crucial in the construction of the dynamos.

\begin{lemma}\label{Lemma:steady}
We have
$$\nabla\mathbf{H}_k=e_k-\mathbf{u}_k\times\mathbf{B}_k,$$ 
and also
\begin{subequations}\label{eq:magneticvectorpotentials}
\begin{align*}
\nabla\times\boldsymbol{\psi}_k&=\mu\mathbf{u}_k,\\
\nabla\times\mathbf{A}_k&=\mu\mathbf{B}_k.
\end{align*}
\end{subequations} 
\end{lemma}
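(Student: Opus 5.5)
This is a direct computation from the definitions \eqref{def:intermittentshearflows}, \eqref{def:magneticvectorpotentials} and \eqref{def:potentialH}. All fields involved depend on the single variable $x_k$, so only $\partial_k$ survives when we differentiate. We use the relation $e_i\times e_j=e_k$, where $(i,j,k)$ is a cyclic permutation of $(1,2,3)$, together with the conjugacy $\frac1p+\frac1q=1$.

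\textbf{Gradient identity.} Since $\mathbf{H}_k(x)=\mathbf{H}(x_k)$, we have $\nabla\mathbf{H}_k=\mathbf{H}'(x_k)e_k$. By \eqref{def:potentialH} this equals $(1-\ell\mu\phi_\mu^2(x_k))e_k$. On the other side,
$$\mathbf{u}_k\times\mathbf{B}_k=(\ell\mu)^{1/p+1/q}\phi_\mu(x_k)^2\,e_i\times e_j=\ell\mu\,\phi_\mu^2(x_k)\,e_k,$$
and comparing the two expressions gives $\nabla\mathbf{H}_k=e_k-\mathbf{u}_k\times\mathbf{B}_k$. Two points need care. First, $\mathbf{H}$ is a genuine periodic function on $\T_\ell$: the $\ell$-periodic extension of $\phi_\mu$ has one bump per period, and by \eqref{cutoff:mean1}, $\int_0^\ell \ell\mu\phi_\mu^2=\ell\int\phi^2=\ell$, so the integrand has zero mean over a period. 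Second, the chain rule holds for the periodic extension. Both checks are routine.

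\textbf{Curl identities.} For a field of the form $f(x_k)e_m$, the formula $\nabla\times(f(x_k)e_m)=\nabla f\times e_m=f'(x_k)\,e_k\times e_m$ applies. Since $\frac{d}{ds}\Phi_\mu(s)=\mu\phi_\mu(s)$, we compute
$$\nabla\times\boldsymbol{\psi}_k=-(\ell\mu)^{1/p}\mu\phi_\mu(x_k)\,e_k\times e_j=(\ell\mu)^{1/p}\mu\phi_\mu(x_k)\,e_i=\mu\mathbf{u}_k,$$
using $e_k\times e_j=-e_i$ for cyclic $(i,j,k)$. Similarly,
$$\nabla\times\mathbf{A}_k=(\ell\mu)^{1/q}\mu\phi_\mu(x_k)\,e_k\times e_i=\mu\mathbf{B}_k,$$
since $e_k\times e_i=e_j$.

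There is no real obstacle here. The only points to watch are the sign bookkeeping in the cyclic cross products, which is where the minus sign in $\boldsymbol\psi_k$ comes from, and the zero-mean normalization that makes $\mathbf{H}$ periodic.
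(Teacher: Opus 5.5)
Your proposal is correct and is the same direct verification from the definitions that the paper intends; the paper's proof just says the claim ``follows directly from the definitions.'' The details you fill in are exactly the ones that proof leaves implicit: the cyclic sign $e_k\times e_j=-e_i$, and the normalization $\int_0^\ell \ell\mu\phi_\mu^2=\ell$, which makes $\mathbf{H}$ genuinely $\ell$-periodic.
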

\begin{proof} 
The claim follows directly from the definitions.
\end{proof}

As an immediate corollary, we deduce the following property.

\begin{prop}\label{prop:steadydynamo}
The pair $(\mathbf{u}_{k},\mathbf{B}_{k})$ is a steady solution to the ideal \eqref{dynamo1} equation in $\T_\ell^3$
\begin{subequations}
\begin{align}
\nabla\times(\mathbf{u}_{k}\times \mathbf{B}_{k})
&=0,\label{steady:1}\\
\nabla\cdot \mathbf{u}_{k}=
\nabla\cdot \mathbf{B}_{k}
&=0.\label{steady:2}
\end{align}
\end{subequations}
Moreover, it satisfies
$$
\dashint_{\T_\ell^3}\mathbf{u}_{k}\dif x =
\dashint_{\T_\ell^3}\mathbf{B}_{k}\dif x 
=0,\quad\quad
\dashint_{\T_\ell^3}\mathbf{u}_{k}\times \mathbf{B}_{k}\dif x
=e_k.
$$
\end{prop}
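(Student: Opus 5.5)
The plan is to read everything off Lemma~\ref{Lemma:steady} together with the explicit shear-flow structure of \eqref{def:intermittentshearflows}. For \eqref{steady:1}, Lemma~\ref{Lemma:steady} gives $\mathbf{u}_k\times\mathbf{B}_k=e_k-\nabla\mathbf{H}_k$. Since $e_k$ is constant and curl kills gradients, we get $\nabla\times(\mathbf{u}_k\times\mathbf{B}_k)=0$. As a sanity check, the same follows directly: $\mathbf{u}_k\times\mathbf{B}_k=\ell\mu\,\phi_\mu^2(x_k)\,e_i\times e_j=\ell\mu\,\phi_\mu^2(x_k)e_k$, using $1/p+1/q=1$. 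This is a field pointing in direction $e_k$ and depending only on $x_k$, so its curl vanishes.

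For \eqref{steady:2}, I use that $\mathbf{u}_k$ and $\mathbf{B}_k$ point in the directions $e_i,e_j$ and depend only on $x_k$, with $i,j\neq k$. Their divergences are therefore $\partial_{x_i}$ and $\partial_{x_j}$ of functions of $x_k$, which vanish. Alternatively, Lemma~\ref{Lemma:steady} writes $\mathbf{u}_k=\mu^{-1}\nabla\times\boldsymbol{\psi}_k$ and $\mathbf{B}_k=\mu^{-1}\nabla\times\mathbf{A}_k$, and divergence of a curl is zero. All functions involved are $\ell$-periodic, since $\phi_\mu,\Phi_\mu$ were extended periodically, so these are genuine identities on $\T_\ell^3$.

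For the means, the curl representation gives $\dashint\mathbf{u}_k=\mu^{-1}\dashint\nabla\times\boldsymbol{\psi}_k=0$ by periodicity of the smooth potential $\boldsymbol{\psi}_k$, and likewise for $\mathbf{B}_k$ via $\mathbf{A}_k$. For the product I average the identity $\mathbf{u}_k\times\mathbf{B}_k=e_k-\nabla\mathbf{H}_k$. The gradient of the periodic function $\mathbf{H}_k$ has zero mean, so the average is $e_k$. Here one should check that $\mathbf{H}$ really is $\ell$-periodic. Indeed, $\int_0^\ell(1-\ell\mu\phi_\mu^2)\dif s=\ell-\ell\mu\cdot\mu^{-1}\int_{\R}\phi^2=0$, using \eqref{cutoff:mean1}, the substitution $s'=\mu s$, and that $\operatorname{supp}\phi_\mu\subset(0,\ell)$ since $\mu\ge\ell^{-1}$. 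The same support argument, now using $\int_{\R}\phi=\Phi(1)-\Phi(0)=0$, also gives the zero means of $\mathbf{u}_k$ and $\mathbf{B}_k$ directly.

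There is no real obstacle. The only point needing care is the periodicity of the potentials $\mathbf{H}$, $\Phi_\mu$ on $\T_\ell$, which is exactly where the normalization \eqref{cutoff:mean1} and the support condition $\mu\geq\ell^{-1}$ enter.
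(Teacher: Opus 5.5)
Your proposal is correct and follows the paper's route. The paper states this proposition as an immediate corollary of Lemma~\ref{Lemma:steady}, reading \eqref{steady:1}, \eqref{steady:2} and the three averages off the identities $\mathbf{u}_k\times\mathbf{B}_k=e_k-\nabla\mathbf{H}_k$, $\mu\mathbf{u}_k=\nabla\times\boldsymbol{\psi}_k$ and $\mu\mathbf{B}_k=\nabla\times\mathbf{A}_k$ with periodic potentials. Your check that $\mathbf{H}$ is $\ell$-periodic, via \eqref{cutoff:mean1} and $\mu\geq\ell^{-1}$, is exactly what the paper uses implicitly when it declares $\mathbf{H}\in C^\infty(\T_\ell)$.
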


\subsubsection{Temporal intermittency}\label{sec:temporalintermittency} 
In Section \ref{sec:fastdynamos}, we consider a time interval $I=(0,\ell_0)$,
a concentration parameter $\mu_0\geq\ell_0^{-1}$, an oscillation parameter $\lambda_0 \in \N$, and a pair of conjugate H\"older exponents
\begin{equation}\label{eq:timeexponents}
\frac{1}{p_0}+\frac{1}{q_0}=1.
\end{equation} 
With them we define the time-intermittent velocity and magnetic coefficients
\begin{subequations}\label{def:timeintermittency:Bu}
\begin{align}
\mathbf{u}_0(t):=(\ell_0\mu_0)^{1/p_0}\phi_{\mu_0}(t),\\
\mathbf{B}_0(t):=(\ell_0\mu_0)^{1/q_0}\phi_{\mu_0}(t).
\end{align}
\end{subequations}
Similarly, we define 
\begin{equation}\label{def:timeintermittency:H0}
\mathbf{H}_0(t):=\int_{0}^{t}(1-\ell_0\mu_0\phi_{\mu_0}^2)\dif s',
\end{equation}
which satisfies
$
\mathbf{H}_0'
=1-\mathbf{u}_0\mathbf{B}_0.
$

\begin{Rem}\label{Rem:subindex0}
Notice that the spatial intermittent shear flows $(\mathbf{u}_k, \mathbf{B}_k)$ for $k = 1, 2, 3$ are vector fields on $\T_\ell^3$, whereas the temporal intermittent coefficients $(\mathbf{u}_0, \mathbf{B}_0)$ are scalar functions on $\T_{\ell_0}$.
For ease of notation, we omit the dependence on the parameters $(\ell,\mu,p)$ and $(\ell_0,\mu_0,p_0)$.
\end{Rem}

\subsection{Localized convex integration scheme}\label{sec:scheme}
In turbulent regimes, it is common to apply the \textit{Reynolds decomposition}, writing the flow $(u,B)$ as the sum of a coarse-grained part $(\bar{u},\bar{B})$ and a fluctuating part $(u',B')$:
$$
u=\bar{u}+u',
\quad\quad
B=\bar{B}+B'.
$$ 

On the one hand, the coarse-grained flow satisfies the \textit{mean-field dynamo equation} 
\begin{equation}\label{relaxeddynamo}\tag{MF-dynamo}
	\begin{split}
		(\partial_t -\epsilon\Delta)\bar{B} - \nabla\times(\bar{u}\times \bar{B})&=  \nabla\times R,\\
		\nabla\cdot\bar{B} =
		\nabla\cdot\bar{u}&=0,
	\end{split}
\end{equation}
where $R$ is the \textit{subscale electromotive force}. This term measures the deviation of the coarse-grained flow from being an exact solution, and we therefore also refer to $R$ as the \textit{defect}. 
Indeed, if $(\bar{E},\bar{B})$ is the coarse-grained electromagnetic field, then by \eqref{Faraday} law we have $\bar{E}=\epsilon\nabla\times\bar{B}-\overline{u\times B}$, and hence $R=\overline{u\times B}-\bar{u} \times \bar{B}$.
For a detailed discussion of \eqref{relaxeddynamo}, see e.g.~\cite{Eyi15,Kap25,KR80}.

On the other hand,
the fluctuation part must solve 
$$
\underbrace{\nabla\times(R-u'\times B')}_{\text{quadratic}}
+\underbrace{(\partial_t-\epsilon\Delta)B'
-\nabla\times(u'\times\bar{B}+\bar{u}\times B')}_{\text{linear}}
=0.
$$

A central problem in turbulence is the reconstruction of the full solutions $(u,B)$ from the coarse-grained fields $(\bar{u},\bar{B})$, that is, the recovery of the unresolved fluctuations $(u',B')$. One might attempt to determine $(u',B')$ either by canceling the quadratic error $u'\times B' = R$ or by solving the corresponding linear problem. However, given the complexity of turbulent dynamics, it is unrealistic to expect both conditions to hold at the first step. A more reasonable strategy is to choose $(u',B')$ so that the quadratic and linear errors are significantly smaller than the original one, with the hope of reducing the error to zero through an iterative procedure.

The strategy outlined above is precisely the one implemented by the convex integration method. This allows recovering weak solutions $(u,B)$ of the \eqref{dynamo1} equation as the limit, in a suitable sense, of a sequence $(u_n,B_n,R_n)$ solving the \eqref{relaxeddynamo} equation, starting from $(u_0,B_0,R_0)=(\bar{u},\bar{B},R)$. At each step, the defect $R_n$ is reduced by adding highly oscillatory and concentrated perturbations to $(u_n,B_n)$, carefully designed to diminish the error while preserving incompressibility. Iterating this procedure and driving $R_n \to 0$ yields a weak solution $(u,B)$ of \eqref{dynamo1} 
$$
u=\underbrace{\underbrace{u_0+u_0'}_{u_1}
+u_1'}_{u_2}
+u_2'+\cdots
\quad\quad
B=\underbrace{\underbrace{B_0+B_0'}_{B_1}
+B_1'}_{B_2}
+B_2'+\cdots
\quad\quad
$$

This is, in fact, the original approach of Nash, later brought into fluid mechanics by De Lellis and Sz\'ekelyhidi \cite{DeLellisSzekelyhidi09,DeLellisSzekelyhidi13}. The method is highly flexible, since the oscillatory (and, in the case of intermittency, concentrated) perturbations can be chosen in many different ways. Consequently, a wide variety of convex integration schemes now exist, tailored to the specific equations and desired properties.

Our scheme is directly inspired by \cite{ModenaSzekelyhidi18,CheskidovLuo24} and uses building blocks from \cite{BBV20}. The main technical innovation is the avoidance of inverse div/curl operators, which both simplifies the construction and makes it more local and explicit. We now summarize the construction for the reader’s convenience; its components will be treated in detail in the following sections. See also \cite{Isett18,EPP25} for localized versions of convex integration for the 3D Euler equations.

There exists a smooth magnetic potential $\bar{A}$ with $\nabla\times\bar{A} = \bar{B}$, $\nabla \cdot \bar{A} = 0$ in $\Omega$ and $\bar{A} \cdot n = 0$ on $\partial \Omega$ (see e.g.~\cite[Theorem 3.12 and Corollary 2.15]{ABDG}), and the electromotive force $R$ is given by
$$
R = (\partial_t -\epsilon\Delta)\bar{A} - \bar{u}\times\bar{B}.
$$
Notice that the support of $R$ is contained in the support of $(\bar{u},\bar{A})$. 
To preserve the support, we set
$$
u'=\nabla\times\psi',
\quad\quad
B'=\nabla\times A',
$$
for suitable potentials $(\psi',A')$, which will be described below. We begin with the case $\epsilon=0$.
 
\subsubsection{Ideal case} 
For simplicity, we assume here that $(\bar{u},\bar{A})$, and hence $R$, have compact spatial support in $\Omega$. In particular, we impose vanishing boundary conditions.
As a first attempt, we set
\begin{equation}\label{ideal:fluctuation}
\psi'
=\frac{1}{\lambda\mu}\sum_{k=1}^3
\tilde{u}_k(\boldsymbol{\psi}_k)_\lambda,
\quad\quad
A'
=\frac{1}{\lambda\mu}\sum_{k=1}^3
\tilde{B}_k
(\mathbf{A}_k)_\lambda,
\end{equation}
in terms of the oscillation and concentration parameters $\lambda,\mu>0$, to be chosen sufficiently large, the velocity and magnetic potentials $(\boldsymbol{\psi}_k,\mathbf{A}_k)$ of the spatial intermittent shear flows $(\mathbf{u}_k,\mathbf{B}_k)$, which satisfy the crucial Lemma~\ref{Lemma:steady}, and certain coefficients $(\tilde{u}_k,\tilde{B}_k)$ to be determined.
Indeed, these coefficients are taken so that $\tilde{u}_k\tilde{B}_k=R_k=R\cdot e_k$. 
By applying a vector calculus identity, we split
$$
u'
=\underbrace{\sum_{k=1}^3
\tilde{u}_k(\mathbf{u}_k)_\lambda}_{\tilde{u}}
+
\underbrace{\frac{1}{\lambda\mu}\sum_{k=1}^3
(\nabla\tilde{u}_k)\times (\boldsymbol{\psi}_k)_\lambda}_{\tilde{u}_c},
\quad\quad
B'
=\underbrace{\sum_{k=1}^3
\tilde{B}_k(\mathbf{B}_k)_\lambda}_{\tilde{B}}
+
\underbrace{\frac{1}{\lambda\mu}\sum_{k=1}^3
(\nabla\tilde{B}_k)\times(\mathbf{A}_k)_\lambda}_{\tilde{B}_c},
$$
into the mean term $(\tilde{u},\tilde{B})$ and the incompressibility corrector $(\tilde{u}_c,\tilde{B}_c)$. The latter can be made arbitrarily small by taking $\lambda\mu$ large enough, and we therefore neglect them for the remainder of this presentation. 
Thus, the mean contribution to the quadratic error is given by
$$
R-\tilde{u}\times\tilde{B}
=\underbrace{\sum_{k=1}^3R_k(e_k-\mathbf{u}_k\times\mathbf{B}_k)_\lambda}_{R_0^{\text{osc}}}
+\underbrace{\sum_{k\neq k'}\tilde{u}_k\tilde{B}_{k'}(\mathbf{u}_k\times\mathbf{B}_{k'})_\lambda}_{R^{\text{int}}}.
$$
The second term arises from the interaction of building blocks with different indices and, as observed in \cite{ModenaSzekelyhidi18}, can be made small by taking the concentration parameter $\mu$ sufficiently large. The first term is the oscillation error, and thanks to Lemma~\ref{Lemma:steady} it can be replaced by a smaller oscillation error by taking $\lambda$ sufficiently large, namely (see Lemma~\ref{def:Rosc})
$$
R_1^{\text{osc}}
=-\frac{1}{\lambda}\sum_{k=1}^3(\nabla R_k)(\mathbf{H}_k)_\lambda,
$$
which has the same curl as the original oscillation error, i.e.\ $\nabla\times R_0^{\text{osc}}=\nabla\times R_1^{\text{osc}}$. This is the main ingredient that allows us to localize our solutions.

We follow this approach in Section \ref{sec:idealdynamos} to construct weak solutions to the ideal \eqref{dynamo1} equation (see Theorems \ref{Thm:lebesgue} and \ref{Thm:Sobolev}). The only difference is that we introduce a cutoff, as in \cite{ModenaSzekelyhidi18}, to prevent a possible loss of regularity near the points where $R$ vanishes.

As mentioned earlier, the main advantage of the \eqref{dynamo1} equation compared to the \eqref{transport} equation is that the corresponding building blocks are one-dimensional, which makes it easy to construct explicit potentials. The drawback, however, is that diffusion cannot be incorporated for free, in contrast with the transport–diffusion setting. To overcome this, in the next step we incorporate the time intermittency introduced in \cite{CheskidovLuo21}. We also refer to \cite{BDS16} for an intermittent in time version of Onsager conjecture.

\subsubsection{Diffusion case}
In order to construct a fast dynamo, we must incorporate diffusion into the scheme while ensuring that the velocity remains independent of the magnetic diffusivity. Since the defect $R$ depends on $\epsilon$, a natural choice of the coefficients would be $\tilde{u}_k=1$ and $\tilde{B}_k=R_k$. The drawback of this choice is that the support is no longer controlled.
We therefore introduce a spatial cutoff $\chi_\sigma$, where $\sigma>0$ measures the distance to the boundary $\partial\Omega$, and set $\tilde{u}_k=\chi_\sigma$ and $\tilde{B}_k=\chi_\sigma R_k$. Notice that this choice preserves the boundary conditions. Finally, we add the time-intermittent coefficients $(\mathbf{u}_0,\mathbf{B}_0)$ to absorb the diffusion into the defect by compensating it with the time integrability. With these ingredients, we define the new potentials
$$
\psi'
=(\mathbf{u}_0)_{\lambda_0}\frac{\chi_\sigma}{\lambda\mu}\sum_{k=1}^3
(\boldsymbol{\psi}_k)_\lambda,
\quad\quad
A'
=(\mathbf{B}_0)_{\lambda_0}\frac{\chi_\sigma}{\lambda\mu}\sum_{k=1}^3
 R_k (\mathbf{A}_k)_\lambda
+\frac{1}{\lambda_0}(\mathbf{H}_0)_{\lambda_0}\chi_\sigma^2 R,
$$
where $\mathbf{H}_0$ is introduced to compensate for a new term appearing in the quadratic error, as in \cite{CheskidovLuo24}. 
Finally, we need to check that all the parameters defining the velocity—namely, the distance to the boundary, the oscillation and concentration parameters as well as the H\"older exponents—can be chosen independently of $\epsilon$.

We follow this approach in Section~\ref{sec:fastdynamos} to construct weak solutions to the \eqref{dynamo1} equation satisfying the $H$-principle (Theorem~\ref{thm:hprinciple}), which ultimately allows us to prove the existence of a.s.\ turbulent fast dynamos (Theorems~\ref{thm:Main} and~\ref{thm:turbulent}). 

\subsubsection{Extension to the GTE}\label{sec:localized:GTE}
Since the generalization to currents appears in Section~\ref{sec:GTE}, we prefer to present both the building blocks and the convex integration scheme there, in order to avoid forcing the reader to jump back and forth between sections. Here we only explain the key points.

In this setting, we consider the Reynolds \eqref{GTE} equation
\begin{equation}\label{RGTE}\tag{RGTE}
\begin{split}
\partial_t T - \partial(u \wedge T) = \partial R,\\
\nabla \cdot u = 0,\quad \partial T = 0,
\end{split}
\end{equation}
where $R$ is a $(k+1)$-current, which we will also refer to as the defect. Likewise, we will also consider its version with a diffusion term, namely the Reynolds \eqref{DGTE}.

For the \eqref{GTE}, the spatial building blocks $(\mathbf{u}_\alpha,\mathbf{T}_\alpha)$ depend on $x^\alpha$, which denotes the $d-(k+1)$ components of $x \in \mathbb{R}^d$ complementary to the multi-index $\alpha$ of size $k+1$. Using these, we construct the main perturbations 
$$
\tilde{u}
=\sum_{\alpha}\tilde{u}_\alpha(\mathbf{u}_{\alpha})_\lambda,
\quad\quad
\tilde{T}
=\sum_{\alpha}\tilde{T}_\alpha(\mathbf{T}_{\alpha})_\lambda,
$$
which produces the corresponding oscillatory error
$$
R_0^{\text{osc}}
=\sum_{\alpha}R_{\alpha}(e_\alpha-\mathbf{u}_{\alpha}\wedge\mathbf{T}_{\alpha})_\lambda.
$$

On the one hand, we seek a small corrector $\tilde{T}_c$ such that
$
\partial (\tilde{T}+\tilde{T}_c)=0.
$
In fact, we need to find a small potential $\tilde{S}$ such that
$$
\tilde{T}+\tilde{T}_c=\partial\tilde{S}.
$$
On the other hand, we aim to construct a smaller oscillatory error $R_1^{\text{osc}}$ such that
$$
\partial R_1^{\text{osc}}=\partial R_0^{\text{osc}}.
$$

We realize that both problems can be formulated as follows. Let $U$ be of the form
$$
U=f (g_\alpha)_\lambda e_{\beta},
$$
where $\beta \subset \alpha$, and $g_\alpha(x) = g(x^\alpha)$, with $g$ a zero-mean scalar function. We then seek $V$ and $W$, supported in the same region as $f$ and of size $\sim \lambda^{-1}$, such that
$$
U+V=\partial W.
$$

The main contribution of Section~\ref{sec:GTE} is the construction of such a general potential, which appears in Proposition~\ref{prop:potential:GTE}.
The remainder of the convex integration scheme then proceeds similarly to the dynamo case.

\section{Ideal dynamos}\label{sec:idealdynamos}

In this section we apply the ideas from the transport equation \cite{ModenaSzekelyhidi18} and the MHD equation \cite{BBV20} to construct a weak solution $(u,B)\in L_t^\infty(L^p\times L^q)$ to the ideal \eqref{dynamo1} equation ($\epsilon=0$) on  an open domain $\Omega\subset\R^3$, where $1<p,q<\infty$ is a pair of conjugate H\"older exponents
$$\frac{1}{p}+\frac{1}{q}=1.$$
This solution $(u,B)$ is obtained as the limit of a sequence $(u_n,B_n,R_n)$ of solutions to the ideal \eqref{relaxeddynamo} equation, namely 
$$(u_n,B_n,R_n)
\to
(u,B,0)
\quad\quad
\text{in}
\quad\quad
L_t^\infty(L^p\times L^q\times L^1).$$ 
This sequence is constructed iteratively by means of the next proposition. 

\begin{prop}\label{Prop:Lebesgue}
There is a constant $M>0$ such that the following holds. 
For any $\delta,\eta >0$ and any smooth solution $(u_0,B_0,R_0)$ of the ideal \eqref{relaxeddynamo} equation in $[0,T]\times\Omega$, there is another smooth solution $(u_1,B_1,R_1)$ which fulfills the estimates
\begin{subequations}\label{inductiveestimate:ideal}
\begin{align}
\|u_1(t)-u_0(t)\|_{L^p}&\leq \frac{M}{\eta}\|R_0(t)\|_{L^1}^{1/p},\label{inductiveestimate:ideal:1}\\
\|B_1(t)-B_0(t)\|_{L^{q}}&\leq M\eta\|R_0(t)\|_{L^1}^{1/{q}},\label{inductiveestimate:ideal:2}\\
\|R_1(t)\|_{L^1}&\leq\delta,\label{inductiveestimate:ideal:3}
\end{align} 
\end{subequations}
for all $t\in [0,T]$.
Moreover, 
\begin{equation}\label{prop:Lebesgue:support}
u_1(t,x)=u_0(t,x),
\quad B_1(t,x)=B_0(t,x),
\quad R_1(t,x)=0
\quad\text{outside}\quad
\operatorname{supp}R_0.
\end{equation}
\end{prop}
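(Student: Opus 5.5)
We build $(u_1,B_1,R_1)$ directly from the ideal-case sketch of Section~\ref{sec:scheme}, with the potentials \eqref{ideal:fluctuation}. Two modifications are needed. One is a cutoff making the coefficients smooth where $R_0$ vanishes, as in \cite{ModenaSzekelyhidi18}. The other is a balancing factor $\eta$ between velocity and magnetic amplitudes.

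\textbf{Coefficients.} Fix $\chi\in C^\infty$ with $\chi(s)=0$ for $s\le \kappa/2$ and $\chi(s)=1$ for $s\ge\kappa$, where $\kappa\ll\delta$. Set
\[
\rho_k:=\chi(|R_{0,k}|)\,\bigl(|R_{0,k}|\,(\text{smoothed})\bigr),\qquad
\tilde u_k:=\eta^{-1}\rho_k^{1/p}\,\mathrm{sgn},\qquad
\tilde B_k:=\eta\,\rho_k^{1/q}.
\]
Then $\tilde u_k\tilde B_k=R_{0,k}$ wherever $|R_{0,k}|\ge\kappa$, and both coefficients are smooth and supported in $\operatorname{supp}R_0$. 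The mismatch $\sum_k (R_{0,k}-\tilde u_k\tilde B_k)e_k$ has $L^1$ norm $\lesssim\kappa|\Omega|$; we keep it as part of the new defect.

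\textbf{Perturbation.} Define $\psi',A'$ by \eqref{ideal:fluctuation}, and set $u_1=u_0+\nabla\times\psi'$, $B_1=B_0+\nabla\times A'$. These are divergence free, and they coincide with $(u_0,B_0)$ outside $\operatorname{supp}R_0$ because $\tilde u_k,\tilde B_k$ vanish there.

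For the estimates \eqref{inductiveestimate:ideal:1}--\eqref{inductiveestimate:ideal:2}, use the splitting $u'=\tilde u+\tilde u_c$ together with Lemma~\ref{Lemma:steady}. The principal part $\tilde u$ is a product of a slowly varying coefficient with a $\lambda$-oscillating function. An improved H\"older inequality (decorrelation of the fast and slow scales for $\lambda$ large) together with Lemma~\ref{lemma:buildingblock} gives $\|(\mathbf u_k)_\lambda\|_{L^p}=\ell^{1/p}\|\phi\|_{L^p}$, uniformly in $\mu$. Hence
\[
\|\tilde u\|_{L^p}\le C\eta^{-1}\|R_0\|_{L^1}^{1/p}+o_\lambda(1).
\]
The corrector $\tilde u_c$ is $O((\lambda\mu)^{-1}\|\Phi_\mu\|_{L^p})$ times derivatives of the coefficients, so it is small. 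The estimate for $B$ is symmetric. To turn the $o(1)$ terms into the multiplicative form of the claim, we absorb them when $\|R_0(t)\|_{L^1}$ is not tiny. Near times where $\|R_0(t)\|_{L^1}$ is tiny, we use that the coefficients themselves are pointwise bounded by $|R_0|^{1/p}$ and $|R_0|^{1/q}$, so the $L^p$ bounds still hold. Uniformity in $t\in[0,T]$ follows from compactness.

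\textbf{New defect.} Set
\[
R_1:=\partial_t A'-u'\times B_0-u_0\times B'-R_{0}^{\mathrm{osc}}\text{-replacement}+\dots
\]
Precisely, the relaxed equation for $(u_1,B_1)$ holds with
\[
R_1 = R^{\mathrm{mis}}+R_1^{\mathrm{osc}}+R^{\mathrm{int}}+R^{\mathrm{lin}}+R^{\mathrm{corr}},
\]
where:
\begin{itemize}
\item $R^{\mathrm{mis}}$ is the cutoff mismatch;
\item $R_1^{\mathrm{osc}}=-\lambda^{-1}\sum_k\nabla(\tilde u_k\tilde B_k)(\mathbf H_k)_\lambda$ replaces $R_0^{\mathrm{osc}}$ with the same curl (Lemma~\ref{Lemma:steady}), and is $O(\lambda^{-1})$ in $L^\infty$ by \eqref{eq:Hbounded};
\item $R^{\mathrm{int}}$ consists of the cross terms $k\ne k'$. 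Since $(\mathbf u_k)_\lambda\times(\mathbf B_{k'})_\lambda$ depends on two different coordinates, Lemma~\ref{lemma:buildingblock} gives an $L^1$ norm $\lesssim \mu^{1/p+1/q}\mu^{-1/p}\mu^{-1/q}\cdot\mu^{-1}\cdot\mu=\mu^{-1}$, i.e.\ small for $\mu$ large;
\item $R^{\mathrm{lin}}=\partial_tA'-u'\times B_0-u_0\times B'$. Here $\partial_tA'$ is $O((\lambda\mu)^{-1})$, and $u'\times B_0$, $u_0\times B'$ are small in $L^1$ because $\|(\mathbf u_k)\|_{L^1}\sim\mu^{1/p-1}\to0$;
\item $R^{\mathrm{corr}}$ collects the products involving the correctors.
\end{itemize}
Each term vanishes outside $\operatorname{supp}R_0$, which gives \eqref{prop:Lebesgue:support}.

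\textbf{Choice of parameters.} Choose $\kappa$ first, then $\mu$ large, then $\lambda\gg\mu$, so that every term is below $\delta/5$; this gives \eqref{inductiveestimate:ideal:3}.

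The main obstacle is the cross term. The intermittency count must show $\|(\mathbf u_k)_\lambda\times(\mathbf B_{k'})_\lambda\|_{L^1}\to0$ while $\|(\mathbf u_k)_\lambda\times(\mathbf B_{k})_\lambda\|_{L^1}$ stays of order one. This forces genuinely different directions of concentration and relies on the exponent pairing $1/p+1/q=1$. A secondary delicate point is obtaining bounds proportional to $\|R_0(t)\|_{L^1}^{1/p}$ uniformly in time, including at times where $R_0(t)$ is small, where the cutoff $\chi$ must be handled carefully.
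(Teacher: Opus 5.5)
Your construction is essentially the paper's: the same potentials built from $(\boldsymbol{\psi}_k,\mathbf{A}_k)$, coefficients $\tilde u_k\sim\eta^{-1}|R_{0,k}|^{1/p}$ and $\tilde B_k\sim\eta|R_{0,k}|^{1/q}$ cut off near $\{R_{0,k}=0\}$, the same splitting of $R_1$ into cutoff, oscillation (via $\mathbf{H}_k$), intersection, linear and corrector errors, and the same estimates via the improved H\"older inequality and Lemma~\ref{lemma:buildingblock}.

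\textbf{The step that fails.} Your treatment of times where $\|R_0(t)\|_{L^1}$ is small does not work as written. The pointwise bound $|\tilde u_k|^p\le\eta^{-p}|R_{0,k}|$ only yields
\[
\|\tilde u_k(\mathbf{u}_k)_\lambda\|_{L^p}^p\le\eta^{-p}\int|R_{0,k}|\,|(\mathbf{u}_k)_\lambda|^p\dif x\le\eta^{-p}\|\mathbf{u}_k\|_{L^\infty}^p\|R_{0,k}\|_{L^1}.
\]
Since $\|\mathbf{u}_k\|_{L^\infty}^p\sim\ell\mu$, you lose a factor $\mu$. Replacing $\|\mathbf{u}_k\|_{L^\infty}^p$ by $\|\mathbf{u}_k\|_{L^p}^p$ requires decorrelation again, which brings back the additive $\lambda^{-1/p}$ error.

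\textbf{The fix.} The multiplicative form \eqref{inductiveestimate:ideal:1}--\eqref{inductiveestimate:ideal:2} for all $t$ comes from the cutoff itself. At time $t$ the coefficients vanish identically unless $|R_{0,k}(t,x)|\ge\kappa/2$ for some $x$. In that case, since $R_0$ is uniformly Lipschitz (and compactly supported in $\Omega$), $|R_{0,k}(t,\cdot)|\ge\kappa/4$ on a ball of radius $r=\kappa/(4\|\nabla R_0\|_{L^\infty})$. Hence $\|R_0(t)\|_{L^1}\ge c(\kappa,\|R_0\|_{C^1})>0$. So at each $t$ there are two cases:
\begin{enumerate}[(i)]
\item $(u_1,B_1)(t)=(u_0,B_0)(t)$, and the bounds are trivial;
\item $\|R_0(t)\|_{L^1}\ge c$, and the $O(\lambda^{-1/p})$ and $O((\lambda\mu)^{-1})$ errors are absorbed into $\frac{M}{\eta}\|R_0(t)\|_{L^1}^{1/p}$ (resp.\ $M\eta\|R_0(t)\|_{L^1}^{1/q}$). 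This is done by taking $\lambda$ large in terms of $c$ and enlarging $M$ by a fixed factor.
\end{enumerate}
This is the reasoning implicit in the paper's passage from Lemma~\ref{lem:Lestimates} to \eqref{eq:uB:Lebesgue}.

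\textbf{Two minor points.}
\begin{enumerate}[(i)]
\item Your intersection count is garbled: the product you display equals $1$. The correct count is $|\mathbf{u}_k\times\mathbf{B}_{k'}|\le\ell\mu\,|\phi_\mu(x_k)|\,|\phi_\mu(x_{k'})|$, with two independent factors of $L^1$ norm $\mu^{-1}\|\phi\|_{L^1}$. This gives $\ell^2\mu^{-1}\|\phi\|_{L^1}^2$.
\item $\rho_k^{1/p}=\chi^{1/p}|R_{0,k}|^{1/p}$ is smooth only for a suitably chosen $\chi$. The paper avoids this by putting the cutoff outside the roots, $\tilde u_k=\chi_k|R_{0,k}|^{1/p}$ and $\tilde B_k=\chi_k\operatorname{sgn}(R_{0,k})|R_{0,k}|^{1/q}$, so that $\tilde u_k\tilde B_k=\chi_k^2R_{0,k}$.
\end{enumerate}
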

\begin{proof}
See Section \ref{sec:proof:Lebesgue}.
\end{proof}

\begin{thm}\label{Thm:lebesgue}
For any $\delta>0$ and any smooth solution $(\bar{u},\bar{B},R)$ of the ideal \eqref{relaxeddynamo} equation in $[0,T]\times\Omega$, there exists a weak solution $(u,B)\in L_t^\infty(L^p\times L^q)$ of the ideal \eqref{dynamo1} equation satisfying
$$
\|(B-\bar{B})(t)\|_{L^q}\leq\delta,
$$
for all $t\in[0,T]$.
Moreover, 
\begin{equation}\label{thm:Lebesgue:support}
u(t,x)=\bar{u}(t,x),\quad B(t,x)=\bar{B}(t,x)
\quad\text{outside}\quad
\operatorname{supp}R.
\end{equation}
\end{thm}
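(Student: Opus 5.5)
We iterate Proposition~\ref{Prop:Lebesgue} starting from $(u_0,B_0,R_0):=(\bar u,\bar B,R)$, producing smooth solutions $(u_n,B_n,R_n)$ of the ideal \eqref{relaxeddynamo} equation on $[0,T]\times\Omega$. At step $n$ we apply the proposition to $(u_n,B_n,R_n)$ with parameters $\delta=\delta_{n+1}$ and $\eta=\eta_n$, to be chosen below. This yields $(u_{n+1},B_{n+1},R_{n+1})$ with $\|R_{n+1}(t)\|_{L^1}\le\delta_{n+1}$ for all $t\in[0,T]$. Set $\delta_0:=\sup_{t\in[0,T]}\|R(t)\|_{L^1}$, which is finite since $R$ is smooth on the compact set $[0,T]\times\bar\Omega$ (or, if $\Omega$ is unbounded, since $R$ is given smooth with $L^1$ control, as is implicit in the statement).

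By \eqref{inductiveestimate:ideal:1}--\eqref{inductiveestimate:ideal:2}, for $n\ge0$,
\[
\sup_t\|u_{n+1}-u_n\|_{L^p}\le \frac{M}{\eta_n}\delta_n^{1/p},\qquad
\sup_t\|B_{n+1}-B_n\|_{L^q}\le M\eta_n\delta_n^{1/q}.
\]
For $n=0$ we want the magnetic increment small without any control on $\delta_0$, so we choose $\eta_0$ with $M\eta_0\delta_0^{1/q}\le\delta/2$; this makes $u_1-u_0$ large but finite, which is harmless. For $n\ge1$ we take $\eta_n=1$ and choose $\delta_n$ so that $\sum_{n\ge1}M\delta_n^{1/p}<\infty$ and $\sum_{n\ge1}M\delta_n^{1/q}\le\delta/2$, for instance $\delta_n=\varepsilon^n$ with $\varepsilon$ small. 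Then $(u_n)$ is Cauchy in $L^\infty_tL^p$, $(B_n)$ is Cauchy in $L^\infty_tL^q$, and $R_n\to0$ in $L^\infty_tL^1$. Call the limits $u$ and $B$. Summing the increments gives $\|(B-\bar B)(t)\|_{L^q}\le\delta$ for every $t$. Both $L^\infty_t$ bounds hold pointwise in $t$, so the limits lie in $L^\infty_t(L^p\times L^q)$; note that $\|u_1-u_0\|$ is controlled because $\eta_0$ is fixed.

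To pass to the limit in the equation, we test \eqref{relaxeddynamo} (with $\epsilon=0$) for $(u_n,B_n,R_n)$ against $\varphi\in C_c^\infty([0,T)\times\Omega)$ in weak form:
\[
\int\!\!\int \bigl(B_n\cdot\partial_t\varphi-(B_n\times u_n+R_n)\cdot\nabla\times\varphi\bigr)+\int B_n(0)\cdot\varphi(0)=0.
\]
The key point is that $B_n\times u_n\to B\times u$ in $L^\infty_tL^1$ by H\"older's inequality, since the exponents are conjugate. Together with $R_n\to0$ in $L^1$, this shows $(u,B)$ is a weak solution with initial datum $\lim B_n(0)$. The divergence-free constraints pass to the limit weakly.

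For the support statement, \eqref{prop:Lebesgue:support} gives $\operatorname{supp}R_{n+1}\subset\operatorname{supp}R_n$ and shows that all modifications occur inside $\operatorname{supp}R_n\subset\operatorname{supp}R$. By induction, $u_n=\bar u$ and $B_n=\bar B$ outside $\operatorname{supp}R$ for all $n$, and this passes to the limit a.e., giving \eqref{thm:Lebesgue:support}. The main subtlety is the parameter bookkeeping: the mismatch between the $\eta^{-1}$ and $\eta$ factors must be absorbed. This works because only the first step needs a nontrivial $\eta$, and the superlinear decay of $\delta_n$ handles the rest. All analytic difficulty sits in Proposition~\ref{Prop:Lebesgue}, which we assume.
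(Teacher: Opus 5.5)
Your proof is correct and follows the paper's argument. You iterate Proposition~\ref{Prop:Lebesgue} from $(\bar u,\bar B,R)$, obtain Cauchy sequences in $L^\infty_t(L^p\times L^q\times L^1)$, pass to the limit in $u_n\times B_n$ via H\"older with conjugate exponents, and inherit the support condition by induction. The only difference is in the choice of $\eta$. The paper fixes a single $\eta$ for all steps, chosen in terms of $(M,R_0,(\delta_n))$, while you take $\eta_0$ small at the first step and $\eta_n=1$ afterwards; this works equally well. One small slip: the $R_n$ term in your weak formulation should read $-(B_n\times u_n-R_n)\cdot\nabla\times\varphi$, which is immaterial since $R_n\to 0$.
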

\begin{proof}
Set $(u_0,B_0,R_0):=(\bar u,\bar B,R)$. 
Let $\delta_n>0$ be a sequence satisfying
\begin{equation}\label{prop:Lebesgue:deltan}
\sum_{n\geq 1}\delta_n^{1/\max\{p,q\}}<\infty,
\end{equation}
and fix a parameter $\eta>0$, to be chosen below. By Proposition \ref{Prop:Lebesgue} there exists a sequence $(u_n,B_n,R_n)$ of smooth solutions to the \eqref{relaxeddynamo} equation fulfilling, for all $n\geq 1$, the inductive estimates
\begin{align*}
\|u_n(t)-u_{n-1}(t)\|_{L^p}&\le \frac{M}{\eta}\,\|R_{n-1}(t)\|_{L^1}^{1/p},\\
\|B_n(t)-B_{n-1}(t)\|_{L^{q}}&\le M\eta\,\|R_{n-1}(t)\|_{L^1}^{1/q},\\
\|R_n(t)\|_{L^1}&\le \delta_n,
\end{align*}
together with the support condition
\begin{equation}\label{prop:Lebesgue:support:n}
u_n(t,x)=u_{n-1}(t,x),
\quad B_n(t,x)=B_{n-1}(t,x),
\quad R_n(t,x)=0
\quad\text{outside}\quad
\operatorname{supp}R_{n-1}.
\end{equation}
Hence, by \eqref{prop:Lebesgue:deltan}, the sequence $(u_n,B_n,R_n)$ is Cauchy in $L_t^\infty\!\big(L^p\times L^q\times L^1\big)$, and therefore converges to some $(u,B,0)$.
Moreover,
\[
u_n\times B_n \to u\times B
\qquad\text{in}\qquad
L_t^\infty L^1,
\]
so that $(u,B)$ is a weak solution of the ideal \eqref{dynamo1} equation.

By choosing $\eta$ at the outset, in terms of $(M,R_0,(\delta_n))$, we can ensure that
\begin{equation}\label{B-barB:Lebesgue}
\|B-\bar B\|_{L^q}
\le \sum_{n\geq 1}\|B_n-B_{n-1}\|_{L^q}
\le M\eta\!\left(\|R_0\|_{L^1}^{\nicefrac{1}{q}}
+\sum_{n\geq 1}\delta_n^{\nicefrac{1}{q}}\right)
\le \delta,
\end{equation}
uniformly in time.

Finally, since \eqref{prop:Lebesgue:support:n} implies that
$$
u_n(t,x)=u_0(t,x),
\quad B_n(t,x)=B_0(t,x)
\quad\text{outside}\quad
\operatorname{supp}R_0,
$$
for all $n\geq 1$, the support condition \eqref{thm:Lebesgue:support} is satisfied.
\end{proof}

\begin{Rem}
As observed in \eqref{B-barB:Lebesgue}, the role of the parameter $\eta$ is to compensate for the first term in the iteration, $\|R_0\|_{L^1}$, which is not necessarily small. Since this issue does not arise in Section~\ref{sec:fastdynamos}, we set $\eta = 1$.
\end{Rem}

\begin{Rem}
Let us make some comments on the space and time domains:
\begin{itemize}
    \item By the translation invariance of the \eqref{dynamo1} equation, we may assume without loss of generality that $\Omega\subset(0,\ell)^3$. Moreover, by the scaling invariance, we may also assume that $\ell=1$ for simplicity.
    \item We restrict to the time interval $[0,T]$. To extend the result to the entire interval $[0,\infty)$, one can follow \cite{ModenaSzekelyhidi18} by incorporating a time cutoff into the iteration scheme. This cutoff can also ensure that the initial datum is preserved. Since these issues will be addressed in Section~\ref{sec:fastdynamos} by incorporating the temporal intermittency from \cite{CheskidovLuo24}, we omit this part of the construction here to keep the presentation as simple as possible.
\end{itemize}
\end{Rem}

In Sections \ref{sec:newB1}–\ref{sec:Rcor} we establish Proposition \ref{Prop:Lebesgue}, which is summarized in Section \ref{sec:proof:Lebesgue} for the reader's convenience. Section \ref{sec:Sobolevversion} is then devoted to improving the Sobolev regularity of $(u,B)$, but we begin with simpler preliminary estimates.

\subsection{The new $(u_1,B_1,R_1)$}\label{sec:newB1}
Given $(u_0,B_0,R_0)$ we declare
\begin{equation}\label{def:newB1u1}
u_1:=u_0 + \underbrace{\tilde{u} + \tilde{u}_c}_{\nabla\times\tilde{\psi}},
\quad\quad
B_1:=B_0 + \underbrace{\tilde{B} + \tilde{B}_c}_{\nabla\times\tilde{A}}.
\end{equation}
The mean terms $(\tilde{u}, \tilde{B})$ are introduced in Section \ref{sec:mean} to cancel the lower frequencies of the old defect $R_0$. 
The incompressibility correctors $(\tilde{u}_c, \tilde{B}_c)$ are introduced in Section~\ref{sec:correctors} to ensure that both $u_1$ and $B_1$ are divergence-free; namely, we construct suitable velocity and magnetic potentials $\tilde{\psi}$ and $\tilde{A}$ as indicated above. 
These will be defined in terms of the spatial intermittent shear flows introduced in Section~\ref{sec:buildingblocks}, the conjugate H\"older exponents $p,q$, and the parameters
\begin{align*}
\textit{concentration}&\quad\quad\mu\geq\ell^{-1}\\
\textit{oscillation}&\quad\quad\lambda\in\N
\end{align*}
both taken sufficiently large, to be specified later. The new defect $R_1$ must satisfy the ideal \eqref{relaxeddynamo} equation, which can be written in terms of the decomposition \eqref{def:newB1u1} as follows:
\begin{subequations}\label{R1}
\begin{align}
\nabla\times R_1
&=\partial_t B_1 - \nabla\times(u_1\times B_1)\nonumber\\
&= \partial_t B_0 - \nabla\times(u_0\times B_0) - \nabla\times(\tilde{u}\times\tilde{B})\label{term:quad}\\
&+ \nabla\times(\partial_t\tilde{A}-\tilde{u}\times B_0 - u_0\times\tilde{B})\label{term:lin}\\
&-\nabla\times(\tilde{u}_c\times B_1
+u_1\times \tilde{B}_c
-\tilde{u}_c\times \tilde{B}_c)\label{term:cor}.
\end{align}
\end{subequations}
We decompose $R_1$ into three terms depending on each line of \eqref{R1}
\begin{equation}\label{R1:decomposition}
R_1:=R^{\text{quad}}
+R^{\text{lin}}
+R^{\text{cor}}.
\end{equation}

On the one hand, since $(u_0,B_0,R_0)$ is a solution to the ideal \eqref{relaxeddynamo} equation, the first line \eqref{term:quad} equals
$$\eqref{term:quad}=\nabla\times(R_0-\tilde{u}\times\tilde{B}).$$
As we will explain in Section~\ref{sec:Rquad}, the choice of $(\tilde{u},\tilde{B})$ guarantees the existence of a small \textit{quadratic error} satisfying
$$\nabla\times R^{\text{quad}}=\nabla\times(R_0-\tilde{u}\times\tilde{B}).$$
As we mentioned in Section \ref{sec:scheme}, the main difference compared to previous works is that here we localize the quadratic error by means of a potential (see Lemma~\ref{def:Rosc}). 

On the other hand, for the second and third line, \eqref{term:lin} and \eqref{term:cor},  we can take the \textit{linear error} and the \textit{corrector error} simply as
\begin{equation}\label{eq:Rlincor}
\begin{split}
R^{\text{lin}}
&:=\partial_t\tilde{A}-\tilde{u}\times B_0 - u_0\times\tilde{B},\\
R^{\text{cor}}
&:=-\tilde{u}_c\times B_1
-u_1\times \tilde{B}_c
+\tilde{u}_c\times \tilde{B}_c.
\end{split}
\end{equation}
As we will explain in Sections~\ref{sec:Rlin} and~\ref{sec:Rcor}, these terms can be made arbitrarily small by taking $\lambda$ and $\mu$ sufficiently large

\begin{Rem}
From this point onward, we define and estimate all terms in the construction at a fixed time $t$, omitting the time dependence whenever it is not essential for ease of notation. We use the letters $C$ and $M$ to denote positive constants: $C$ may depend on the given inputs of the statement, while $M$ depends only on the domain $[0,T]\times\Omega$. Their values may change from line to line, but the notation remains fixed for simplicity.
\end{Rem}

\subsection{The mean term $(\tilde{u},\tilde{B})$}\label{sec:mean}

For technical reasons, we start by fixing a smooth cutoff function $\chi:[0,\infty)\to [0,1]$ satisfying
$$\chi(s)
=\left\lbrace
\begin{array}{rl}
0, & s\leq 1/2,\\
1, & s\geq 1.
\end{array}\right.$$
For any index $k=1,2,3$, we define 
$$\chi_{k}(t,x)
:=\chi\left(\frac{|R_{0,k}(t,x)|}{\delta/9}\right),$$
where $R_{0,k}=R_0\cdot e_k$, that is
\begin{equation}\label{Rdecomposition} 
R_0(t,x)=\sum_{k=1}^3R_{0,k}(t,x)e_k.
\end{equation}

The main term $(\tilde{u},\tilde{B})$ is defined in terms of the old defect $R_0$, the spatial intermittent shear flows $(\mathbf{u}_k,\mathbf{B}_{k})$ given in \eqref{def:intermittentshearflows}, and the cutoff $\chi$ as follows (recall $f_\lambda(x):=f(\lambda x)$)
\begin{equation}\label{eq:defTildeB}
\tilde{u}
:=\frac{1}{\eta}\sum_{k=1}^{3}\underbrace{\chi_{k}|R_{0,k}|^{1/p}}_{\tilde{u}_{k}}(\mathbf{u}_k)_\lambda,
\quad\quad
\tilde{B}
:=\eta\sum_{k=1}^{3}\underbrace{\chi_{k}\operatorname{sgn}(R_{0,k})|R_{0,k}|^{1/q}}_{\tilde{B}_k}(\mathbf{B}_{k})_\lambda.
\end{equation}
Notice that the coefficients $(\tilde{u}_k,\tilde{B}_k)$ are scalar functions. 
The role of $\chi_k$
is to guarantee the smoothness of $(\tilde{u}_k,\tilde{B}_k)$ for $p,q\neq\infty$ by removing the zero set
of $R_{0,k}$. Indeed, in Section \ref{sec:fastdynamos}, this cutoff will not be necessary.

We now turn to the derivation of the corresponding integrability of the main term. The key tool is the following \textit{improved H\"older inequality} (\cite[Lemma 2.1]{ModenaSzekelyhidi18}).

\begin{prop}\label{prop:IHI}
For any $\lambda\in\N$, smooth functions $f,g:\T_\ell^d\to\R$, and $1\leq r\leq\infty$,
$$
	\bigg|\|fg_\lambda\|_{L^r}
	-\|f\|_{L^r}\|g\|_{L^r}\bigg|
	\leq\frac{C_{r,\ell}}{\lambda^{1/r}}\|f\|_{C^1}\|g\|_{L^r}.
$$
\end{prop}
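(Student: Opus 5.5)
The plan is to reduce the statement to $r<\infty$ and then compare $f$ with a function that is piecewise constant on the small cells of periodicity of $g_\lambda$. For $r=\infty$ the constant $C_{r,\ell}/\lambda^{1/r}$ is just $C$, and the bound follows directly. Indeed, $\|fg_\lambda\|_{L^\infty}\le\|f\|_{L^\infty}\|g\|_{L^\infty}$. For the reverse direction, pick a point $x_0$ where $|f|$ attains its maximum. Its cell of side $\ell/\lambda$ contains a point $y$ with $|g_\lambda(y)|=\|g\|_{L^\infty}$, and $|f(y)|\ge\|f\|_{L^\infty}-C\lambda^{-1}\|f\|_{C^1}$. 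So we may assume $1\le r<\infty$.

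First I partition $\T_\ell^d$ into $\lambda^d$ cubes $Q_i$ of side $\ell/\lambda$. Because $\lambda\in\N$, $g_\lambda$ is $\ell/\lambda$-periodic, and a change of variables gives, on each cube,
$$\int_{Q_i}|g_\lambda|^r\dif x=\lambda^{-d}\int_{\T_\ell^d}|g|^r\dif x=\frac{|Q_i|}{\ell^d}\|g\|_{L^r}^r.$$
If the norms are normalized (averaged), the factor $\ell^{-d}$ is absorbed into the constant; here I track it as $\ell^{-d}$ times $\|g\|_{L^r}^r$.

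Next I expand $\|fg_\lambda\|_{L^r}^r=\sum_i\int_{Q_i}|f|^r|g_\lambda|^r$. On each $Q_i$ I replace $|f|^r$ by its value at a fixed point $x_i\in Q_i$, using
$$\bigl||f(x)|^r-|f(x_i)|^r\bigr|\le r\|f\|_{L^\infty}^{r-1}\|\nabla f\|_{L^\infty}\sqrt d\,\ell/\lambda\le C\lambda^{-1}\|f\|_{C^1}^r.$$
Summing over the cubes, $\|fg_\lambda\|_{L^r}^r$ equals $\ell^{-d}\|g\|_{L^r}^r\sum_i|f(x_i)|^r|Q_i|$ up to an error $C\lambda^{-1}\|f\|_{C^1}^r\|g\|_{L^r}^r$. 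The same mean value argument shows that the Riemann sum $\sum_i|f(x_i)|^r|Q_i|$ equals $\|f\|_{L^r}^r$ up to an error $C\lambda^{-1}\|f\|_{C^1}^r$. Combining the two,
$$\bigl|\|fg_\lambda\|_{L^r}^r-\|f\|_{L^r}^r\|g\|_{L^r}^r\bigr|\le C\lambda^{-1}\|f\|_{C^1}^r\|g\|_{L^r}^r,$$
with the torus-volume normalization absorbed into $C_{r,\ell}$.

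Finally I pass from $r$-th powers to norms. The elementary inequality $|a-b|\le|a^r-b^r|^{1/r}$ holds for $a,b\ge0$ and $r\ge1$. Applied with $a=\|fg_\lambda\|_{L^r}$ and $b=\|f\|_{L^r}\|g\|_{L^r}$, it gives
$$\bigl|\|fg_\lambda\|_{L^r}-\|f\|_{L^r}\|g\|_{L^r}\bigr|\le C^{1/r}\lambda^{-1/r}\|f\|_{C^1}\|g\|_{L^r},$$
which is exactly the claimed rate $\lambda^{-1/r}$. This last step is where that rate comes from, and it is the only place needing care; the rest is a routine Riemann-sum comparison. The main bookkeeping issue is the normalization of $\|g\|_{L^r}$ on $\T_\ell^d$ (whether norms are averaged or not), which I would fix by absorbing $\ell^{\pm d/r}$ into $C_{r,\ell}$.
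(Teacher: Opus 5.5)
Your argument is the standard one behind \cite[Lemma 2.1]{ModenaSzekelyhidi18}, which the paper cites instead of reproving. The individual steps are all correct: freezing $|f|^r$ on cubes of side $\ell/\lambda$, the Riemann-sum comparison, and $|a-b|\le|a^r-b^r|^{1/r}$. Your $r=\infty$ case is also fine; there even the crude observation that both $\|fg_\lambda\|_{L^\infty}$ and $\|f\|_{L^\infty}\|g\|_{L^\infty}$ lie in $[0,\|f\|_{C^1}\|g\|_{L^\infty}]$ suffices.

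The step that fails is the final normalization claim. With unnormalized norms on $\T_\ell^d$, your computation actually proves
\[
\bigl|\|fg_\lambda\|_{L^r}^r-\ell^{-d}\|f\|_{L^r}^r\|g\|_{L^r}^r\bigr|\le\frac{C}{\lambda}\|f\|_{C^1}^r\|g\|_{L^r}^r .
\]
Here $\ell^{-d}$ multiplies the leading term, not the error. It therefore cannot be absorbed into $C_{r,\ell}$, which only multiplies a quantity tending to $0$ as $\lambda\to\infty$. In fact, with unnormalized norms, $\ell\neq1$ and $r<\infty$, the inequality as stated is false. Take $f=g\equiv1$: the left-hand side is $|\ell^{d/r}-\ell^{2d/r}|$, independent of $\lambda$, while the right-hand side tends to $0$.

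The correct resolution is to read the proposition with averaged norms $\|h\|_{L^r}=\bigl(\dashint_{\T_\ell^d}|h|^r\dif x\bigr)^{1/r}$, or with $\ell=1$ as in the cited source (the paper itself suggests setting $\ell=1$). With averaged norms you have exactly $\int_{Q_i}|g_\lambda|^r\dif x=|Q_i|\,\|g\|_{L^r}^r$. The factor $\ell^{-d}$ is then absorbed into the definition of the norm, not into the constant, and your computation gives the claim verbatim.
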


\begin{lemma}[Lebesgue bounds on $(\tilde{u},\tilde{B})$] \label{lem:Lestimates}
There exist $M>0$ and $C=C(p,\delta,\|R_0\|_{C^1})>0$ such that
\begin{align*}
\|\tilde{u}\|_{L^p}
&\leq \frac{M}{\eta}\left(\|R_{0}\|_{L^1}^{1/p} + \frac{C}{\lambda^{1/p}}\right),\\
\|\tilde{B}\|_{L^q}
&\leq M\eta\left(\|R_{0}\|_{L^1}^{1/q} + \frac{C}{\lambda^{1/q}}\right).
\end{align*}
\end{lemma}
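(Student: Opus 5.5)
By the triangle inequality over $k=1,2,3$, it suffices to estimate each summand $\eta^{-1}\tilde{u}_k(\mathbf{u}_k)_\lambda$ in $L^p$ and $\eta\tilde{B}_k(\mathbf{B}_k)_\lambda$ in $L^q$; the constant $M$ absorbs the factor $3$. The plan is to apply the improved H\"older inequality (Proposition~\ref{prop:IHI}) with $f=\tilde{u}_k$ (resp.\ $\tilde{B}_k$), extended by zero outside $\Omega\subset(0,\ell)^3$ and viewed on $\T_\ell^3$, and with $g=\mathbf{u}_k$ (resp.\ $\mathbf{B}_k$), $r=p$ (resp.\ $r=q$). This gives
$$
\|\tilde{u}_k(\mathbf{u}_k)_\lambda\|_{L^p}\leq \|\tilde{u}_k\|_{L^p}\|\mathbf{u}_k\|_{L^p}+\frac{C_{p,\ell}}{\lambda^{1/p}}\|\tilde{u}_k\|_{C^1}\|\mathbf{u}_k\|_{L^p},
$$
and the analogous bound for $\tilde{B}_k(\mathbf{B}_k)_\lambda$ in $L^q$.

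First, normalize the building blocks. By the definition \eqref{def:intermittentshearflows} and Lemma~\ref{lemma:buildingblock} applied in one variable, $\|\mathbf{u}_k\|_{L^p}^p=\ell\mu\,\ell^{2}\int\phi_\mu^p\,ds$ on one period, which equals $\ell^2\|\phi\|_{L^p}^p$, independent of $\mu$. Likewise $\|\mathbf{B}_k\|_{L^q}=\ell^{2/q}\|\phi\|_{L^q}$. These are constants depending only on the fixed profile $\phi$ and on $\ell$, so they go into $M$. This is the reason for the exponents $(\ell\mu)^{1/p}$ and $(\ell\mu)^{1/q}$.

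Next, bound the coefficients. Since $0\le\chi_k\le1$, we have $\|\tilde{u}_k\|_{L^p}^p\le\int|R_{0,k}|\,dx\le\|R_0\|_{L^1}$, so $\|\tilde{u}_k\|_{L^p}\le\|R_0\|_{L^1}^{1/p}$; the same argument gives $\|\tilde{B}_k\|_{L^q}\le\|R_0\|_{L^1}^{1/q}$. This produces the leading terms.

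The main point is the $C^1$ bound for the coefficients. The map $s\mapsto|s|^{1/p}$, and $\operatorname{sgn}(s)|s|^{1/q}$, is not Lipschitz at $0$. However, $\chi_k$ vanishes wherever $|R_{0,k}|<\delta/18$, so on the support of $\tilde{u}_k$ the chain rule gives
$$
|\nabla\tilde{u}_k|\le |\chi'|\,\frac{9}{\delta}|\nabla R_{0,k}|\,|R_{0,k}|^{1/p}+\frac1p|R_{0,k}|^{1/p-1}|\nabla R_{0,k}|\le C(p,\delta,\|R_0\|_{C^1}),
$$
using $|R_{0,k}|^{1/p-1}\le(\delta/18)^{1/p-1}$. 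The same computation works for $\tilde{B}_k$ with $q$ in place of $p$, where the dependence on $q$ is a dependence on $p$.

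Finally, these smooth compactly supported coefficients extend by zero to $\T_\ell^3$. Combining the three steps gives the stated inequalities, with the $\eta^{\mp1}$ factors carried through linearly.
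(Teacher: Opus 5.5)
Your proposal is correct and is essentially the paper's proof: apply the improved H\"older inequality (Proposition~\ref{prop:IHI}) to each summand, bound $\|\tilde{u}_k\|_{L^p}\le\|R_0\|_{L^1}^{1/p}$ and $\|\tilde{u}_k\|_{C^1}\le C(p,\delta,\|R_0\|_{C^1})$ using that $|R_{0,k}|>\delta/18$ on the support of $\chi_k$, and use that $\|\mathbf{u}_k\|_{L^p}$ and $\|\mathbf{B}_k\|_{L^q}$ are independent of $\mu$; you give more detail than the paper on the chain-rule bound and on the zero extension to $\T_\ell^3$. One harmless slip: on $\T_\ell^3$ one gets $\|\mathbf{u}_k\|_{L^p}^p=\ell\mu\cdot\ell^2\cdot\mu^{-1}\|\phi\|_{L^p}^p=\ell^3\|\phi\|_{L^p}^p$ rather than $\ell^2\|\phi\|_{L^p}^p$, but only the $\mu$-independence matters.
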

\begin{proof} 
A direct application of Proposition \ref{prop:IHI} to each term of $\tilde{u}$ in \eqref{eq:defTildeB}
yields that
$$
\|\tilde{u}_{k}(\mathbf{u}_k)_\lambda\|_{L^p}
\leq\frac{1}{\eta}\left(\|\tilde{u}_k\|_{L^p}
+\frac{C}{\lambda^{1/p}}\|\tilde{u}_k\|_{C^1}\right)\|\mathbf{u}_k\|_{L^p}.
$$
Now, directly from the definition of $\tilde{u}_k$,
\begin{align*}
\|\tilde{u}_k\|_{L^p}&\leq\|R_{0,k}\|_{L^1}^{1/p},\\
\|\tilde{u}_k\|_{C^1}&\leq C(\delta,\|R_{0,k}\|_{C^1}).
\end{align*}
Since 
$\|\mathbf{u}_k\|_{L^p}=\ell^{1/p}\|\phi\|_{L^p}$ by definition \eqref{def:intermittentshearflows} and Lemma \ref{lemma:buildingblock}, the $L^p$ estimate for $\tilde{u}$ is completed. The $L^q$ estimate for $\tilde{B}$ is completely analogous.
\end{proof}

All in all, Lemma~\ref{lem:Lestimates} allows to  take $\lambda$ large enough, in terms of $(p,\delta,\|R_0\|_{C^1})$, to guarantee that 
\begin{equation}\label{eq:uB:Lebesgue}
\begin{split}
\|\tilde{u}\|_{L^p}&\leq\frac{M}{\eta}\|R_0\|_{L^1}^{1/p},\\
\|\tilde{B}\|_{L^q}&\leq M\eta\|R_0\|_{L^1}^{1/q},
\end{split}  
\end{equation}
for some universal constant $M>0$.

\subsection{The quadratic error}
\label{sec:Rquad}
By the definition \eqref{eq:defTildeB} of $(\tilde{u},\tilde{B})$ we have
\begin{align*}
\tilde{u}\times\tilde{B}
&=\sum_{k=1}^{3}
\tilde{u}_{k}\tilde{B}_{k} (\mathbf{u}_{k}\times \mathbf{B}_{k})_\lambda
+\sum_{k\neq k'}\tilde{u}_{k}\tilde{B}_{k'} (\mathbf{u}_{k}\times \mathbf{B}_{k'})_\lambda.
\end{align*}
The first term comes from the products of shear flows with the same index, and the second term from the intersection between different indices. 
By applying that
$$\tilde{u}_{k}\tilde{B}_{k}
=\chi_{k}^2 R_{0,k},$$
we split
\begin{subequations}\label{eq:R0-uxB}
\begin{align}
R_0-\tilde{u}\times\tilde{B}
&=\sum_k
\chi_{k}^2R_{0,k}(e_k-\mathbf{u}_{k}\times \mathbf{B}_{k})_\lambda
&&=:R^{\text{osc}}_0\label{eq:R0-uxB:osc}\\
&+\sum_k(1-\chi_{k}^2)
R_{0,k}e_k
&& =: R^\chi\\
&-\sum_{k\neq k'}\tilde{u}_{k}\tilde{B}_{k'} (\mathbf{u}_{k}\times \mathbf{B}_{k'})_\lambda
&&=:R^{\text{int}}
\end{align}
\end{subequations}
Firstly, we will find $R^{\text{osc}}_1$ with small $L^1$-norm and the same curl as $R^{\text{osc}}_0$. Secondly, we will bound the $L^1$-norm of $R^\chi$ and $R^{\text{int}}$ directly. With them we define
\begin{equation}\label{def:Rquad}
R^{\text{quad}}
:=R^{\text{osc}}_1
+R^\chi
+R^{\text{int}}.
\end{equation}

\subsubsection{The oscillation error}
In the following lemma we find the small $R_1^{osc}$. The crucial observation is that, thanks to Lemma \ref{Lemma:steady}, we can express \eqref{eq:R0-uxB:osc} as
$$
R^{\text{osc}}_0
=\sum_{k=1}^{3}
\chi_{k}^2R_{0,k}(\nabla\mathbf{H}_k)_\lambda.
$$
Thus, we 
can use the following vector calculus identity for smooth and periodic functions $f,h$
\begin{equation}\label{eq:vectorcalculus:1}
\nabla \times (f (\nabla h)_\lambda)=-\frac{1}{\lambda} \nabla \times (h_\lambda \nabla f),
\end{equation}
to gain decay by selecting a big oscillation parameter $\lambda$.

\begin{lemma}\label{def:Rosc}
We define
$$R^{\text{osc}}_1
:=-\frac{1}{\lambda}\sum_{k=1}^{3}
\nabla(\chi_k^2R_{0,k})(\mathbf{H}_k)_\lambda.$$
Then, we have
$$\nabla\times R^{\text{osc}}_0
=\nabla\times R^{\text{osc}}_1.$$
\end{lemma}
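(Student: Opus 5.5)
The plan is to verify the vector calculus identity \eqref{eq:vectorcalculus:1} and then apply it term by term to $R_0^{\text{osc}}$, which Lemma~\ref{Lemma:steady} lets us write in gradient form. For each index $k$, Lemma~\ref{Lemma:steady} gives $e_k-\mathbf{u}_k\times\mathbf{B}_k=\nabla\mathbf{H}_k$. Composing with $x\mapsto\lambda x$ yields $(e_k-\mathbf{u}_k\times\mathbf{B}_k)_\lambda=(\nabla\mathbf{H}_k)_\lambda$, and therefore
$$
R_0^{\text{osc}}=\sum_{k=1}^3 f_k\,(\nabla\mathbf{H}_k)_\lambda,\qquad f_k:=\chi_k^2R_{0,k}.
$$
Here one must note that $(\nabla h)_\lambda$ means $(\nabla h)(\lambda x)=\lambda^{-1}\nabla(h_\lambda)(x)$. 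This is the only point where care with the notation is needed.

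Next I prove \eqref{eq:vectorcalculus:1} for smooth $f$ and smooth periodic $h$. Using $(\nabla h)_\lambda=\lambda^{-1}\nabla(h_\lambda)$ and the product rule,
$$
f(\nabla h)_\lambda=\frac{1}{\lambda}\Big(\nabla(fh_\lambda)-h_\lambda\nabla f\Big).
$$
Taking the curl kills the gradient term because $\nabla\times\nabla=0$. What remains is
$$
\nabla\times\big(f(\nabla h)_\lambda\big)=-\frac1\lambda\nabla\times(h_\lambda\nabla f).
$$

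Applying this with $f=f_k$ and $h=\mathbf{H}_k$ and summing over $k$ gives
$$
\nabla\times R_0^{\text{osc}}=-\frac1\lambda\sum_k\nabla\times\big(\nabla(\chi_k^2R_{0,k})(\mathbf{H}_k)_\lambda\big)=\nabla\times R_1^{\text{osc}}.
$$

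The main point to check is regularity and well-definedness. First, $f_k$ is smooth: $\chi_k$ vanishes near the zero set of $R_{0,k}$, and $\chi_k^2R_{0,k}$ is in any case a product of smooth functions. Second, $\mathbf{H}$ is a genuine smooth $\ell$-periodic function. This follows from \eqref{cutoff:mean1}, since $\int_0^\ell(1-\ell\mu\phi_\mu^2)\,ds=\ell-\ell\int\phi^2=0$. Hence $(\mathbf{H}_k)_\lambda$ is a smooth function on $\Omega\subset(0,\ell)^3$ for $\lambda\in\mathbb{N}$. With these two facts the identity holds pointwise on $[0,T]\times\Omega$, and the lemma follows.
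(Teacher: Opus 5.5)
Your proof is correct and follows the paper's argument. You rewrite $R_0^{\text{osc}}=\sum_k \chi_k^2R_{0,k}(\nabla\mathbf{H}_k)_\lambda$ via Lemma~\ref{Lemma:steady} and apply the identity \eqref{eq:vectorcalculus:1} with $f=\chi_k^2R_{0,k}$ and $h=\mathbf{H}_k$, exactly as the paper does; your derivation of that identity from $(\nabla h)_\lambda=\lambda^{-1}\nabla(h_\lambda)$, and your checks that $\chi_k$ is smooth and $\mathbf{H}$ is periodic, fill in details the paper leaves implicit.
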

\begin{proof}
It follows by applying the curl operator to each term of $R_0^{\text{osc}}$ and $R_1^{\text{osc}}$, and then using the identity \eqref{eq:vectorcalculus:1} to $f=\chi_k^2 R_{0,k}$ and $h=\mathbf{H}_k$.
\end{proof}

\begin{lemma}[$L^1$-bound of $R^{\text{osc}}_1$]\label{lemma:Rosc} There exists $C=C(\delta,\|R_0\|_{C^1})>0$ such that
$$
\|R^{\text{osc}}_1\|_{L^1}
\leq\frac{C}{\lambda}.
$$
\end{lemma}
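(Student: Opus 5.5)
The bound follows directly from the definition of $R^{\text{osc}}_1$ in Lemma~\ref{def:Rosc}, by taking the $L^1$-norm term by term. For each $k=1,2,3$ we estimate
$$
\Big\|\nabla(\chi_k^2R_{0,k})(\mathbf{H}_k)_\lambda\Big\|_{L^1}
\leq \|\nabla(\chi_k^2R_{0,k})\|_{L^1}\,\|(\mathbf{H}_k)_\lambda\|_{L^\infty}.
$$
This is Hölder's inequality with exponents $(1,\infty)$, so no improved Hölder inequality is needed.

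For the second factor we use the uniform bound \eqref{eq:Hbounded}. Composing with $x\mapsto x_k$ and rescaling by $\lambda$ does not change the sup norm, so $\|(\mathbf{H}_k)_\lambda\|_{L^\infty}=\|\mathbf{H}\|_{L^\infty}\leq 2\ell$, independently of both $\mu$ and $\lambda$. This uniformity in $\mu$ is the only point requiring care: it holds because $\ell\mu\phi_\mu^2$ integrates to $\ell$ over each period, by the normalisation \eqref{cutoff:mean1}, so that $|\mathbf{H}(s)|\le |s|+\ell\le 2\ell$ on $[0,\ell)$.

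For the first factor we expand $\nabla(\chi_k^2R_{0,k})=2\chi_k\chi'\big(\tfrac{|R_{0,k}|}{\delta/9}\big)\tfrac{9}{\delta}\nabla|R_{0,k}|\,R_{0,k}+\chi_k^2\nabla R_{0,k}$. The function $|R_{0,k}|$ is Lipschitz with $|\nabla|R_{0,k}||\le|\nabla R_{0,k}|$, and in any case $\chi(|R_{0,k}|/(\delta/9))$ is smooth, since $\chi$ is constant near $0$. Since $\chi,\chi'$ are bounded, this gives
$$
\|\nabla(\chi_k^2R_{0,k})\|_{L^\infty}\le C\Big(1+\frac{\|R_0\|_{L^\infty}}{\delta}\Big)\|R_0\|_{C^1}.
$$
Integrating over the bounded domain $\Omega\subset(0,\ell)^3$ then yields an $L^1$ bound depending only on $(\delta,\|R_0\|_{C^1})$.

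Summing over $k$ and recalling the prefactor $1/\lambda$ in $R^{\text{osc}}_1$, we obtain $\|R^{\text{osc}}_1\|_{L^1}\leq C(\delta,\|R_0\|_{C^1})/\lambda$, uniformly in $t\in[0,T]$ and in $\mu$. We expect no genuine obstacle here. The essential structural input, replacing $R^{\text{osc}}_0$ by a potential-based term with a gain of $\lambda^{-1}$, was already supplied by Lemma~\ref{Lemma:steady} and Lemma~\ref{def:Rosc}.
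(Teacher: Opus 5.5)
Your proof is correct and is essentially the paper's argument. Both use H\"older's inequality together with the pointwise bound $\|\nabla(\chi_k^2R_{0,k})\|_{L^\infty}\le C(\delta,\|R_0\|_{C^1})$ and the $\mu$-uniform bound \eqref{eq:Hbounded} on $\mathbf{H}$. The only difference is cosmetic: the paper puts the $L^1$ norm on $(\mathbf{H}_k)_\lambda$ and the $L^\infty$ norm on the gradient, whereas you do the reverse.
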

\begin{proof}
	The claim follows from
	$$\|\nabla(\chi_k^2R_{0,k})\|_{L^\infty}
	\leq C(\delta,\|R_0\|_{C^1}),$$
	and \eqref{eq:Hbounded}, namely
$$\|(\mathbf{H}_k)_\lambda\|_{L^1}
=\|\mathbf{H}_k\|_{L^1}
\leq\ell\|\mathbf{H}_k\|_{L^\infty}\leq 2\ell^2.
$$
\end{proof}

\subsubsection{The cutoff error}
In this section we bound $R^{\chi}$. We recall that this is a technical term for isolating the regions of non-smoothness of $|R|$.
\begin{lemma}[$L^1$-bound of $R^\chi$]\label{lemma:Rcutoff} We have
	$$\|R^\chi\|_{L^1}\leq\delta/3.$$
\end{lemma}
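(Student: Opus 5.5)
The plan is a pointwise estimate on each component of $R^\chi=\sum_k(1-\chi_k^2)R_{0,k}e_k$, followed by integration over $\Omega$. First I record the pointwise bound; then I integrate; finally I sum over $k=1,2,3$.

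The pointwise step uses the definition $\chi_k=\chi\big(|R_{0,k}|/(\delta/9)\big)$ together with the properties of the cutoff: $\chi(s)=1$ for $s\geq 1$ and $0\leq\chi\leq 1$. At any point $(t,x)$ there are two cases. If $|R_{0,k}(t,x)|\geq\delta/9$, then $\chi_k=1$ and the $k$-th component vanishes. Otherwise $|R_{0,k}(t,x)|<\delta/9$, and since $0\leq 1-\chi_k^2\leq 1$,
$$
|(1-\chi_k^2)R_{0,k}|\leq |R_{0,k}|<\delta/9 .
$$
In both cases $|(1-\chi_k^2)R_{0,k}|\leq\delta/9$ pointwise.

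Integrating this bound over $\Omega$ gives $\|(1-\chi_k^2)R_{0,k}\|_{L^1}\leq(\delta/9)|\Omega|$. Since $\Omega\subset(0,\ell)^3$ with $\ell=1$ by the normalization in the earlier remark, $|\Omega|\leq 1$. Summing over the three indices yields
$$
\|R^\chi\|_{L^1}\leq \sum_{k=1}^3\|(1-\chi_k^2)R_{0,k}\|_{L^1}\leq 3\cdot\frac{\delta}{9}=\frac{\delta}{3}.
$$

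No step here is a genuine obstacle. The only point needing care is the dependence on the volume of $\Omega$, which the normalization $\ell=1$ controls. Without that normalization, one would instead place a factor of $|\Omega|$ in the threshold, that is, use $\delta/(9|\Omega|)$ in the definition of $\chi_k$.
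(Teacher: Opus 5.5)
Your proof is correct and matches the paper's argument: the paper also uses that $|R_{0,k}|\le\delta/9$ on $\operatorname{supp}(1-\chi_k^2)$, integrates over that set, and sums over $k=1,2,3$. You additionally state explicitly that $|\Omega|\le 1$, which comes from the normalization $\Omega\subset(0,1)^3$ and which the paper uses without saying so.
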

\begin{proof} Since $|R_{0,k}|\leq\delta/9$ on the support of $(1-\chi_{k}^2)$, we have
	$$\|R^\chi\|_{L^1}
	\leq\sum_{k=1}^{3}\int_{\operatorname{supp}(1-\chi_{k}^2)}
	|R_{0,k}(t,x)|\dif x
	\leq\delta/3.$$
\end{proof}

\subsubsection{The intersection error}
For the cross terms, we observe that they depend on different variables. This allows us to use the concentration parameter $\mu$ to make the intersection small. 

\begin{lemma}[$L^1$-bound of $R^{\text{int}}$]\label{lemma:Rint}
There exists $M>0$ such that
$$\|R^{\text{int}}\|_{L^1}
\leq
\frac{M}{\mu}\|R_0\|_{C^0}.$$
\end{lemma}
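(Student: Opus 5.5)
Each cross term is bounded pointwise by its coefficients and then in $L^1$ by the measure of the set where two different building blocks overlap. For $k\neq k'$ we have
$$
|\tilde{u}_k\tilde{B}_{k'}|\leq |R_{0,k}|^{1/p}|R_{0,k'}|^{1/q}\leq \|R_0\|_{C^0}^{1/p+1/q}=\|R_0\|_{C^0},
$$
since $0\le\chi_k\le1$. Hence
$$
\|R^{\text{int}}\|_{L^1}\leq \|R_0\|_{C^0}\sum_{k\neq k'}\|(\mathbf{u}_k\times\mathbf{B}_{k'})_\lambda\|_{L^1(\Omega)}.
$$
We have $\Omega\subset(0,\ell)^3$, and $\lambda\in\N$ with $\ell$-periodic building blocks. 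So $\|(\mathbf{u}_k\times\mathbf{B}_{k'})_\lambda\|_{L^1(\Omega)}\le \|(\mathbf{u}_k\times\mathbf{B}_{k'})_\lambda\|_{L^1(\T_\ell^3)}=\|\mathbf{u}_k\times\mathbf{B}_{k'}\|_{L^1(\T_\ell^3)}$, because rescaling by an integer $\lambda$ preserves the $L^1$ norm on the torus.

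Next I would compute this norm by Fubini, using that the two factors depend on different coordinates. By \eqref{def:intermittentshearflows},
$$
|\mathbf{u}_k\times\mathbf{B}_{k'}|\le (\ell\mu)^{1/p+1/q}|\phi_\mu(x_k)||\phi_\mu(x_{k'})| = \ell\mu\,|\phi_\mu(x_k)||\phi_\mu(x_{k'})|.
$$
Integrating over $\T_\ell^3$ with $k\neq k'$, the third variable contributes a factor $\ell$. Each of the two remaining one-dimensional integrals is $\|\phi_\mu\|_{L^1(\T_\ell)}=\mu^{-1}\|\phi\|_{L^1}$, by Lemma~\ref{lemma:buildingblock} with $d=1$, $m=0$, $r=1$. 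Altogether,
$$
\|\mathbf{u}_k\times\mathbf{B}_{k'}\|_{L^1}\le \ell\mu\cdot\ell\cdot\mu^{-2}\|\phi\|_{L^1}^2=\frac{\ell^2\|\phi\|_{L^1}^2}{\mu}.
$$
Summing over the six ordered pairs $k\neq k'$ gives the claim with $M=6\ell^2\|\phi\|_{L^1}^2$, which depends only on $\phi$ and $\ell$ (so only on the domain).

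The key point is that the normalization $(\ell\mu)^{1/p}(\ell\mu)^{1/q}=\ell\mu$ makes the same-index products have unit mean, while for distinct indices the two $\mu^{-1}$ factors from the independent concentrations leave a net $\mu^{-1}$. There is no real obstacle. The only care needed is to avoid the improved Hölder inequality, since the coefficients are only bounded in $C^0$ here, and to use the direct pointwise bound together with the exact Fubini computation instead.
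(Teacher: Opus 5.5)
Your proposal is correct and follows the paper's proof. Like the paper, you combine the pointwise bound $|\tilde u_k\tilde B_{k'}|\le\|R_0\|_{C^0}$ with the Fubini computation $\|(\mathbf{u}_k\times\mathbf{B}_{k'})_\lambda\|_{L^1}\lesssim\mu\|\phi_\mu\|_{L^1}^2=\mu^{-1}\|\phi\|_{L^1}^2$ for $k\neq k'$; you are slightly more careful in tracking the factors of $\ell$ and in writing an inequality for $|\mathbf{u}_k\times\mathbf{B}_{k'}|$, since some of these cross products vanish identically (e.g.\ $\mathbf{u}_3\times\mathbf{B}_2=0$).
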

\begin{proof}
On the one hand, we have the pointwise bound
$$|\tilde{u}_{k}\tilde{B}_{k'}|
\leq|R_0|.$$
On the other hand, since
$$|(\mathbf{u}_{k}\times \mathbf{B}_{k'})(x)|
=\mu|\phi_{\mu}(x_k)\phi_{\mu}(x_{k'})|,$$
and $k\neq k'$, we have
$$
\|(\mathbf{u}_{k}\times \mathbf{B}_{k'})_\lambda\|_{L^1}
=\mu\|\phi_\mu\|_{L^1}^2
=\mu^{-1}\|\phi\|_{L^1}^2.
$$ 
\end{proof}

By combining the decomposition \eqref{def:Rquad} of $R^{\text{quad}}$ with Lemmas \ref{lemma:Rosc}-\ref{lemma:Rint}, we can take $(\lambda,\mu)$ large enough, in terms of $(\delta,\|R_0\|_{C^1})$, such that
\begin{equation}\label{Rquad:estimate}
\|R^{\text{quad}}\|_{L^1}\leq\delta/2.
\end{equation}

\subsection{The incompressibility correctors $(\tilde{u}_c,\tilde{B}_c)$}\label{sec:correctors}

In this section, we define the incompressibility correctors of the mean term $(\tilde{u}, \tilde{B})$ constructed in Section~\ref{sec:mean}, by means of the velocity and magnetic potentials $(\boldsymbol{\psi}_k,\mathbf{A}_k)$ defined in \eqref{def:magneticvectorpotentials}. 

\begin{lemma}\label{lemma:incompressibilitycorrectors}
We define 
$$
\tilde{u}_c
:=\frac{1}{\eta}\frac{1}{\lambda\mu}\sum_{k=1}^{3}\nabla \tilde{u}_k\times (\boldsymbol{\psi}_k)_\lambda,
\quad\quad
\tilde{B}_c
:=\eta\frac{1}{\lambda\mu}\sum_{k=1}^{3}\nabla \tilde{B}_k\times (\mathbf{A}_k)_\lambda.
$$
Then, we have $\tilde{u}+\tilde{u}_c=\nabla\times\tilde{\psi}$ and $\tilde{B}+\tilde{B}_c
=\nabla\times \tilde{A}$, where 
$$
\tilde{\psi}
:=\frac{1}{\eta}\frac{1}{\lambda\mu}\sum_{k=1}^{3}\tilde{u}_k({\boldsymbol{\psi}}_k)_\lambda,
\quad\quad
\tilde{A}
:=\eta\frac{1}{\lambda\mu}\sum_{k=1}^{3}\tilde{B}_k({\mathbf{A}}_k)_\lambda.
$$
In particular,
$\nabla\cdot 
(\tilde{u}+\tilde{u}_c)
=\nabla\cdot 
(\tilde{B}+\tilde{B}_c)=0$.
\end{lemma}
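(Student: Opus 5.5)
The plan is to use the product rule for the curl, $\nabla\times(f\,V)=f\,\nabla\times V+\nabla f\times V$ for a scalar $f$ and a vector field $V$. I will apply it termwise to the definition of $\tilde{\psi}$, and then likewise to $\tilde{A}$. Taking $f=\tilde{u}_k$ and $V=(\boldsymbol{\psi}_k)_\lambda$, I get
$$
\nabla\times\tilde{\psi}=\frac{1}{\eta}\frac{1}{\lambda\mu}\sum_{k=1}^3\Big(\tilde{u}_k\,\nabla\times(\boldsymbol{\psi}_k)_\lambda+\nabla\tilde{u}_k\times(\boldsymbol{\psi}_k)_\lambda\Big).
$$
The second sum is exactly $\tilde{u}_c$ by definition. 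What remains is to identify the first sum with $\tilde{u}$.

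For this I use the chain rule under the rescaling $x\mapsto\lambda x$, namely $\nabla\times(V_\lambda)=\lambda(\nabla\times V)_\lambda$. This holds because each derivative of $V(\lambda x)$ produces a factor $\lambda$. Combining it with Lemma~\ref{Lemma:steady}, which gives $\nabla\times\boldsymbol{\psi}_k=\mu\mathbf{u}_k$, I obtain
$$
\nabla\times(\boldsymbol{\psi}_k)_\lambda=\lambda\mu(\mathbf{u}_k)_\lambda.
$$
The factor $\lambda\mu$ then cancels the prefactor $\frac{1}{\lambda\mu}$, so the first sum equals $\frac{1}{\eta}\sum_k\tilde{u}_k(\mathbf{u}_k)_\lambda=\tilde{u}$, as given in \eqref{eq:defTildeB}.

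The magnetic part is identical. I use $\nabla\times\mathbf{A}_k=\mu\mathbf{B}_k$ from Lemma~\ref{Lemma:steady}, together with $\tilde{B}_k$ in place of $\tilde{u}_k$ and $\eta$ in place of $1/\eta$. This gives $\nabla\times\tilde{A}=\tilde{B}+\tilde{B}_c$.

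Divergence-freeness then follows from $\nabla\cdot\nabla\times=0$. This requires $\tilde{\psi}$ and $\tilde{A}$ to be smooth. They are: $\tilde{u}_k$ and $\tilde{B}_k$ are smooth because the cutoff $\chi_k$ vanishes near the zero set of $R_{0,k}$, where $|R_{0,k}|^{1/p}$ and $\operatorname{sgn}(R_{0,k})|R_{0,k}|^{1/q}$ fail to be smooth. The building blocks $\boldsymbol{\psi}_k$ and $\mathbf{A}_k$ are smooth and periodic.

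There is no real obstacle. The only points needing care are the scaling factor $\lambda$ produced by differentiating $(\cdot)_\lambda$, which is what makes the normalization $\frac{1}{\lambda\mu}$ exact, and the smoothness of the coefficients, which the cutoff $\chi_k$ guarantees.
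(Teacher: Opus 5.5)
Your proposal is correct and follows the paper's own proof: the identity $\frac{1}{\lambda}\nabla\times(fA_\lambda)=f(\nabla\times A)_\lambda+\frac{1}{\lambda}\nabla f\times A_\lambda$ used there is exactly your curl product rule combined with $\nabla\times(V_\lambda)=\lambda(\nabla\times V)_\lambda$. Together with $\nabla\times\boldsymbol{\psi}_k=\mu\mathbf{u}_k$ and $\nabla\times\mathbf{A}_k=\mu\mathbf{B}_k$ from Lemma~\ref{Lemma:steady}, this gives the claim. Your remark that the cutoff $\chi_k$ makes $\tilde{u}_k,\tilde{B}_k$ smooth, which is what justifies $\nabla\cdot\nabla\times=0$, is a correct detail the paper leaves implicit.
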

\begin{proof}
It follows from the vector calculus identity for smooth and periodic scalar $f$ and vector $A$ functions
$$
\frac{1}{\lambda}\nabla\times(fA_\lambda)
=f(\nabla\times A)_\lambda
+\frac{1}{\lambda}\nabla f\times A_\lambda,
$$
and Lemma \ref{Lemma:steady}.
\end{proof}

\begin{lemma}[Lebesgue estimates on $(\tilde{u}_c,\tilde{B}_c)$]\label{lem:Lestimates:cor}
There exists $C(\delta,\|R_0\|_{C^1})>0$ such that
\begin{align*}
\|\tilde{u}_c\|_{L^p}
&\leq\eta\frac{C}{\lambda\mu},\\
\|\tilde{B}_c\|_{L^q}
&\leq\frac{1}{\eta} \frac{C}{\lambda\mu}.
\end{align*}
\end{lemma}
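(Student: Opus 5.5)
The plan is to estimate each summand of $\tilde{u}_c$ and $\tilde{B}_c$ directly. Each is a product of a smooth, non-oscillating coefficient gradient ($\nabla\tilde{u}_k$ or $\nabla\tilde{B}_k$) with an oscillating, concentrated potential ($(\boldsymbol{\psi}_k)_\lambda$ or $(\mathbf{A}_k)_\lambda$). I will put the coefficient in $L^\infty$ and the potential in $L^p$ (resp.\ $L^q$). No improved H\"older inequality is needed, because the factor $\frac{1}{\lambda\mu}$ already supplies the decay and the potentials are uniformly bounded in $L^p$, $L^q$.

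\emph{Step 1 (coefficients).} Recall $\tilde{u}_k=\chi_k|R_{0,k}|^{1/p}$ and $\tilde{B}_k=\chi_k\operatorname{sgn}(R_{0,k})|R_{0,k}|^{1/q}$. On $\operatorname{supp}\chi_k$ we have $|R_{0,k}|\geq\delta/18$, so the maps $s\mapsto|s|^{1/p}$ and $s\mapsto\operatorname{sgn}(s)|s|^{1/q}$ are smooth there, with derivatives controlled in terms of $\delta$. Moreover $\nabla\chi_k$ is bounded by $C\delta^{-1}\|R_0\|_{C^1}$. By the chain rule this gives
\[
\|\nabla\tilde{u}_k\|_{L^\infty}+\|\nabla\tilde{B}_k\|_{L^\infty}\leq C(\delta,\|R_0\|_{C^1}),
\]
and this constant does not depend on $\eta$, $\lambda$ or $\mu$. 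This is the same bound already used in the proof of Lemma~\ref{lem:Lestimates}.

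\emph{Step 2 (potentials).} The rescaling $x\mapsto\lambda x$ preserves $L^r(\T_\ell^3)$ norms of $\ell$-periodic functions. Applying Lemma~\ref{lemma:buildingblock} with $m=0$ in the single variable $x_k$ to the definitions \eqref{def:magneticvectorpotentials} then gives
\[
\|(\boldsymbol{\psi}_k)_\lambda\|_{L^p}=(\ell\mu)^{1/p}\mu^{-1/p}\ell^{2/p}\|\Phi\|_{L^p}\leq M,\qquad
\|(\mathbf{A}_k)_\lambda\|_{L^q}=(\ell\mu)^{1/q}\mu^{-1/q}\ell^{2/q}\|\Phi\|_{L^q}\leq M.
\]
The exponents $1/p$ and $1/q$ in the normalisation of the building blocks are chosen exactly so that the concentration factor cancels. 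Since $\Omega\subset(0,\ell)^3$, restricting to $\Omega$ only decreases these norms.

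\emph{Step 3 (conclusion and the $\eta$-bookkeeping).} Combining Steps 1 and 2 with H\"older's inequality $\|fg\|_{L^p}\leq\|f\|_{L^\infty}\|g\|_{L^p}$, and summing over $k=1,2,3$, yields
\[
\|\tilde{u}_c\|_{L^p}\leq\frac{C}{\lambda\mu}\qquad\text{and}\qquad\|\tilde{B}_c\|_{L^q}\leq\frac{C}{\lambda\mu},
\]
each multiplied by the scalar prefactor in front of the sum in Lemma~\ref{lemma:incompressibilitycorrectors}. In every case $C=C(\delta,\|R_0\|_{C^1})$.

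The only delicate point is matching the powers of $\eta$ to the statement, which puts $\eta$ on $\tilde{u}_c$ and $\eta^{-1}$ on $\tilde{B}_c$. I would handle it as follows: $\eta$ enters solely as a multiplicative constant that is fixed once and for all before $(\lambda,\mu)$ are chosen (and is set to $1$ in Section~\ref{sec:fastdynamos}). So, whichever power of $\eta$ appears, the estimate takes the form (power of $\eta$)$\times C(\delta,\|R_0\|_{C^1})/(\lambda\mu)$, with $C$ independent of $\eta$. What is actually used afterwards, in \eqref{inductiveestimate:ideal:1}--\eqref{inductiveestimate:ideal:2} and in the bound on $R^{\text{cor}}$, is only that these norms tend to zero as $\lambda\mu\to\infty$ with $\eta$ held fixed.

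I expect no step to be a genuine obstacle. The step needing the most care is Step 1, namely checking that the cutoff $\chi_k$ keeps the H\"older-type powers $|R_{0,k}|^{1/p}$ and $|R_{0,k}|^{1/q}$ away from their singularity at zero, since this is what makes the constant depend on $\delta$.
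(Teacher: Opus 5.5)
Your argument is correct and is the same as the paper's: H\"older's inequality with $\nabla\tilde u_k,\nabla\tilde B_k$ in $L^\infty$ (bounded thanks to the cutoff $\chi_k$) against the building-block potentials, whose $L^p$ and $L^q$ norms are bounded uniformly in $\lambda,\mu$. You are in fact more accurate than the paper, which quotes the norms of $\mathbf{u}_k,\mathbf{B}_k$ instead of those of $\boldsymbol{\psi}_k,\mathbf{A}_k$.

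On the $\eta$ bookkeeping, say plainly that the definitions in Lemma~\ref{lemma:incompressibilitycorrectors} give $\eta^{-1}$ on $\tilde u_c$ and $\eta$ on $\tilde B_c$, so the powers in the statement are swapped. This is a typo, as the later use in \eqref{eq:uBcor:Lebesgue}, where $\lambda\mu$ is chosen independently of $\eta$, confirms. Do not suggest that any power can be absorbed, since that would make $C$ depend on $\eta$.
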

\begin{proof} 
Since 
$\|\mathbf{u}_k\|_{L^p}=\ell^{1/p}\|\phi\|_{L^p}$ and $\|\mathbf{B}_k\|_{L^q}=\ell^{1/q}\|\phi\|_{L^q}$ by definition \eqref{def:intermittentshearflows} and Lemma \ref{lemma:buildingblock}, the claim follows by applying the H\"older inequality.
\end{proof}

Thus, we can take $\lambda\mu$ large enough, in terms of $(\delta,\|R_0\|_{C^1})$, such that
\begin{equation}\label{eq:uBcor:Lebesgue}
\begin{split}
\|\tilde{u}_c\|_{L^p}&\leq\frac{M}{\eta}\|R_0\|_{L^1}^{1/p},\\
\|\tilde{B}_c\|_{L^q}&\leq M\eta\|R_0\|_{L^1}^{1/q}.
\end{split}
\end{equation}

\subsection{The linear error}\label{sec:Rlin}
As we anticipated in Section \ref{sec:newB1}, we define 
$$R^{\text{lin}}
:=\partial_t\tilde{A} - \tilde{u}\times B_0 - u_0\times\tilde{B}.$$

\begin{lemma}[$L^1$-Bound of $R^{\text{lin}}$]\label{lemma:Rlin}
There exists $C=C(\delta,\|R_0\|_{C_{t,x}^1})>0$ such that
$$\|\partial_t \tilde{A}\|_{L^1}
\leq \eta \frac{C}{\lambda\mu^{1+1/q}}.$$
In addition, there exists $M>0$ such that
\begin{align*}
\|\tilde{u}\times B_0\|_{L^1}
&\leq\frac{M}{\eta}\frac{\|R_0\|_{C^0}^{1/p}\|B_0\|_{C^0}}{\mu^{1/q}},\\
\|u_0\times\tilde{B}\|_{L^1}
& \leq M\eta\frac{\|R_0\|_{C^0}^{1/q}\|u_0\|_{C^0}}{\mu^{1/p}}.
\end{align*}
\end{lemma}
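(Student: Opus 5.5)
The three bounds are handled separately: differentiate the explicit potential in time for the first, and use Hölder's inequality together with Lemma~\ref{lemma:buildingblock} for the other two. In the second and third bounds the $L^1$ norm of a single concentrated profile is much smaller than its $L^p$ or $L^q$ norm, and this gives the negative powers of $\mu$.

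For $\partial_t\tilde{A}$: by Lemma~\ref{lemma:incompressibilitycorrectors}, $\tilde{A}=\eta(\lambda\mu)^{-1}\sum_k\tilde{B}_k(\mathbf{A}_k)_\lambda$. The building blocks $\mathbf{A}_k$ do not depend on time, so
$$\partial_t\tilde{A}=\frac{\eta}{\lambda\mu}\sum_{k=1}^3(\partial_t\tilde{B}_k)(\mathbf{A}_k)_\lambda .$$
The coefficient $\tilde{B}_k=\chi_k\operatorname{sgn}(R_{0,k})|R_{0,k}|^{1/q}$ is smooth, because $\chi_k$ vanishes where $|R_{0,k}|\le\delta/18$. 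Hence $\|\partial_t\tilde{B}_k\|_{L^\infty}\le C(\delta,\|R_0\|_{C^1_{t,x}})$. For the profile, $\mathbf{A}_k=(\ell\mu)^{1/q}\Phi_\mu(x_k)e_i$, and Lemma~\ref{lemma:buildingblock} with $m=0$, $r=1$ (in one variable, the other two directions contributing a factor $\ell^2$) gives
$$\|(\mathbf{A}_k)_\lambda\|_{L^1}=\ell^2(\ell\mu)^{1/q}\mu^{-1}\|\Phi\|_{L^1}\le M\mu^{1/q-1}.$$
So the bound is $\eta C\lambda^{-1}\mu^{-2+1/q}$, which is at least as good as the stated $\eta C/(\lambda\mu^{1+1/q})$ whenever $\mu^{1/q-1}\le\mu^{-1/q}$, i.e. when $q\ge2$. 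For $q<2$ the direct bound is slightly weaker, so I would redo the exponent bookkeeping. One option is to bound $\|(\mathbf{A}_k)_\lambda\|_{L^1}\le\|(\mathbf{A}_k)_\lambda\|_{L^q}\cdot|\T^3_\ell|^{1/p}$ and recompute. In any case, any negative power of $\lambda\mu$ is all that is needed downstream. This exponent check is the only real obstacle I expect; everything else is mechanical.

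For $\tilde{u}\times B_0$: use the pointwise bound $|\tilde{u}|\le\eta^{-1}\sum_k|R_{0,k}|^{1/p}|(\mathbf{u}_k)_\lambda|$ and put $B_0$ in $L^\infty$. This gives
$$\|\tilde{u}\times B_0\|_{L^1}\le\frac{1}{\eta}\|R_0\|_{C^0}^{1/p}\|B_0\|_{C^0}\sum_k\|(\mathbf{u}_k)_\lambda\|_{L^1}.$$
By Lemma~\ref{lemma:buildingblock}, $\|(\mathbf{u}_k)_\lambda\|_{L^1}=\ell^2(\ell\mu)^{1/p}\mu^{-1}\|\phi\|_{L^1}=M\mu^{1/p-1}=M\mu^{-1/q}$, using $1/p+1/q=1$. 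The term $u_0\times\tilde{B}$ is handled symmetrically, with $\|(\mathbf{B}_k)_\lambda\|_{L^1}=M\mu^{1/q-1}=M\mu^{-1/p}$. The constant $M$ depends only on $\ell$ and $\phi$, which matches the claimed form.
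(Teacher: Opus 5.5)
Your route is the paper's: move $\partial_t$ onto the coefficients $\tilde{B}_k$, then put the coefficient in $L^\infty$ and the profile in $L^1$ via Lemma~\ref{lemma:buildingblock}. Your second and third bounds coincide with the paper's.

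On the first bound, the mismatch you flagged is an error in the statement, not in your computation. Since $\mathbf{A}_k=(\ell\mu)^{1/q}\Phi_\mu(x_k)e_i$, one has $\|\mathbf{A}_k\|_{L^1}=\ell^{2+1/q}\mu^{1/q-1}\|\Phi\|_{L^1}$, exactly as you found. This is also what Lemma~\ref{lemma:Sobolev} gives at $m=0$, $r=1$. The paper's proof instead writes $\|\mathbf{A}_k\|_{L^1}=\mu^{1/p-1}\|\Phi\|_{L^1}$, which is the norm of the velocity potential $\boldsymbol{\psi}_k$ (prefactor $(\ell\mu)^{1/p}$). That is the only source of the exponent $1+1/q$. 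The correct estimate is $\|\partial_t\tilde{A}\|_{L^1}\le\eta C/(\lambda\mu^{1+1/p})$.

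You should drop the proposed repair. Hölder against $\|(\mathbf{A}_k)_\lambda\|_{L^q}\sim 1$ only yields $\|(\mathbf{A}_k)_\lambda\|_{L^1}\lesssim 1$, which is worse. No other bookkeeping can do better either. The three summands of $\partial_t\tilde{A}$ point in different directions $e_i$, so none can cancel another. Proposition~\ref{prop:IHI} with $r=1$ gives $\|\partial_t\tilde{B}_k(\mathbf{A}_k)_\lambda\|_{L^1}\gtrsim\|\partial_t\tilde{B}_k\|_{L^1}\|\mathbf{A}_k\|_{L^1}$ once $\lambda$ is large, with the threshold independent of $\mu$. Hence for $q<2$ the stated rate fails in general as $\mu\to\infty$.

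This is precisely the regime of Section~\ref{sec:fastdynamos}, where $q=1$ and the true bound is $C/(\lambda\mu)$ rather than $C/(\lambda\mu^2)$. As you observe, only decay in $\lambda\mu$ is used in \eqref{Rlin:estimate} and downstream, so stating the corrected exponent closes the argument.
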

\begin{proof}
Since $({\mathbf{A}}_k)_\lambda$ is time independent, we have
$$\partial_t \tilde{A}
=\frac{\eta}{\lambda\mu}\sum_{k=1}^{3}\partial_t\tilde{B}_k({\mathbf{A}}_k)_\lambda.$$
Thus, the first estimate follows from the H\"older inequality recalling that $\|{\mathbf{A}}_k\|_{L^1}=\mu^{1/p-1}\|\Phi\|_{L^1}$ from definition \eqref{def:magneticvectorpotentials} and Lemma \ref{lemma:buildingblock}. The second estimate follows from
$$\|\tilde{u}_k(\mathbf{u}_{k})_\lambda\times  B_0\|_{L^1}
\leq
\|\tilde{u}_k\|_{C^0}\|B_0\|_{C^0}\|\mathbf{u}_{k}\|_{L^1},$$
and $\|\mathbf{u}_{k}\|_{L^1}
=\mu^{1/p-1}\|\phi\|_{L^1}$ from definition \eqref{def:intermittentshearflows} Lemma \ref{lemma:buildingblock}. The estimate for the third term  $u_0 \times\tilde{B}$ is analogous.
\end{proof}

Thus, we can take $(\lambda,\mu)$ large enough, in terms of $(\delta,\eta,\|R_0\|_{C_{t,x}^1},\|B_0\|_{C^0},\|u_0\|_{C^0})$, such that
\begin{equation}\label{Rlin:estimate}
\|R^{\text{lin}}\|_{L^1}\leq\delta/8.
\end{equation}

\subsection{The corrector error}\label{sec:Rcor}
As we anticipated in Section \ref{sec:newB1}, we define 
$$R^{\text{cor}}
=-\tilde{u}_c\times B_1
-u_1\times \tilde{B}_c
+\tilde{u}_c\times \tilde{B}_c.$$

\begin{lemma}[$L^1$-bound of $R^{\text{cor}}$]\label{lemma:Rcor}
There exists $C=C(\delta,\eta,\|R_0\|_{C^2},\|B_0\|_{C^0},\|u_0\|_{C^0})>0$ such that
$$
\|\tilde{u}_c\times B_1\|_{L^1},\,
\|u_1\times\tilde{B}_c\|_{L^1}
\leq\frac{C}{\lambda\mu},
$$
and also
$$
\|\tilde{u}_c\times\tilde{B}_c\|_{L^1}\leq\frac{C}{(\lambda\mu)^2}.
$$
\end{lemma}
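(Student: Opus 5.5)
The plan is to reduce everything to Hölder's inequality, using the $L^p$/$L^q$ pairing that the construction is built on. The building blocks satisfy $\|\mathbf{u}_k\|_{L^p}$, $\|\mathbf{B}_k\|_{L^q}\lesssim 1$, and the potentials satisfy $\|\boldsymbol{\psi}_k\|_{L^p}=\ell^{1/p}\|\Phi\|_{L^p}$, $\|\mathbf{A}_k\|_{L^q}=\ell^{1/q}\|\Phi\|_{L^q}$ by Lemma~\ref{lemma:buildingblock}. The same is true of their oscillated versions, since $\lambda\in\N$ and periodicity preserve $L^r$ norms on $\T_\ell^3$.

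First I would write $B_1=B_0+\tilde{B}+\tilde{B}_c$ and $u_1=u_0+\tilde{u}+\tilde{u}_c$, and expand $\tilde{u}_c\times B_1$ and $u_1\times\tilde{B}_c$ term by term.

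For the smooth pieces I would use $\|\tilde{u}_c\|_{L^1}\le\frac{1}{\eta\lambda\mu}\sum_k\|\nabla\tilde{u}_k\|_{C^0}\|\boldsymbol{\psi}_k\|_{L^1}$. This gives
\[
\|\tilde{u}_c\times B_0\|_{L^1}\le \frac{C}{\lambda\mu}\|B_0\|_{C^0},
\]
where $C$ depends on $\|\tilde{u}_k\|_{C^1}\le C(\delta,\|R_0\|_{C^1})$. The bound for $u_0\times\tilde{B}_c$ is the same with $\|u_0\|_{C^0}$.

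For the mixed pieces I would apply Hölder with exponents $(p,q)$:
\[
\|\tilde{u}_c\times\tilde{B}\|_{L^1}\le\|\tilde{u}_c\|_{L^p}\|\tilde{B}\|_{L^q}.
\]
Here $\|\tilde{u}_c\|_{L^p}\le \frac{C}{\eta\lambda\mu}$, obtained from $\nabla\tilde{u}_k\in C^0$ and $\|(\boldsymbol{\psi}_k)_\lambda\|_{L^p}\lesssim1$. I would bound $\|\tilde{B}\|_{L^q}\le \eta C$ pointwise by $\|\tilde{B}_k\|_{C^0}\|\mathbf{B}_k\|_{L^q}$. The same argument handles $\tilde{u}\times\tilde{B}_c$ with the roles of $p$ and $q$ exchanged.

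For the product of the two correctors, $\tilde{u}_c\times\tilde{B}_c$, I would again use Hölder with $(p,q)$, which gives
\[
\|\tilde{u}_c\|_{L^p}\|\tilde{B}_c\|_{L^q}\le \frac{C}{(\lambda\mu)^2}.
\]
The contributions $\tilde{u}_c\times\tilde{B}_c$ hidden inside $\tilde{u}_c\times B_1$ and $u_1\times\tilde{B}_c$ are controlled by this bound, so they are in particular $\le C/(\lambda\mu)$ once $\lambda\mu\ge1$.

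The only delicate point is keeping the dependence of the constants correct. The estimate $\|\nabla\tilde{u}_k\|_{C^0},\|\nabla\tilde{B}_k\|_{C^0}\le C(\delta,\|R_0\|_{C^1})$ relies on the cutoff $\chi_k$. On the support of $\chi_k$ we have $|R_{0,k}|\ge\delta/18$, so the maps $s\mapsto|s|^{1/p}$ and $s\mapsto\operatorname{sgn}(s)|s|^{1/q}$ are smooth there with derivative bounds depending on $\delta$. The stated dependence on $\|R_0\|_{C^2}$ is harmless, since only one derivative of $R_0$ is actually needed. It is also important that no factor of $\mu$ is lost when an $L^1$ norm is taken against $C^0$ functions; this holds because $\|\boldsymbol{\psi}_k\|_{L^r}$ and $\|\mathbf{A}_k\|_{L^r}$ are bounded uniformly in $\mu$ for $r\in\{1,p,q\}$, as $\mu^{1/p-1/r}\le1$ and $\mu^{1/q-1/r}\le1$.

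Collecting all of these terms gives the stated bounds, with $C$ depending on $(\delta,\eta,\|R_0\|_{C^2},\|B_0\|_{C^0},\|u_0\|_{C^0})$.
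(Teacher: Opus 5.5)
Your proof is correct and follows the paper's argument: expand $u_1=u_0+\tilde u+\tilde u_c$ and $B_1=B_0+\tilde B+\tilde B_c$, then apply H\"older with the $(p,q)$ pairing, using the Lebesgue bounds on $\tilde u,\tilde B$ and on the correctors (Lemmas \ref{lem:Lestimates} and \ref{lem:Lestimates:cor}). One harmless slip: $\|\boldsymbol{\psi}_k\|_{L^q}\sim\mu^{1/p-1/q}$ (and likewise $\|\mathbf{A}_k\|_{L^p}$) is not bounded in $\mu$ when $p\neq 2$. You never use these norms, though, since you only need $\boldsymbol{\psi}_k$ in $L^1$ and $L^p$ and $\mathbf{A}_k$ in $L^1$ and $L^q$.
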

\begin{proof}
Since $u_1=u_0+\tilde{u}+\tilde{u}_c$ and $B_1=B_0+\tilde{B}+\tilde{B}_c$, the claim by applying the H\"older inequality and Lemmas \ref{lem:Lestimates} and \ref{lem:Lestimates:cor}.
\end{proof}

Thus, we can take $(\lambda,\mu)$ large enough, in terms of $(\delta,\eta,\|R_0\|_{C^2},\|B_0\|_{C^0},\|u_0\|_{C^0})$, such that
\begin{equation}\label{Rcor:estimate}
\|R^{\text{cor}}\|_{L^1}\leq\delta/8.
\end{equation}

\subsection{Proof of Proposition \ref{Prop:Lebesgue}}\label{sec:proof:Lebesgue}
We now summarize the results obtained in the previous sections, leading to the proof of \eqref{inductiveestimate:ideal} and \eqref{prop:Lebesgue:support}, for the convenience of the reader.

Let $(u_1,B_1)$ be as defined in \eqref{def:newB1u1}. As established in \eqref{eq:uB:Lebesgue} and \eqref{eq:uBcor:Lebesgue}, it is possible to choose $\lambda$ and $\mu$ sufficiently large such that
\eqref{inductiveestimate:ideal:1} and \eqref{inductiveestimate:ideal:2} hold:
\begin{align*}
\|u_1-u_0\|_{L^p}
&\leq\|\tilde{u}\|_{L^p}+\|\tilde{u}_c\|_{L^p}
\leq \frac{M}{\eta}\|R_0\|_{L^1}^{\nicefrac{1}{p}},\\
\|B_1-B_0\|_{L^q}
&\leq\|\tilde{B}\|_{L^q}+\|\tilde{B}_c\|_{L^q}
\leq M\eta\|R_0\|_{L^1}^{\nicefrac{1}{q}},
\end{align*}
for some universal constant $M>0$.

Similarly, let $R_1$ be as in \eqref{R1:decomposition}. As established in \eqref{Rquad:estimate}, \eqref{Rlin:estimate}, and \eqref{Rcor:estimate}, one can take $\lambda$ and $\mu$ sufficiently large to ensure \eqref{inductiveestimate:ideal:3}:
$$
\|R_1\|_{L^1}
\leq\|R^{\text{quad}}\|_{L^1}
+\|R^{\text{lin}}\|_{L^1}
+\|R^{\text{cor}}\|_{L^1}
\leq\delta.
$$

Finally, since $\tilde{u}=\tilde{B}=\tilde{u}_c=\tilde{B}_c=0$
and $R^{\text{quad}}=R^{\text{lin}}=R^{\text{cor}}=0$
outside $\operatorname{supp}R_0$ (indeed, all of them are defined in terms of $R_0$; see \eqref{eq:Rlincor}, \eqref{eq:defTildeB}, \eqref{eq:R0-uxB}, \eqref{def:Rquad}, and Lemmas \ref{def:Rosc} and \ref{lemma:incompressibilitycorrectors}), the support condition \eqref{prop:Lebesgue:support} is satisfied.

\subsection{Sobolev regularity}\label{sec:Sobolevversion}
In this section, we discuss the regularity of the solutions obtained through the previous convex integration scheme. As will be explained in the next section, the ultimate goal is to control the $L^1$-norm of the curl of the magnetic perturbation, which makes it possible to incorporate the diffusion term. In fact, it is straightforward to show that the following bounds hold for the perturbation.

\begin{lemma}[Sobolev bound of the perturbation]\label{lemma:Sobolev} 
For any $m\geq 0$ and $1\leq r\leq\infty$, there exists $C=C(m,\delta,R_0)>0$ such that
\begin{align*}
\|\tilde{\psi}\|_{\dot{W}^{m,r}}
&\leq\frac{C}{\eta}(\lambda\mu)^{m-1}\mu^{\tfrac{1}{p}-\tfrac{1}{r}},\\[0.2cm]
\|\tilde{A}\|_{\dot{W}^{m,r}}
&\leq C\eta(\lambda\mu)^{m-1}\mu^{\tfrac{1}{q}-\tfrac{1}{r}}.
\end{align*}
\end{lemma}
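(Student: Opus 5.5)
The bound follows from the explicit form of the potentials in Lemma~\ref{lemma:incompressibilitycorrectors},
$$
\tilde{\psi}=\frac{1}{\eta}\frac{1}{\lambda\mu}\sum_{k=1}^3\tilde{u}_k(\boldsymbol{\psi}_k)_\lambda,\qquad
\tilde{A}=\eta\frac{1}{\lambda\mu}\sum_{k=1}^3\tilde{B}_k(\mathbf{A}_k)_\lambda,
$$
combined with the Leibniz rule, H\"older's inequality and Lemma~\ref{lemma:buildingblock}. We treat $\tilde{\psi}$; the argument for $\tilde{A}$ is identical with $p$ replaced by $q$ and $\eta^{-1}$ by $\eta$.

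\textbf{Step 1 (Leibniz expansion).} For a multi-index of order $m$, distribute the derivatives between the two factors:
$$
\nabla^m\big(\tilde{u}_k(\boldsymbol{\psi}_k)_\lambda\big)=\sum_{j=0}^m\binom{m}{j}\nabla^{m-j}\tilde{u}_k\otimes\nabla^j(\boldsymbol{\psi}_k)_\lambda .
$$
The coefficients $\tilde{u}_k=\chi_k|R_{0,k}|^{1/p}$ are smooth, because the cutoff $\chi_k$ removes a neighbourhood of the zero set of $R_{0,k}$. Their $C^{m}$ norms are therefore bounded by a constant $C(m,\delta,R_0)$. This is the only place where $\delta$ enters; it does so through derivatives of $\chi(\,\cdot\,/(\delta/9))$ and of $|\cdot|^{1/p}$ on the set where $|R_{0,k}|\geq\delta/18$.

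\textbf{Step 2 (building-block norms).} Each $\boldsymbol{\psi}_k$ is $(\ell\mu)^{1/p}\Phi_\mu(x_k)$ times a constant unit vector. It depends on a single variable, and on $\T_\ell^3$ its $L^r$ norm factorizes into the one-dimensional norm times $\ell^{2/r}$. Applying Lemma~\ref{lemma:buildingblock} with $d=1$ to $\Phi$ gives
$$
\|\nabla^j(\boldsymbol{\psi}_k)_\lambda\|_{L^r}\leq M(\lambda\mu)^j\mu^{\frac1p-\frac1r}\|\Phi^{(j)}\|_{L^r}.
$$
Put the coefficient factor in $L^\infty$ and the building block in $L^r$. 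Since $\lambda\mu\geq1$, the $j=m$ term dominates, and
$$
\|\tilde{\psi}\|_{\dot W^{m,r}}\leq\frac{C}{\eta}\frac{1}{\lambda\mu}(\lambda\mu)^m\mu^{\frac1p-\frac1r}.
$$
This is the claimed bound. Here we use that $\Omega\subset(0,\ell)^3\subset\T_\ell^3$, so the norms on $\Omega$ are dominated by those on the torus.

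\textbf{Main point to check.} The only nontrivial issue is that the lower-order terms $j<m$ are dominated by the leading one. This requires $\lambda\mu\geq1$, which holds since $\mu\geq\ell^{-1}$ (with $\ell=1$) and $\lambda\in\N$. It also requires that $C$ absorb $\|\tilde{u}_k\|_{C^m}$ and the $\|\Phi^{(j)}\|_{L^r}$, which are independent of $\lambda$ and $\mu$. No improved H\"older inequality is needed, because the statement only asks for a crude bound.
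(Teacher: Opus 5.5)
For integer $m$ your argument is correct and coincides with the paper's proof. You apply Lemma~\ref{lemma:buildingblock} to the one-dimensional profiles $(\boldsymbol{\psi}_k)_\lambda$, $(\mathbf{A}_k)_\lambda$, use the Leibniz rule with the smooth coefficients $\tilde u_k,\tilde B_k$ placed in $L^\infty$, and use $\lambda\mu\geq 1$ to absorb the lower-order terms.

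The gap is that you only treat integer orders. The lemma is stated for every real $m\geq 0$, and the fractional case is exactly what the paper needs. In Proposition~\ref{Prop:Sobolev}, the regime \eqref{Prop:Sobolev:regime} forces the smoothness indices $r,s$ there into $[0,1)$, because $\tilde p,\tilde q>1$. Controlling $u_1-u_0=\nabla\times\tilde\psi$ in $W^{r,\tilde p}$ therefore requires the bound on $\tilde\psi$ at order $1+r$, which is in general not an integer; likewise order $1+s$ for $\tilde A$. At such orders a Leibniz expansion over derivatives of order $m$ does not make sense.

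The missing step is interpolation, which is how the paper concludes. Write $m=m_0+\theta$ with $m_0=\lfloor m\rfloor$ and $\theta\in(0,1)$. Then use the standard interpolation inequality
$\|f\|_{\dot W^{m,r}}\lesssim\|f\|_{\dot W^{m_0,r}}^{1-\theta}\|f\|_{\dot W^{m_0+1,r}}^{\theta}$
for the homogeneous fractional seminorms. Your integer bounds at orders $m_0$ and $m_0+1$ share the factor $\frac{C}{\eta}\mu^{\frac1p-\frac1r}$. Their remaining factors are $(\lambda\mu)^{m_0-1}$ and $(\lambda\mu)^{m_0}$, whose geometric mean with weights $1-\theta,\theta$ is exactly $(\lambda\mu)^{m-1}$. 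So the interpolation is lossless and yields the claimed estimate, with $C$ depending on the $C^{m_0+1}$ norms of the coefficients. The same argument gives the bound for $\tilde A$, with $q$ in place of $p$ and $\eta$ in place of $\eta^{-1}$.
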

\begin{proof}
For the intermittent potentials $(\boldsymbol{\psi}_k,\mathbf{A}_k)$, these bounds follows from Lemma \ref{lemma:buildingblock}. For the complete term $(\tilde{\psi},\tilde{A})$ it is enough to apply the Leibniz rule when $m$ is an integer, and finally interpolate for the non-integer exponents.
\end{proof}

Hence, whenever the exponent of $\mu$ is negative in Lemma \ref{lemma:Sobolev}, namely
\eqref{Prop:Sobolev:regime},
we can take $\mu$ large enough, in terms of $(\delta,\eta,\lambda,R_0)$, to obtain the following (fractional) Sobolev regularity.

\begin{prop}\label{Prop:Sobolev}
Consider $r,s\geq 0$ and $1<\tilde{p},\tilde{q}<\infty$ in the regime
\begin{equation}\label{Prop:Sobolev:regime}
r+\frac{1}{p}<\frac{1}{\tilde{p}},
\quad\quad
s+\frac{1}{q}<\frac{1}{\tilde{q}}.
\end{equation}

Then, in the setting of Proposition \ref{Prop:Lebesgue}, we can also guarantee that
\begin{align*}
\|u_1(t)-u_0(t)\|_{W^{r,\tilde{p}}}&\leq \delta,\\
\|B_1(t)-B_0(t)\|_{W^{s,\tilde{q}}}&\leq \delta.
\end{align*}
\end{prop}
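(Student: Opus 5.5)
The plan is to use the identities $u_1-u_0=\nabla\times\tilde\psi$ and $B_1-B_0=\nabla\times\tilde A$ from Lemma~\ref{lemma:incompressibilitycorrectors}. This gives
$$
\|u_1-u_0\|_{W^{r,\tilde p}}\le M\|\tilde\psi\|_{W^{r+1,\tilde p}},\qquad \|B_1-B_0\|_{W^{s,\tilde q}}\le M\|\tilde A\|_{W^{s+1,\tilde q}},
$$
where $M$ depends only on the domain. The inhomogeneous norm $W^{r+1,\tilde p}$ is controlled by the homogeneous seminorms $\dot W^{0,\tilde p}$ and $\dot W^{r+1,\tilde p}$. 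This is legitimate because $\tilde\psi$ is supported in $\operatorname{supp}R_0$, a fixed bounded set, so either interpolation or the Leibniz-rule argument of Lemma~\ref{lemma:Sobolev} applies.

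Next, apply Lemma~\ref{lemma:Sobolev} with $m=r+1$ and $m=0$, exponent $\tilde p$. This yields
$$
\|\tilde\psi\|_{\dot W^{r+1,\tilde p}}\le \frac{C}{\eta}\lambda^{r}\mu^{r+\frac1p-\frac1{\tilde p}},\qquad \|\tilde\psi\|_{L^{\tilde p}}\le \frac{C}{\eta}(\lambda\mu)^{-1}\mu^{\frac1p-\frac1{\tilde p}}.
$$
The second bound is harmless: since $\tilde p<p$ is forced by \eqref{Prop:Sobolev:regime}, the $\mu$-exponent there is negative. The same computation for $\tilde A$ gives the exponent $\lambda^s\mu^{s+\frac1q-\frac1{\tilde q}}$, where $C=C(r,s,\delta,R_0)$ does not depend on $\lambda$ or $\mu$. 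Under \eqref{Prop:Sobolev:regime} both $\mu$-exponents are strictly negative.

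The main point to get right is the order in which the parameters are fixed. First $\lambda$ is fixed, large enough (depending on $p,\delta,\eta,R_0,u_0,B_0$) that the requirements from Lemmas~\ref{lem:Lestimates}, \ref{lemma:Rosc} and the rest of Proposition~\ref{Prop:Lebesgue} hold. This is possible because every one of those conditions is a largeness condition on $\lambda$, on $\mu$, or on $\lambda\mu$, and each is monotone: once it holds it persists as $\mu$ increases with $\lambda$ fixed. With $\lambda$ frozen, the factor $\lambda^{r}$ becomes a constant. Then $\mu$ is sent to infinity, driving $C\eta^{-1}\lambda^r\mu^{r+1/p-1/\tilde p}$ and $C\eta\lambda^s\mu^{s+1/q-1/\tilde q}$ below $\delta/(2M)$. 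This yields both Sobolev bounds while keeping all the estimates of Proposition~\ref{Prop:Lebesgue}.

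For non-integer $r,s$, one must check that Lemma~\ref{lemma:Sobolev} holds in the fractional form. This follows by interpolating the integer-order bounds for $m=\lfloor r\rfloor+1$ and $m=\lceil r\rceil+1$, since the exponents depend affinely on $m$. Finally, the bounds are uniform in $t\in[0,T]$ because $C$ depends on the $C_{t,x}$ norms of $R_0$ on $[0,T]\times\Omega$.
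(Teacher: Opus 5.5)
Your proposal is correct and follows the paper's argument: write $u_1-u_0=\nabla\times\tilde\psi$ and $B_1-B_0=\nabla\times\tilde A$, apply Lemma~\ref{lemma:Sobolev} with $m=r+1$ (resp.\ $m=s+1$) to obtain the factors $\lambda^r\mu^{r+\frac1p-\frac1{\tilde p}}$ and $\lambda^s\mu^{s+\frac1q-\frac1{\tilde q}}$, and then take $\mu$ large after $\lambda$ has been fixed. Your additional bookkeeping (the lower-order $L^{\tilde p}$ term, the monotonicity in $\mu$ of the conditions from Proposition~\ref{Prop:Lebesgue}, and the interpolation for fractional orders) only makes explicit what the paper leaves implicit.
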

\begin{proof}
Since $u_1-u_0=\nabla\times\tilde{\psi}$ and $B_1-B_0=\nabla\times\tilde{A}$, the claim follows from Lemma \ref{lemma:Sobolev}.
\end{proof}

As a consequence, we deduce a Sobolev version of Theorem \ref{Thm:lebesgue}.

\begin{thm}\label{Thm:Sobolev}
Consider $r,s\geq 0$ and $1<\tilde{p},\tilde{q}<\infty$ in the regime \eqref{Prop:Sobolev:regime}. Then, in the setting of Theorem \ref{Thm:lebesgue}, we can also guarantee that
$(u,B)\in L_t^\infty(W^{r,\tilde{p}}\times W^{s,\tilde{q}})$
with
\begin{align*}
\|u-\bar{u}\|_{L_t^\infty W^{r,\tilde{p}}}&\leq\delta,\\
\|B-\bar{B}\|_{L_t^\infty W^{s,\tilde{q}}}&\leq\delta .
\end{align*}
\end{thm}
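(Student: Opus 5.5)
The plan is to rerun the iteration from the proof of Theorem~\ref{Thm:lebesgue}, now applying Proposition~\ref{Prop:Lebesgue} together with Proposition~\ref{Prop:Sobolev} at each step. The only change is that the Sobolev smallness parameter is allowed to vary with $n$.

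Set $(u_0,B_0,R_0):=(\bar u,\bar B,R)$ and fix the sequence $\delta_n$ with \eqref{prop:Lebesgue:deltan} and the parameter $\eta$ exactly as in Theorem~\ref{Thm:lebesgue}. At step $n$ we apply Proposition~\ref{Prop:Lebesgue} with tolerance $\delta_n$, and also invoke Proposition~\ref{Prop:Sobolev} with a second tolerance $\delta'_n:=2^{-n}\delta$. This is legitimate because Proposition~\ref{Prop:Sobolev} only requires enlarging $\mu$ after $\lambda$ is fixed: by Lemma~\ref{lemma:Sobolev}, in the regime \eqref{Prop:Sobolev:regime} the relevant powers of $\mu$ are negative. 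We thus obtain a sequence satisfying, in addition to the Lebesgue estimates and the support property \eqref{prop:Lebesgue:support:n}, the bounds
\[
\|u_n(t)-u_{n-1}(t)\|_{W^{r,\tilde p}}+\|B_n(t)-B_{n-1}(t)\|_{W^{s,\tilde q}}\le 2^{-n}\delta
\]
for all $t\in[0,T]$. Strictly speaking, Proposition~\ref{Prop:Sobolev} is stated with the same $\delta$ as in \eqref{inductiveestimate:ideal:3}. However, its proof only uses that $\mu$ can be taken large, so an independent tolerance is harmless; alternatively, one can simply use $\min\{\delta_n,2^{-n}\delta\}$ as the tolerance throughout.

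Summing these bounds shows that $(u_n,B_n)$ is Cauchy in $L_t^\infty(W^{r,\tilde p}\times W^{s,\tilde q})$. Its limit coincides with the $L_t^\infty(L^p\times L^q)$ limit $(u,B)$ obtained before, since both convergences imply convergence in distributions. Consequently $\|u-\bar u\|_{L_t^\infty W^{r,\tilde p}}$ and $\|B-\bar B\|_{L_t^\infty W^{s,\tilde q}}$ are each at most $\sum_n 2^{-n}\delta=\delta$. The weak-solution property, the $L^q$ bound and the support condition \eqref{thm:Lebesgue:support} carry over verbatim from Theorem~\ref{Thm:lebesgue}.

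The one point requiring care is the order of parameter choices inside Proposition~\ref{Prop:Sobolev}. The constant $C$ in Lemma~\ref{lemma:Sobolev} depends on $R_0$ (through the cutoffs $\chi_k$ and $\delta$), and the bound grows like $(\lambda\mu)^{m-1}$. Hence one must first choose $\lambda$ to handle the Lebesgue and oscillation errors, and only then take $\mu$ large so that $\mu$ beats the factor $\lambda^{m-1}$ via the strictly negative exponent in \eqref{Prop:Sobolev:regime}. Fractional orders are covered by interpolation, as already noted in Lemma~\ref{lemma:Sobolev}. Since all other error bounds in Sections~\ref{sec:Rquad}--\ref{sec:Rcor} only improve as $\mu$ increases with $\lambda$ fixed (or hold for $\lambda\mu$ large), this ordering is consistent with the rest of the construction.
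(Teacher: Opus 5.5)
Your proposal is correct and follows the argument the paper intends: the paper states Theorem~\ref{Thm:Sobolev} simply as a consequence of Proposition~\ref{Prop:Sobolev}, and you rerun the iteration of Theorem~\ref{Thm:lebesgue} with summable tolerances $\min\{\delta_n,2^{-n}\delta\}$, choosing $\lambda$ first and then $\mu$ large via the negative exponents $r+\frac1p-\frac1{\tilde p}$ and $s+\frac1q-\frac1{\tilde q}$, which supplies the details the paper leaves implicit. Your extra checks are also accurate: the $W^{r,\tilde p}$ and $L^p$ limits coincide, and the ``$\lambda$ first, then $\mu$'' ordering is compatible with all other error bounds, since these decay in $\mu$ for fixed $\lambda$.
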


Unfortunately, at this point we can only guarantee that $s<1$ as $q\to\infty$ and $\tilde{q}\to 1$, which prevents us from considering the diffusion term
$$
\Delta B
=-\nabla\times(\nabla\times B),
$$
by incorporating $\nabla\times B$ to the defect. 
This is in stark contrast with the convex integration scheme from where we were drawing inspiration in the first place,  
namely the (3D) transport-diffusion equation 
\begin{align*}
\partial_t\rho + \nabla\cdot(\rho u) &= \epsilon\Delta\rho,\\
\nabla\cdot u &=0.
\end{align*}
In this case, the spatial-intermittent density and velocity field are given in \cite{ModenaSzekelyhidi18} by
\begin{align*}
\mathbf{v}_{k}(x)&:=\mu^{(3-1)/p}\varphi_{\mu}(x^k)e_k,\\
\boldsymbol{\rho}_{k}(x)&:=\mu^{(3-1)/q}\varphi_{\mu}(x^k),
\end{align*}
where
$
x^k
=(\ldots,x_{k-1},x_{k+1},\ldots),
$
for some cutoff function $\varphi\in C_c^\infty(\R^{3-1})$. Such  $\varphi$, depending on more variables, yields regularity in $W^{1,p}$ for $p<2$, in particular allows to recover a full derivative. Since $\Delta\rho=\nabla\cdot(\nabla\rho)$, one can absorb the gradient into the defect.

In \cite{ModenaSattig20}, Modena and Sattig were able to improve the regularity by adding other parameters to the construction, in particular a ``phase speed'' (like $\phi_\mu(x_k-\alpha t)$), which appear later in other constructions.
Since it is not clear how to incorporate such building blocks in a straightforward way into dynamo theory, nor whether this approach would allow one to construct a fast dynamo, we instead turn to time intermittency as introduced by Cheskidov and Luo \cite{CheskidovLuo24}.

\section{Turbulent fast dynamos}\label{sec:fastdynamos}

The goal of this section is to prove the H-principle (Theorem \ref{thm:hprinciple}) and to show how it implies the existence of turbulent fast dynamos (Theorems \ref{thm:Main} and \ref{thm:turbulent}). As an intermediate step we need to prove that the required coarse-grained magnetic fields exist (Theorems \ref{thm:fastdynamo} and \ref{thm:turbulentdynamo}).

The construction proceeds similarly to the ideal case, except for some modifications that we summarize below. 
\begin{itemize}
    \item Since $\tilde{u}_k \sim |R_{0,k}|^{1/p}$, the velocity field in Section \ref{sec:idealdynamos} depends on the defect (which in turn depends on $\epsilon$), unless we take ``$p = \infty$ \& $q = 1$".
    With this choice of exponents, the spatial building blocks are given by 
    \begin{align*}
    \mathbf{u}_k(x)&:=\phi_{\mu}(x_k)e_i,
    &\boldsymbol{\psi}_k(x)
    &:=-\Phi_{\mu}(x_k) e_j,\\
    \mathbf{B}_k(x)&:=\ell\mu\phi_{\mu}(x_k)e_j,
    &\mathbf{A}_k(x)
    &:=\ell\mu\Phi_{\mu}(x_k) e_i,
    \end{align*}
    while $\mathbf{H}_k$ remains unchanged.
    \item At this point, the issue is that $\tilde{u}_k$ would be equal to $1$, and we would lose control of the spatial support as well as the boundary conditions. To avoid this problem, we apply Urysohn’s lemma to introduce a smooth cutoff $\chi_\sigma :\Omega \to [0,1]$ such that 
    $$
    \chi_\sigma(x)=
    \begin{cases}
    1, & \operatorname{dist}(x,\partial\Omega)>\sigma, \\
    0, & \operatorname{dist}(x,\partial\Omega)<\sigma/2.
    \end{cases}
    $$
    Then, we take $\tilde{u}_k=\chi_\sigma$ and $\tilde{B}_k=\chi_\sigma R_{0,k}$, that is,
    \begin{equation}\label{eq:defTildeB:diff}
    \tilde{u}
    :=\sum_{k=1}^3\chi_\sigma(\mathbf{u}_k)_\lambda,
    \quad\quad
    \tilde{B}
    :=\sum_{k=1}^3 \chi_\sigma R_{0,k}(\mathbf{B}_k)_\lambda.
    \end{equation}
    As mentioned previously, in this case it suffices to take $\chi = \eta = 1$. 
    \item The incompressibility correctors are given by
    $$
    \tilde{u}_c
    :=\frac{1}{\lambda\mu}\sum_{k=1}^{3}\nabla\chi_\sigma\times (\boldsymbol{\psi}_k)_\lambda,
    \quad\quad
    \tilde{B}_c
    :=\frac{1}{\lambda\mu}\sum_{k=1}^{3}\nabla (\chi_\sigma R_{0,k})\times (\mathbf{A}_k)_\lambda,
    $$
    which satisfy $\tilde{u}+\tilde{u}_c
    =\nabla\times\tilde{\psi}$ and $\tilde{B}+\tilde{B}_c
    =\nabla\times\tilde{A}$, where
    $$
    \tilde{\psi}
    :=\frac{1}{\lambda\mu}\sum_{k=1}^{3}\chi_\sigma(\boldsymbol{\psi}_k)_\lambda.
    \quad\quad
    \tilde{A}
    :=\frac{1}{\lambda\mu}\sum_{k=1}^{3}\chi_\sigma R_{0,k}(\mathbf{A}_k)_\lambda.
    $$
    \item The new ingredient is the use of the time-intermittent velocity and magnetic coefficients $(\mathbf{u}_0,\mathbf{B}_0)$ given in \eqref{def:timeintermittency:Bu}. Namely, we define the new mean terms, incompressibility corrector, and potentials as
    \begin{align*}
     \tilde{u}_{\text{dyn}}
    &:=(\mathbf{u}_0)_{\lambda_0}\tilde{u},&
    \tilde{B}_{\text{dyn}},
    &:=(\mathbf{B}_0)_{\lambda_0}\tilde{B}, \\
    \tilde{u}_{\text{dyn},c}
    &:=(\mathbf{u}_0)_{\lambda_0}\tilde{u}_c,&
    \tilde{B}_{\text{dyn},c}
    &:=(\mathbf{B}_0)_{\lambda_0}\tilde{B}_c,\\
    \tilde{\psi}_{\text{dyn}}
    &:=(\mathbf{u}_0)_{\lambda_0}\tilde{\psi},&
    \tilde{A}_{\text{dyn}}
    &:=(\mathbf{B}_0)_{\lambda_0}\tilde{A}.
    \end{align*}
    In most of the estimates we will use the inequality
    $$
    \|cf\|_{L_t^r X}\leq
    \|c\|_{L^r}\|f\|_{L_t^\infty X},
    $$
where $c(t)$ denotes one of the time-intermittent coefficients (so that Lemma \ref{lemma:buildingblock} applies), and $f(t,x)$ is one of the terms estimated in the ideal case (Section \ref{sec:idealdynamos}), in the corresponding space $X$. 
\item Additionally, we incorporate a temporal corrector
$$
H
:=-\frac{1}{\lambda_0}(\mathbf{H}_0)_{\lambda_0}\chi_\sigma^2 R_0,
$$
with $\mathbf{H}_0$ given in \eqref{def:timeintermittency:H0}.
\end{itemize}

Analogously to Section~\ref{sec:idealdynamos}, Theorem~\ref{thm:hprinciple} will be proved by constructing a sequence iteratively using the next proposition. Let us note that, while $u_n \times B_n \to u \times B$ in $L_{t,x}^1$ is immediate in Theorem~\ref{Thm:lebesgue}, here this convergence must be verified, as in \cite{CheskidovLuo24}.

In this proposition, we fix a time interval $I_j$ of the partition in Theorem~\ref{thm:hprinciple} and, for ease of notation, omit the subindex $j$ in Sections~\ref{sec:dynamo:newterm}--\ref{sec:uxB}. This causes no issues, since the time-intermittent coefficients are compactly supported in $I_j$, allowing us to glue the solutions constructed on each time interval of the partition. Thus, the parameters $(p_0,q_0)$, $(\lambda,\mu)$, and $(\lambda_0,\mu_0)$ will later depend on $j$ in the proof of Theorem~\ref{thm:hprinciple}, as our goal is to obtain estimates for $p_j$, $m_j$ and $\delta_j$.

Since the time interval is fixed, we denote $p_j$, $m_j$, and $\delta_j$ simply by $p$, $m$, and $\delta$, respectively. Moreover, we split the exponent $p$ into two roles: $p$, governing the spatial integrability of the velocity, and $\gamma$, governing the time integrability of the magnetic field. This distinction is made solely to emphasize their respective roles in the estimates. Later, in Section~\ref{Proof:prop:dynamo}, we set $\gamma = p$.

We caution the reader that the H\"older exponent $p$ in Proposition \ref{prop:dynamo} is not the same as in Section \ref{sec:idealdynamos}, where we have just fixed it to be infinite. In any case, whenever we invoke the lemmas from Section \ref{sec:idealdynamos}, we will recall the corresponding estimates to avoid any confusion.

\begin{prop}\label{prop:dynamo}
Let $\Omega\subset\R^3$ be a smooth bounded domain, $I\subset (0,\infty)$ be a bounded interval, and let $p,\gamma,m\geq 1$ and $\delta>0$. For any continuous-in-$\epsilon$ family of smooth solutions $(u_0,B_0^\epsilon,R_0^\epsilon)$ of the \eqref{relaxeddynamo} equation on $I\times\Omega$, there is another continuous-in-$\epsilon$ family of smooth solutions $(u_1,B_1^\epsilon,R_1^\epsilon)$ which fulfills the estimates
\begin{align*}
\|u_1-u_0\|_{L^1(I,W^{1,p})}&\leq \delta,\\
\|B_1-B_0\|_{L^\gamma(I,C^m)}&\leq\delta,\\
\|R_1\|_{L^1(I\times\Omega)}&\leq\delta,\\
\|u_1\times B_1-u_0\times B_0\|_{L^1(I\times\Omega)}&\leq M\|R_0\|_{L^1(I\times\Omega)}.
\end{align*}
Moreover, 
\begin{equation}\label{prop:dynamo:boundary}
\operatorname{supp}(u_1-u_0),\,\operatorname{supp}(B_1-B_0),\,\operatorname{supp}R_1\subset\subset I\times\Omega.
\end{equation}
\end{prop}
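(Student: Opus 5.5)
The plan is to follow the bulleted recipe preceding the statement. We set
\[
u_1:=u_0+\nabla\times\tilde\psi_{\rm dyn},\qquad B_1^\epsilon:=B_0^\epsilon+\nabla\times\tilde A_{\rm dyn}+\nabla\times H,
\]
with the time cutoff $\chi_\sigma$ and the time-intermittent coefficients $(\mathbf u_0)_{\lambda_0},(\mathbf B_0)_{\lambda_0}$ supported in $I$. Since $\tilde\psi_{\rm dyn}$ involves only $\chi_\sigma$, the $\psi_k$ and $\mathbf u_0$, the velocity is independent of $\epsilon$. All $\epsilon$-dependence sits in $R_0^\epsilon$, which enters linearly, so continuity in $\epsilon$ is inherited. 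The support property \eqref{prop:dynamo:boundary} holds because every new term carries $\chi_\sigma$ (supported at distance $\geq\sigma/2$ from $\partial\Omega$) and a factor compactly supported in time inside $I$.

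The new defect is defined through
\[
\nabla\times R_1=(\partial_t-\epsilon\Delta)B_1-\nabla\times(u_1\times B_1),
\]
and we split it as follows.

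\emph{Quadratic part.} We have $(\mathbf u_0\mathbf B_0)_{\lambda_0}\tilde u\times\tilde B$. Using $\mathbf H_0'=1-\mathbf u_0\mathbf B_0$, write $(\mathbf u_0\mathbf B_0)_{\lambda_0}=1-(\mathbf H_0')_{\lambda_0}$. The ``$1$'' part is handled exactly as in Section~\ref{sec:Rquad}, via Lemma~\ref{def:Rosc} and Lemmas~\ref{lemma:Rosc}, \ref{lemma:Rint}. Here the cutoff error is replaced by $(1-\chi_\sigma^2)R_0$, which is small in $L^1$ once $\sigma$ is small; this requires $R_0$ to be bounded near $\partial\Omega$, which holds since it is smooth and the family is continuous in $\epsilon\in(0,1]$. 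The part $-(\mathbf H_0')_{\lambda_0}\chi_\sigma^2R_0$ equals $\partial_t H$ up to the error $\lambda_0^{-1}(\mathbf H_0)_{\lambda_0}\partial_t(\chi_\sigma^2R_0)$, which is $O(\lambda_0^{-1})$. This is exactly why $H$ is added.

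\emph{Linear and corrector parts.} These are the terms $\partial_t\tilde A_{\rm dyn}$, $\tilde u_{\rm dyn}\times B_0$, $u_0\times\tilde B_{\rm dyn}$, the corrector cross terms, and the diffusion terms $\epsilon\nabla\times\tilde A_{\rm dyn}$, $\epsilon\nabla\times H$, plus the transport terms involving $H$. The diffusion term is absorbed into $R_1$ via $\Delta B=-\nabla\times\nabla\times B$.

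Each such term is estimated by $\|cf\|_{L^1_tX}\le\|c\|_{L^1}\|f\|_{L^\infty_tX}$, together with Lemma~\ref{lemma:buildingblock} in both time and space. In time we have $\|\mathbf u_0\|_{L^1}\sim\mu_0^{1/p_0-1}$ and $\|\mathbf B_0\|_{L^1}\sim\mu_0^{1/q_0-1}$. In space we use the $p=\infty,q=1$ scalings, so $\|\mathbf B_k\|_{L^1}\sim1$ but $\|\nabla\times\mathbf A_k\|_{C^m}$ grows with $\mu$. The derivative $\partial_t$ on $(\mathbf B_0)_{\lambda_0}$ costs $\lambda_0\mu_0$, and must be beaten by the spatial gain $1/(\lambda\mu)$ in $\tilde A$.

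The main obstacle is the ordering of parameters. Spatial smallness gives $\|\tilde u\|_{W^{1,p}}\sim\lambda\mu\cdot\mu^{-1/p}$ and $\|\tilde B\|_{C^m}\sim(\lambda\mu)^m\mu$. These large factors are compensated only by time intermittency:
\[
\|(\mathbf u_0)_{\lambda_0}\|_{L^1}\sim\mu_0^{-1/q_0},\qquad \|(\mathbf B_0)_{\lambda_0}\|_{L^\gamma}\sim\mu_0^{1/q_0-1/\gamma}.
\]
So we first choose $q_0$ close to $1$ (with $q_0<\gamma$), then $\sigma$, then $(\lambda,\mu)$ so that the spatial errors, which are of order $1/\lambda$ and $1/\mu$, multiply time norms of size $\|\mathbf u_0\mathbf B_0\|_{L^1}=O(1)$. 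Next we pick $\mu_0$ huge so that the powers $\mu_0^{-1/q_0}$ and $\mu_0^{1/q_0-1/\gamma}$ beat the fixed polynomial powers of $\lambda\mu$, and likewise absorb the linear terms $\mu_0^{1/p_0-1}$. Finally we take $\lambda_0$ large to kill the $\lambda_0^{-1}$ errors from $H$. One must also check that $\partial_t\tilde A_{\rm dyn}$, of size $\lambda_0\mu_0\cdot\mu_0^{1/q_0-1}/(\lambda\mu)$, stays small. I expect this to be the delicate point: it may force $\lambda\mu$ to be chosen after $\lambda_0\mu_0$, contradicting the order above. If so, I would instead move the temporal derivative onto a potential, or bound it crudely but require $\lambda\gg\lambda_0\mu_0^{1/q_0}$ while keeping the $C^m$ bound via an extra factor $\mu_0^{-\theta}$. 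The diffusion term $\epsilon\,\|\mathbf B_0\|_{L^1}\|\tilde B\|_{W^{1,1}}\le\mu_0^{1/q_0-1}(\lambda\mu)$ is uniform in $\epsilon\le1$ and is killed by $\mu_0$.

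The last estimate follows from
\[
u_1\times B_1-u_0\times B_0=\tilde u_{\rm dyn}\times\tilde B_{\rm dyn}+\text{(small terms)}.
\]
Here $\|\tilde u_{\rm dyn}\times\tilde B_{\rm dyn}\|_{L^1}\le\|\mathbf u_0\mathbf B_0\|_{L^1}\,\|\chi_\sigma^2R_0(\mathbf u_k\times\mathbf B_k)_\lambda\|_{L^1}\le M\|R_0\|_{L^1}$, using Proposition~\ref{prop:IHI} for large $\lambda$ together with the intersection bound.
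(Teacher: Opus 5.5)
Your construction and error decomposition coincide with the paper's. You use the same $\tilde\psi_{\text{dyn}}$, $\tilde A_{\text{dyn}}$ and $H$, the same splitting into quadratic, linear, corrector and diffusion errors, and you correctly isolate the acceleration error $\lambda_0\mu_0^{1/q_0}/(\lambda\mu)$. The argument fails, however, at the step that carries the content of the proposition: the choice of parameters. First, the direction of $q_0$ is reversed. With $q_0$ close to $1$ and $q_0<\gamma$ you get $\|(\mathbf B_0)_{\lambda_0}\|_{L^\gamma}\sim\mu_0^{1/q_0-1/\gamma}\to\infty$, so taking $\mu_0$ huge destroys the $L^\gamma_tC^m$ bound instead of proving it. You need $q_0>\gamma$, and in fact $q_0$ large: the magnetic time coefficient nearly bounded, the velocity time coefficient nearly an $L^1$-normalized spike. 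Second, the order ``$(\lambda,\mu)$, then $\mu_0$ huge, then $\lambda_0$ large'' cannot work, because the last two steps both increase the acceleration error. Your fallback $\lambda\gg\lambda_0\mu_0^{1/q_0}$ does not help either: combined with the velocity bound $\lambda\mu^{1-1/p}\ll\mu_0^{1/q_0}$ it gives $\lambda\gg\lambda_0\lambda\mu^{1-1/p}\geq\lambda$.

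What is missing is a proof that the three constraints
\[
\lambda\mu^{1-\frac{1}{p}}\ll\mu_0^{\frac{1}{q_0}},\qquad (\lambda\mu)^m\mu\ll\mu_0^{\frac{1}{\gamma}-\frac{1}{q_0}},\qquad \lambda_0\mu_0^{\frac{1}{q_0}}\ll\lambda\mu
\]
can hold simultaneously. The first and third leave a window for $\mu_0^{1/q_0}$ only if $\lambda_0\ll\mu^{1/p}$. The room comes from spatial concentration: the velocity pays only $\mu^{1-1/p}$ in $W^{1,p}$, while the potential $\tilde A$ gains the full factor $(\lambda\mu)^{-1}$. The second and third are compatible only if $q_0$ is large in terms of $\gamma$ and $m$. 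The paper makes this precise by writing $\mu=\lambda^{\gamma_\mu}$, $\lambda_0=\lambda^{\gamma_{\lambda_0}}$, $\mu_0=\lambda^{\gamma_{\mu_0}}$. It then imposes $p\gamma_{\lambda_0}<\gamma_\mu$ and $q_0>\gamma\bigl(1+\frac{m+(m+1)\gamma_\mu}{1+\gamma_\mu-\gamma_{\lambda_0}}\bigr)$, and picks $\gamma_{\mu_0}$ in the resulting nonempty interval. Since all parameters then tend to infinity with $\lambda$, this handles at once the $O(\lambda^{-1})$, $O(\mu^{-1})$ and $O(\lambda_0^{-1})$ errors, and the diffusion error, which is implied by the second constraint.

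A smaller omission is in the cross-product estimate. Bounding by $\|\mathbf u_0\mathbf B_0\|_{L^1}$ times a spatial norm only gives $M|I|\sup_t\|R_0(t)\|_{L^1}$, not $M\|R_0\|_{L^1(I\times\Omega)}$. You must also apply the improved H\"older inequality in time, taking $\lambda_0$ large and using the smoothness of $t\mapsto\|R_0(t)\|_{L^1}$. This matters in the iteration, because the defects produced by the scheme are small in $L^1_{t,x}$ but not uniformly in time.
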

\begin{proof}
See Section \ref{Proof:prop:dynamo}.
\end{proof}

\subsection{The new $(u_1,B_1,R_1)$}\label{sec:dynamo:newterm}

Given $(u_0,B_0,R_0)$, we
explain how the next step $(u_{\text{dyn}},B_{\text{dyn}},R_{\text{dyn}})$ is constructed. We will use the subindex ``dyn'' instead of 1 to avoid confusion with the ideal case.
 We decompose the new fields similarly to the ideal dynamo situation
$$
u_{\text{dyn}}:=u_0 + \underbrace{\tilde{u}_{\text{dyn}} + \tilde{u}_{\text{dyn},c}}_{\nabla\times\tilde{\psi}_{\text{dyn}}},
\quad\quad
B_{\text{dyn}}:=B_0 + \underbrace{\tilde{B}_{\text{dyn}} + \tilde{B}_{\text{dyn},c}}_{\nabla\times\tilde{A}_{\text{dyn}}} + \nabla\times H.
$$
Recall that these new perturbations are essentially the old ideal ones, tuned with the time cut-offs. 
They are defined in terms of the conjugate H\"older exponents $p_0,q_0$ and the parameters
\[
\renewcommand{\arraystretch}{1.2}
\begin{tabular}{r|c|c}
& \text{space} & \text{time} \\ \hline 
\text{concentration} & $\mu$ & $\mu_0$ \\ \hline
\text{oscillation} & $\lambda$ & $\lambda_0$
\end{tabular}
\]

As we did for $R_1$, we split
\begin{subequations}\label{R1:int;dif}
\begin{align}
\nabla\times R_{\text{dyn}}
&=(\partial_t-\epsilon\Delta) B_{\text{dyn}} - \nabla\times(u_{\text{dyn}}\times B_{\text{dyn}})\nonumber\\
&= \nabla\times(R_0-
\tilde{u}_{\text{dyn}}\times \tilde{B}_{\text{dyn}}+\partial_tH)\label{term:quad:int}\\
&+ \nabla\times(\partial_t\tilde{A}_{\text{dyn}}-\tilde{u}_{\text{dyn}}\times B_0 -u_0\times\tilde{B}_{\text{dyn}})\label{term:lin:int}\\
&-\nabla\times(\tilde{u}_{\text{dyn},c}\times (B_{\text{dyn}} -\nabla\times H)
+u_{\text{dyn}}\times(\tilde{B}_{\text{dyn},c}+\nabla\times H)
-\tilde{u}_{\text{dyn},c}\times \tilde{B}_{\text{dyn},c})
\label{term:cor:int}\\	
&+\epsilon\nabla\times(\nabla\times(\nabla\times(\tilde{A}_{\text{dyn}}+H)))\label{term:vis:int}\end{align}
\end{subequations}
and we define accordingly
\begin{equation}\label{Rdyn}
R_{\text{dyn}}
:=R_{\text{dyn}}^{\text{quad}}
+R_{\text{dyn}}^{\text{lin}}
+R_{\text{dyn}}^{\text{cor}}
+\epsilon R^{\text{dif}}.
\end{equation}

As we saw in Section \ref{sec:idealdynamos}, most of the terms are harmless, since they involve bounded negative powers of the parameters $(\mu,\mu_0,\lambda,\lambda_0)$. However, as discussed in Section \ref{sec:Sobolevversion}, there are also delicate terms, which a priori are bounded by positive powers of some of the parameters and thus must be balanced by those with negative exponents. In the following sections we estimate the delicate terms and, for completeness, briefly recall the bounds for the harmless ones.

\subsection{Choice of $\sigma$}
By continuity in $\epsilon$, the function
$$
g(t,x):=\max_{0\leq\epsilon\leq 1}|R_0^\epsilon(t,x)|
$$
is continuous on the closure of $I\times \Omega$. In particular, $g\in L^1(I\times \Omega)$. Hence, by the dominated convergence theorem, we deduce that there exists $\sigma>0$, independent of $0\leq \epsilon \leq 1$, such that
\begin{equation}\label{eq:choicesigma}
\|(1-\chi_\sigma^2)R_0^\epsilon\|_{L^1(I\times\Omega)}
\leq\|(1-\chi_\sigma^2)g\|_{L^1(I\times\Omega)}
\leq\frac{\delta}{8}.
\end{equation}

In the sequel, we omit the dependence on $\epsilon$ to simplify the notation. The key point is that all the estimates depend on derivatives of $(u_0,B_0^\epsilon,R_0^\epsilon)$. Since these derivatives depend continuously on $\epsilon$, we can ultimately choose all the parameters uniformly in $\epsilon$.

\subsection{Regularity of the velocity and magnetic  fields}

Firstly, we obtain a bound for the perturbation of the velocity field. 

\begin{lemma}[Sobolev bound of the velocity perturbation]\label{lemma:Sobolev:udyn}
There exists $M>0$ such that
$$
\|\nabla\times\tilde{\psi}_{\text{dyn}}\|_{L_t^1 W^{1,p}}
\leq M\frac{\lambda\mu^{1-\tfrac{1}{p}}}{{\mu_0}^{1-\tfrac{1}{p_0}}}.
$$
\end{lemma}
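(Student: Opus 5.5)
The plan is to factor the space–time norm into a time part and a space part. Since $\nabla\times\tilde{\psi}_{\text{dyn}}=(\mathbf{u}_0)_{\lambda_0}(t)\,\nabla\times\tilde{\psi}(t,\cdot)$ and the temporal coefficient is a scalar function of time only, the inequality $\|cf\|_{L_t^1X}\le\|c\|_{L^1}\|f\|_{L_t^\infty X}$ with $X=W^{1,p}$ reduces the claim to
\[
\|\nabla\times\tilde{\psi}_{\text{dyn}}\|_{L^1_tW^{1,p}}\le \|(\mathbf{u}_0)_{\lambda_0}\|_{L^1(I)}\,\sup_{t}\|\tilde{u}+\tilde{u}_c\|_{W^{1,p}}.
\]
This splits the proof into two independent estimates.

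\emph{Time factor.} I apply Lemma~\ref{lemma:buildingblock} in one variable ($d=1$, $m=0$, $r=1$) to $\mathbf{u}_0=(\ell_0\mu_0)^{1/p_0}\phi_{\mu_0}$. Periodicity makes the oscillation $\lambda_0$ irrelevant for the $L^1$ norm over the period $I$. This gives
\[
\|(\mathbf{u}_0)_{\lambda_0}\|_{L^1}=(\ell_0\mu_0)^{1/p_0}\mu_0^{-1}\|\phi\|_{L^1}\lesssim \mu_0^{-(1-1/p_0)},
\]
which produces the denominator.

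\emph{Space factor.} Here I use the $p=\infty$, $q=1$ building blocks, so $\mathbf{u}_k=\phi_\mu(x_k)e_i$ and $\boldsymbol{\psi}_k=-\Phi_\mu(x_k)e_j$, and $\tilde{\psi}=\frac{1}{\lambda\mu}\sum_k\chi_\sigma(\boldsymbol{\psi}_k)_\lambda$. I mimic Lemma~\ref{lemma:Sobolev} with the spatial coefficient $\chi_\sigma$ in place of $\tilde{u}_k$ and with exponent $1/p$ of $\mu$ replaced by $0$.
\begin{itemize}
\item For $m=2$, $r=p$, the Leibniz rule gives
\[
\|\tilde{\psi}\|_{\dot W^{2,p}}\lesssim \frac{1}{\lambda\mu}\sum_{a+b=2}\|\chi_\sigma\|_{C^a}\,\|(\Phi_\mu)_\lambda\|_{\dot W^{b,p}}.
\]
By Lemma~\ref{lemma:buildingblock} in one variable, $\|(\Phi_\mu)_\lambda\|_{\dot W^{b,p}}=(\lambda\mu)^b\mu^{-1/p}\|\Phi^{(b)}\|_{L^p}$ (up to the harmless $\ell$-periodicity factor). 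The top term $b=2$ therefore dominates, giving $(\lambda\mu)\mu^{-1/p}=\lambda\mu^{1-1/p}$.
\item The lower-order terms ($b=0,1$, and the $L^p$ part of the $W^{1,p}$ norm, i.e.\ $\dot W^{1,p}$ of $\tilde\psi$) carry strictly fewer powers of $\lambda\mu$. They are absorbed for $\lambda\mu\ge1$.
\end{itemize}
Combining the two factors gives the stated bound, with a constant depending on $\|\chi_\sigma\|_{C^2}$, $\Phi$, $\phi$, $\ell$, $\ell_0$.

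The main subtlety is bookkeeping the constant. The dependence on $\chi_\sigma$ (through $\sigma$, hence on $\delta$ and $R_0$) is a priori not universal, so ``$M$'' must be read as in the paper's convention, possibly absorbing $\|\chi_\sigma\|_{C^2}$. Crucially, no dependence on $\epsilon$ enters, since $\tilde{\psi}$ involves only $\chi_\sigma$ and not $R_0$. The other point to check is that the lower-order Leibniz terms are genuinely dominated, which requires $\lambda\mu\gtrsim\|\chi_\sigma\|_{C^2}$. This holds once $\lambda$ and $\mu$ are chosen large, as they will be anyway.
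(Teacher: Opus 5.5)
Your proposal is correct and follows the paper's argument. You factor out the scalar time coefficient via $\|cf\|_{L^1_tX}\le\|c\|_{L^1}\|f\|_{L^\infty_tX}$, bound $\|(\mathbf{u}_0)_{\lambda_0}\|_{L^1}\lesssim\mu_0^{1/p_0-1}$ by Lemma~\ref{lemma:buildingblock}, and bound $\|\nabla\times\tilde{\psi}\|_{W^{1,p}}\lesssim\lambda\mu^{1-1/p}$ by the Leibniz-rule estimate of Lemma~\ref{lemma:Sobolev} with the $p=\infty$ building blocks. Your remark that the constant a priori depends on $\|\chi_\sigma\|_{C^2}$, and is universal only once $\lambda\mu\gtrsim\|\chi_\sigma\|_{C^2}$, is a fair sharpening of the paper's claim that it is universal; it is harmless because $\sigma$ is fixed before $\lambda,\mu,\lambda_0,\mu_0$ are chosen.
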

\begin{proof}
Since $\tilde{\psi}_{\text{dyn}}=(\mathbf{u}_0)_{\lambda_0}\tilde{\psi}$, the claim follows from the bounds 
$$
\|(\mathbf{u}_0)_{\lambda_0}\|_{L^1}\leq M\mu_0^{\frac{1}{p_0}-1},
\quad\quad
\|\nabla\times\tilde{\psi}\|_{L_t^\infty W^{1,p}}\leq M\lambda\mu^{1-\frac{1}{p}},
$$
which are obtained from Lemma \ref{lemma:buildingblock} applied to \eqref{def:timeintermittency:Bu}, and Lemma  \ref{lemma:Sobolev}, respectively. Recall that in this section the velocity is chosen to be independent of $R_0$, so the constant in Lemma~\ref{lemma:Sobolev} for the estimate of $\tilde{\psi}$ is universal.
\end{proof}
In view of Lemma \ref{lemma:Sobolev:udyn}, we can make the $L_t^1 W^{1,p}$-norm of $(u_1-u_0)$ small provided that
\begin{equation}\label{fastdynamo:cond2}
\lambda \mu^{1-\tfrac{1}{p}} \ll \mu_0^{\tfrac{1}{q_0}}.
\end{equation}
In particular, this condition requires $q_0<\infty$.

Similarly, we obtain a bound for the perturbation of the magnetic field. 

\begin{lemma}[Sobolev bound of the magnetic perturbation]\label{lemma:Sobolev:Bdyn}
There exists $C(m,\|R_0\|_{C^1})>0$ such that
$$
\|\nabla\times\tilde{A}_{\text{dyn}}\|_{L_t^\gamma C^m}
\leq C\frac{(\lambda\mu)^{m}\mu}{{\mu_0}^{\tfrac{1}{\gamma}-\tfrac{1}{q_0}}}.
$$
\end{lemma}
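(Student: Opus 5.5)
The estimate follows the proof of Lemma~\ref{lemma:Sobolev:udyn}: we separate the time-intermittent coefficient from the spatial perturbation. Since $\tilde{A}_{\text{dyn}}=(\mathbf{B}_0)_{\lambda_0}\tilde{A}$ and $(\mathbf{B}_0)_{\lambda_0}$ depends only on time, we have $\nabla\times\tilde{A}_{\text{dyn}}=(\mathbf{B}_0)_{\lambda_0}\nabla\times\tilde{A}$. The inequality $\|cf\|_{L_t^\gamma X}\leq\|c\|_{L^\gamma}\|f\|_{L_t^\infty X}$ with $X=C^m$ then gives
$$
\|\nabla\times\tilde{A}_{\text{dyn}}\|_{L_t^\gamma C^m}\leq\|(\mathbf{B}_0)_{\lambda_0}\|_{L^\gamma}\,\|\nabla\times\tilde{A}\|_{L_t^\infty C^m}.
$$

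For the time factor, apply Lemma~\ref{lemma:buildingblock} in dimension $d=1$ with $m=0$ and $r=\gamma$ to the definition \eqref{def:timeintermittency:Bu}, with $\ell_0$ fixed. This gives $\|(\mathbf{B}_0)_{\lambda_0}\|_{L^\gamma}=(\ell_0\mu_0)^{1/q_0}\mu_0^{-1/\gamma}\|\phi\|_{L^\gamma}\leq M\mu_0^{\frac{1}{q_0}-\frac{1}{\gamma}}$, which is exactly the denominator in the statement. If $I$ is normalized differently, the rescaling only changes the constant.

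For the space factor, recall that in this section $\tilde{A}=\frac{1}{\lambda\mu}\sum_k\chi_\sigma R_{0,k}(\mathbf{A}_k)_\lambda$ with $\mathbf{A}_k=\ell\mu\Phi_\mu(x_k)e_i$, which corresponds to $q=1$ in Lemma~\ref{lemma:Sobolev}. Rather than interpolate, we expand directly. By Lemma~\ref{lemma:incompressibilitycorrectors},
$$
\nabla\times\tilde{A}=\sum_k\chi_\sigma R_{0,k}(\mathbf{B}_k)_\lambda+\frac{1}{\lambda\mu}\sum_k\nabla(\chi_\sigma R_{0,k})\times(\mathbf{A}_k)_\lambda,
$$
and we take up to $m$ derivatives using the Leibniz rule. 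Each derivative that falls on $(\mathbf{B}_k)_\lambda$ or $(\mathbf{A}_k)_\lambda$ produces a factor $\lambda\mu$, by Lemma~\ref{lemma:buildingblock} with $r=\infty$. The $L^\infty$ norms satisfy $\|\mathbf{B}_k\|_{L^\infty}\leq M\mu$ and $\|\mathbf{A}_k\|_{L^\infty}\leq M\mu$. Derivatives that fall on the coefficients $\chi_\sigma R_{0,k}$ are bounded by $C(m,\sigma,\|R_0\|_{C^{m+1}})$. Since $\lambda\mu\geq1$, the leading term is $(\lambda\mu)^m\mu$, and we obtain $\|\nabla\times\tilde{A}\|_{L_t^\infty C^m}\leq C(\lambda\mu)^m\mu$.

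Multiplying the two bounds proves the lemma. The main point to check is the dependence of the constant: it involves $C^{m+1}$ norms of $R_0$ (uniformly in $\epsilon$, by continuity) and the fixed cutoff $\chi_\sigma$, so the $C(m,\|R_0\|_{C^1})$ in the statement should be read as depending on enough derivatives of $R_0$. Beyond that, the argument is routine bookkeeping: checking that the corrector term has the same order $(\lambda\mu)^{m-1}\mu$ times lower-order coefficient norms, and so is dominated by the leading term.
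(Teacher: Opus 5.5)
Your proposal is correct and follows the paper's proof. You factor $\nabla\times\tilde{A}_{\text{dyn}}=(\mathbf{B}_0)_{\lambda_0}\nabla\times\tilde{A}$ and bound the time coefficient in $L^\gamma$ by Lemma~\ref{lemma:buildingblock}. You bound $\nabla\times\tilde{A}$ in $L^\infty_t C^m$ by $C(\lambda\mu)^m\mu$, which the paper gets by citing Lemma~\ref{lemma:Sobolev} with $q=1$; that lemma is itself the Leibniz-rule computation you carry out explicitly.

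Your remark on the constant is also right. It depends on $\|R_0\|_{C^{m+1}}$ and on $\sigma$ through $\chi_\sigma$, consistent with the $C(m,\delta,R_0)$ of Lemma~\ref{lemma:Sobolev}, so the label $C(m,\|R_0\|_{C^1})$ in the statement is loose.
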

\begin{proof}
Since $\tilde{A}_{\text{dyn}}=(\mathbf{B}_0)_{\lambda_0}\tilde{A}$, the claim follows from the bounds 
$$
\|(\mathbf{B}_0)_{\lambda_0}\|_{L_t^\gamma}\leq M\mu_0^{\frac{1}{q_0}-\frac{1}{\gamma}},
\quad\quad
\|\nabla\times\tilde{A}\|_{L_t^\infty W^{m,\infty}}\leq C(\lambda\mu)^m \mu,
$$
which are obtained from Lemma \ref{lemma:buildingblock} applied to \eqref{def:timeintermittency:Bu}, and Lemma \ref{lemma:Sobolev}, respectively. 
\end{proof}
In view of Lemma \ref{lemma:Sobolev:Bdyn}, we can make the $L_t^\gamma C^m$-norm of $\nabla\times\tilde{A}_{\text{dyn}}$ small provided that
\begin{equation}\label{fastdynamo:cond1}
(\lambda\mu)^{m}\mu
\ll
{\mu_0}^{\tfrac{1}{\gamma}-\tfrac{1}{q_0}}.
\end{equation}
In particular, this condition requires $q_0>\gamma$.

The temporal corrector satisfies
$$
\|\nabla\times H\|_{L_t^\gamma C^m}
\leq\frac{M}{\lambda_0}\|\chi_\sigma^2R_0\|_{C^{m+1}},
$$
where we have applied that $\mathbf{H}_0$ is bounded (recall \eqref{eq:Hbounded}). Therefore, the temporal corrector is a harmless term and thus the condition \eqref{fastdynamo:cond1} suffices to make $(B_1-B_0)$ arbitrarily small in $L_t^\gamma C^m$.

\subsection{The quadratic error}\label{sec:quadraticerror:dif}
Recall that $R_{\text{dyn}}^{\text{quad}}$ is defined at the line \eqref{term:quad:int}. 
More precisely, since
$$
\tilde{u}_{\text{dyn}}\times\tilde{B}_{\text{dyn}}
=(\mathbf{u}_0\mathbf{B}_0)_{\lambda_0}(\tilde{u}\times\tilde{B}),
$$
and, using the definition of $\mathbf{H}_0$,
$$\partial_tH
=(\mathbf{u}_0\mathbf{B}_0-1)_{\lambda_0}\chi_\sigma^2 R_0
-\frac{1}{\lambda_0}(\mathbf{H}_0)_{\lambda_0}\chi_\sigma^2\partial_tR_0,
$$
we split
\begin{align*}
R_0-\tilde{u}_{\text{dyn}}\times \tilde{B}_{\text{dyn}} +\partial_tH
=
(1-\chi_\sigma^2)R_0
+(\mathbf{u}_0\mathbf{B}_0)_{\lambda_0}\chi_\sigma^2(R_0-\tilde{u}\times\tilde{B}) 
-\frac{1}{\lambda_0}(\mathbf{H}_0)_{\lambda_0}\chi_\sigma^2\partial_tR_0.
\end{align*}
This allows us to define
$$R_{\text{dyn}}^{\text{quad}}
:=(1-\chi_\sigma^2) R_0 + (\mathbf{u}_0\mathbf{B}_0)_{\lambda_0} \chi_\sigma^2 R^{\text{quad}}
-\frac{1}{\lambda_0}(\mathbf{H}_0)_{\lambda_0} \chi_\sigma^2 \partial_tR_0.$$
The first term was bounded in \eqref{eq:choicesigma}. For the second term, note that $R^{\text{quad}}$ was already estimated in \eqref{Rquad:estimate} (recall that $R^{\text{quad}} = R_1^{\text{osc}} + R^{\text{int}}$, where now $R^\chi = 0$). The last term can be made arbitrarily small by taking $\lambda_0$ sufficiently large. Therefore, by choosing the parameters large enough, we can ensure, for instance, that
$$
\|R_{\text{dyn}}^{\text{quad}}\|_{L^1_{t,x}} \leq \frac{3}{4}\delta.
$$  

\subsection{The linear error} Recall that the linear error $R_{\text{dyn}}^{\text{lin}}$ is defined at the second line \eqref{term:lin:int}. More precisely, by splitting
$$\partial_t\tilde{A}_{\text{dyn}}
=\partial_t(\mathbf{B}_0)_{\lambda_0}\tilde{A}
+(\mathbf{B}_0)_{\lambda_0}\partial_t\tilde{A},$$
we define
\begin{align*}
R_{\text{dyn}}^{\text{lin}}
&:=\partial_t(\mathbf{B}_0)_{\lambda_0}\tilde{A}
+(\mathbf{B}_0)_{\lambda_0}(\partial_t\tilde{A}
-u_0\times\tilde{B}) - (\mathbf{u}_0)_{\lambda_0}(\tilde{u}\times B_0).
\end{align*}
The first term is new, and following \cite{CheskidovLuo24}, we refer to it as the \textit{acceleration error}
$$R^{\text{acc}}
:=\partial_t(\mathbf{B}_0)_{\lambda_0}\tilde{A}.$$
\begin{lemma}[$L^1$-bound of $R^{\text{acc}}$]
There exists $C(R_0)>0$ such that
$$\|R^{\text{acc}}\|_{L_{t,x}^1}
\leq C \frac{\lambda_0\mu_0^{1/q_0}}{\lambda\mu}.$$
\end{lemma}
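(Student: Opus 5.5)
The acceleration error factorizes as a product of a purely temporal coefficient and a spatial potential, $R^{\text{acc}}=\partial_t(\mathbf{B}_0)_{\lambda_0}\,\tilde{A}$. We therefore bound it by
$$
\|R^{\text{acc}}\|_{L^1_{t,x}}\leq \|\partial_t(\mathbf{B}_0)_{\lambda_0}\|_{L^1_t}\,\|\tilde{A}\|_{L^\infty_tL^1_x},
$$
using the inequality $\|cf\|_{L_t^rX}\leq\|c\|_{L^r}\|f\|_{L_t^\infty X}$ recorded in the list of modifications above, with $r=1$ and $X=L^1$. The two factors are then estimated separately.

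\textbf{Temporal factor.} By \eqref{def:timeintermittency:Bu}, $\mathbf{B}_0=(\ell_0\mu_0)^{1/q_0}\phi_{\mu_0}$. Lemma~\ref{lemma:buildingblock} with $d=1$, $m=1$, $r=1$ gives
$$
\|\partial_t(\mathbf{B}_0)_{\lambda_0}\|_{L^1}
=(\ell_0\mu_0)^{1/q_0}(\lambda_0\mu_0)\mu_0^{-1}\|\dot\phi\|_{L^1}
=M\lambda_0\mu_0^{1/q_0}.
$$

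\textbf{Spatial factor.} In the diffusive setting, $\tilde{A}=\frac{1}{\lambda\mu}\sum_k\chi_\sigma R_{0,k}(\mathbf{A}_k)_\lambda$, with $\mathbf{A}_k=\ell\mu\,\Phi_\mu(x_k)e_i$ (the choice ``$p=\infty$, $q=1$''). Hölder's inequality bounds each summand by
$$
\frac{1}{\lambda\mu}\|\chi_\sigma R_{0,k}\|_{L^\infty_{t,x}}\|(\mathbf{A}_k)_\lambda\|_{L^1}.
$$
By Lemma~\ref{lemma:buildingblock} (this is the case $m=0$ of Lemma~\ref{lemma:Sobolev} with $r=q=1$),
$$
\|(\mathbf{A}_k)_\lambda\|_{L^1}=\ell\mu\cdot\mu^{-1}\ell^2\|\Phi\|_{L^1}\leq M.
$$
Here the $\mu^{-1}$ comes from the concentration of the one-dimensional profile, and the remaining two directions contribute the side length $\ell^2$. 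Hence $\|\tilde{A}\|_{L^\infty_tL^1_x}\leq C(R_0)/(\lambda\mu)$, where $C$ depends only on $\|R_0\|_{C^0}$, and $\chi_\sigma\leq1$.

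Multiplying the two factors gives the stated bound $C\lambda_0\mu_0^{1/q_0}/(\lambda\mu)$. The argument is a routine Hölder estimate. The only point to watch is that the $L^1$ scaling of $\mathbf{A}_k$ is exactly $O(1)$ in $\mu$: the amplitude $\ell\mu$ cancels against the measure $\mu^{-1}$ of the support. This cancellation is what leaves the gain $1/(\lambda\mu)$ from the potential. Consequently the condition needed afterwards is that $\lambda\mu\gg\lambda_0\mu_0^{1/q_0}$, which must be made compatible with \eqref{fastdynamo:cond2} and \eqref{fastdynamo:cond1}.
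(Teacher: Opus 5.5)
Your proposal is correct and matches the paper's proof: you factor $R^{\text{acc}}=\partial_t(\mathbf{B}_0)_{\lambda_0}\tilde{A}$, bound $\|\partial_t(\mathbf{B}_0)_{\lambda_0}\|_{L^1}\le M\lambda_0\mu_0^{1/q_0}$ via Lemma~\ref{lemma:buildingblock}, and bound $\|\tilde{A}\|_{L^\infty_t L^1}\le C/(\lambda\mu)$. The paper cites the $m=0$, $r=q=1$ case of Lemma~\ref{lemma:Sobolev} for the second bound, whereas you verify it directly by H\"older, using that the amplitude $\ell\mu$ of $\mathbf{A}_k$ cancels against the $\mu^{-1}$ measure of its support.
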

\begin{proof}
The claim follows from the bounds 
$$
\|\partial_t(\mathbf{B}_0)_{\lambda_0}\|_{L^1}\leq M\lambda_0\mu_0^{\tfrac{1}{q_0}},
\quad\quad
\|\tilde{A}\|_{L_t^\infty L^1}\leq\frac{C}{\lambda\mu},
$$
which are obtained from Lemma~\ref{lemma:buildingblock} applied to \eqref{def:timeintermittency:Bu}, and Lemma~\ref{lemma:Sobolev}, respectively.
\end{proof}

Therefore, the acceleration error can be made small in $L_{t,x}^1$ provided that
\begin{equation}\label{acc:cond:1}
\lambda_0 \mu_0^{1/q_0} \ll \lambda \mu .
\end{equation}
The remaining terms of $R_{\text{dyn}}^{\text{lin}}$ are harmless; that is, they can be made arbitrarily small in $L_{t,x}^1$ by first applying Lemma \ref{lemma:Rlin} to control the contributions from the ideal case, and then using Lemma \ref{lemma:buildingblock} applied to \eqref{def:timeintermittency:Bu} to gain an additional factor of $\mu_0^{-1/\max (p_0,q_0)}$.

\subsection{The corrector error}
For the third line in \eqref{term:cor:int}, we set
\begin{align*}
R_{\text{dyn}}^{\text{cor}}
&:=-\tilde{u}_{\text{dyn},c}\times (B_{\text{dyn}}-\nabla\times H)
-u_{\text{dyn}}\times(\tilde{B}_{\text{dyn},c}+\nabla\times H)
+\tilde{u}_{\text{dyn},c}\times \tilde{B}_{\text{dyn},c}.
\end{align*}
Using the extra factors $(\lambda\mu)^{-1}$, $(\lambda\mu)^{-1}$ and  $\lambda_0^{-1}$ of $\tilde{u}_{\text{dyn},c}$, $\tilde{B}_{\text{dyn},c}$ and $H$, respectively, one easily sees that the $L_{t,x}^1$-norm of the corrector error can be made arbitrarily small; in other words, $R_{\text{dyn}}^{\text{cor}}$ is harmless.

\subsection{The diffusion error}

Finally, from the fourth line \eqref{term:vis:int}, we set
$$R^{\text{dif}}:=\nabla\times\nabla\times(\tilde{A}_{\text{dyn}}+ H).$$
Let us bound the harmless term, which is analogous to Lemma \ref{lemma:Sobolev:Bdyn}.

\begin{lemma}[$L^1$-norm of $R^{\text{dif}}$]
There exists $C(R_0)>0$ such that
$$\|\nabla\times\nabla\times\tilde{A}_{\textup{dyn}}\|_{L_ {t,x}^1}
\leq C
\frac{\lambda\mu}{{\mu_0}^{1/p_0}}.$$
\end{lemma}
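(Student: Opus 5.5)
The estimate factors into a time part and a space part, as in Lemmas \ref{lemma:Sobolev:udyn} and \ref{lemma:Sobolev:Bdyn}. Since $\tilde{A}_{\text{dyn}}=(\mathbf{B}_0)_{\lambda_0}\tilde{A}$ with the time coefficient scalar and independent of $x$, we have pointwise in time
$$
\|\nabla\times\nabla\times\tilde{A}_{\text{dyn}}(t)\|_{L^1}=|(\mathbf{B}_0)_{\lambda_0}(t)|\,\|\nabla\times\nabla\times\tilde{A}(t)\|_{L^1},
$$
and therefore
$$
\|\nabla\times\nabla\times\tilde{A}_{\text{dyn}}\|_{L^1_{t,x}}\le \|(\mathbf{B}_0)_{\lambda_0}\|_{L^1(I)}\,\|\nabla\times\nabla\times\tilde{A}\|_{L^\infty_t L^1}.
$$

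For the time factor, Lemma \ref{lemma:buildingblock} with $d=1$, $m=0$, $r=1$ applied to \eqref{def:timeintermittency:Bu} gives
$$
\|(\mathbf{B}_0)_{\lambda_0}\|_{L^1}=(\ell_0\mu_0)^{1/q_0}\mu_0^{-1}\|\phi\|_{L^1}\le M\mu_0^{\frac{1}{q_0}-1}=M\mu_0^{-1/p_0},
$$
using \eqref{eq:timeexponents}. For the space factor, I use $|\nabla\times\nabla\times\tilde{A}|\lesssim|\nabla^2\tilde{A}|$ and Lemma \ref{lemma:Sobolev} with $m=2$, $r=1$. In this section the spatial exponent is ``$q=1$'': $\mathbf{A}_k=\ell\mu\Phi_\mu(x_k)e_i$ and the coefficient is $\chi_\sigma R_{0,k}$. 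The lemma then yields
$$
\|\tilde{A}\|_{\dot W^{2,1}}\le C(\lambda\mu)^{2-1}\mu^{1-1}=C\lambda\mu,
$$
where $C$ depends on $\|\chi_\sigma R_0\|_{C^2}$, i.e.\ on $R_0$ (with $\sigma$ already fixed). This bound is uniform in $t\in I$ because $R_0$ is smooth on $\bar I\times\bar\Omega$.

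The step most worth checking is that the Leibniz expansion of $\nabla^2\big(\chi_\sigma R_{0,k}(\mathbf{A}_k)_\lambda\big)/(\lambda\mu)$ really has leading term of order $\lambda\mu$ in $L^1$. The term where both derivatives hit $(\mathbf{A}_k)_\lambda$ gives
$$
\frac{1}{\lambda\mu}(\lambda\mu)^2\,\ell\mu\cdot\mu^{-1}\|\Phi''\|_{L^1}\sim\lambda\mu,
$$
by Lemma \ref{lemma:buildingblock} in one variable; the factor $\mu^{-1}$ is the $L^1$ Jacobian cancelling the amplitude $\ell\mu$. The terms with fewer derivatives on the building block carry lower powers of $\lambda\mu$, so they are dominated. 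Multiplying the two factors gives the stated bound $C\lambda\mu\,\mu_0^{-1/p_0}$. (The $H$ part of $R^{\text{dif}}$ is handled separately and is harmless thanks to its factor $\lambda_0^{-1}$.)
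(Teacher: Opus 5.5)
Your proposal is correct and is essentially the paper's own proof. The paper also factors the norm as $\|(\mathbf{B}_0)_{\lambda_0}\|_{L^1}\,\|\nabla\times\nabla\times\tilde{A}\|_{L^\infty_t L^1}$. It bounds the first factor by $M\mu_0^{1/q_0-1}=M\mu_0^{-1/p_0}$ via Lemma \ref{lemma:buildingblock}, and the second by $C\lambda\mu$ via Lemma \ref{lemma:Sobolev} (with $m=2$, $r=1$, spatial exponent $q=1$); your explicit Leibniz-rule check simply spells out what the paper leaves implicit in that citation.
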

\begin{proof}
Since $\tilde{A}_{\text{dyn}}=(\mathbf{B}_0)_{\lambda_0}\tilde{A}$, the claim follows from the bounds
$$
\|(\mathbf{B}_0)_{\lambda_0}\|_{L^1}\leq M\mu_0^{\tfrac{1}{q_0}-1},
\quad\quad
\|\nabla\times\nabla\times\tilde{A}\|_{L_t^\infty L^1}\leq C\lambda\mu,
$$
which are obtained from Lemma \ref{lemma:buildingblock} applied to \eqref{def:timeintermittency:Bu}, and \ref{lemma:Sobolev}, respectively.
\end{proof}
As in the previous sections, 
the term $H$ can be made arbitrarily small in $L_{t,x}^1$ thanks to the factor $\lambda_0^{-1}$.
All in all, the $L_{t,x}^1$-norm of the viscous error can be made small provided that
\begin{equation}\label{viscous:cond:1}
\lambda \mu \ll \mu_0^{1/p_0}.
\end{equation}
As expected, one can easily check that \eqref{viscous:cond:1} is implied by the much stronger condition \eqref{fastdynamo:cond1}. Therefore, it will suffice to verify \eqref{fastdynamo:cond1}.

\subsection{The cross product}\label{sec:uxB}
In this section we control the $L_{t,x}^1$-norm of
$$
u_1\times B_1-u_0\times B_0
= (\tilde{u}_{\text{dyn}} + \tilde{u}_{\text{dyn}}^c) \times
(\tilde{B}_{\text{dyn}}+\tilde{B}_{\text{dyn},c}+\nabla\times H).
$$
As in the previous sections, all the terms except $\tilde{u}_{\text{dyn}}\times\tilde{B}_{\text{dyn}}$ can be made arbitrarily small by using the extra factors $(\lambda\mu)^{-1}$, $(\lambda\mu)^{-1}$ and  $\lambda_0^{-1}$ of $\tilde{u}_{\text{dyn},c}$, $\tilde{B}_{\text{dyn},c}$ and $H$, respectively. 

The remaining term can be estimated by applying the improved H\"older inequality (Proposition~\ref{prop:IHI}). 
This was already used in \eqref{eq:uB:Lebesgue}; namely, we can take the spatial oscillation $\lambda$ sufficiently large so that
$$
\|(\tilde{u}\times\tilde{B})(t)\|_{L^1}
\leq\|\tilde{u}(t)\|_{L^\infty}\|\tilde{B}(t)\|_{L^1}
\leq M\|R_0(t)\|_{L^1}.
$$
Then, using the smoothness of $t \mapsto \|R_0(t)\|_{L^1}$, we can argue as before and take the temporal oscillation $\lambda_0$ sufficiently large to ensure that
$$
\|\tilde{u}_{\text{dyn}}\times\tilde{B}_{\text{dyn}}\|_{L^1_{t,x}}
\leq M\|(\mathbf{u}_0\mathbf{B}_0)_{\lambda_0}\|R_0\|_{L^1}\|_{L_t^1}
\leq M\|R_0\|_{L_{t,x}^1},
$$
where we have applied that $\|\mathbf{u}_0\mathbf{B}_0\|_{L^1}=1$.

\subsection{Proof of Proposition \ref{prop:dynamo}}
\label{Proof:prop:dynamo}
In the previous sections, we have seen that proving Proposition \ref{prop:dynamo} reduces to showing that one can choose the parameters $(p_0,q_0)$, $(\lambda,\mu)$, and $(\lambda_0,\mu_0)$ satisfying the three conditions \eqref{fastdynamo:cond2}, \eqref{fastdynamo:cond1}, and \eqref{acc:cond:1}. To this end, we begin by comparing the concentration/oscillation parameters with $\lambda$ in terms of the corresponding exponents
$$
\lambda=\lambda^1,
\quad\quad
\mu=\lambda^{\gamma_\mu}, 
\quad\quad
\lambda_0=\lambda^{\gamma_{\lambda_0}},
\quad\quad
{\mu_0}=\lambda^{\gamma_{\mu_0}}.
$$
This allows us to write our three conditions in terms of algebraic relations between these exponents:
\begin{enumerate}[(1)]
    \item\label{cond:u} The $L_t^1W^{1,p}$-condition \eqref{fastdynamo:cond2} reads as
$$q_0(1+(1-1/p)\gamma_\mu) < \gamma_{\mu_0}.$$
    \item\label{cond:B} The $L_t^\gamma C^m$-condition \eqref{fastdynamo:cond1} reads as ($q_0>\gamma$)
$$\frac{q_0\gamma}{q_0-\gamma}(m+(m+1)\gamma_\mu)<\gamma_{\mu_0}.$$
    \item\label{cond:acc} The acceleration condition \eqref{acc:cond:1} reads as
$$\gamma_{\mu_0} <q_0(1 + \gamma_\mu - \gamma_{\lambda_0}).$$
\end{enumerate}

Next, we investigate whether there is a way to make all these conditions compatible.
Notice that there is no incompatibility between \eqref{fastdynamo:cond2} and \eqref{fastdynamo:cond1}, they only require taking $\gamma_{\mu_0}$ large enough.
Thus, we only need to check that these two conditions are compatible with \eqref{acc:cond:1}.

Firstly, 
\eqref{fastdynamo:cond2} and \eqref{acc:cond:1} are compatible as soon as
$$
1+(1-1/p)\gamma_\mu
<1 + \gamma_\mu - \gamma_{\lambda_0},
$$
or equivalently,
\begin{equation}\label{fastdynamo:compatibility23}
p\gamma_{\lambda_0} < \gamma_\mu.
\end{equation}

Secondly, \eqref{fastdynamo:cond1} and \eqref{acc:cond:1} are compatible if
$$
\frac{\gamma}{q_0-\gamma}(m+(m+1)\gamma_\mu)
<1 + \gamma_\mu - \gamma_{\lambda_0},
$$
or equivalently,
\begin{equation}\label{fastdynamo:compatibility13}
q_0>
\gamma\left(1+\frac{m+(m+1)\gamma_\mu}{1 + \gamma_\mu - \gamma_{\lambda_0}}\right).
\end{equation}

Therefore, for every $p,\gamma,m\geq 1$, by choosing $\gamma_\mu$ such that
$$
p\gamma_{\lambda_0}<\gamma_\mu<\infty,
$$
for some arbitrary $\gamma_{\lambda_0}\in\N$,
and then taking $q_0$ in the regime
$$
\gamma\left(1+\frac{m+(m+1)\gamma_\mu}{1 + \gamma_\mu - \gamma_{\lambda_0}}\right)
<q_0<\infty,
$$
there is room to take $\gamma_{\mu_0}$ satisfying the three conditions \ref{cond:u}, \ref{cond:B} and \ref{cond:acc}. 

Finally, recall that $\epsilon$ only appears in the definition of $R_{\text{dyn}}$ (equation \eqref{Rdyn}). Since $\epsilon \leq 1$, we can make the $L^1_{t,x}$-norm of $R_{\text{dyn}}$ arbitrarily small uniformly in $\epsilon$. Moreover, thanks to the continuity in $\epsilon$ of the family $(u_0,B_0^\epsilon,R_0^\epsilon)$, all the estimates are uniform in $\epsilon$, that is, the parameters can be taken independent of $\epsilon$. Thus, $u_1$ is also independent of the magnetic diffusivity.

\subsection{Proof of Theorem \ref{thm:hprinciple}}\label{Proof:thm:hprinciple}

Set 
$u_0:=\bar{u}$, $B_0:=\bar{B}=\nabla\times\bar{A}$ and $R:=(\partial_t-\epsilon\Delta)\bar{A}-\bar{u}\times\bar{B}$. Then, $(u_0,B_0,R_0)$ is a smooth solution to the \eqref{relaxeddynamo} equation in $[0,\infty)\times\Omega$. Given the partition $I_j$ and the sequences $p_j,m_j\geq 1$ and $\delta_j>0$, we define
$$
p_{j,n}:=np_j,
\quad\quad
m_{j,n}:=n m_j,
\quad\quad
\delta_{j,n}:=2^{-(n+1)}\min(|I_j|,|I_j|^{\nicefrac{1}{p_{j,n}}})\delta_j.
$$
We apply Proposition \ref{prop:dynamo} iteratively on each interval $I_j$ to construct a sequence of smooth solutions $(u_n,B_n,R_n)$ satisfying the inductive estimates
\begin{align*}
\|u_n-u_{n-1}\|_{L^1(I_j,W^{1,p_{j,n}})}&\leq\delta_{j,n},\\
\|B_n-B_{n-1}\|_{L^{p_{j,n}}(I_j,C^{m_{j,n}})}&\leq\delta_{j,n},\\
\|R_n\|_{L^1(I\times\Omega)}&\leq\delta_{j,n},\\
\|u_n\times B_n-u_{n-1}\times B_{n-1}\|_{L^1(I\times\Omega)}&\leq M\|R_{n-1}\|_{L^1(I\times\Omega)},
\end{align*}
for all indices $j$ and $n\geq 1$. 
Recall that the solutions are well-defined at the boundaries of the partition thanks to the support condition \eqref{prop:dynamo:boundary}. It follows that, for any given $p,m \geq 1$, the sequence $(u_n,B_n,R_n,u_n\times B_n)$ is Cauchy in $L_t^1W^{1,p}\times L_t^p C^m\times L_{t,x}^1\times L_{t,x}^1$, and therefore converges to some $(u,B,0,P)$. 
By taking a subsequence and dropping indices if necessary, we obtain $u_n \to u$, $B_n \to B$, and $u_n \times B_n \to P$ pointwise almost everywhere. Hence $P = u \times B$, and therefore $(u,B)$ is a weak solution to the \eqref{dynamo1} equation.

By applying the triangle inequality, using that the sequences $p_j \leq p_{j,n}$ and $m_j \leq m_{j,n}$ are increasing in $n$, and recalling the choice of $\delta_{j,n}$, we deduce that
\begin{align*}
\left(\dashint_{I_j}\|B^\epsilon-\bar{B}\|_{C^{m_j}}^{p_j}\dif t\right)^{\nicefrac{1}{p_j}}
&\leq\sum_{n\geq 1}
\left(\dashint_{I_j}\|B_n-B_{n-1}\|_{C^{m_j}}^{p_j}\dif t\right)^{\nicefrac{1}{p_j}}\\
&\leq\sum_{n\geq 1}
\left(\dashint_{I_j}\|B_n-B_{n-1}\|_{C^{m_{j,n}}}^{p_{j,n}}\dif t\right)^{\nicefrac{1}{p_{j,n}}}\\
&\leq\sum_{n\geq 1}\delta_{j,n}
|I_j|^{-\nicefrac{1}{p_{j,n}}}=\frac{\delta_j}{2}.
\end{align*}
The proof for the velocity is analogous.
This establishes \eqref{thm:hprinciple:barB}.

Finally, we note that the boundary conditions are preserved, since the perturbations are compactly supported in $\Omega$.

\subsection{Coarse-grained magnetic fields}\label{Proof:thm:fastdynamo}
After the $H$-principle the existence of dynamos boils down to the existence of coarse-grained evolutions. We label differently the intervals $I_j$ near $0$ and $\infty$. That is,  we set the notation
\begin{equation}\label{eq:partition}
0<\ldots<a_1<a_0=b_0<b_1<\ldots<\infty,
    \end{equation}
    where the intervals $(a_{j+1},a_j)$ and $(b_j,b_{j+1})$ accumulate at $0$ and $\infty$, respectively. 
  
\begin{thm}[Coarse-grained magnetic fields]\label{thm:fastdynamo}
Let $\Omega\subset\R^3$ be a smooth bounded domain, and let $\bar{\mathcal{E}}:[0,\infty)\to[0,\infty)$ be a smooth energy profile. Consider a partition of $(0,\infty)$ of the form \eqref{eq:partition}. Fix two non-decreasing sequences $p_j,m_j \geq 1$, and another sequence $\delta_j\searrow 0$. Then there exists
$u$ and $B^\epsilon$ as in \eqref{thm:hprinciple:u} and \eqref{thm:hprinciple:B}, such that:
\begin{enumerate}[(a)]
    \item\label{thm:fastdynamo:(a)} There exists a smooth solenoidal field $\bar{B}$ on $[0,\infty)\times\bar{\Omega}$ such that $\mathcal{E}(\bar{B})=\bar{\mathcal{E}}$ and
$$
\left(\dashint_0^{a_j}
\|(B^\epsilon-\bar{B})(t)\|_{C^{m_j}}^{p_j} \, dt 
\right)^{\nicefrac{1}{p_j}}\leq\delta_j,
$$
for all $j$, uniformly in $0<\epsilon\leq 1$. 
    \item\label{thm:fastdynamo:(b)} The magnetic energies converge as $t\to\infty$ in the sense that
$$
\left(\dashint_{b_{j}}^{b_{j+1}} |\mathcal{E}(B^\epsilon(t))-\bar{\mathcal{E}}(t)|^{p_j}\dif t\right)^{\nicefrac{1}{p_j}}
\leq\delta_j,
$$
for all $j$, uniformly in $0<\epsilon\leq 1$.
\begin{comment}
In particular, if 
$$
\liminf_{t\to\infty}\frac{\log\bar{\mathcal{E}}(t)}{t}>0,
$$
then $u$ is a fast dynamo and if \[ \lim_{T\to \infty}\frac{1}{T}  \int_1^T\frac{\log\bar{\mathcal{E}}(t)}{t}dt =\infty \]
$u$ is an \it{ergodic} fast dynamo.
\end{comment}
\end{enumerate}
\end{thm}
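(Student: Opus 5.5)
The plan is to build a smooth coarse-grained evolution $(\bar u,\bar B)$ with the prescribed energy profile, feed it into the $H$-principle (Theorem~\ref{thm:hprinciple}) with a suitably refined choice of sequences, and then turn the $C^{m_j}$-closeness of $B^\epsilon$ to $\bar B$ into closeness of energies.

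\emph{Construction of $\bar B$.} Pick a fixed nonzero smooth field $B_\ast\in H_{\mathrm{pc}}(\Omega)$, smooth up to the boundary with $\mathcal{E}(B_\ast)=1$. A magnetostatic eigenfield from Appendix~\ref{sec:magnetostatic} works. Alternatively, take $B_\ast=\nabla\times(\varphi e_1)$ with $\varphi\in C_c^\infty(\Omega)$, normalized, which trivially satisfies \eqref{dynamo:tangentialcondition}--\eqref{dynamo:curlBxn} because it vanishes near $\partial\Omega$. Set
$$
\bar B(t,x):=\sqrt{\bar{\mathcal E}(t)}\,B_\ast(x),\qquad \bar u:=0 .
$$
Then $\mathcal{E}(\bar B(t))=\bar{\mathcal E}(t)$. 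For $\bar B$ to be smooth in time, $\sqrt{\bar{\mathcal E}}$ must be smooth. This holds if $\bar{\mathcal E}>0$. If $\bar{\mathcal E}$ may vanish, I would write $\bar{\mathcal E}=\sum_i f_i^2$ with smooth $f_i$, which is possible after a harmless perturbation inside the tolerance, or simply assume $\bar{\mathcal E}>0$, which is the relevant case for dynamos. Because $\bar u=0$, the boundary conditions in Theorem~\ref{thm:hprinciple} hold trivially. The defect $R=(\partial_t-\epsilon\Delta)\bar A$ depends continuously on $\epsilon$, as Proposition~\ref{prop:dynamo} requires.

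\emph{Application of the $H$-principle.} Apply Theorem~\ref{thm:hprinciple} to the partition $I_j$ given by the intervals $(a_{j+1},a_j)$ and $(b_j,b_{j+1})$. Use exponents $p_j'\ge p_j$, $m_j'\ge m_j$ (say $p_j'=2p_j$) and tolerances $\delta_j'\le\delta_j$, all to be chosen below.

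For (a), the interval $(0,a_j)$ is the union of the $I_i$ with $i\ge j$ near $0$. By monotonicity of $p_i,m_i$ and Jensen,
$$
\dashint_0^{a_j}\|B^\epsilon-\bar B\|_{C^{m_j}}^{p_j}\,dt\le \sum_{i\ge j}\frac{|I_i|}{a_j}\dashint_{I_i}\|B^\epsilon-\bar B\|_{C^{m_i}}^{p_i}\,dt\le \sup_{i\ge j}(\delta_i')^{p_j}.
$$
So it suffices that $\delta_i'$ be nonincreasing with $\delta_i'\le\delta_i$; here the hypothesis $\delta_j\searrow0$ is used. Some care is needed with exponents when $\delta_i'<1$, but this is fine since $\delta_i'^{p_i}\le\delta_i'^{p_j}$ for $i\ge j$.

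For (b), use the pointwise identity
$$
|\mathcal E(B^\epsilon)-\mathcal E(\bar B)|\le \tfrac12\|B^\epsilon-\bar B\|_{L^2}\bigl(\|B^\epsilon-\bar B\|_{L^2}+2\|\bar B\|_{L^2}\bigr)\le C_\Omega\bigl(\|B^\epsilon-\bar B\|_{C^0}^2+\|\bar B\|_{L^2}\|B^\epsilon-\bar B\|_{C^0}\bigr).
$$
Taking $L^{p_j}$ averages on $(b_j,b_{j+1})$ and applying Minkowski and Cauchy--Schwarz, with the theorem invoked for exponent $2p_j$, gives the bound
$$
C_\Omega\Bigl(\delta_j'^2+\sup_{[b_j,b_{j+1}]}\sqrt{\bar{\mathcal E}}\;\delta_j'\Bigr).
$$
Choosing $\delta_j'$ small relative to $\sup_{[b_j,b_{j+1}]}\bar{\mathcal E}$, which is finite on bounded intervals, makes this at most $\delta_j$. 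The bound is uniform in $\epsilon$ because \eqref{thm:hprinciple:barB} is.

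\emph{Main obstacle.} I expect the only delicate point to be regularity and compatibility of $\bar B$: smoothness of $\sqrt{\bar{\mathcal E}}$ where $\bar{\mathcal E}$ may vanish, and the perfectly conducting boundary condition. Using a compactly supported $B_\ast$ disposes of the latter. The former I would treat by the sum-of-squares or positivity remark above. Everything else consists of bookkeeping choices of $p_j',m_j',\delta_j'$ made in advance, before invoking Theorem~\ref{thm:hprinciple}.
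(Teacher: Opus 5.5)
Your proposal is correct and is essentially the paper's own argument: $\bar u=0$, $\bar B=\sqrt{\bar{\mathcal E}(t)}\,B_\ast(x)$ with a fixed compactly supported solenoidal $B_\ast$, the $H$-principle with $p_j'=2p_j$, Jensen over the intervals accumulating at $0$ for (a), and the factorization $|\mathcal E(B^\epsilon)-\mathcal E(\bar B)|\le\tfrac12\|B^\epsilon-\bar B\|_{L^2}(\|B^\epsilon-\bar B\|_{L^2}+2\|\bar B\|_{L^2})$ followed by a H\"older/Minkowski step in time for (b). Two small corrections and remarks: keeping the convex weights $|I_i|/a_j$ in (a) is actually cleaner than the paper's telescoping choice of $\delta_i'$, but your middle term should read $\bigl(\dashint_{I_i}\|B^\epsilon-\bar B\|_{C^{m_i'}}^{p_i'}\,dt\bigr)^{p_j/p_i'}$ by Jensen, since replacing $p_j$ by $p_i$ inside the integral is not valid where the integrand is below $1$; and the smoothness of $\sqrt{\bar{\mathcal E}}$ that you flag is tacitly assumed in the paper too (it holds in the applications, e.g.\ $\bar{\mathcal E}=e^{\bar\gamma t}>0$), whereas your perturbation fallback would lose the exact identity $\mathcal E(\bar B)=\bar{\mathcal E}$ claimed in (a).
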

\begin{proof}
Let $B \in C_c^\infty(\Omega)$ be a solenoidal vector field with $\mathcal{E}(B)=1$.
We apply Theorem \ref{thm:hprinciple} to $\bar{u}=0$ and 
$\bar{B}(t,x) = \sqrt{2\bar{\mathcal{E}}(t)} B(x)$,
using the same partition $I_j$, and for some sequences $p_j',m_j'\geq 1$, and $\delta_j' > 0$, to be determined.
In fact, we set $p_j' = 2p_j$ and $m_j'=m_j$.

\textit{Proof of \ref{thm:fastdynamo:(a)}}.
Let us abbreviate $f_j=\|B^\epsilon-\bar{B}\|_{C^{m_j}}$. Since $f_j$ is non-decreasing, we deduce that 
\begin{equation}\label{thm:fastdynamo:1}
\dashint_0^{a_j}f_j^{p_j}\dif t
=\sum_{i\geq j}\frac{a_i-a_{i+1}}{a_j}\dashint_{a_{i+1}}^{a_i}f_j^{p_j}\dif t
\leq
\sum_{i\geq j}\dashint_{a_{i+1}}^{a_i}f_i^{p_j}\dif t,
\end{equation}
where we also used that $a_i - a_{i+1} \leq a_i \leq a_j$ for all $i \geq j$.
Since $p_i'=2p_i$ is non-decreasing, the Jensen inequality and \eqref{thm:hprinciple:barB} yield
\begin{equation}\label{thm:fastdynamo:2}
\eqref{thm:fastdynamo:1}
\leq
\sum_{i\geq j}
\left(\dashint_{a_{i+1}}^{a_i}f_i^{p_i'}\dif t\right)^{\frac{p_j}{p_i'}}
\leq
\sum_{i\geq j}(\delta_i')^{p_j}.
\end{equation}
Finally, by choosing $\delta_i' \leq 1$ in advance, smaller than the telescoping series
$(\delta_i^{p_i} - \delta_{i+1}^{p_{i+1}})$, we obtain
$$
\eqref{thm:fastdynamo:2}
\leq\sum_{i\geq j} \delta_i'
\leq\delta_j^{p_j}.
$$

\textit{Proof of \ref{thm:fastdynamo:(b)}}.
Let us denote $I_j=(b_j,b_{j+1})$.
By applying  the reverse triangle inequality and \eqref{thm:hprinciple:barB}, it follows that 
\begin{equation}\label{thm:fastdynamo:b:1}
\left(\dashint_{I_j}
|\|B^\epsilon\|_{L^2}-\|\bar{B}\|_{L^2}|^{p_j'}\dif 
t\right)^{\nicefrac{1}{p_j'}}
\leq
\left(\dashint_{I_j}
\|B^\epsilon-\bar{B}\|_{L^2}^{p_j'}\dif 
t\right)^{\nicefrac{1}{p_j'}}
\leq|\Omega|^{\nicefrac{1}{2}}\delta_j'.
\end{equation}
This estimate implies that
\begin{align}
\left(\dashint_{I_j}|\mathcal{E}(B^\epsilon)-\mathcal{E}(\bar{B})|^{p_j}\dif t\right)^{\nicefrac{1}{p_j}}
&\leq\frac{1}{2}\left(\dashint_{I_j}|\|B^\epsilon\|_{L^2}
+\|\bar{B}\|_{L^2}|^{2\gamma_j}\right)^{\nicefrac{1}{2p_j}}
\left(\dashint_{I_j}
|\|B^\epsilon\|_{L^2}-\|\bar{B}\|_{L^2}|^{2p_j}\dif 
t\right)^{\nicefrac{1}{2p_j}}\nonumber\\
&\leq
\frac{1}{2}\left(2\left(\dashint_{I_j}\|\bar{B}\|_{L^2}^{2p_j}\right)^{\nicefrac{1}{2p_j}}+|\Omega|^{\nicefrac{1}{2}}\delta_j'\right)|\Omega|^{\nicefrac{1}{2}}\delta_j'.\label{thm:fastdynamo:b:2}
\end{align}
Therefore, we may choose $\delta_j'$ from the outset, depending on $(B,\bar{\mathcal{E}}, I_j, p_j, \delta_j)$, so that $\eqref{thm:fastdynamo:b:2}\leq\delta_j$. Recall that $\mathcal{E}(\bar{B}) = \bar{\mathcal{E}}$.
\end{proof}

\subsubsection{Proof of Theorem~\ref{thm:Main}}
\label{sec:proof:thm:main}
The existence of a limsup fast dynamo is immediate. On each time interval $I_j$, the property \ref{thm:fastdynamo:(b)} ensures the existence of Lebesgue points $t_j \in I_j$ such that $\mathcal{E}(B^\epsilon(t_j)) \geq \bar{\mathcal{E}}(t_j) - \delta_j$. 
Since $\delta_j \to 0$, if $\bar{\mathcal{E}}$ grows (super)exponentially, then $u$ satisfies the dynamo condition \eqref{e:gamma} uniformly in $0 < \epsilon \leq 1$.

The existence of an a.s.~dynamo follows by choosing, for instance, $\bar{\mathcal{E}}(t) = e^{\bar{\gamma} t}$ for some $\bar{\gamma} > 0$. For simplicity, we take $b_j = j$, that is, $I_j = [j,j+1)$. The sequence $\delta_j$ will be specified at the end.
For brevity, we write $\mathcal{E}(B^\epsilon(t)) = \mathcal{E}(t)$. By Chebyshev’s inequality and property~\ref{thm:fastdynamo:(b)},
\[
|\{ t\in I_j:|\mathcal{E}(t)-\bar{\mathcal{E}}(t)|\ge \lambda_j|\}|\le \frac{\delta_j}{\lambda_j},\]
for some $\lambda_j>0$ to be chosen. Splitting $\mathcal{E}(t)=e^{\bar{\gamma}t}+(\mathcal{E}(t)-\bar{\mathcal{E}}(t))$, we deduce that
\[
|\{ t\in I_j:\mathcal{E}(t)\geq e^{\bar{\gamma}t}-\lambda_j\}|
\geq 1-\frac{\delta_j}{\lambda_j}.\]
Given an arbitrary $0 < \gamma < \bar{\gamma}$, we choose $0 < \lambda_j \le e^{\bar{\gamma} j} - e^{\gamma j}$. Then $\lambda_j \le e^{\bar{\gamma} t} - e^{\gamma t}$ for all $t \ge j$, which implies $\frac{1}{t}\log(e^{\bar{\gamma}t}-\lambda_j)\geq\gamma$. Hence
\[
|\{ t\in I_j:\frac{1}{t}\log(\mathcal{E}(t))\geq \gamma\}|
\geq 1-\frac{\delta_j}{\lambda_j}.\]
Summing over all intervals, we obtain
\begin{align*}
\frac{1}{T}|\{t\in(0,T]\,:\,
\frac{1}{t}\log(\mathcal{E}(t))\geq\gamma\}|
&\geq\frac{1}{T}\sum_{j=1}^{\lfloor T \rfloor}
|\{t\in I_j\,:\,
\frac{1}{t}\log(\mathcal{E}(t))\geq\gamma\}|\\
&\geq 
\frac{\lfloor T\rfloor}{T}
\left(1-\frac{1}{\lfloor T \rfloor}\sum_{j=1}^{\lfloor T \rfloor}\frac{\delta_j}{\lambda_j}\right).
\end{align*}
Since $\frac{\lfloor T \rfloor}{T} \to 1$ as $T\to\infty$, by choosing $\delta_j$ from the outset so that $\frac{1}{\lfloor T\rfloor}\sum^{\lfloor T\rfloor}_{j=1}\frac{\delta_j}{\lambda_j}\to 0$, the claim follows.

\begin{comment}
Our fast dynamo is based on intermittency and thus it is an intermittent fast dynamo. As such it is a limsup fast dynamo in the terminology of \cite{SorellaVillingerpp}. Our dynamo is
    ergodic in the sense that the Ces\`aro means in time, that is ergodic average of the type 

\[\lim_{T \to \infty} \frac{1}{T}\int_0^T \frac{\log \mathcal{E}(B^\epsilon(t))}{t} dt =\infty \]
Namely, we could take intervals $I_j=[j,j+1]$,  $\delta_j=\frac{1}{j}$ and $p_j=1$. Then, \textcolor{red}{(some further comment seems to be needed here; the enemy would be possible times where $0 < \mathcal{E}(B^\epsilon(t)) \ll 1$,)}
\[ \limsup_{T\to \infty}\frac{1}{T} \int^T_{1}|\frac{\log \mathcal{E}(B^\epsilon(t))}{t} - \frac{\log  \bar{\mathcal{E}}(t)}{t}|dt \le \limsup_{T\to \infty} \frac{1}{T}\sum^{[T]+1}_{j=1} \frac{1}{j}=0 \]}
\end{comment}

\subsection{Turbulent coarse-grained magnetic fields} \label{sec:Turbulent coarse-grained magnetic fields}
As in the case of fast dynamos, multiscale dynamos are obtained by constructing coarse-grained evolutions with appropriate growth of their magnetic energy modes.

\begin{thm}[Turbulent coarse-grained magnetic fields]\label{thm:turbulentdynamo}
Let $\Omega \subset \mathbb{R}^3$ be a smooth bounded domain. Let $\bar{\mathcal{E}}_k : [0,\infty) \to [0,\infty)$ be a sequence of smooth energy profiles satisfying
\begin{equation}\label{thm:turbulentdynamo:energies}
\sum_{k=1}^\infty \partial_t^n \sqrt{\bar{\mathcal{E}}_k(t)}\, k^{\nicefrac{m}{3}} < \infty,
\end{equation}
for all $n,m \geq 0$ and $t \geq 0$.
Then there exist $u$ and $B^\epsilon$ as in \eqref{thm:hprinciple:u} and \eqref{thm:hprinciple:B}, and a smooth solenoidal field $\bar{B}$ on $[0,\infty)\times\bar{\Omega}$ with $\mathcal{E}_k(\bar{B}) = \bar{\mathcal{E}}_k$, satisfying \ref{thm:fastdynamo:(a)} and the following refinement of \ref{thm:fastdynamo:(b)}:
$$
\left(\dashint_{b_j}^{b_{j+1}} |\mathcal{E}_k(B^\epsilon(t)) - \bar{\mathcal{E}}_k(t)|^{p_j}\, \dif t\right)^{\nicefrac{1}{p_j}}
\leq \delta_j,
$$
for all $j$ and $k$, uniformly in $0 < \epsilon \leq 1$.
\end{thm}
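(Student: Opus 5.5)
The plan mirrors the proof of Theorem~\ref{thm:fastdynamo}. The only new ingredient is that the coarse-grained field $\bar{B}$ is built from the magnetostatic eigenfields $B_k$ of Appendix~\ref{sec:magnetostatic}. I would define
\[
\bar{B}(t,x):=\sum_{k=1}^\infty \sqrt{2\bar{\mathcal{E}}_k(t)}\,B_k(x),
\]
so that, by orthonormality, $\mathcal{E}_k(\bar{B})=\frac12\langle\bar B,B_k\rangle^2=\bar{\mathcal{E}}_k$. Each $B_k$ belongs to $H_{\mathrm{pc}}(\Omega)$ and is smooth up to the boundary. The first step is to check that $\bar B$ is smooth on $[0,\infty)\times\bar\Omega$ and satisfies the boundary conditions \eqref{dynamo:tangentialcondition}--\eqref{dynamo:curlBxn}. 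For this I would use elliptic regularity for the magnetostatic operator: $\|B_k\|_{H^{2j}}\lesssim (1+\lambda_k)^{j}$. Combined with the Weyl asymptotics $\lambda_k\sim k^{2/3}$ in three dimensions and Sobolev embedding, this gives $\|B_k\|_{C^m}\lesssim k^{m'/3}$ for some $m'$ depending on $m$. Hypothesis \eqref{thm:turbulentdynamo:energies}, applied with all $n,m$ (with local uniformity in $t$, which I would assume or obtain by continuity), then makes the series and all its space--time derivatives converge uniformly on compact time sets. Hence $\bar B$ is smooth, solenoidal, and tangential, and each partial sum satisfies $(\nabla\times\cdot)\times n=0$, so the limit does too.

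Next I apply Theorem~\ref{thm:hprinciple} with $\bar u=0$ and this $\bar B$, using the same partition and sequences $p_j'=2p_j$, $m_j'=m_j$, and $\delta_j'$ to be fixed later. Property \ref{thm:fastdynamo:(a)} then follows exactly as in the proof of Theorem~\ref{thm:fastdynamo}: the same monotonicity, Jensen, and telescoping argument, with $\delta_i'$ chosen below the increments $\delta_i^{p_i}-\delta_{i+1}^{p_{i+1}}$.

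For the refinement of \ref{thm:fastdynamo:(b)}, I fix $k$ and write $a(t)=\langle B^\epsilon(t),B_k\rangle$ and $\bar a(t)=\langle\bar B(t),B_k\rangle$. By Cauchy--Schwarz and $\|B_k\|_{L^2}=1$,
\[
|a-\bar a|\le\|B^\epsilon-\bar B\|_{L^2}\le|\Omega|^{1/2}\|B^\epsilon-\bar B\|_{C^0}.
\]
Then $|\mathcal{E}_k(B^\epsilon)-\bar{\mathcal{E}}_k|=\frac12|a-\bar a|\,|a+\bar a|$, and H\"older in time with exponents $2p_j,2p_j$ gives a bound of the same form as \eqref{thm:fastdynamo:b:2}:
\[
\tfrac12\Big(2\big(\dashint_{I_j}|\bar a|^{2p_j}\big)^{1/2p_j}+|\Omega|^{1/2}\delta_j'\Big)|\Omega|^{1/2}\delta_j'.
\]
The key point is that $|\bar a|\le\|\bar B\|_{L^2}$, so this bound is independent of $k$. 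Choosing $\delta_j'$ depending only on $(\bar B,I_j,p_j,\delta_j,\Omega)$ therefore yields the estimate for all $k$ simultaneously, uniformly in $\epsilon$.

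The main obstacle is the first step: justifying smoothness of $\bar B$ up to the boundary and the preservation of the perfectly conducting condition under the infinite sum. This needs quantitative $C^m$ bounds on the eigenfields in terms of $k$, for which I would rely on the elliptic theory and eigenvalue growth recalled in Appendix~\ref{sec:magnetostatic}. The exponent $k^{m/3}$ in \eqref{thm:turbulentdynamo:energies} is exactly what absorbs these bounds. The remaining steps are routine adaptations of Theorem~\ref{thm:fastdynamo}.
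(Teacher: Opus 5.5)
Your proposal is correct and follows essentially the same route as the paper: define $\bar B=\sum_k\sqrt{2\bar{\mathcal{E}}_k}\,B_k$, get smoothness and the boundary conditions from the eigenfield bounds of Corollary~\ref{corappendix:magneticfields}, apply Theorem~\ref{thm:hprinciple} with $\bar u=0$, and use $|\langle B^\epsilon-\bar B,B_k\rangle_{L^2}|\le\|B^\epsilon-\bar B\|_{L^2}$ to rerun the argument of \eqref{thm:fastdynamo:b:1}--\eqref{thm:fastdynamo:b:2}. You are, if anything, more explicit than the paper in two places: you note that the final bound does not depend on $k$, and you note that the summability hypothesis must hold locally uniformly in $t$ (and in absolute value) to justify term-by-term differentiation.
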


\begin{proof}
Condition \eqref{thm:turbulentdynamo:energies} is imposed to ensure the smoothness of the coarse-grained magnetic field. Namely, for the ad hoc defined field
$$
\bar{B}(t,x)=\sum_k \sqrt{2\bar{\mathcal{E}}_k(t)}\, B_k(x),
$$
the regularity and bounds of the eigenfields established in Corollary~\ref{corappendix:magneticfields} yield
$$
\|\partial_t^n \bar{B}(t)\|_{H^m}
\leq \sqrt{2}\sum_k \partial_t^n \sqrt{\bar{\mathcal{E}}_k(t)} \|B_k\|_{H^m}
\leq C_{\Omega,m} \sum_k \partial_t^n \sqrt{\bar{\mathcal{E}}_k(t)} k^{\nicefrac{m}{3}} < \infty,
$$
for all $n,m \geq 0$ and $t \geq 0$. Hence, $\bar{B}$ is a smooth on $[0,\infty)\times\bar{\Omega}$. 
Moreover, $\bar{B}$ satisfies the same boundary conditions as the eigenfields $B_k$, and thus fulfills the assumptions of Theorem~\ref{thm:hprinciple}. 

Let $(u,B^\epsilon)$ denote the solutions to the \eqref{dynamo1} equation such that $\mathcal{E}(B^\epsilon)$ approximates $\mathcal{E}(\bar{B})$. By applying the Cauchy–Schwarz inequality,
$$
|\langle B^\epsilon - \bar{B}, B_k \rangle_{L^2}|
\leq \|B^\epsilon - \bar{B}\|_{L^2},
$$
we can repeat the argument in \eqref{thm:fastdynamo:b:1}–\eqref{thm:fastdynamo:b:2} to conclude the proof.
\end{proof}

\subsubsection{Proof of Theorem~\ref{thm:turbulent}}
\label{sec:proof:thm:turbulent}

Notice that \eqref{thm:turbulentdynamo:energies} is automatically satisfied if only finitely many of the energy profiles are nonzero. More generally, given $K\subset\mathbb{N}$ (not necessarily finite), we may fix $\bar{\gamma}>0$ and define
$$
\bar{\mathcal{E}}_k(t)=e^{\bar{\gamma}t-k},
$$
for $k\in K$,
and zero otherwise. This choice satisfies \eqref{thm:turbulentdynamo:energies}. Since $\langle \bar{B}, B_k \rangle_{L^2}^2 = 2\bar{\mathcal{E}}_k$, we have
$$
\lim_{t\to\infty}\frac{\log\mathcal{E}_k(\bar{B}(t))}{t}=\bar{\gamma}>0,$$
for all $k\in K$,
and zero otherwise.
By selecting $K$ appropriately, Theorem \ref{thm:turbulentdynamo} yields the existence of both small-scale and large-scale (fast) dynamos, as well as turbulent (fast) dynamos. 

Finally, notice that the multiscale dynamo action could also be prescribed at almost all times as in the proof of Theorem~\ref{thm:Main} in Section \ref{sec:proof:thm:main}.

\subsection{Taylor conjecture}\label{sec:Taylor} 

Recall that, in the context of plasma relaxation, one prescribes initial data $B^\circ$ together with the long-time behavior, while requiring the boundary flux to vanish at all times. Since the boundary conditions differ at the initial and final times, a compatibility condition must be imposed to ensure that the corresponding solutions satisfy that the vectors $B^\epsilon$, $\nabla\times B^\epsilon$, and $n$ are linearly dependent on $\partial\Omega$. This condition guarantees the vanishing of the boundary flux in the energy estimate \eqref{eq:energyestimate}. Note that all these conditions are trivial in the periodic setting $\mathbb{T}^3$.

\begin{thm}\label{thm:Beltrami}
Let $\Omega\subset\R^3$ be a smooth bounded domain.  
Let $\bar{u}$ be a smooth solenoidal field, $B^\circ$ be a smooth initial field in $H_{\mathrm{pc}}(\Omega)$, and let $\mathscr{B}$ denote a Beltrami field.
Assume that $B^\circ$, $\mathscr{B}$ and $n$ are linearly dependent on $\partial\Omega$.
Consider a partition of $(0,\infty)$ of the form \eqref{eq:partition}. Fix two non-decreasing sequences $p_j,m_j \geq 1$, and another sequence $\delta_j\searrow 0$. Then there exist
$u$ and $B^\epsilon$ as in \eqref{thm:hprinciple:u} and \eqref{thm:hprinciple:B}, such that:
\begin{itemize}
    \item {$u$ and $B^\epsilon$ solve \eqref{dynamo1} and $(\nabla \times B^\epsilon) \times B^\epsilon \cdot n = 0$ on $\partial \Omega$}.

    \item The velocity field $u$ remains close to $\bar{u}$ in the sense that
    $$
    \dashint_{I_j}\|(u-\bar{u})(t)\|_{W^{1,p_j}}\dif t\leq\delta_j,
    $$
    for all $j$.
    \item The magnetic field $B$ satisfies that $B|_{t=0}=B^\circ$ in the Lebesgue-Bochner sense and it approaches the Beltrami field $\mathscr{B}$ as $t\to\infty$ in the sense that
    $$
    \left(\dashint_{b_j}^{b_{j+1}}\|B(t)-\mathscr{B}\|_{C^{m_j}}^{p_j}\dif t\right)^{\nicefrac{1}{p_j}}
    \leq\delta_j,
    $$
    for all $j$.
\end{itemize}
\end{thm}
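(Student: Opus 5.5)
The plan is to reduce Theorem~\ref{thm:Beltrami} to the $H$-principle (Theorem~\ref{thm:hprinciple}). We choose a smooth coarse-grained magnetic field $\bar{B}$ interpolating between $B^\circ$ and $\mathscr{B}$, and apply Theorem~\ref{thm:hprinciple} with the given $\bar{u}$, the given partition, and suitably modified sequences $(p_j',m_j',\delta_j')$. Concretely, fix a smooth nondecreasing $\theta:[0,\infty)\to[0,1]$ with $\theta\equiv 0$ on $[0,a_0/2]$ and $\theta\equiv 1$ on $[b_1,\infty)$, and set
\[
\bar{B}(t,x):=(1-\theta(t))B^\circ(x)+\theta(t)\mathscr{B}(x).
\]
Then $\bar{B}$ is smooth on $[0,\infty)\times\bar\Omega$ and solenoidal. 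It satisfies $\bar{B}\cdot n=0$ provided $\mathscr{B}\cdot n=0$ and $B^\circ\cdot n=0$. The first is the natural tangency assumption on a Beltrami field in $\Omega$, which we take as part of its definition. The second holds because $B^\circ\in H_{\mathrm{pc}}$.

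The boundary condition \eqref{dynamo:curlBxn} is not preserved by convex combination, and it fails for a general Beltrami field, since $\nabla\times\mathscr{B}=\kappa\mathscr{B}$ is tangential. This is why the statement only asks for vanishing boundary flux, i.e.\ $(\nabla\times B^\epsilon)\times B^\epsilon\cdot n=0$. We therefore check that the proof of Theorem~\ref{thm:hprinciple} never uses \eqref{dynamo:curlBxn}. It only needs a smooth potential $\bar{A}$ with $\nabla\times\bar{A}=\bar{B}$, which exists by the tangency condition, so that $R$ is defined. All perturbations are compactly supported in $I_j\times\Omega$, as in \eqref{prop:dynamo:boundary}. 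Hence $B^\epsilon=\bar{B}$ near $\partial\Omega$, in the Cauchy-limit sense explained in Section~\ref{sec:generalizations}. On $\partial\Omega$ we then compute
\[
(\nabla\times\bar{B})\times\bar{B}\cdot n
=\big((1-\theta)\nabla\times B^\circ+\theta\kappa\mathscr{B}\big)\times\big((1-\theta)B^\circ+\theta\mathscr{B}\big)\cdot n .
\]
The terms that survive are $(1-\theta)\theta\,(\nabla\times B^\circ)\times\mathscr{B}\cdot n$ and $\theta(1-\theta)\kappa\,\mathscr{B}\times B^\circ\cdot n$. The first vanishes because $\nabla\times B^\circ\parallel n$. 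The second vanishes by the hypothesis that $B^\circ$, $\mathscr{B}$, $n$ are linearly dependent. This gives the first bullet. The main obstacle is this boundary bookkeeping, in particular justifying that pointwise boundary identities pass to the limit. That follows because $B_n-\bar{B}$ vanishes in a fixed neighbourhood of $\partial\Omega$ on each $I_j$: the cutoff $\chi_\sigma$ can be chosen per interval.

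For the second bullet, we apply \eqref{thm:hprinciple:barB} directly with $p_j'=p_j$ and $\delta_j'\le\delta_j$. For the third bullet, on each $[b_j,b_{j+1}]$ with $j\geq1$ we have $\bar{B}=\mathscr{B}$, so \eqref{thm:hprinciple:barB} gives the stated estimate.

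The initial datum is handled as in Remark~\ref{rem:fastdynamo}. We have $\bar{B}(0)=B^\circ$, and in fact $\bar{B}=B^\circ$ on $[0,a_0/2]$. We choose $p_j'=2p_j$ and $\delta_j'$ summable as in the proof of Theorem~\ref{thm:fastdynamo}\ref{thm:fastdynamo:(a)}. This yields $\dashint_0^{a_j}\|B^\epsilon-B^\circ\|_{C^{m_j}}^{p_j}\,\mathrm{d}t\to0$, hence the Lebesgue--Bochner attainment of $B^\circ$, uniformly in $\epsilon$.
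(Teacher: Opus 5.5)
Your route is the paper's. You interpolate in time between $B^\circ$ and $\mathscr{B}$, kill the boundary triple product using $\nabla\times B^\circ\parallel n$, $\nabla\times\mathscr B=\kappa\mathscr B$ and the linear-dependence hypothesis, and invoke Theorem~\ref{thm:hprinciple}. Your remark that the construction only uses a vector potential $\bar A$, so that \eqref{dynamo:curlBxn} for $\bar B$ is never needed, is correct and more explicit than the paper.

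However, the justification you give for the step you single out as the main obstacle is false. You claim that $B_n-\bar B$ vanishes in a fixed neighbourhood of $\partial\Omega$ on each $I_j$, because $\chi_\sigma$ can be chosen per interval. In the scheme, $\sigma$ is fixed at each iteration step through \eqref{eq:choicesigma}, i.e.\ $\|(1-\chi_\sigma^2)R_{n-1}\|_{L^1}\le\delta_{j,n}/8$. Near $\partial\Omega$, where all cutoffs used so far vanish, every new term carries a factor $\chi_\sigma$ or $\nabla\chi_\sigma$, so there $R_n=R_{n-1}=\dots=R_0$. Since $\nabla\times R_0=(\partial_t-\epsilon\Delta)\bar B-\nabla\times(\bar u\times\bar B)$ does not vanish near $\partial\Omega$ in general, $\|R_n\|_{L^1}\to0$ forces $\sigma_n\to0$. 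The supports of $B_n-\bar B$ therefore exhaust $\Omega$. No scheme could give a fixed collar anyway: on it $(u,B^\epsilon)=(\bar u,\bar B)$ would have to solve \eqref{dynamo1} exactly. That fails for your interpolated $\bar B$ with arbitrary $\bar u$; for instance $\partial_t\bar B=\theta'(\mathscr B-B^\circ)$ during the transition.

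The correct mechanism is the one the paper invokes through Section~\ref{sec:generalizations}: convergence up to the boundary.
\begin{itemize}
\item The estimates give $B_n\to B^\epsilon$ in $L^p_tC^m$, with sup norms effectively over $\bar\Omega$ because each $B_n(t)-\bar B(t)\in C_c^\infty(\Omega)$.
\item Passing to an a.e.-in-$t$ convergent subsequence, $B^\epsilon(t)-\bar B(t)$ lies in the $C^m(\bar\Omega)$-closure of $C_c^\infty(\Omega)$ for a.e.\ $t$.
\item Taking $m\ge1$, $B^\epsilon$ and $\nabla B^\epsilon$ coincide with $\bar B$ and $\nabla\bar B$ on $\partial\Omega$, so your pointwise identity $(\nabla\times\bar B)\times\bar B\cdot n=0$ transfers to $B^\epsilon$.
\end{itemize}
With this replacement your argument goes through.

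A minor point: with $\theta\equiv1$ only on $[b_1,\infty)$, the estimate towards $\mathscr B$ on $(b_0,b_1)$ is not covered. Placing the whole transition inside $(a_0/2,a_0)$ fixes this, since $a_0=b_0$.
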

\begin{proof}
As explained in Section~\ref{sec:generalizations}, the $H$-principle ensures that the solution $B$ satisfies the same boundary conditions as the corresponding coarse-grained magnetic field. Therefore, it remains to construct a coarse-grained solution $\bar{B}$ with the desired behavior at $t=0$, as $t\to\infty$, and on $\partial\Omega$.

Take a smooth cutoff function $\chi:[0,\infty)\to[0,1]$ with $\chi=1$ on $[0,1]$ and $\chi=0$ on $[2,\infty)$ and declare
$$
\bar{B}=\chi(t)B^\circ + (1-\chi(t))\mathscr{B}.
$$
Let us check that $\bar{B}$, $\nabla\times\bar{B}$ and $n$ are linearly dependent on $\partial\Omega$ computing their mixed product.  Since $\nabla\times B^\circ\parallel n$ and $\nabla\times\mathscr{B}\parallel\mathscr{B}$, we have
$$
(\bar{B}\times n)\cdot\nabla\times\bar{B}
=\chi(1-\chi)(B^\circ\times n)\cdot\nabla\times\mathscr{B}=\lambda \chi(1-\chi) (B^\circ\times n)\cdot \mathscr{B}=0,
$$
where we have used that  $\nabla\times\mathscr{B}=\lambda\mathscr{B}$, and that $B^\circ$, $\mathscr{B}$ and $n$ are linearly dependent by assumption. Thus 
the boundary flux for $\bar B$ vanishes. Then Theorem~\ref{thm:hprinciple} yields the desired solutions with the claimed properties. 
\end{proof}

\section{Geometric transport equation}\label{sec:GTE}

In this section we extend the results of Sections~\ref{sec:idealdynamos} and~\ref{sec:fastdynamos} to the \eqref{GTE}. For the sake of conciseness, we explain the construction, highlighting the main differences, and present the estimates for the dangerous terms, while briefly recalling the harmless ones. We refer the reader to \cite{Federer} for a comprehensive treatment of multilinear algebra and currents, and to \cite{Krantz-Parks} for a more beginner-friendly introduction.

\subsection{Multilinear algebra}
We first recall some basic notions and notation from multilinear algebra. We define a multi-index of order $k$ among $d$ elements (shortly, $k$-index) any ordered subset of $\{1,\ldots,d\}$ with $k$ elements, and we indicate it by $\alpha=(\alpha_1,\ldots,\alpha_k)$, where $\alpha_i\in\{1\ldots,d\}$ and $\alpha_1<\ldots<\alpha_k$. We indicate the space of all $k$-indices on $d$ elements by $I(d,k)$. We define the space of $k$-covectors in $\R^d$ as the span of all elements $d x_\alpha:=dx_{\alpha_1}\wedge\ldots\wedge dx_{\alpha_k}$ among all multi-indices $\alpha\in I(d,k)$, where $\wedge$ denotes the wedge product. We denote the corresponding space by $\bigwedge^k\R^d$ (superscript $k$). A differential $k$-form is an element of the type $\omega(x)=\sum_{\alpha\in I(d,k)} \omega_\alpha(x) dx_\alpha$, where $\omega_\alpha(x)$ are real-valued functions on $\R^d$ (the coefficients of $\omega$).

Similarly, we define the space of $k$-vectors in $\R^d$ as the span of all elements $e_\alpha:=e_{\alpha_1}\wedge \ldots \wedge e_{\alpha_k}$ among all multi-indices $\alpha\in I(d,k)$, and we denote the corresponding space by $\bigwedge_k\R^d$ (subscript $k$). The spaces $\bigwedge_k\R^d$ and $\bigwedge^k\R^d$ are naturally dual to each other, where the duality pairing is given as follows on the basis vectors and then extended by linearity: for every pair of multi-indices $\alpha,\beta\in I(d,k)$
\[
\langle dx_\alpha,e_\beta\rangle =\begin{cases}
    1 & \text{if } \alpha=\beta\\
    0 & \text{otherwise}
\end{cases}.
\]

\subsection{The contraction operator} 
An important role in the computations that will follow is played by the contraction operator, which can be thought of as the dual to the wedge product: the contraction with $dx_i$ takes a $(k+1)$-vector $\tau$ and turns it into the $k$-vector $\tau\llcorner dx_i$ that is defined by its action on all $k$-forms as follows
\[
\langle  \beta, \tau \llcorner dx_i \rangle:=\langle   dx_i \wedge \beta,\tau  \rangle
\]
for every $(k+1)$-vector $\tau$ and every $k$-covector $\alpha$, and every $i=1,\ldots, d$. More concretely, one can think of the contraction with $dx_i$ as an operation that removes the $i$-th components from the $(k+1)$-vector $\tau$, up to a sign that counts the number of swaps to get to that component. As a concrete example we have the following: given a multi-index $\alpha=(\alpha_1,\ldots, \alpha_k)$
\[
(e_{\alpha_1}\wedge\ldots\wedge e_{\alpha_k})\llcorner dx_i=\begin{cases}
    (-1)^{j-1}e_{\alpha_1}\wedge\ldots\wedge \widehat{e_{\alpha_j}}\wedge\ldots\wedge e_{\alpha_k} & \text{if $i=\alpha_j$ for some $j$}\\
    0 & \text{otherwise}
\end{cases}
\]
The contraction of a general $(k+1)$-vector with a a general $1$-covector can be defined by multi-linearity writing both in the respective bases $\{e_\alpha\}_{\alpha\in I(d,k+1)}$ and $\{dx_i\}_{i=1,\ldots,d}$.

\subsection{Multi-index notation}
Given a vector $v \in \mathbb{R}^d$ and a multi-index $i\in I(d,m) $ of size $0 \leq m \leq d$, we denote
$$
v_i=(v_{i_1},\ldots,v_{i_m}),
\quad\quad
v^i=(v_{j_1},\ldots,v_{j_{d-m}}),
$$
where $j$ denotes the complement of $i$. For instance, in three dimensions we have $v_{(2,3)} = (v_2,v_3)$ and $v^{(2,3)} = v_1$. By convention, we set $v_i = 1$ if $m = 0$, that is, if $i = \emptyset$, and $v^i = 1$ if $m = d$.

\subsection{Currents}
The space of $k$-currents is defined as the dual of the space of compactly supported smooth differential $k$-forms, with the appropriate topology (analogous to the topology that one puts on test functions to define distributions). We will only use currents with \textit{finite mass}, in which case one can think of a current $T$ as a finite measure which takes values in the space of $k$-vectors: $T=\tau\nu$, where $\nu$ is a scalar measure and $\tau\in L^1_\nu(\R^d,\bigwedge_k\R^d)$. In this case the action of $T=\tau \nu$ on a differential $k$-form $\omega$ is given by
\[
\langle T,\omega\rangle =\int \langle \omega(x),\tau(x)\rangle d\nu(x)
\]
where $\langle\omega(x),\tau(x)\rangle$ denotes the duality between $k$-covectors and $k$-vectors. One should keep in mind that $k$-currents represent a generalization of oriented smooth $k$-dimensional manifolds. Indeed given such a manifold $M$ one can define an associated $k$-current $T_M$ by 
\[
\langle T_M,\omega\rangle:=\int_M \omega
\]
where the integration is the classical integration of a differential $k$-form on an oriented smooth $k$-dimensional manifold. 

\subsection{The boundary operator}
With the previous discussion in mind, the definition of the boundary operator as the dual of the exterior derivative is very natural:
$$
\langle\partial T,\omega\rangle :=
\langle T,d\omega\rangle
$$
for any $k$-current $T$ and $(k-1)$-form $\omega$, and $k=1,\ldots,d$. Notice that $\partial T$ is a $(k-1)$-current, while $d\omega$ is a $k$-form. Indeed, by Stokes' theorem this definition of boundary is consistent with the classical notion of boundary for smooth manifolds, or in other words, $\partial T_M=T_{\partial M}$ (up to a sign that might depend on Stokes' theorem or on how the orientation on $\partial M$ is defined). By convention $\partial T=0$ for $0$-currents. In particular, for the \eqref{transport} equation the condition $\partial\rho =0$ holds trivially. For a current $T=\tau\nu$ we indicate by $T\llcorner dx_j$ the $(k-1)$-current defined by $(\tau\llcorner dx_j)\nu$.

From \cite[4.1.7]{Federer} we obtain the following formulas: if $T$ is a $k$-current, with $1\le k\le d$, then
\begin{equation}\label{eq:partial_T_Federer}
\partial T=-\sum_{j=1}^d (\partial_j T)\llcorner dx_j.
\end{equation}
Moreover, if $f:\R^d\to \R$ is a sufficiently regular function ($C^1$ is enough) and $T$ is a $k$-current, then
\begin{equation}\label{eq:partial_fT_Federer}
\partial(fT)=f\partial T-\sum_{j=1}^d (\partial_j f)\wedge(T\llcorner dx_j),
\end{equation}
and if $u:\R^d\to\R^d$ is a sufficiently regular vector field, then
\begin{equation}\label{eq:partial_bT_Federer}
\partial(u\wedge T)=-\sum_{j=1}^d \big((\partial_j u)\wedge T+u\wedge (\partial_j T)\big)\llcorner dx_j.
\end{equation}

\subsection{Building blocks}

We take a smooth cutoff function $\varphi:(0,1)^{d-(k+1)}\to\R$ satisfying
$$
\int\varphi=0,
\quad\quad
\int\varphi^2=1.
$$
For instance, we could simply take $\varphi(y)=\prod_{j=1}^{d-(k+1)}\phi(y_j)$.
For any $\mu\geq 1$ we define the concentration $\varphi_\mu(y)=\varphi(\mu y)$ for $y\in (0,1)^{d-(k+1)}$. Next, we extend $\varphi_\mu$ periodically into $\T^{^{d-(k+1)}}$. Then, for any $\lambda\in\N$ we define the oscillation $(\varphi_\mu)_\lambda(y)=\varphi_\mu(\lambda y)$ where now $y\in\T^{d-(k+1)}$.
For any given multi-index
$$\alpha=(\alpha_1,\ldots ,\alpha_{k+1})\in I (d,k+1),$$
we define the spatial-intermittent shear flows
\begin{subequations}\label{eq:def_space_intermittency}
\begin{align}
\mathbf{u}_{\alpha}
&=\mu^{\frac{d-(k+1)}{p}}\varphi_\mu(x^\alpha)e_{\alpha_1},\label{eq:def_space_intermittency:1}\\
\mathbf{T}_{\alpha}
&=\mu^{\frac{d-(k+1)}{q}}\varphi_\mu(x^\alpha)e_{\alpha^1}.\label{eq:def_space_intermittency:2}
\end{align}
\end{subequations}
where recall that $\alpha^1=(\alpha_2,\ldots,\alpha_{k+1})$. 
Notice that $e_{\alpha_1}\wedge e_{\alpha^1}=e_\alpha$.

Next, we deduce the following property, which generalizes Proposition \ref{prop:steadydynamo}.
\begin{prop}
The pair $(\mathbf{u}_{\alpha},\mathbf{T}_{\alpha})$ is a stationary solution to the GTE, that is
\begin{align*}
\partial(\mathbf{u}_{\alpha}\wedge\mathbf{T}_{\alpha})&=0,\\
\nabla\cdot\mathbf{u}=0,\,
\partial\mathbf{T}_{\alpha}&=0.
\end{align*}
Moreover, it satisfies
$$
\dashint_{\T_\ell^d}\mathbf{u}_{\alpha}\dif x =
0,
\quad\quad
\dashint_{\T_\ell^d}\mathbf{T}_{\alpha}\dif x 
=0,\quad\quad
\dashint_{\T_\ell^d}\mathbf{u}_{\alpha}\wedge \mathbf{T}_{\alpha}\dif x
=e_{\alpha}.
$$
\end{prop}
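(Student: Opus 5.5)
The plan is to verify each identity directly from the definitions \eqref{eq:def_space_intermittency}, using the coordinate formulas \eqref{eq:partial_T_Federer}--\eqref{eq:partial_bT_Federer}. The structural point is that both $\mathbf{u}_\alpha$ and $\mathbf{T}_\alpha$ are a scalar function of $x^\alpha$ times a constant multivector whose indices all lie in $\alpha$. Hence every derivative $\partial_j$ with $j\notin\alpha$ produces a multivector that is annihilated by the contraction $\llcorner dx_j$. Every derivative with $j\in\alpha$ vanishes identically, because the coefficient does not depend on $x_j$.

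First, $\nabla\cdot\mathbf{u}_\alpha=\mu^{\frac{d-(k+1)}{p}}\partial_{\alpha_1}(\varphi_\mu(x^\alpha))=0$, since $\alpha_1\in\alpha$. Next, by \eqref{eq:partial_T_Federer},
$$
\partial\mathbf{T}_\alpha=-\mu^{\frac{d-(k+1)}{q}}\sum_{j}\partial_j(\varphi_\mu(x^\alpha))\,e_{\alpha^1}\llcorner dx_j .
$$
For $j\in\alpha$ the derivative vanishes. For $j\notin\alpha$ we have $j\notin\alpha^1$, so $e_{\alpha^1}\llcorner dx_j=0$ by the explicit contraction formula. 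Hence $\partial\mathbf{T}_\alpha=0$. (When $k=0$, $\mathbf{T}_\alpha$ is a $0$-current and $\partial\mathbf{T}_\alpha=0$ holds by convention.) The same argument handles the product: $\mathbf{u}_\alpha\wedge\mathbf{T}_\alpha=\mu^{d-(k+1)}\varphi_\mu^2(x^\alpha)\,e_\alpha$, using $\frac1p+\frac1q=1$ and $e_{\alpha_1}\wedge e_{\alpha^1}=e_\alpha$. Applying \eqref{eq:partial_T_Federer} to this $(k+1)$-current, or equivalently \eqref{eq:partial_bT_Federer}, gives $\partial(\mathbf{u}_\alpha\wedge\mathbf{T}_\alpha)=-\mu^{d-(k+1)}\sum_j\partial_j(\varphi_\mu^2(x^\alpha))\,e_\alpha\llcorner dx_j$. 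Each term vanishes by the same dichotomy.

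For the averages, note that the functions depend only on $x^\alpha\in\T^{d-(k+1)}$, so the average over $\T_\ell^d$ reduces to an average over the complementary torus. Here I take $\ell=1$, which is consistent with the building blocks in this section being extended to $\T^{d-(k+1)}$. For the zero means, the function $\varphi_\mu$ is a rescaled copy of $\varphi$ supported in one cell, so $\int_{\T^{d-(k+1)}}\varphi_\mu=\mu^{-(d-(k+1))}\int\varphi=0$. For the wedge product, Lemma~\ref{lemma:buildingblock} (with $m=0$, $r=2$, in dimension $d-(k+1)$) gives $\int\varphi_\mu^2=\mu^{-(d-(k+1))}\int\varphi^2=\mu^{-(d-(k+1))}$. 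Therefore the mean of $\mathbf{u}_\alpha\wedge\mathbf{T}_\alpha$ is $\mu^{d-(k+1)}\mu^{-(d-(k+1))}e_\alpha=e_\alpha$.

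I do not expect a real obstacle. The only delicate point is sign and index bookkeeping in the contraction: one must confirm that $e_{\alpha^1}\llcorner dx_j$ and $e_\alpha\llcorner dx_j$ vanish exactly for $j\notin\alpha$, which is immediate from the explicit contraction formula. A secondary point is the normalization of the torus side length, which I fix to $\ell=1$ as above.
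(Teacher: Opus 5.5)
Your proof is correct and takes the same route as the paper. The paper also applies \eqref{eq:partial_T_Federer}--\eqref{eq:partial_bT_Federer} with the dichotomy you use: $\partial_j$ annihilates the coefficient when $j\in\alpha$, and the contraction $\llcorner dx_j$ annihilates $e_\alpha$ and $e_{\alpha^1}$ when $j\notin\alpha$; it then gets the averages from $\int\varphi=0$ and from $\mathbf{u}_\alpha\wedge\mathbf{T}_\alpha=\mu^{d-(k+1)}\varphi_\mu(x^\alpha)^2e_\alpha$ with $\int\varphi^2=1$, whereas you additionally spell out details the paper leaves implicit (the divergence of $\mathbf{u}_\alpha$, the $k=0$ convention, and the normalization $\ell=1$).
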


\begin{proof}
    To prove the first part we use formulas \eqref{eq:partial_T_Federer} and \eqref{eq:partial_bT_Federer}: if $j\in \alpha$ then $\partial_j \mathbf{u}_\alpha=0$ and $\partial_j \mathbf{T}_\alpha=0$, since $\mathbf{u}_\alpha$ and $\mathbf{T}_\alpha$ only depend on the variables $x^\alpha$. If on the other hand $j\not\in\alpha$, then $e_\alpha\llcorner dx_j=0$ and $e^{\alpha_0}\llcorner dx_j=0$. In any case all the terms are zero.

    The zero-average property of $\mathbf{T}_\alpha$ descends from the same property for $\varphi$ (and thus for $\varphi_\mu$). Finally,
    \[
    \mathbf{u}_\alpha\wedge \mathbf{T}_\alpha=\mu^{d-(k+1)}\varphi_\mu(x^\alpha)^2 e_\alpha
    \]
    and after a change of variables we use that $\int \varphi^2=1$ to obtain the thesis.
\end{proof}

\subsection{The potential}\label{sec:potential}
As anticipated in Section~\ref{sec:localized:GTE}, we now introduce the general potential used to correct both the incompressibility and the oscillatory error.

\begin{prop}\label{prop:potential:GTE}
	Given $g:\T^{d-(k+1)}\to\R$ with $\int g=0$ we define $$g_\alpha(x)=g(x^\alpha)$$
	and also $G_{\alpha}=\mathrm{div}^{-1}(g_\alpha)$, that is,
	$$
	G_{\alpha,j}(x)
	=-i\sum_{\xi\neq 0}\hat{g}_\alpha(\xi)\frac{\xi_j}{|\xi|^2}e^{i\xi\cdot x}.
	$$
	Let $\beta\subset\alpha$ and consider
	$$U=f (g_\alpha)_\lambda e_\beta.$$
	Then, the pair
	$$
	V=\frac{1}{\lambda}\sum_{i,j=1}^d\partial_i f (G_{\alpha,j})_\lambda (e_\beta\wedge e_j)
	\mathbin{\llcorner} d x_i
	$$
	and
	$$
	W=\frac{1}{\lambda}\sum_{j=1}^d
	f(G_{\alpha,j})_\lambda(e_\beta\wedge e_j)
	$$
	satisfy
	$$U+V=\partial W.$$
\end{prop}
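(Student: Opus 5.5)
The plan is a direct computation of $\partial W$ using the Leibniz-type formula \eqref{eq:partial_fT_Federer} together with \eqref{eq:partial_T_Federer}. I would write $W=\sum_j h_j\,(e_\beta\wedge e_j)$ with scalar coefficients $h_j:=\lambda^{-1} f\,(G_{\alpha,j})_\lambda$. Each $e_\beta\wedge e_j$ is a constant $(|\beta|+1)$-vector, so $\partial(e_\beta\wedge e_j)=0$. Formula \eqref{eq:partial_fT_Federer} then gives
\[
\partial W=-\sum_{i,j=1}^d (\partial_i h_j)\,(e_\beta\wedge e_j)\llcorner dx_i .
\]
Since $\partial_i h_j$ is a scalar, its wedge with a current is just multiplication. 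I will keep track of the sign convention exactly as stated in \eqref{eq:partial_fT_Federer}.

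Next, I expand $\partial_i h_j$ by the product rule and the chain rule for the oscillation:
\[
\partial_i h_j=\lambda^{-1}(\partial_i f)(G_{\alpha,j})_\lambda+f\,(\partial_iG_{\alpha,j})_\lambda .
\]
The first term reproduces $V$ up to sign. I will check that the sign matches the statement; if it comes out opposite, the identity reads $U-V=\partial W$, or equivalently $V$ should carry a minus sign. The second term is the main part:
\[
-f\sum_{i,j}(\partial_iG_{\alpha,j})_\lambda\,(e_\beta\wedge e_j)\llcorner dx_i .
\]
To evaluate it, I use the contraction rule $(e_\beta\wedge e_j)\llcorner dx_i=(e_\beta\llcorner dx_i)\wedge e_j+(-1)^{|\beta|}(e_j\llcorner dx_i)\,e_\beta$, the graded Leibniz property of interior multiplication. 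The second piece equals $(-1)^{|\beta|}\delta_{ij}e_\beta$. Summing over $i=j$, it gives $(-1)^{|\beta|}(\dive G_\alpha)_\lambda e_\beta=(-1)^{|\beta|}(g_\alpha)_\lambda e_\beta$, because $\dive G_\alpha=g_\alpha$ (this uses $\int g=0$).

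The step I expect to be the main obstacle is showing that the first piece vanishes, namely
\[
\sum_{i,j}(\partial_iG_{\alpha,j})(e_\beta\llcorner dx_i)\wedge e_j=0 .
\]
Here $e_\beta\llcorner dx_i\neq0$ only when $i\in\beta\subset\alpha$. On the other hand, $g_\alpha$ depends only on $x^\alpha$, so its Fourier support lies in $\{\xi:\xi_i=0 \ \text{for } i\in\alpha\}$. Hence $\partial_iG_{\alpha,j}=0$ for every $i\in\alpha$, and in particular for every $i\in\beta$. The same observation shows $G_{\alpha,j}=0$ for $j\in\alpha$, which is consistent but not needed.

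Finally, I collect the signs. The main term is $-(-1)^{|\beta|}f(g_\alpha)_\lambda e_\beta$, so whether it equals $U$ depends on the contraction and boundary sign conventions of \cite{Federer}. I expect to absorb the factor $\pm(-1)^{|\beta|}$ by fixing the convention for $\llcorner$ (contracting from the correct side) or by reordering the wedge as $e_j\wedge e_\beta$. I would carry out these sign checks explicitly, testing them on the case $k=0$, $\beta=\emptyset$. In that case $W$ is the vector field $\lambda^{-1}fG_\lambda$, $\partial W=-\dive W$, and the identity reduces to the familiar $\dive(fG_\lambda)=f(g)_\lambda+\lambda^{-1}\nabla f\cdot G_\lambda$.
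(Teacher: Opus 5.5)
Your computation is the paper's proof. You expand $\partial W$ by the Leibniz rule, identify the $\partial_i f$ part with $V$, and reduce the remaining part to $f(\dive G_\alpha)_\lambda e_\beta$ using that $G_\alpha$ depends only on $x^\alpha$. The paper does this by discarding the terms with $i\in\alpha$ or $j\in\alpha$ directly; you use the graded Leibniz rule for $\llcorner$, which is equivalent and, as you note, does not need $G_{\alpha,j}=0$ for $j\in\alpha$.

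The only loose end is the sign, and you should settle it rather than hope to absorb it. With the paper's own conventions (formula \eqref{eq:partial_T_Federer} and left contraction), your computation gives
\[
\partial W=-(-1)^{|\beta|}U-V .
\]
The paper's proof reaches $U+V$ only through two slips: it drops the minus sign of \eqref{eq:partial_T_Federer}, and it writes $(e_\beta\wedge e_j)\llcorner dx_i=\delta_{ij}e_\beta$ instead of $(-1)^{|\beta|}\delta_{ij}e_\beta$.

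Reordering the wedge to $e_j\wedge e_\beta$ removes only the factor $(-1)^{|\beta|}$, not the overall minus. Your own $k=0$ check shows this: it gives $\partial W=-\dive W=-(U+V)$. So the identity that actually holds is
\[
\partial\bigl(-(-1)^{|\beta|}W\bigr)=U+(-1)^{|\beta|}V,
\]
or, equivalently, $-W$ built on $e_j\wedge e_\beta$ works with the matching $V$. Since only the sizes of $V$ and $W$ enter the later estimates, this correction is harmless for the rest of the paper.
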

\begin{proof} 
	We compute 
	\begin{subequations}
		\begin{align}
			\partial W
			&=\frac{1}{\lambda}\sum_{i,j=1}^d
			\partial_i(f(G_{\alpha,j})_\lambda)(e_\beta\wedge e_j)
			\mathbin{\llcorner} d x_i\\
			&=\sum_{i,j=1}^d
			f(\partial_iG_{\alpha,j})_\lambda(e_\beta\wedge e_j)
			\mathbin{\llcorner} d x_i\label{eq:potential:1}\\
			&+\frac{1}{\lambda}\sum_{i,j=1}^d
			\partial_if(G_{\alpha,j})_\lambda(e_\beta\wedge e_j)
			\mathbin{\llcorner} d x_i.\label{eq:potential:2}
		\end{align}  
	\end{subequations}
	Notice that the second term \eqref{eq:potential:2} equals $V$. It remains to show that the first term \eqref{eq:potential:1} equals $U$.  
	Firstly, we notice that
	\begin{align*}
		\hat{g}_\alpha(\xi)
		&=\int g_\alpha(x)e^{-ix\cdot\xi}\dif x\\
		&=\int g(x^\alpha)e^{-ix^\alpha\cdot\xi^\alpha}\dif x^\alpha
		\int e^{-ix_\alpha\cdot\xi_\alpha}\dif x_\alpha\\
		&=\hat{g}(\xi^\alpha)\delta_{\xi_\alpha}.
	\end{align*}
	It follows that $G_{\alpha,j}$ only depends on $x^\alpha$.
	Moreover, we have $\partial_i G_{\alpha,j}=G_{\alpha,j}=0$ for $i,j\in\alpha$. For $i,j\notin\alpha$ we have
	$$(e_\beta\wedge e_j)
	\mathbin{\llcorner} d x_i=e_\beta\delta_{ij}.$$
	Therefore,
	$$
	\eqref{eq:potential:1}
	=f
	\left(\sum_{j=1}^d\partial_j G_{\alpha,j}
	\right)_\lambda e_\beta =U.
	$$
\end{proof}

\begin{Rem}
    For simplicity, Proposition~\ref{prop:potential:GTE} is stated for $g_\alpha$ defined only in terms of the oscillation parameter $\lambda\in\mathbb{N}$. However, an analogous statement holds when including a concentration parameter $\mu\ge1$, namely for $((g_\alpha)_\mu)_\lambda$. Here, the innermost subscript $\alpha$ denotes the multi-index specifying the variables on which the function depends, while $\mu$ and $\lambda$ are the concentration and oscillation parameters, respectively. The only difference is that the factor $\frac{1}{\lambda}$ in front of $V$ and $W$ is replaced by $\frac{1}{\lambda\mu}$, since the additional concentration produces an extra factor $\mu$ when differentiating.
\end{Rem}

\subsection{The new $(u_1,T_1,R_1)$}\label{sec:newT1}
Analogously to Section \ref{sec:newB1}, 
given a solution $(u_0,T_0,R_0)$ of the \eqref{RGTE},
we define 
\begin{equation}\label{def:newT1u1}
    u_1:=u_0 + \underbrace{\tilde{u} + \tilde{u}_c}_{\partial\tilde{\psi}},
	\quad\quad
    T_1:=T_0 + \underbrace{\tilde{T} + \tilde{T}_c}_{\partial\tilde{S}}.
\end{equation}
The new defect $R_1$ must satisfy the \eqref{GTE}, which can be written in terms of the decomposition \eqref{def:newT1u1} as follows
\begin{subequations}\label{R1:GTE}
	\begin{align}
		\partial R_1
		&=\partial_t T_1 - \partial(u_1\wedge T_1)\nonumber\\
		&= \partial(R_0-\tilde{u}\wedge\tilde{T})\label{term:quad:GTE}\\
		&+ \partial(\partial_t\tilde{S}-\tilde{T}\wedge u_0 - T_0\wedge\tilde{u})\label{term:lin:GTE}\\
		&-\partial(\tilde{u}_c\wedge (T_0+\tilde{T})
		+(u_0+\tilde{u})\wedge \tilde{T}_c
		+\tilde{u}_c\wedge \tilde{T}_c)\label{term:cor:GTE}.
	\end{align}
\end{subequations}
Recall that in the first line we have applied the hypothesis that $(u_0,T_0,R_0)$ is a solution.
We decompose $R_1$ into three terms depending on each line of \eqref{R1:GTE}
$$R_1:=R^{\text{quad}}
+R^{\text{lin}}
+R^{\text{cor}}.$$

\subsection{The mean term $(\tilde{T},\tilde{u})$}\label{sec:mean:GTE}

Analogously to Section \ref{sec:mean}, 
the new pair $(\tilde{u},\tilde{T})$ is obtained from  the old defect $R_0$, the spatial building blocks $(\mathbf{u}_\alpha,\mathbf{T}_{\alpha})$ defined in \eqref{def:intermittentshearflows}, and the cutoff $\chi$ as follows 
\begin{equation}\label{eq:defTildeT}
	\tilde{u}
	:=\frac{1}{\eta}\sum_{\alpha}\underbrace{\chi_{\alpha}|R_{0,k}|^{1/q}}_{\tilde{u}_\alpha}(\mathbf{u}_{\alpha})_\lambda,
	\quad\quad
    \tilde{T}:=\eta\sum_{\alpha}\underbrace{\chi_{\alpha}\operatorname{sgn}(R_{0,\alpha})|R_{0,\alpha}|^{1/p}}_{\tilde{T}_{\alpha}}(\mathbf{T}_\alpha)_\lambda.
\end{equation}
where $\alpha\in I (d,k+1)$,
\begin{equation}\label{Rdecomposition:GTE} 
	R_0(t,x)=\sum_{\alpha}R_{0,\alpha}(t,x)e_\alpha,
\end{equation}
and
$$
\chi_{\alpha}(t,x)
:=\chi\left(\frac{|R_{0,\alpha}(t,x)|}{\delta/N}\right).
$$
The parameter $N = N(d,k)$ is chosen sufficiently large so that \eqref{eq:cutoff:N} holds.

By applying the improved H\"older inequality (Proposition~\ref{prop:IHI}), we generalize Lemma~\ref{lem:Lestimates}: 
There exist $M>0$ and $C=C(p,\delta,\|R_0\|_{C^1})>0$ such that
\begin{align*}
\|\tilde{u}\|_{L^p}
&\leq \frac{M}{\eta}\left(\|R_{0}\|_{L^1}^{1/p} + \frac{C}{\lambda^{1/p}}\right),\\
\|\tilde{T}\|_{L^q}
&\leq M\eta\left(\|R_{0}\|_{L^1}^{1/q} + \frac{C}{\lambda^{1/q}}\right).
\end{align*}

\subsection{The quadratic error}

By definition we have
\begin{align*}
	R_0-\tilde{u}\wedge\tilde{T}
	&=\sum_{\alpha}
	\chi_{\alpha}^2R_{0,\alpha}(e_\alpha-\mathbf{u}_{\alpha}\wedge\mathbf{T}_{\alpha})_\lambda
	&&=:R^{\text{osc}}_0\\
	&+\sum_{\alpha}(\chi_{\alpha}^2-1)
	R_{0,k}e_k
	&& =: R^\chi\\
	&+\sum_{\alpha\neq\alpha'}\tilde{u}_{\alpha}\tilde{T}_{\alpha'} (\mathbf{u}_{k}\wedge \mathbf{T}_{\alpha'})_\lambda
	&&=:R^{\text{int}}
\end{align*}
We define
\begin{equation}\label{def:Rquad:GTE}
	R^{\text{quad}}
	:=R^{\text{osc}}_1
	+R^\chi
	+R^{\text{int}}.
\end{equation}

\subsubsection{The oscillation error}
Notice that
$$
e_\alpha-\mathbf{u}_{\alpha}\wedge\mathbf{T}_{\alpha}
=g_\alpha e_\alpha,
$$
where $g=(1-\mu^{d-(k+1)}\varphi_\mu^2)$ has zero mean. Hence, we can apply Proposition~\ref{prop:potential:GTE} with  $\beta=\alpha$, $f=\chi_{\alpha}^2R_{0,\alpha}$ and $g$ to obtain a smaller oscillatory error. Namely, we define
$$
R_1^{\text{osc}}
=-\frac{1}{\lambda}\sum_{i,j=1}^d\partial_i(\chi_{\alpha}^2R_{0,\alpha})(G_{\alpha,j})_\lambda (e_\alpha\wedge e_j)
	\mathbin{\llcorner} d x_i,
$$
where we recall that $G_{\alpha}=\mathrm{div}^{-1}(g_\alpha)$. It follows that
$$
\|R^{\text{osc}}_1\|_{L^1}
\leq\frac{C}{\lambda},
$$
for some $C=C(\delta,\|R_0\|_{C^1})>0$. This generalizes Lemmas~\ref{def:Rosc} and~\ref{lemma:Rosc}.

\subsubsection{The cutoff error}
Arguing exactly as in Lemma~\ref{lemma:Rcutoff}, we obtain
\begin{equation}\label{eq:cutoff:N}
\|R^\chi\|_{L^1}\leq\delta/3.
\end{equation}

\subsubsection{The intersection error}

Recall that $R^{\text{int}}$ arises from the intersection of the mixed terms. 
By shifting the building blocks, we can ensure that $R^{\text{int}}=0$ whenever $2(d-(k+1))>d$. 
This is the typical strategy used with Mikado flows in 3D. 
Otherwise, we can make $R^{\text{int}}$ small by taking the concentration $\mu$ sufficiently large. 
More precisely, as in the proof of Lemma~\ref{lemma:Rint}, we have
$$
\|\varphi_\mu(x^\alpha)\varphi_\mu(x^{\alpha'})\|_{L^1}
=\mu^{-(2a+b)}\|\varphi^2\|_{L^1}^a
\|\varphi\|_{L^1}^b,
$$
where $a\ge 1$ is the number of common variables and $a+b=d-(k+1)$. 
Thus we deduce that
$$
\|R^{\text{int}}\|_{L^1}
\le
\frac{M}{\mu}\|R_0\|_{C^0},
$$
for some universal constant $M>0$.

Therefore,
we can take $(\lambda,\mu)$ large enough, in terms of $(\delta,\|R_0\|_{C^1})$, such that
$$
\|R^{\text{quad}}\|_{L^1}\leq\delta/2.
$$

\subsection{The incompressibility correctors}
Recall that 
$$
\mathbf{T}_\alpha = g_\alpha e_{\alpha^1},
$$
where $g=\mu^{\frac{d-(k+1)}{q}}\varphi_\mu$ has zero mean. Hence, we can apply Proposition~\ref{prop:potential:GTE} with $\beta=\alpha^1$, $f=\eta\tilde{T}_\alpha$  and $g$ to obtain a $\partial$-corrector
$$
\tilde{T}_c
=\frac{\eta}{\lambda}\sum_{i,j=1}^d\partial_i\tilde{T}_\alpha (G_{\alpha,j})_\lambda (e_{\alpha^1}\wedge e_j)
	\mathbin{\llcorner} d x_i,
$$
where we recall that $G_\alpha=\mathrm{div}^{-1}(g_\alpha)$. Indeed, there is a potential 
$$
\tilde{S}=\frac{\eta}{\lambda}\sum_{j=1}^d
	\tilde{T}_\alpha(G_{\alpha,j})_\lambda(e_{\alpha^1}\wedge e_j)
$$
such that $\tilde{T}+\tilde{T}_c=\partial\tilde{S}$.

For the velocity field $\tilde{u}$, which can be identified with a 1-current,  Proposition~\ref{prop:potential:GTE} implies the existence of a 1-current $\tilde{u}_c$ and a 2-current $\tilde{\psi}$ such that $\tilde{u}+\tilde{u}_c=\partial\tilde{\psi}$.
Recalling that $\partial\circ\partial=0$, and that $\partial$ corresponds to minus the divergence for $1$-currents, this implies that the velocity field corresponding to $\tilde u + \tilde u_c$ is divergence free.

It follows that  
\begin{align*}
\|\tilde{u}_c\|_{L^p}\leq\eta\frac{C}{\lambda\mu},\\
\|\tilde{T}_c\|_{L^p}\leq\frac{1}{\eta}\frac{C}{\lambda\mu},
\end{align*}
for some $C(\delta,\|R_0\|_{C^2})>0$.
This generalizes Lemmas \ref{lemma:incompressibilitycorrectors} and \ref{lem:Lestimates:cor}.

\subsection{The linear and corrector errors}
The terms
\begin{align*}
R^{\text{lin}}&=\partial_t\tilde{S}-\tilde{T}\wedge u_0 - T_0\wedge\tilde{u},\\
R^{\text{cor}}&=-(\tilde{u}_c\wedge (T_0+\tilde{T})
		+(u_0+\tilde{u})\wedge \tilde{T}_c
		+\tilde{u}_c\wedge \tilde{T}_c)
\end{align*}
can be bounded as in Lemma~\ref{lemma:Rlin} and Lemma~\ref{lemma:Rcor}.
In particular, we can take $(\lambda,\mu)$ large enough, in terms of $(\delta,\eta,\|R_0\|_{C_{t,x}^1},\|R_0\|_{C^2},\|B_0\|_{C^0},\|u_0\|_{C^0})$, such that
$$
\|R^{\text{lin}}\|_{L^1},\,
\|R^{\text{cor}}\|_{L^1}\leq\delta/8.
$$

\subsection{Sobolev regularity}
By applying Lemma~\ref{lemma:buildingblock} and the Leibniz rule, we obtain the corresponding Sobolev estimates as in Lemma~\ref{lemma:Sobolev}, namely
\begin{align*}
\|\tilde{\psi}\|_{\dot{W}^{m,r}}
&\leq\frac{C}{\eta}(\lambda\mu)^{m-1}\mu^{(d-(k+1))\left(\frac{1}{p}-\frac{1}{r}\right)},\\[0.2cm]
\|\tilde{S}\|_{\dot{W}^{m,r}}
&\leq C\eta(\lambda\mu)^{m-1}\mu^{(d-(k+1))\left(\frac{1}{q}-\frac{1}{r}\right)}.
\end{align*}
for some $C(\delta,R_0)>0$.

\subsection{Proof of Theorems \ref{thm:GTE} and \ref{thm:DGTE}}\label{sec:proof:thm:GTE}
Analogously to Section~\ref{sec:proof:Lebesgue}, the previous estimates allow us to prove a version of the inductive estimates (Propositions~\ref{Prop:Lebesgue} and \ref{Prop:Sobolev}) for the \eqref{GTE}, for exponents in the range of Theorem~\ref{thm:GTE}. This, in turn, allows us to conclude a version of the $H$-principles (Theorems~\ref{Thm:lebesgue} and \ref{Thm:Sobolev}) for the \eqref{GTE}. We omit the corresponding statements for brevity, as they are essentially obtained by replacing $B$ with $T$. This completes the proof of Theorem~\ref{thm:GTE}.

As explained in Section~\ref{sec:DGTE}, the diffusive term can be absorbed into the defect in the range \eqref{d>k+1+p}, which allows us to prove the analogous inductive estimates and $H$-principles for the \eqref{DGTE}, in particular Theorem~\ref{thm:DGTE}.

\subsection{The diffusion GTE equation}

We proceed analogously to Section \ref{sec:fastdynamos}. For the sake of brevity, we only recall the estimates for the dangerous terms. The regularity of the perturbations is given by
$$
\|\partial\tilde{\psi}_{\text{dyn}}\|_{L_t^1 W^{1,p}}
\leq M\frac{\lambda\mu^{1-\frac{d-(k+1)}{p}}}{\mu_0^{1-\frac{1}{p_0}}},
$$
and
$$
\|\partial\tilde{S}_{\text{dyn}}\|_{L_t^\gamma C^m}
\leq C\frac{(\lambda\mu)^{m}\mu^{d-(k+1)}}{{\mu_0}^{\tfrac{1}{\gamma}-\tfrac{1}{q_0}}}.
$$
The acceleration and diffusion errors satisfy the same bound:
$$
\|R^{\text{acc}}\|_{L_{t,x}^1}
\leq C \frac{\lambda_0\mu_0^{1/q_0}}{\lambda\mu},
$$
and
$$\|\partial^\dagger\partial\tilde{S}_{\textup{dyn}}\|_{L_ {t,x}^1}
\leq C
\frac{\lambda\mu}{{\mu_0}^{1/p_0}}.$$
All the remaining terms are harmless. These estimates allow us to choose the parameters so as to close the scheme as in Section~\ref{Proof:prop:dynamo}.

\subsection{Proof of Theorem \ref{thm:DGTE:intermittent}}\label{sec:proof:thm:DGTE}
Analogously to Section~\ref{Proof:thm:hprinciple}, this choice of parameters allows us to prove a version of the inductive estimate (Proposition~\ref{prop:dynamo}), as well as a version of the $H$-principle (Theorem~\ref{thm:hprinciple}), for the \eqref{DGTE}. In particular, we deduce Theorem~\ref{thm:DGTE:intermittent}. 
As before, we omit the corresponding statements for brevity, as they are essentially obtained by replacing $B$ with $T$.

\subsection{The codimension 1 case}\label{sec:k=d-1}
All results concerning the \eqref{GTE} have been stated under the condition $0\le k\le d-2$, and we now spend a few words commenting on this. As already mentioned in the introduction, in the case $k=d$ the class of boundaryless $d$-dimensional currents in $\R^d$ is trivial, since it includes only constants. This leaves out the only remaining interesting case $k=d-1$. The reason why we can not currently include this case in our results is due to the fact that, in the construction of the intermittent shear flows in \eqref{eq:def_space_intermittency:1} and \eqref{eq:def_space_intermittency:2}, we need to build (non-trivial) functions that depend on $d-(k+1)$ variables only. When $k=d-1$ this would entail working with functions that depend on 0 variables, i.e., constants, and this is too restrictive. The extension of the results to the case $k=d-1$ is thus left for future investigations.

\section{Conclusion}\label{sec:conclusion}

In this work, we have investigated the potential of convex integration for the \eqref{GTE}, and in particular for the \eqref{dynamo1} equation. The outcomes of convex integration should be interpreted in two complementary ways. On the one hand, they highlight the limits of well-posedness theory for various partial differential equations, thereby complementing the results of \cite{BDR-JFA,BDR-TAMS,BonicattoDelNinpp} on the well-posedness of the \eqref{GTE}. On the other hand, the solutions we obtain often reproduce key features of turbulent flows.

From this perspective, there is an ongoing debate about which convex integration solutions may have a genuine physical interpretation. In this regard, our scheme is highly explicit, which raises the intriguing possibility that such constructions could, at least in part, be replicated experimentally in the laboratory to build an artificial dynamo. It is therefore natural to compare the numerical simulations in \cite{ADN02,BouyaDormy13}, see  Figure~\ref{fig:4},  with the structure of the dynamo solutions presented here, namely an intermittent superposition of Daneri–Sz\'ekelyhidi Mikado flows \cite{DaneriSzekelyhidi17}.

From a mathematical standpoint, it would be of great interest to explore alternative convex integration schemes, such as those conserving helicity \cite{FLSz21,EPPpp,GiardiSzekelyhidipp}, or to combine our approach with other models of turbulent flows (e.g., \cite{ModenaSattig20,BCKpp,BCKpp2}). Another promising direction is the combination with homogenization, as in \cite{AmstrongVicol26,BSWpp,BSWpp2}, which could lead to universal dynamos, namely velocity fields that amplify all magnetic fields. At the same time, our scheme has certain limitations, notably the lack of universality and the presence of ``bad times'', which should be addressed in future work.

Overall, the theory of the kinematic dynamo appears to have reached a stage where it can be fruitfully studied using modern mathematical tools developed over the past 15 years for the analysis of turbulent flows.

\begin{figure}[h!]
	\centering
	\includegraphics[height=4.0cm]{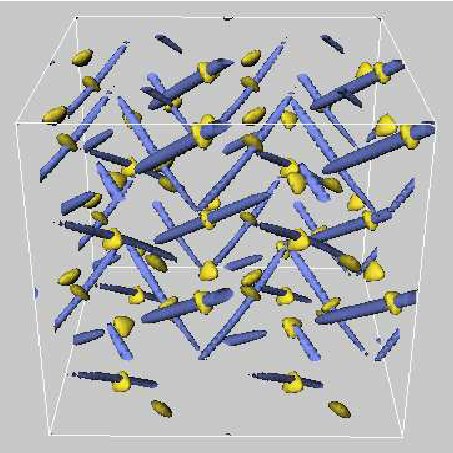}
	\quad\quad
	\includegraphics[height=4.0cm]{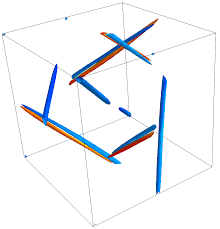}
	\caption{Magnetic field strength isosurfaces. Left figure reproduced from~\cite{ADN02} with permission of the authors; right figure reproduced from~\cite{BouyaDormy13} (available at \href{https://doi.org/10.1063/1.4795546}{https://doi.org/10.1063/1.4795546}) with permission of AIP Publishing.}
    \label{fig:4}
	\end{figure}

\appendix

\section{An eigenbasis of the magnetostatic operator} \label{sec:magnetostatic}
We consider the magnetostatic eigenvalue problem
\begin{equation} \label{e:Magnetostatic problem 2}
\nabla \times (\nabla \times B) = \lambda B, \quad \nabla \cdot B = 0, \quad B \cdot n|_{\partial \Omega} = 0, \quad (\nabla \times B) \times n|_{\partial \Omega} = 0
\end{equation}
for $H^1$-integrable fields. We first need to explain how the boundary condition $(\nabla \times B) \times n = 0$ is defined (since $B \in H^1_\sigma(\Omega)$ alone is not enough to make sense of the trace $(\nabla \times B) \times n$). If $B \in C^\infty(\bar{\Omega}) \cap H^1_\sigma(\Omega)$, then \eqref{e:Magnetostatic problem 2} is equivalent to the condition that
\begin{equation} \label{e:Weak formulation}
\int_\Omega \nabla \times B \cdot \nabla \times \varphi = \lambda \int_\Omega B \cdot \varphi \qquad \text{for every } \varphi \in H^1_\sigma(\Omega).
\end{equation}
We take \eqref{e:Weak formulation} as the weak formulation of \eqref{e:Magnetostatic problem 2} for $B \in H^1_\sigma(\Omega)$. In Proposition \ref{p:Magnetostatic proposition} below we present a regularity result on solutions of \eqref{e:Magnetostatic problem 2}, and for that recall the following special case of~\cite[Corollary 2.15]{ABDG}:
\begin{lemma} \label{l:ABDG}
Let $m \in \N$. Then the function spaces
\begin{align*}
& \{v \in L^2(\Omega): \nabla \times v \in H^{m-1}(\Omega), \ \nabla \cdot v \in H^{m-1}(\Omega), \ v \times n \in H^{m-\nicefrac{1}{2}}(\partial \Omega)\}, \\
& \{v \in L^2(\Omega): \nabla \times v \in H^{m-1}(\Omega), \ \nabla \cdot v \in H^{m-1}(\Omega), \ v \cdot n \in H^{m-\nicefrac{1}{2}}(\partial \Omega)\}
\end{align*}
are continuously embedded in $H^m(\Omega)$.
\end{lemma}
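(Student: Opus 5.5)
We argue by induction on $m$, treating the two spaces in parallel; the second one (prescribed normal trace) is the model case. The key structural fact is that for a field $v$ near a boundary point, in local coordinates flattening $\partial\Omega$, the normal derivative $\partial_n v$ can be written pointwise as a combination of $\nabla\cdot v$, $\nabla\times v$ and tangential derivatives of $v$, with smooth coefficients. Hence $H^m$ control of $v$ reduces to $H^m$ control of tangential derivatives of order up to $m$, plus the div/curl data.

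\textbf{Base case $m=1$.} We subtract a gradient to homogenize the data. Let $c$ be the constant that makes the Neumann problem
$$\Delta\phi=\nabla\cdot v-c \ \text{in } \Omega, \qquad \partial_n\phi=v\cdot n \ \text{on } \partial\Omega$$
compatible, namely $c=0$ by the divergence theorem, and solve this problem. Standard elliptic regularity with data in $L^2\times H^{1/2}(\partial\Omega)$ gives $\phi\in H^2$ with the corresponding estimate. Then $w:=v-\nabla\phi$ satisfies $\nabla\cdot w=0$, $w\cdot n=0$ and $\nabla\times w=\nabla\times v\in L^2$. The classical Gaffney--Friedrichs inequality on smooth bounded domains,
$$\|w\|_{H^1}\lesssim \|w\|_{L^2}+\|\nabla\times w\|_{L^2}+\|\nabla\cdot w\|_{L^2},$$
valid under $w\cdot n=0$, then gives $w\in H^1$. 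For the tangential-trace space we proceed analogously. We first lift the datum $v\times n\in H^{1/2}(\partial\Omega)$ to an $H^1$ field, which handles the tangential part. We then correct the divergence through a Dirichlet problem $\Delta\phi=\nabla\cdot(\cdot)$ with $\phi|_{\partial\Omega}=0$, which does not alter the tangential trace. Finally we apply the Gaffney inequality under $w\times n=0$. The finite-dimensional spaces of harmonic fields (Neumann or Dirichlet type) consist of smooth fields, so they never obstruct the estimate.

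\textbf{Induction step $m-1\to m$.} Assume the embedding holds for $m-1$, and let $v$ lie in the $m$-th space. By a partition of unity, the interior part follows from ellipticity of $\Delta=\nabla\nabla\cdot-\nabla\times\nabla\times$ and interior regularity. Near the boundary, we flatten $\partial\Omega$ and apply a tangential derivative (or tangential difference quotient) $\partial_\tau$ to $\chi v$, where $\chi$ is a cutoff. The field $\partial_\tau(\chi v)$ has div and curl in $H^{m-2}$ up to commutators already controlled in $H^{m-1}$ by induction, and its normal (respectively tangential) trace lies in $H^{m-3/2}$, since tangential differentiation costs exactly one boundary derivative. The induction hypothesis then gives $\partial_\tau(\chi v)\in H^{m-1}$ uniformly in the difference-quotient parameter. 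Finally we recover normal derivatives from the algebraic identity expressing $\partial_n v$ through $\nabla\cdot v$, $\nabla\times v$ and tangential derivatives, which bootstraps to $v\in H^m$. Continuity of the embedding follows from the closed graph theorem, or directly from tracking the estimates.

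\textbf{Main obstacle.} The delicate point is the commutator bookkeeping in the flattened coordinates. The boundary conditions transform with variable coefficients, so $\partial_\tau$ of the trace $v\cdot n$ is not exactly the trace of $\partial_\tau v$ dotted with $n$; it differs by a lower-order term involving derivatives of $n$, which must be shown to lie in $H^{m-3/2}$ using the inductive $H^{m-1}$ bound together with the trace theorem. I would handle this by writing everything in a frame adapted to the boundary (moving frame $\{\tau_1,\tau_2,n\}$ extended smoothly inward) and checking that every commutator is of order at most $m-1$ in $v$. Alternatively, and more economically, I would simply invoke the regularity theory of elliptic boundary problems (Agmon--Douglis--Nirenberg) for the div-curl system with normal or tangential boundary conditions, which is Lopatinskii--Shapiro elliptic, as done in \cite{ABDG}.
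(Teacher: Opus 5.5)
Your argument is sound, but it is not what the paper does: the paper gives no proof of this lemma and simply quotes it as a special case of \cite[Corollary 2.15]{ABDG}.

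You instead give an actual argument. You reduce to homogeneous boundary data via scalar Neumann/Dirichlet problems, and take the homogeneous case $m=1$ from Friedrichs--Gaffney theory. You then climb in $m$ by applying the induction hypothesis to $\partial_\tau(\chi v)$, for fields $\tau$ tangent to $\partial\Omega$. Finally you recover the transversal derivative pointwise from injectivity of the div--curl symbol: $a$ is determined by $\xi\cdot a$ and $\xi\times a$ when $\xi\neq 0$. The citation buys brevity. Your route shows that all genuinely global input sits in the case $m=1$, with higher regularity a soft consequence of ellipticity.

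Two small points.

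\emph{The base case.} You need the Friedrichs regularity theorem, not merely the Gaffney inequality. The theorem says that fields in $L^2$ with $L^2$ curl and divergence and vanishing normal (resp.\ tangential) trace lie in $H^1$. The Gaffney inequality is only an a priori bound for fields already in $H^1$. Upgrading it to membership in $H^1$ requires density of $H^1$ fields in the graph-norm space, and that density is exactly the nontrivial content (it fails, e.g., on nonconvex polyhedra). So your base case is a citation of the homogeneous $m=1$ instance of the lemma, which is fine but should be stated that way.

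\emph{Difference quotients are unnecessary.} For $m\geq 2$ you already know $v\in H^{m-1}\subset H^1$ by induction, so $\partial_\tau(\chi v)\in L^2$ and the hypothesis applies to it directly. The only real bookkeeping is at $m=2$, where $\partial_\tau(\chi v)$ is merely $L^2$. There you must identify its normal (resp.\ tangential) trace with $\partial_\tau(\chi v\cdot n)-\chi v\cdot\partial_\tau n$ (resp.\ the analogous cross-product expression) through Green's formula. This works because $\tau\cdot n=0$ on $\partial\Omega$, so integrating by parts along $\tau$ produces no boundary term.
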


\begin{prop} \label{p:Magnetostatic proposition}
Suppose $B \in H^1_\sigma(\Omega)$ satisfies \eqref{e:Weak formulation}. Then $B \in \cap_{m \in \N} H^m(\Omega)$ and $B$ satisfies \eqref{e:Magnetostatic problem 2}. Furthermore, $\norm{B}_{H^m(\Omega)} \leq C_{\Omega,m} (1+\lambda^{\nicefrac{m}{2}}) \norm{B}_{L^2(\Omega)}$ for every $m \in \N$.
\end{prop}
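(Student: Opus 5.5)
We argue by elliptic bootstrapping, using Lemma~\ref{l:ABDG} to trade control of curl, divergence and one boundary trace for full Sobolev regularity.

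\emph{Step 1: recovering the strong equation.} Set $J:=\nabla\times B\in L^2(\Omega)$. For $\psi\in C_c^\infty(\Omega)$ we have $\nabla\times\psi\in H^1_\sigma(\Omega)$ (it is divergence free and vanishes near $\partial\Omega$), so \eqref{e:Weak formulation} applies to $\varphi=\nabla\times\psi$. Its right-hand side is $\lambda\int_\Omega B\cdot\nabla\times\psi=\lambda\int_\Omega J\cdot\psi$, since $\psi$ has compact support. Hence $\nabla\times J=\lambda B$ in the distributional sense.

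To recover the boundary condition, take a general $\varphi\in H^1(\Omega)$ and split it as $\varphi=\varphi_\sigma+\nabla\pi$ (Helmholtz projection with a Neumann problem for $\pi$). Then
\[
\int_\Omega J\cdot\nabla\times\varphi=\int_\Omega J\cdot\nabla\times\varphi_\sigma=\lambda\int_\Omega B\cdot\varphi_\sigma=\lambda\int_\Omega B\cdot\varphi ,
\]
where the last equality holds because $B\in L^2_\sigma$ is orthogonal to gradients. Compare this with Green's formula
\[
\int_\Omega J\cdot\nabla\times\varphi=\int_\Omega(\nabla\times J)\cdot\varphi+\langle J\times n,\varphi\rangle_{\partial\Omega}.
\]
Since $\nabla\times J=\lambda B$, the boundary pairing vanishes for every $\varphi$, that is, $J\times n=0$ in $H^{-1/2}(\partial\Omega)$.

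\emph{Step 2: bootstrap.} We now have the elliptic system
\[
\nabla\cdot J=0,\qquad \nabla\times J=\lambda B,\qquad J\times n=0,
\]
\[
\nabla\cdot B=0,\qquad \nabla\times B=J,\qquad B\cdot n=0.
\]
Suppose $B\in H^m$. Then $J$ satisfies the hypotheses of the first space in Lemma~\ref{l:ABDG} at level $m+1$: its curl $\lambda B$ lies in $H^m$, its divergence vanishes, and its tangential trace is $0$. Hence $J\in H^{m+1}$ with $\|J\|_{H^{m+1}}\le C(\lambda\|B\|_{H^m}+\|J\|_{L^2})$. Next, $B$ satisfies the hypotheses of the second space in Lemma~\ref{l:ABDG} at level $m+2$: its curl $J$ lies in $H^{m+1}$, its divergence vanishes, and its normal trace is $0$. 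Hence $B\in H^{m+2}$. Starting from $B\in H^1$, induction gives $B\in H^m$ for every $m$. Once $B$ is smooth up to the boundary, $J$ has a classical trace, so $B$ satisfies \eqref{e:Magnetostatic problem 2} pointwise.

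\emph{Step 3: quantitative bound.} Taking $\varphi=B$ in \eqref{e:Weak formulation} gives $\|J\|_{L^2}=\lambda^{1/2}\|B\|_{L^2}$; in particular $\lambda\ge0$. The estimates of Lemma~\ref{l:ABDG} include the $L^2$ norm of the field, so
\[
\|B\|_{H^1}\le C\bigl(\|J\|_{L^2}+\|B\|_{L^2}\bigr)\le C(1+\lambda^{1/2})\|B\|_{L^2}.
\]
To keep the powers of $\lambda$ optimal, we bootstrap in steps of one derivative. Using $\|J\|_{H^{m}}\le C\bigl(\lambda\|B\|_{H^{m-1}}+\|J\|_{L^2}\bigr)$ together with $\|B\|_{H^{m+1}}\le C\bigl(\|J\|_{H^{m}}+\|B\|_{L^2}\bigr)$, we get
\[
\|B\|_{H^{m+1}}\lesssim\lambda\|B\|_{H^{m-1}}+(1+\lambda^{1/2})\|B\|_{L^2}.
\]
Induction on $m$ (separately along even and odd orders, with base cases $m=0,1$) then yields $\|B\|_{H^m}\le C_{\Omega,m}(1+\lambda^{m/2})\|B\|_{L^2}$.

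\emph{Main obstacle.} The delicate point is Step~1: giving $J\times n=0$ a rigorous meaning for $J$ that is a priori only in $H(\mathrm{curl})$, and using test functions in $H^1$ that are not divergence free. I would handle this with the Helmholtz decomposition above, together with the trace theory for $H(\mathrm{curl})$ fields. Everything after that is a routine application of Lemma~\ref{l:ABDG}.
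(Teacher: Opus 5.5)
Your overall route matches the paper's, but the first paragraph of Step~1 contains a non sequitur. Testing \eqref{e:Weak formulation} with $\varphi=\nabla\times\psi$, $\psi\in C_c^\infty(\Omega)$, gives $\int_\Omega J\cdot\nabla\times(\nabla\times\psi)=\lambda\int_\Omega J\cdot\psi$. That is, $\nabla\times(\nabla\times J)=\lambda J$, or equivalently $\nabla\times(\nabla\times J-\lambda B)=0$ in the sense of distributions. This only shows that $\nabla\times J-\lambda B$ is curl-free. It is also divergence-free, so a priori it could be a nonzero harmonic field. Test fields that are curls of compactly supported fields cannot detect such a field, and neither can any compactly supported solenoidal test fields. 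Concretely, in a non-simply-connected $\Omega$, take a Neumann field $B=H_1$ and $\lambda>0$. Your tested identity then holds trivially, since both sides vanish because $\nabla\times H_1=0$. Yet $\nabla\times J=0\neq\lambda B$. What rules this out is testing against fields in $H^1_\sigma(\Omega)$ that do not vanish on $\partial\Omega$.

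The repair is already in your proposal. The Helmholtz computation in your second paragraph establishes $\int_\Omega J\cdot\nabla\times\varphi=\lambda\int_\Omega B\cdot\varphi$ for every $\varphi\in H^1(\Omega)$, and it does not use the first paragraph. Restricting it to $\varphi\in C_c^\infty(\Omega)$ yields $\nabla\times J=\lambda B$. In particular $\nabla\times J\in L^2$, so Green's formula is available for the boundary step. This is exactly how the paper argues: it decomposes $\varphi\in C_c^\infty(\Omega)$ as $\psi+\nabla g$ and uses that $B$ is orthogonal to gradients.

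With Step~1 reordered this way, the rest of your argument coincides with the paper's proof. That means applying Lemma~\ref{l:ABDG} alternately to $J$ (tangential trace) and to $B$ (normal trace), using $\|\nabla\times B\|_{L^2}=\sqrt{\lambda}\|B\|_{L^2}$, and running the two-step induction $\|B\|_{H^{m+1}}\lesssim\lambda\|B\|_{H^{m-1}}+(1+\sqrt{\lambda})\|B\|_{L^2}$. Your explicit Helmholtz treatment of non-solenoidal $H^1$ test functions in the boundary step is in fact a little more careful than the paper. The paper applies \eqref{e:Weak formulation} to arbitrary $\varphi\in H^1(\Omega)$ without comment there.
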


\begin{proof}
Assuming that \eqref{e:Weak formulation} holds, we first show that $\nabla \times (\nabla \times B) = \lambda B$ in the sense of distributions. Let $\varphi \in C_c^\infty(\Omega)$ and consider a Helmholtz decomposition $\varphi = \psi + \nabla g$. Integrating by parts,
\[\int_\Omega B \cdot \nabla \times (\nabla \times \varphi) = \int_\Omega \nabla \times B \cdot \nabla \times \varphi = \int_\Omega \nabla \times B \cdot \nabla \times \psi = \lambda \int_\Omega B \cdot \psi = \lambda \int_\Omega B \cdot \varphi.\]
We then show that $B \in H^2(\Omega)$. By Lemma \ref{l:ABDG}, it is enough to show that $\nabla \times B \in H^1(\Omega)$. Since $\nabla \cdot (\nabla \times B) = 0$ and $\nabla \times (\nabla \times B) = \lambda B \in L^2(\Omega)$, the tangential trace $(\nabla \times B) \times n \in H^{-1/2}(\partial \Omega)$ is well-defined and by Lemma \ref{l:ABDG}, it suffices to show that $(\nabla \times B) \times n = 0$. Given $\varphi \in H^1(\Omega)$, we compute $\int_{\partial \Omega} (\nabla \times B) \times n \cdot \varphi = \int_\Omega (\nabla \times B \cdot \nabla \times \varphi - \nabla \times (\nabla \times B) \cdot \varphi) = \int_\Omega (\nabla \times B \cdot \nabla \times \varphi - \lambda B \cdot \varphi) = 0$.

We then prove the bound $\norm{B}_{H^m(\Omega)} \leq C_{\Omega,m} (1+\lambda^{\nicefrac{m}{2}}) \norm{B}_{L^2}$ for $m \geq 2$. First note that by \eqref{e:Weak formulation} we have $\norm{\nabla \times B}_{L^2} = \sqrt{\lambda} \norm{B}_{L^2}$. Thus, using Lemma \ref{l:ABDG},
\begin{align*}
    \norm{B}_{H^m}
&\lesssim_{\Omega,m} \norm{B}_{L^2} + \norm{\nabla \times B}_{H^{m-1}} \\
&\lesssim_{\Omega,m} \norm{B}_{L^2} + \norm{\nabla \times B}_{L^2} + \norm{\nabla \times (\nabla \times B)}_{H^{m-2}} \\
&\lesssim_{\Omega,m} (1+\sqrt{\lambda}) \norm{B}_{L^2} + \lambda \norm{B}_{H^{m-2}},
\end{align*}
and the claim follows by using induction.
\end{proof}

In Proposition \ref{p:Basis} below we recall a statement about an eigenbasis of the magnetostatic operator from~\cite[\textsection A]{FL20}.
We denote the set of Neumann fields on $\Omega$ by \[L^2_H(\Omega) = \tp{span}(\{H_1,\ldots,H_b\}) = \{v \in L^2_\sigma(\Omega): \nabla \times v = 0\}\]
(where $b$ is the first Betti number of $\Omega$ and $\{H_1,\ldots,H_b\}$ is an orthonormal system in $L^2_\sigma(\Omega)$). The Neumann fields are described e.g. in~\cite[Proposition 3.14]{ABDG}. Thus, $H_1,\ldots,H_b$ satisfy \eqref{e:Magnetostatic problem 2} with $\lambda = 0$. 

We endow $H^1_\sigma(\Omega)$ with the norm $\norm{\cdot}_{H^1_\sigma}$ induced by the inner product
\[\langle v, w \rangle_{H^1_\sigma} \defeq \int_\Omega \nabla \times v \cdot \nabla \times w + \sum_{i=1}^b \gamma_i \langle v \cdot n, 1 \rangle_{\Sigma_i} \langle w \cdot n, 1 \rangle_{\Sigma_i}\]
(see~\cite{ABDG} for an explanation of the cuts $\Sigma_i$; $\gamma_i>0$ are chosen so that $\norm{H_i}_{H^1_\sigma}=1$ for $i=1,\ldots,N\}$). The norm $\norm{\cdot}_{H^1_\sigma}$ is equivalent to the one inherited from $H^1(\Omega)$~\cite[Corollary 3.16]{ABDG}.

We also denote
\[H^1_\Sigma(\Omega) \defeq H^1_\sigma(\Omega) \ominus L^2_H(\Omega).\]
For every $v \in H^1_\Sigma(\Omega)$ we have $\langle v \cdot n, 1 \rangle_{\Sigma_i} = 0$ for $i=1,\ldots,N$. In particular, $H^1_\Sigma(\Omega)$ has a Poincar\'e-type inequality:
\begin{equation} \label{e:Poincare}
\norm{v}_{H^1(\Omega)}^2 \le C_\Omega \norm{\nabla \times v}_{L^2(\Omega)}^2 \quad \text{for all } v \in H^1_\Sigma(\Omega).
\end{equation}
By \eqref{e:Poincare}, the positive eigenvalues of the magnetostatic operator are bounded away from zero. We can now state an existence theorem for the magnetostatic eigenbasis of $L^2_\sigma(\Omega)$~\cite[Lemma A.6]{FL20}:

\begin{prop} \label{p:Basis}
$L^2_\sigma(\Omega)$ has an orthonormal basis $\{\psi_j\}_{j \in \N}$ with the following properties:
\begin{enumerate}
\item $\{\psi_1,\ldots,\psi_b\} = \{H_1,\ldots,H_b\}$. In particular, $\psi_1,\ldots,\psi_b$ satisfy \eqref{e:Magnetostatic problem 2} for $\lambda = 0$.

\item $\psi_j \in H^1_\Sigma(\Omega) \defeq H^1_\sigma(\Omega) \ominus L^2_H(\Omega)$ for every $j > b$.

\item Every $\psi_j$, $j > b$, satisfies \eqref{e:Magnetostatic problem 2} for some $\lambda \geq c_\Omega >0$.
\end{enumerate}
\end{prop}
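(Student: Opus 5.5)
This is the standard spectral construction for the compact, self-adjoint inverse of the magnetostatic operator restricted to $H^1_\Sigma(\Omega)$; the Neumann fields are appended separately.

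\emph{Step 1: split off the Neumann fields.} Take $\psi_1,\dots,\psi_b$ to be the orthonormal Neumann fields $H_1,\dots,H_b$. They lie in $H^1_\sigma(\Omega)$, are curl-free, and therefore satisfy \eqref{e:Weak formulation} with $\lambda=0$. Let $X$ be the $L^2$-closure of $H^1_\Sigma(\Omega)$, so that $L^2_\sigma(\Omega)=L^2_H(\Omega)\oplus X$. I would verify $X=L^2_\sigma\ominus L^2_H$ using the density of $H^1_\sigma$ in $L^2_\sigma$ and the fact that the orthogonal projection onto the finite-dimensional space $L^2_H\subset H^1_\sigma$ preserves $H^1_\sigma$. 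It therefore suffices to produce an orthonormal basis of $X$ consisting of eigenfields with $\lambda\ge c_\Omega$.

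\emph{Step 2: solution operator.} On $H^1_\Sigma(\Omega)$ the form $a(v,w)=\int_\Omega\nabla\times v\cdot\nabla\times w$ is an inner product. By the Poincar\'e inequality \eqref{e:Poincare} its norm is equivalent to the $H^1$ norm, so $H^1_\Sigma$ is a Hilbert space under $a$. Given $f\in X$, the map $w\mapsto\langle f,w\rangle_{L^2}$ is bounded on $(H^1_\Sigma,a)$, so Lax--Milgram (in fact Riesz) gives a unique $Kf\in H^1_\Sigma$ with
\[
a(Kf,w)=\langle f,w\rangle_{L^2}\qquad\text{for all } w\in H^1_\Sigma.
\]

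\emph{Step 3: spectral theorem.} The operator $K\colon X\to X$ has the following properties.
\begin{enumerate}
\item It is bounded from $X$ into $H^1$, and hence compact on $X$ by Rellich, using that $\Omega$ is bounded and smooth.
\item It is self-adjoint: $\langle Kf,g\rangle=a(Kf,Kg)=\langle f,Kg\rangle$.
\item It is positive: $\langle Kf,f\rangle=a(Kf,Kf)\ge0$.
\item It is injective: if $Kf=0$ then $\langle f,w\rangle=0$ for all $w$ in the dense set $H^1_\Sigma$, so $f=0$.
\end{enumerate}
The spectral theorem for compact self-adjoint operators then yields an orthonormal basis of $X$ of eigenvectors $K\psi_j=\kappa_j\psi_j$ with $\kappa_j>0$. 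Each $\psi_j=\kappa_j^{-1}K\psi_j$ lies in $H^1_\Sigma$ and satisfies $a(\psi_j,w)=\lambda_j\langle\psi_j,w\rangle$ for $w\in H^1_\Sigma$, where $\lambda_j=\kappa_j^{-1}$. Applying \eqref{e:Poincare} with $w=\psi_j$ gives $\lambda_j\ge C_\Omega^{-1}=:c_\Omega$.

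\emph{Step 4: the main obstacle.} The main obstacle is upgrading the eigen-identity from test functions $w\in H^1_\Sigma$ to all $\varphi\in H^1_\sigma$, which is what \eqref{e:Weak formulation} requires. Decompose $\varphi=w+h$ with $w\in H^1_\Sigma$ and $h\in L^2_H$. Then $a(\psi_j,h)=0$ because $\nabla\times h=0$, and $\langle\psi_j,h\rangle=0$ because $\psi_j\in X\perp L^2_H$. Hence \eqref{e:Weak formulation} holds, and Proposition~\ref{p:Magnetostatic proposition} gives smoothness and the boundary conditions in \eqref{e:Magnetostatic problem 2}. Finally, the union of $\{H_i\}_{i\le b}$ with this basis of $X$ is an orthonormal basis of $L^2_\sigma(\Omega)$ with properties (1)--(3).
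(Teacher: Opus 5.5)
Your proposal is correct. The paper gives no proof of its own and simply imports the statement from \cite[Lemma A.6]{FL20}; your argument is the standard one for it: Riesz representation on $(H^1_\Sigma,a)$ via \eqref{e:Poincare}, compactness by Rellich, the spectral theorem on $X=L^2_\sigma\ominus L^2_H$, and the upgrade of test functions from $H^1_\Sigma$ to $H^1_\sigma$ before invoking Proposition~\ref{p:Magnetostatic proposition}. One point is worth making explicit: you read $\ominus$ as $L^2$-orthogonality, whereas the surrounding text suggests the $\langle\cdot,\cdot\rangle_{H^1_\sigma}$ inner product, but both readings single out the same subspace of zero-flux fields, so nothing in your argument changes.
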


We state a classical min-max characterization of the eigenvalues of the magnetostatic operator and present a proof for the reader's convenience. We also present a max-min characterization for completeness. We use the standard fact that, for every $k \in \N$,
\begin{equation} \label{e:Eigenvalues}
\lambda_k = \frac{\int_\Omega \abs{\nabla \times \psi_k}^2}{\int_\Omega \abs{\psi_k}^2} = \min_{\psi \perp \psi_1,\ldots,\psi_{k-1}} \frac{\int_\Omega \abs{\nabla \times \psi}^2}{\int_\Omega \abs{\psi}^2} = \max_{\psi \in \tp{span} \{\psi_1,\ldots,\psi_k\}} \frac{\int_\Omega \abs{\nabla \times \psi}^2}{\int_\Omega \abs{\psi}^2}.
\end{equation}
Here and below, $\perp$ refers to orthogonality in $L^2_\sigma(\Omega)$, and $\psi$ and $\phi_i$ belong to $H^1_\sigma(\Omega)$.

\begin{prop}\label{p:minimax}
The eigenvalues of the magnetostatic operator can be characterised by
\begin{align*}
\lambda_k
&= \min_{\tp{dim(span)}\{\phi_1,\ldots,\phi_k\}= k} \max_{\psi \in \tp{span}\{\phi_1,\ldots,\phi_k\}} \frac{\int_\Omega \abs{\nabla \times \psi}^2}{\int_\Omega \abs{\psi}^2} \\
&= \max_{\phi_1,\ldots,\phi_{k-1}} \min_{\psi \perp \phi_1,\ldots,\phi_{k-1}} \frac{\int_\Omega \abs{\nabla \times \psi}^2}{\int_\Omega \abs{\psi}^2}.
\end{align*}
\end{prop}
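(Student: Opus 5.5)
We prove the two characterizations separately, using only the orthonormal eigenbasis $\{\psi_j\}$ of Proposition~\ref{p:Basis} and the standard variational identities \eqref{e:Eigenvalues}. Write $\mathcal{R}(\psi)=\int_\Omega|\nabla\times\psi|^2/\int_\Omega|\psi|^2$ for the Rayleigh quotient on $H^1_\sigma(\Omega)\setminus\{0\}$. The first step is to record the expansion identity we will use throughout. For $\psi\in H^1_\sigma(\Omega)$ with $\psi=\sum_j c_j\psi_j$ in $L^2_\sigma$, the weak formulation \eqref{e:Weak formulation} gives $\langle\nabla\times\psi_i,\nabla\times\psi_j\rangle_{L^2}=\lambda_j\delta_{ij}$. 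Combined with density of finite combinations in $H^1_\sigma$ (the norm $\|\cdot\|_{H^1_\sigma}$ is equivalent to the $H^1$ norm), this yields
\[
\int_\Omega|\nabla\times\psi|^2=\sum_j\lambda_j c_j^2.
\]
We order the $\lambda_j$ nondecreasingly, so that the first $b$ of them vanish.

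\emph{Min-max.} For $\le$, take $\phi_i=\psi_i$, $i\le k$. By the last equality in \eqref{e:Eigenvalues}, the inner maximum equals $\lambda_k$. For $\ge$, let $\phi_1,\ldots,\phi_k$ span a $k$-dimensional space $V$. The map $V\to\mathbb{R}^{k-1}$, $\psi\mapsto(\langle\psi,\psi_i\rangle)_{i<k}$, has a nontrivial kernel by dimension count, so there is $0\neq\psi\in V$ with $\psi\perp\psi_1,\ldots,\psi_{k-1}$. By the middle identity in \eqref{e:Eigenvalues}, $\mathcal{R}(\psi)\ge\lambda_k$. Hence $\max_V\mathcal{R}\ge\lambda_k$, and taking the infimum over $V$ gives the claim. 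The minimum is attained by the choice $\phi_i=\psi_i$.

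\emph{Max-min.} For $\ge$, choose $\phi_i=\psi_i$, $i<k$; then the inner minimum is $\lambda_k$ by \eqref{e:Eigenvalues}. For $\le$, let $\phi_1,\ldots,\phi_{k-1}$ be arbitrary and pick $0\neq\psi\in\operatorname{span}\{\psi_1,\ldots,\psi_k\}$ orthogonal to all $\phi_i$, which exists by the same dimension count ($k$ unknowns, $k-1$ linear constraints). Since $\psi$ lies in $H^1_\sigma$, the expansion identity gives
\[
\mathcal{R}(\psi)=\frac{\sum_{j\le k}\lambda_jc_j^2}{\sum_{j\le k}c_j^2}\le\lambda_k.
\]
So the inner minimum is at most $\lambda_k$ for every choice of the $\phi_i$, and the claim follows.

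\textbf{Main obstacle.} The only delicate point is justifying the expansion identity and the attainment of the minima in \eqref{e:Eigenvalues} for general $\psi\in H^1_\sigma$, not merely finite combinations. We would handle this by noting that $\{\psi_j/\sqrt{1+\lambda_j}\}$ is orthogonal in the inner product $\langle\nabla\times\cdot,\nabla\times\cdot\rangle+\langle\cdot,\cdot\rangle$, which is equivalent to the $H^1$ inner product by \eqref{e:Poincare} and the Neumann-field splitting. It is also complete there, since an element orthogonal to all of them is $L^2$-orthogonal to the basis by \eqref{e:Weak formulation}. Parseval in this Hilbert space then gives the identity.
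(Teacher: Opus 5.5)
Your proof is correct and follows essentially the same route as the paper: the same dimension-count intersections with $\operatorname{span}\{\psi_1,\ldots,\psi_k\}$ and $\{\psi_1,\ldots,\psi_{k-1}\}^\perp$, combined with \eqref{e:Eigenvalues} and the expansion $\int_\Omega|\nabla\times\psi|^2=\sum_j\lambda_j c_j^2$ on finite combinations. The only addition is your Parseval justification for general $\psi\in H^1_\sigma(\Omega)$; the paper avoids this by taking \eqref{e:Eigenvalues} as a standard fact, and your main argument does not actually need it, since you only apply the expansion to elements of $\operatorname{span}\{\psi_1,\ldots,\psi_k\}$.
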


\begin{proof}
Let $k \in \N$, and denote
\[\gamma_k \defeq \sup_{\phi_1,\ldots,\phi_{k-1}} \min_{\psi \perp \phi_1,\ldots,\phi_{k-1}} \frac{\int_\Omega \abs{\nabla \times \psi}^2}{\int_\Omega \abs{\psi}^2}.\]
We first show that $\lambda_k \leq \gamma_k$. By selecting $\phi_1 =\psi_1,\ldots,\phi_{k-1}=\psi_{k-1}$ and using \eqref{e:Eigenvalues} we obtain $\lambda_k = \min_{\psi \perp \phi_1,\ldots,\phi_{k-1}} \int_\Omega \abs{\nabla \times \psi}^2/\int_\Omega \abs{\psi}^2 \leq \gamma_k$. We next show that $\gamma_k \leq \lambda_k$. Suppose $\phi_1,\ldots,\phi_{k-1} \in H^1_\sigma(\Omega)$. Since $\psi_1,\ldots,\psi_k$ are orthogonal, we can select $0 \neq \psi_0 \in \tp{span}\{\psi_1,\ldots,\psi_k\} \cap \{\phi_1,\ldots,\phi_{k-1}\}^\perp$, so that
\[\min_{\psi \perp \phi_1,\ldots,\phi_{k-1}} \frac{\int_\Omega \abs{\nabla \times \psi}^2}{\int_\Omega \abs{\psi}^2} \leq \frac{\int_\Omega \abs{\nabla \times \psi_0}^2}{\int_\Omega \abs{\psi_0}^2}.\]
Furthermore, normalizing $\psi_0$ to have $L^2$-norm $1$, so that $\psi_0 = \sum_{j=1}^k \alpha_j \psi_j$ with $\sum_{j=1}^k \alpha_j^2 = 1$, we have
\[\int_\Omega \abs{\nabla \times \psi_0}^2 = \sum_{j=1}^k \alpha_j^2 \int_\Omega \abs{\nabla \times \psi_j}^2 = \sum_{j=1}^k \alpha_j^2 \lambda_j \leq \max_{1 \leq \ell \leq k} \lambda_\ell \sum_{j=1}^k \alpha_j^2 = \lambda_k.\]
Therefore, $\gamma_k \leq \lambda_k$.

We then prove the min-max characterization. Denote
\[\rho_k \defeq \inf_{\tp{dim(span)}\{\phi_1,\ldots,\phi_k\}= k} \max_{\psi \in \tp{span}\{\phi_1,\ldots,\phi_k\}} \frac{\int_\Omega \abs{\nabla \times \psi}^2}{\int_\Omega \abs{\psi}^2}.\]
We get $\rho_k \leq \lambda_k$ as follows. Select $\phi_1=\psi_1,\ldots,\phi_k=\psi_k$. If $\psi \in \tp{span}\{\psi_1,\ldots,\psi_k\}$, we repeat the argument for $\psi_0$ that we gave above, and so $\int_\Omega \abs{\nabla \times \psi}^2/\int_\Omega \abs{\psi}^2 \leq \lambda_k$.

It remains to show that $\rho_k \geq \lambda_k$. Let $\tp{dim(span)}\{\phi_1,\ldots,\phi_k\}= k$. Our aim is to find $\psi \in \tp{span}\{\phi_1,\ldots,\phi_k\} \setminus \{0\}$ with $\frac{\int_\Omega \abs{\nabla \times \psi}^2}{\int_\Omega \abs{\psi}^2} \geq \lambda_k$. Since $\{\phi_1,\ldots,\phi_k\}$ is linearly independent, there exists $\psi \in (\tp{span}\{\phi_1,\ldots,\phi_k\} \cap \{\psi_1,\ldots,\psi_{k-1}\}^\perp) \setminus \{0\}$. But now $\frac{\int_\Omega \abs{\nabla \times \psi}^2}{\int_\Omega \abs{\psi}^2} \geq \lambda_k$ by \eqref{e:Eigenvalues}.
\end{proof}

\begin{prop}\label{prop:Weyl}For any smooth bounded domain $\Omega \subset \mathbb{R}^3$,
let $\lambda_k$ be the ordered sequence of  eigenvalues for the magnetostatic problem. Then there exists a constant $C_\Omega$ such that 

\[ \lambda_k \le C_\Omega k^{\frac{2}{3}}. \]

\end{prop}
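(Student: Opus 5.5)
The plan is to combine the min–max characterization of Proposition~\ref{p:minimax} with a comparison against an explicit family of test fields supported in a small cube inside $\Omega$, where the spectrum is computable. Fix a closed cube $Q\subset\Omega$ of side $L>0$. Any smooth compactly supported solenoidal field in $Q$, extended by zero, belongs to $H^1_\sigma(\Omega)$. Its Rayleigh quotient $\int|\nabla\times\psi|^2/\int|\psi|^2$ coincides with the one computed in $Q$. By the min–max formula, it then suffices to exhibit, for each $k$, a $k$-dimensional subspace $V_k\subset H^1_\sigma(\Omega)$ of such fields on which the Rayleigh quotient is at most $C_\Omega k^{2/3}$.

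To build $V_k$ I would use the Dirichlet Laplacian eigenfunctions of the cube. Let $\phi_j(x)=\prod_{i}\sin(\pi j_i x_i/L)$, $j\in\N^3$, with eigenvalues $\pi^2|j|^2/L^2$. Set $\psi_{j,l}=\nabla\times(\phi_j e_l)$ for $l=1,2,3$. These are solenoidal, lie in $H^1_0(Q)$, and are admissible test fields, with $\psi\cdot n=0$ trivially on $\partial\Omega$. For $A=\sum a_{j,l}\phi_j e_l$ one computes
\[
\int|\nabla\times A|^2=\int|\nabla A|^2-\int(\nabla\cdot A)^2 .
\]
This identity holds because $A\in H^1_0$. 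It gives a formula for $\|\psi\|_{L^2}^2$ and, similarly, for $\|\nabla\times\psi\|_{L^2}^2 = \|\nabla\times\nabla\times A\|^2\le \|\Delta A\|^2$ plus lower-order terms. A simpler route is to take the vector potential $A$ of the form $\sum_j a_j\phi_j e_3$ with a single fixed direction. Then $\nabla\times A=(\partial_2 a,-\partial_1 a,0)$ with $a=\sum a_j\phi_j$. Hence $\|\nabla\times A\|^2=\|\partial_1 a\|^2+\|\partial_2 a\|^2$ and
\[
\|\nabla\times\nabla\times A\|^2=\|\nabla\partial_1 a\|^2+\|\nabla\partial_2 a\|^2-\ldots
\]
This is bounded by $\|\Delta a\|^2$-type expressions. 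Restricting to indices $j$ with $|j|\le N$, orthogonality of the $\phi_j$ (and of their derivatives, which are products of sines and cosines) yields the Rayleigh quotient bound $\lesssim N^2/L^2$. The integration-by-parts step must use only functions with vanishing traces, or rely on the explicit trigonometric orthogonality. The map $a\mapsto\nabla\times(a e_3)$ is injective on this span because $j_1,j_2\ge1$. So the subspace has dimension $\#\{j:|j|\le N\}\ge cN^3$.

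Given $k$, I choose $N\sim (k/c)^{1/3}$ so that the dimension is at least $k$, and take any $k$-dimensional subspace of this span. Proposition~\ref{p:minimax} then gives $\lambda_k\le C N^2/L^2\le C_\Omega k^{2/3}$.

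The main obstacle is rigorously bounding the Rayleigh quotient on the whole span, not just on the individual basis elements. Cross terms between different $j$ must vanish or be controlled, since the relevant quadratic forms are not all diagonal in the sine basis once curls introduce cosines. I would handle this by working directly with Fourier coefficients. Each component of $\psi$ and of $\nabla\times\psi$ is a sum of products of $\sin/\cos$ in each variable, with the pattern determined by the component. Within a fixed component these product families are orthogonal, which gives a diagonal bound $\|\nabla\times\psi\|^2\le (\pi N/L)^2\|\psi\|^2\cdot C$. If this becomes messy, I would instead use a cruder comparison: $\|\nabla\times\psi\|\le\|\nabla\psi\|\le C\|D^2 a\|$ and $\|\psi\|^2=\|\nabla_{12}a\|^2\ge (\pi/L)^2\|a\|^2$. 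Those estimates may not directly give a ratio of order $N^2$, so the orthogonality computation is the step to get right.
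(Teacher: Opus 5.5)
Your comparison strategy, min--max with test fields supported in a subregion of $\Omega$, is the right one and matches the spirit of the paper's proof. However, the test space you build is not admissible. The claim that $\psi_{j,l}=\nabla\times(\phi_j e_l)$ lies in $H^1_0(Q)$ is false. On $\partial Q$ the tangential derivatives of $\phi_j$ vanish, so there $\nabla\times(\phi_j e_l)=\partial_n\phi_j\,(n\times e_l)$, which is tangential but generically nonzero; only $\psi\cdot n=0$ holds. For instance, with $a=\phi_{(1,1,1)}$ and $\psi=\nabla\times(a e_3)=(\partial_2 a,-\partial_1 a,0)$, on the face $\{x_2=0\}$ one gets $\psi=\frac{\pi}{L}\sin(\pi x_1/L)\sin(\pi x_3/L)\,e_1\neq 0$.

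Hence your fields are not compactly supported in $Q$, and the (correct) principle you state at the start does not apply to them. The zero extension of $\psi$ is divergence-free, but its distributional curl is $(\nabla\times\psi)\mathbf{1}_Q+(\psi\times n)\,\mathcal{H}^2\llcorner\partial Q$, which has a singular layer on $\partial Q$. So it is not in $H^1_\sigma(\Omega)$, and it cannot be used in Proposition~\ref{p:minimax}. In fact your span consists of perfectly conducting magnetostatic eigenfields of the cube itself. Your computation therefore bounds the magnetostatic spectrum of $Q$, and there is no domain monotonicity $\lambda_k(\Omega)\le\lambda_k(Q)$ for those boundary conditions. Ironically, the step you flag as the main obstacle is harmless. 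With $\nabla\times\psi=(\partial_{13}a,\partial_{23}a,-\partial_{11}a-\partial_{22}a)$, every component is diagonal in the appropriate sine/cosine product basis, and one gets exactly $\|\nabla\times\psi\|^2/\|\psi\|^2\le(\pi N/L)^2$ on the span.

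What is needed is a test space whose \emph{full} trace vanishes on the boundary of the subregion. The paper achieves this by comparing with the Dirichlet Stokes problem on a ball $B(a,r)\subset\Omega$. There $H^1_{0,\sigma}(B(a,r))\subset H^1_\sigma(\Omega)$ and $\int|\nabla u|^2=\int|\nabla\times u|^2$. It then counts the Stokes eigenvalues of the ball via zeros of $J_{\ell+1/2}$, Lommel's theorem and interlacing.

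Within your cube framework, a simpler repair is Dirichlet bracketing by disjoint bumps:
\begin{enumerate}[(i)]
\item Fix $0\not\equiv\eta\in C_c^\infty((0,1)^3)$, let $\psi_0=\nabla\times(\eta e_3)\neq 0$, and set $R_0=\|\nabla\times\psi_0\|^2/\|\psi_0\|^2$.
\item Divide $Q$ into $M^3$ subcubes of side $L/M$ with corners $x_i$, and set $\psi_i(x)=\psi_0(M(x-x_i)/L)$.
\item These fields lie in $H^1_\sigma(\Omega)$ and have pairwise disjoint supports, so both quadratic forms are diagonal on their span. Each has quotient exactly $(M/L)^2R_0$.
\item Taking $M=\lceil k^{1/3}\rceil\le 2k^{1/3}$ in Proposition~\ref{p:minimax} gives $\lambda_k\le 4R_0k^{2/3}/L^2$.
\end{enumerate}
This route needs no special functions at all.
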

\begin{proof}
 A quick way to obtain the  inequality  is to apply Dirichlet-Neumann bracketing philosophy \`a la Courant-Hilbert. Namely, because of the min-max characterization of $\lambda_k$ (see Proposition \ref{p:minimax}) we can compare the magnetostatic eigenvalue problem with that of the eigenvalues relative to the Stokes problem 
 with Dirichlet boundary conditions in an open ball $B(a,r) \subset \Omega$. This is simply because it is classical
  that the eigenvalues for the Stokes problem with Dirichlet boundary
 conditions, which we denoted by $\lambda^D_k$, are characterized by a min-max problem  over the space  $H^1_{0,\sigma}(B(a,r))$. 
 Indeed it can  proved following the proof of Proposition \ref{p:minimax}.
 Since 
 $H^1_{0,\sigma}(B(a,r)) \subset
 H^1_\sigma(\Omega)$, the min-max characterizations of both problems  yield that  $\lambda_k(\Omega) \le \lambda^D_k(B(a,r))$.
 
 For the unit ball the growth of the Stokes  eigenvalues is $O(k^{\frac{2}{3}})$. We  sketch a quick  proof of this well-known result:
We consider toroidal fields of the form $u = \nabla \times (\psi x)$, where, in spherical coordinates, $\psi(r,\theta,\phi) = f(r) Y_m^\ell(\theta,\phi)$ and $Y_m^\ell$ is a spherical harmonic, $\ell \in \N$ and $m \in \{-\ell,-\ell+1,\ldots,\ell-1,\ell\}$. Denoting $T \defeq (\nabla_{S^2} Y_m^\ell) \times x$ we get
\[\Delta u = (\Delta_r f) T + f \Delta_{S^2} T = \left( f''(r) + \frac{2}{r} f'(r) - \frac{\ell(\ell+1)}{r^2} f(r) \right) T.\]
Now $\Delta u = \lambda u$ if $f(r) = J_{\ell+1/2}(\sqrt{\lambda} r)$, where $J_{\ell+1/2}$ is a Bessel function of the first kind. The boundary condition $u|_{S^2} = 0$ holds if $J_{\ell+1/2}(\sqrt{\lambda}) = 0$. Thus, denoting the $k$th zero of $J_{\ell+1/2}$ by $j_{\ell,k}$, the square $(j_{\ell,k})^2$ is an eigenvalue of the Stokes operator, and its multiplicity is $2\ell+1$. We then obtain an upper bound on the zeros $j_{k,\ell}$. First, by Lommel's theorem, $J_{1/2}$ has a zero in each open interval $((m-1/2)\pi,m\pi)$, $m \in \N$ (see~\cite[p. 478]{Watsonbook}). Then, by the \emph{interlacing property} of zeros of Bessel functions of the first kind (see~\cite[p. 479]{Watsonbook}), we have
$j_{l,k} < j_{l-1,k+1} < \ldots < j_{0,k+l} < (k+l) \pi.$
Thus, a simple summation argument shows that that for each $N \in \N$, the amount of Stokes eigenvalues $\lambda_{k,\ell} = (j_{\ell,k})^2 < N\pi$ (counted with multiplicity) is at least of the order of $N^{3/2}$. This gives the sought bound $C k^{2/3}$ on the $k$th eigenvalue of the Stokes operator on the unit ball.

By rescaling the eigenfields, one sees that the eigenvalues on $B(a,r)$ grow like $C\frac{1}{r^2}k^{\frac{2}{3}}$. This is an upper bound for the growth of the magnetostatic problem by the minmax discussion above. Since $r$ only depends on the geometry of $\Omega$ the claim is proved. Of course, this quick elementary argument yields a non-optimal constant.
 \end{proof}
 \begin{cor}\label{corappendix:magneticfields} Let $B_k$ a unit eigenfield of the magnetostatic operator corresponding to $\lambda_k$. Then, 
 \begin{equation}\label{eqappendix:eigenfields:Hs}
\|B_k\|_{H^m} \leq C_{\Omega,m}(1+\lambda_k^{\frac{m}{2}}) \leq C'_{\Omega,m}k^\frac{m}{3}
\end{equation} 
for all $m,k \in \N$.

 \end{cor}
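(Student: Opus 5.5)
The estimate has two pieces, and I will take them in turn. The first is the regularity bound $\|B_k\|_{H^m}\le C_{\Omega,m}(1+\lambda_k^{m/2})$. It follows from Proposition~\ref{p:Magnetostatic proposition} applied to $B=B_k$ with $\lambda=\lambda_k$, using the normalization $\|B_k\|_{L^2}=1$. To invoke that proposition we need $B_k\in H^1_\sigma(\Omega)$ to satisfy the weak formulation \eqref{e:Weak formulation}. By Proposition~\ref{p:Basis}, each basis element is either a Neumann field, which is curl-free with $\lambda=0$, or lies in $H^1_\Sigma(\Omega)$ and solves \eqref{e:Magnetostatic problem 2} with $\lambda\ge c_\Omega$. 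In both cases \eqref{e:Weak formulation} holds:
\begin{itemize}
\item for Neumann fields both sides vanish;
\item otherwise it is the defining weak eigen-relation.
\end{itemize}

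The second piece converts the eigenvalue growth into a power of $k$, via Proposition~\ref{prop:Weyl}: $\lambda_k\le C_\Omega k^{2/3}$. Raising this to the power $m/2$ gives
\[
\lambda_k^{m/2}\le C_\Omega^{m/2}k^{m/3}.
\]
Since $k\ge1$, we also have $1\le k^{m/3}$. Hence $1+\lambda_k^{m/2}\le (1+C_\Omega^{m/2})k^{m/3}$, and we may take $C'_{\Omega,m}=C_{\Omega,m}(1+C_\Omega^{m/2})$.

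The only point I would check carefully is the indexing. The label $k$ in $B_k$ must match the ordering used in Proposition~\ref{prop:Weyl}, namely eigenvalues listed in nondecreasing order with multiplicity, and including the $b$ zero eigenvalues of the Neumann fields. With the basis of Proposition~\ref{p:Basis} ordered by eigenvalue, this is consistent. The finitely many zero eigenvalues only affect the constant, and the estimate $1\le k^{m/3}$ covers them.

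Beyond this indexing check, I expect no real obstacle. Everything else is a direct combination of the two cited propositions.
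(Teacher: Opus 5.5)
Your proposal is correct and is essentially the argument the paper intends (it states the corollary without a separate proof): the first inequality is Proposition~\ref{p:Magnetostatic proposition} applied with $\|B_k\|_{L^2}=1$, and the second follows from the bound $\lambda_k\le C_\Omega k^{2/3}$ of Proposition~\ref{prop:Weyl} together with $1\le k^{m/3}$. Your indexing check is the right one: eigenvalues are listed in nondecreasing order with multiplicity, including the $b$ zero eigenvalues, which matches the convention in \eqref{e:Eigenvalues}.
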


\noindent {\bf Acknowledgments.} 
The authors thank L. Sz\'ekelyhidi Jr. for valuable conversations and suggestions, as well as the hospitality of the Max Planck Institute for Mathematics in the Sciences in Leipzig.
D.F., F.M., and S.L. acknowledge the hospitality of the Institute for Advanced Study in Princeton during the Special Year on h-Principle and Flexibility in Geometry and PDEs. 
D.F acknowledges the excellent master thesis of Katerina Vergiopoulou
on topics related to this paper.
G.D.N. and F.M. acknowledge support from the Max Planck Institute for Mathematics in the Sciences in Leipzig.
D.F., S.L., and F.M. acknowledge support from QUAMAP, ERC Grant 834728, and by PID2024-158664NB-C22. D.F. acknowledges support from the Severo Ochoa Programme for Centres of Excellence (Grant CEX2019-000904-S), funded by MCIN/AEI/10.13039/501100011033.
D.F. and F.M. acknowledge support from Grants RED2022-134784-T and RED2018-102650-T, funded by MCIN/AEI/10.13039/501100011033. S.L. acknowledges support from the Finnish Centre of Excellence in Randomness and Structures.
F.M.~acknowledges support from grant RYC2023-045748-I, from grants PID2024-158418NB-I00 funded by  MCIN/AEI/10.13039/501100011033, and also from the IMUS–María de Maeztu Unit of Excellence grant CEX2024-001517-M (Apoyo a Unidades de Excelencia María de Maeztu) funded by MICIU/AEI/10.13039/501100011033.

\bibliographystyle{abbrv}
\bibliography{Turbulent_dynamos_and_GTE}

@article {ABDG,
    AUTHOR = {Amrouche, C. and Bernardi, C. and Dauge, M. and Girault, V.},
     TITLE = {Vector potentials in three-dimensional non-smooth domains},
   JOURNAL = {Math. Methods Appl. Sci.},
  FJOURNAL = {Mathematical Methods in the Applied Sciences},
    VOLUME = {21},
      YEAR = {1998},
    NUMBER = {9},
     PAGES = {823--864},
      ISSN = {0170-4214,1099-1476},
   MRCLASS = {35J25 (35Q30 65N30 76D07)},
  MRNUMBER = {1626990},
MRREVIEWER = {Juha\ H.\ Videman},
       DOI =
              {10.1002/(SICI)1099-1476(199806)21:9<823::AID-MMA976>3.0.CO;2-B},
       URL =
              {https://doi.org/10.1002/(SICI)1099-1476(199806)21:9<823::AID-MMA976>3.0.CO;2-B},
}

@article {AmstrongVicol26,
	AUTHOR = {S Amstrong and Vlad Vicol},
	TITLE = {Anomalous Diffusion by Fractal Homogenization},
	JOURNAL = {Annals of PDE},
	YEAR = {2025},
}

@article {ADN02,
	AUTHOR = {V. Archontis and S. B. F. Dorch and A. Nordlund},
	TITLE = {Numerical simulations of kinematic dynamo action},
	JOURNAL = {Astronomy \& Astrophysics},
	FJOURNAL = {Astronomy \& Astrophysics},
    VOLUME = {397},
	YEAR = {2002},
    NUMBER = {2},
    PAGES = {393--399},
}

@book {Arn04,
    AUTHOR = {Arnold, Vladimir I.},
     TITLE = {Arnold's problems},
   EDITION = {revised},
      NOTE = {With a preface by V. Philippov, A. Yakivchik and M. Peters},
 PUBLISHER = {Springer-Verlag, Berlin; PHASIS, Moscow},
      YEAR = {2004},
     PAGES = {xvi+639},
      ISBN = {3-540-20614-0},
   MRCLASS = {58-02 (00A07 01A72 37-02 53-02 57-02)},
  MRNUMBER = {2078115},
}

@book {ArnoldKhesin98,
    AUTHOR = {Arnold, Vladimir I. and Khesin, Boris A.},
     TITLE = {Topological methods in hydrodynamics},
    SERIES = {Applied Mathematical Sciences},
    VOLUME = {125},
   EDITION = {Second},
 PUBLISHER = {Springer, Cham},
      YEAR = {2021},
     PAGES = {xx+455},
      ISBN = {978-3-030-74277-5; 978-3-030-74278-2},
   MRCLASS = {58-02 (35Q30 57Z05 58B25 58D05 76-02 76M30)},
  MRNUMBER = {4268535},
       DOI = {10.1007/978-3-030-74278-2},
       URL = {https://doi.org/10.1007/978-3-030-74278-2},
}

@article {BSWpp,
	AUTHOR = {Burzcak, Jan  and Sz\'ekelyhidi, Jr., L\'aszl\'o and Wu, Wian},
	TITLE = {Anomalous {D}issipation and {E}uler {F}lows},
	JOURNAL = {arxiv:2310.02934},
	YEAR = {2023},
}

@article {BSWpp2,
	AUTHOR = {Burzcak, Jan  and Sz\'ekelyhidi, Jr., L\'aszl\'o and Wu, Wian},
	TITLE = {Scalar anomalous dissipation and optimal regularity via iterated homogenization},
	JOURNAL = {arXiv:2604.13912},
	YEAR = {2026},
}

@article {GiardiSzekelyhidipp,
	AUTHOR = {Giardi, Matteo and Sz\'ekelyhidi, Jr., L\'aszl\'o},
	TITLE = {${C}^{1/5^-}$ {C}onvex {I}ntegration {S}olutions of {I}deal {MHD}},
	JOURNAL = {arXiv:2604.12091},
	YEAR = {2026},
}

@article {BBV20,
	AUTHOR = {Beekie, Rajendra and Buckmaster, Tristan and Vicol, Vlad},
	TITLE = {Weak solutions of ideal {MHD} which do not conserve magnetic helicity},
	JOURNAL = {Ann. PDE},
	FJOURNAL = {Annals of PDE. Journal Dedicated to the Analysis of Problems  from Physical Sciences},
	VOLUME = {6},
	YEAR = {2020},
	NUMBER = {1},
	PAGES = {Paper No. 1, 40},
	ISSN = {2524-5317},
	MRCLASS = {76W05 (35D30 35Q35)},
	MRNUMBER = {4105741},
	DOI = {10.1007/s40818-020-0076-1},
	URL = {https://doi.org/10.1007/s40818-020-0076-1},
}

@article {Ber84,
    AUTHOR = {Berger, M. A.},
     TITLE = {Rigorous new limits on magnetic helicity dissipation in the solar corona},
   JOURNAL = {Geophys. Astrophys. Fluid Dyn.},
  FJOURNAL = {Geophysical and Astrophysical Fluid Dynamics},
    VOLUME = {30},
      YEAR = {1984},
     PAGES = {79--104}
}

@misc{Bonicatto-Frobenius,
      title={A general {F}robenius' {T}heorem via the {T}ransport of {C}urrents}, 
      author={Bonicatto, Paolo},
      year={2025},
      journal={arxiv:2510.21478}, 
}

@article {BDR-JFA,
    AUTHOR = {Bonicatto, Paolo and Del Nin, Giacomo and Rindler, Filip},
     TITLE = {Existence and uniqueness for the transport of currents by
              {L}ipschitz vector fields},
   JOURNAL = {J. Funct. Anal.},
  FJOURNAL = {Journal of Functional Analysis},
    VOLUME = {286},
      YEAR = {2024},
    NUMBER = {7},
     PAGES = {Paper No. 110315, 24},
      ISSN = {0022-1236,1096-0783},
   MRCLASS = {49Q15 (35Q49)},
  MRNUMBER = {4693224},
       DOI = {10.1016/j.jfa.2024.110315},
       URL = {https://doi.org/10.1016/j.jfa.2024.110315},
}

@article {BDR-TAMS,
    AUTHOR = {Bonicatto, Paolo and Del Nin, Giacomo and Rindler, Filip},
     TITLE = {Transport of currents and geometric {R}ademacher-type
              theorems},
   JOURNAL = {Trans. Amer. Math. Soc.},
  FJOURNAL = {Transactions of the American Mathematical Society},
    VOLUME = {378},
      YEAR = {2025},
    NUMBER = {6},
     PAGES = {4011--4075},
      ISSN = {0002-9947,1088-6850},
   MRCLASS = {49Q15 (35Q49)},
  MRNUMBER = {4907949},
       DOI = {10.1090/tran/9312},
       URL = {https://doi.org/10.1090/tran/9312},
}

@misc{Bonicatto-Rindler,
      title={Homogenization of elasto-plastic evolutions driven by the flow of dislocations}, 
      author={Bonicatto, Paolo and Rindler, Filip},
      year={2025},
      journal={arxiv:2410.02906},
}

@article {BouyaDormy13,
	AUTHOR = {Ismael Bouya and Emmanuel Dormy},
	TITLE = {Revisiting the ABC flow dynamo},
	JOURNAL = {Physics of Fluids},
	FJOURNAL = {Physics of Fluids},
	VOLUME = {25},
    YEAR = {2013},
    NUMBER = {3},
    PAGES = {037103},
}

@article {BLL15,
    AUTHOR = {Bronzi, Anne C. and Lopes Filho, Milton C. and Nussenzveig
              Lopes, Helena J.},
     TITLE = {Wild solutions for 2{D} incompressible ideal flow with passive
              tracer},
   JOURNAL = {Commun. Math. Sci.},
  FJOURNAL = {Communications in Mathematical Sciences},
    VOLUME = {13},
      YEAR = {2015},
    NUMBER = {5},
     PAGES = {1333--1343},
      ISSN = {1539-6746,1945-0796},
   MRCLASS = {35Q35 (35D30 76B03 76W05)},
  MRNUMBER = {3344429},
       DOI = {10.4310/CMS.2015.v13.n5.a12},
       URL = {https://doi.org/10.4310/CMS.2015.v13.n5.a12},
}

@article {BDS16,
    AUTHOR = {Buckmaster, Tristan and De Lellis, Camillo and Sz\'ekelyhidi,
              Jr., L\'aszl\'o},
     TITLE = {Dissipative {E}uler flows with {O}nsager-critical spatial
              regularity},
   JOURNAL = {Comm. Pure Appl. Math.},
  FJOURNAL = {Communications on Pure and Applied Mathematics},
    VOLUME = {69},
      YEAR = {2016},
    NUMBER = {9},
     PAGES = {1613--1670},
      ISSN = {0010-3640,1097-0312},
   MRCLASS = {35Q31 (35B10 35D30 76B03)},
  MRNUMBER = {3530360},
MRREVIEWER = {Franck\ Sueur},
       DOI = {10.1002/cpa.21586},
       URL = {https://doi.org/10.1002/cpa.21586},
}

@article {BDSV19,
	AUTHOR = {Buckmaster, Tristan and De Lellis, Camillo and Sz\'{e}kelyhidi, Jr., L\'{a}szl\'{o} and Vicol, Vlad},
	TITLE = {Onsager's conjecture for admissible weak solutions},
	JOURNAL = {Comm. Pure Appl. Math.},
	FJOURNAL = {Communications on Pure and Applied Mathematics},
	VOLUME = {72},
	YEAR = {2019},
	NUMBER = {2},
	PAGES = {229--274},
	ISSN = {0010-3640},
	MRCLASS = {76B03 (35Q31)},
	MRNUMBER = {3896021},
	MRREVIEWER = {Luigi Carlo Berselli},
	DOI = {10.1002/cpa.21781},
	URL = {https://doi.org/10.1002/cpa.21781},
}

@article {BuckmasterVicol19,
    AUTHOR = {Buckmaster, Tristan and Vicol, Vlad},
     TITLE = {Nonuniqueness of weak solutions to the {N}avier-{S}tokes
              equation},
   JOURNAL = {Ann. of Math. (2)},
  FJOURNAL = {Annals of Mathematics. Second Series},
    VOLUME = {189},
      YEAR = {2019},
    NUMBER = {1},
     PAGES = {101--144},
      ISSN = {0003-486X,1939-8980},
   MRCLASS = {35Q30 (35Q31 35Q35 76D05 76F02)},
  MRNUMBER = {3898708},
MRREVIEWER = {Isabelle\ Gruais},
       DOI = {10.4007/annals.2019.189.1.3},
       URL = {https://doi.org/10.4007/annals.2019.189.1.3},
}

@article {BCKpp2,
	AUTHOR = {Bru\'{e}, Elia and Colombo, Maria and Kumar, Anuj},
	TITLE = {Flexibility of {T}wo-{D}imensional {E}uler {F}lows with {I}ntegrable {V}orticity},
	JOURNAL = {arXiv:2408.07934},
	FJOURNAL = {arXiv:2408.07934},
	YEAR = {2024},
    NOTE = {To appear in Duke Math. J.},
}

@article {CFM19,
    AUTHOR = {Castro, \'A. and Faraco, D. and Mengual, F.},
     TITLE = {Degraded mixing solutions for the {M}uskat problem},
   JOURNAL = {Calc. Var. Partial Differential Equations},
  FJOURNAL = {Calculus of Variations and Partial Differential Equations},
    VOLUME = {58},
      YEAR = {2019},
    NUMBER = {2},
     PAGES = {Paper No. 58, 29},
      ISSN = {0944-2669,1432-0835},
   MRCLASS = {35Q35},
  MRNUMBER = {3921353},
MRREVIEWER = {Beno\^it\ P.\ Desjardins},
       DOI = {10.1007/s00526-019-1489-0},
       URL = {https://doi.org/10.1007/s00526-019-1489-0},
}

@article {CCF21,
    AUTHOR = {Castro, A. and C\'ordoba, D. and Faraco, D.},
     TITLE = {Mixing solutions for the {M}uskat problem},
   JOURNAL = {Invent. Math.},
  FJOURNAL = {Inventiones Mathematicae},
    VOLUME = {226},
      YEAR = {2021},
    NUMBER = {1},
     PAGES = {251--348},
      ISSN = {0020-9910,1432-1297},
   MRCLASS = {35Q35 (76S05)},
  MRNUMBER = {4309495},
MRREVIEWER = {Alberto\ Valli},
       DOI = {10.1007/s00222-021-01045-1},
       URL = {https://doi.org/10.1007/s00222-021-01045-1},
}

@article {CFM22,
    AUTHOR = {Castro, \'A. and Faraco, D. and Mengual, F.},
     TITLE = {Localized mixing zone for {M}uskat bubbles and turned
              interfaces},
   JOURNAL = {Ann. PDE},
  FJOURNAL = {Annals of PDE. Journal Dedicated to the Analysis of Problems
              from Physical Sciences},
    VOLUME = {8},
      YEAR = {2022},
    NUMBER = {1},
     PAGES = {Paper No. 7, 50},
      ISSN = {2524-5317,2199-2576},
   MRCLASS = {35Q35 (76S05)},
  MRNUMBER = {4406890},
       DOI = {10.1007/s40818-022-00121-w},
       URL = {https://doi.org/10.1007/s40818-022-00121-w},
}

@article {CheskidovLuo21,
    AUTHOR = {Cheskidov, Alexey and Luo, Xiaoyutao},
     TITLE = {Nonuniqueness of weak solutions for the transport equation at
              critical space regularity},
   JOURNAL = {Ann. PDE},
  FJOURNAL = {Annals of PDE. Journal Dedicated to the Analysis of Problems
              from Physical Sciences},
    VOLUME = {7},
      YEAR = {2021},
    NUMBER = {1},
     PAGES = {Paper No. 2, 45},
      ISSN = {2524-5317,2199-2576},
   MRCLASS = {35A02 (35D30 35Q35)},
  MRNUMBER = {4199851},
MRREVIEWER = {Jens\ Wirth},
       DOI = {10.1007/s40818-020-00091-x},
       URL = {https://doi.org/10.1007/s40818-020-00091-x},
}

@article {CheskidovLuo22,
    AUTHOR = {Cheskidov, Alexey and Luo, Xiaoyutao},
     TITLE = {Sharp nonuniqueness for the {N}avier-{S}tokes equations},
   JOURNAL = {Invent. Math.},
  FJOURNAL = {Inventiones Mathematicae},
    VOLUME = {229},
      YEAR = {2022},
    NUMBER = {3},
     PAGES = {987--1054},
      ISSN = {0020-9910,1432-1297},
   MRCLASS = {35Q30 (76D05)},
  MRNUMBER = {4462623},
MRREVIEWER = {Xiaojing\ Dong},
       DOI = {10.1007/s00222-022-01116-x},
       URL = {https://doi.org/10.1007/s00222-022-01116-x},
}

@article {CheskidovLuo24,
    AUTHOR = {Cheskidov, Alexey and Luo, Xiaoyutao},
     TITLE = {Extreme temporal intermittency in the linear {S}obolev
              transport: almost smooth nonunique solutions},
   JOURNAL = {Anal. PDE},
  FJOURNAL = {Analysis \& PDE},
    VOLUME = {17},
      YEAR = {2024},
    NUMBER = {6},
     PAGES = {2161--2177},
      ISSN = {2157-5045,1948-206X},
   MRCLASS = {35A02 (35D30 35Q35 35Q49)},
  MRNUMBER = {4776296},
MRREVIEWER = {A.\ El Hajj},
       DOI = {10.2140/apde.2024.17.2161},
       URL = {https://doi.org/10.2140/apde.2024.17.2161},
}

@article {BCKpp,
	AUTHOR = {Bru\'e, Elia  and Colombo, Maria and Kumar, Anuj},
	TITLE = {Sharp {N}onuniqueness in the {T}ransport {E}quation with {S}obolev {V}elocity {F}ield},
	JOURNAL = {arxiv:2405.01670},
	YEAR = {2024},
    NOTE = {To appear in J. Eur. Math. Soc.}
}

@article {CSVpp,
	AUTHOR = {Coti Zelati, Michele and Sorella, Massimo and Villringer, David},
	TITLE = {Alpha-unstable flows and the fast dynamo problem},
	JOURNAL = {arxiv:2504.00855},
	YEAR = {2025},
}

@article{Cortopassipp,
      title={A current based approach for the uniqueness of the continuity equation}, 
      author={Tommaso Cortopassi},
      year={2024},
      JOURNAL={arXiv:2402.10719},
}

@article {CSVpp2,
	AUTHOR = {Coti Zelati, Michele and Sorella, Massimo and Villringer, David},
	TITLE = {Fast dynamo action on the 3-torus for pulsed-diffusions},
	JOURNAL = {arXiv:2603.09861},
	YEAR = {2026},
}

@article {CZNF,
author = {Coti Zelati, M. and Navarro-Fern\'andez, V.},
title =  {{Three-dimensional exponential mixing and ideal kinematic dynamo with randomized ABC flows}},
journal = {arXiv:2407.18028},
year = 2024,
}

@article {SorellaVillingerpp,
	AUTHOR = { Sorella, Massimo and Villringer, David},
	TITLE = {A limsup fast dynamo on $\mathbb{R}^3$},
	JOURNAL = {arXiv:2511.23024},
	YEAR = {2025},
}

@article {CZZpp,
author = {Cheskidov, A and Zeng, Z and Zhang, D },
title =  {Global dissipative solutions of the 3{D} {N}avier {S}tokes equation and {MHD}},
journal = {arXiv:2503.05692},
year = 2025,
}

@book {ChildressGilbert03,
    AUTHOR = {Childress, S. and Gilbert, A.},
     TITLE = {Stretch, Twist, Fold: The Fast Dynamo},
    SERIES = {Lecture Notes in Physics Monographs},
    VOLUME = {37},
 PUBLISHER = {Springer},
      YEAR = {1995},
     PAGES = {xi+408},
      ISBN = {978-3-662-14014-7; 978-3-540-44778-8},
       DOI = {10.1007/978-3-540-44778-8},
       URL = {https://doi.org/10.1007/978-3-540-44778-8},
}

@article {DaneriSzekelyhidi17,
    AUTHOR = {Daneri, Sara and Sz\'ekelyhidi, Jr., L\'aszl\'o},
     TITLE = {Non-uniqueness and h-principle for {H}\"older-continuous weak
              solutions of the {E}uler equations},
   JOURNAL = {Arch. Ration. Mech. Anal.},
  FJOURNAL = {Archive for Rational Mechanics and Analysis},
    VOLUME = {224},
      YEAR = {2017},
    NUMBER = {2},
     PAGES = {471--514},
      ISSN = {0003-9527,1432-0673},
   MRCLASS = {35Q31 (35A02 35B65 35D30 76B03)},
  MRNUMBER = {3614753},
MRREVIEWER = {Jean\ C.\ Cortissoz},
       DOI = {10.1007/s00205-017-1081-8},
       URL = {https://doi.org/10.1007/s00205-017-1081-8},
}

@article {DeLellisSzekelyhidi09,
	AUTHOR = {De Lellis, Camillo and Sz\'{e}kelyhidi, Jr., L\'{a}szl\'{o}},
	TITLE = {The {E}uler equations as a differential inclusion},
	JOURNAL = {Ann. of Math. (2)},
	FJOURNAL = {Annals of Mathematics. Second Series},
	VOLUME = {170},
	YEAR = {2009},
	NUMBER = {3},
	PAGES = {1417--1436},
	ISSN = {0003-486X},
	MRCLASS = {35Q31 (34A60 35D30 76B03)},
	MRNUMBER = {2600877},
	MRREVIEWER = {Fr\'{e}d\'{e}ric Charve},
	DOI = {10.4007/annals.2009.170.1417},
	URL = {https://doi.org/10.4007/annals.2009.170.1417},
}

@article {DeLellisSzekelyhidi10,
	AUTHOR = {De Lellis, Camillo and Sz\'{e}kelyhidi, Jr., L\'{a}szl\'{o}},
	TITLE = {On admissibility criteria for weak solutions of the {E}uler equations},
	JOURNAL = {Arch. Ration. Mech. Anal.},
	FJOURNAL = {Archive for Rational Mechanics and Analysis},
	VOLUME = {195},
	YEAR = {2010},
	NUMBER = {1},
	PAGES = {225--260},	
	ISSN = {0003-9527},
	MRCLASS = {35Q31 (35A02 35L65 76N15)},
	MRNUMBER = {2564474},
	MRREVIEWER = {Stefano Bianchini},
	DOI = {10.1007/s00205-008-0201-x},	
	URL = {https://doi.org/10.1007/s00205-008-0201-x},
}

@article {DeLellisSzekelyhidi13,
    AUTHOR = {De Lellis, Camillo and Sz\'ekelyhidi, Jr., L\'aszl\'o},
     TITLE = {Dissipative continuous {E}uler flows},
   JOURNAL = {Invent. Math.},
  FJOURNAL = {Inventiones Mathematicae},
    VOLUME = {193},
      YEAR = {2013},
    NUMBER = {2},
     PAGES = {377--407},
      ISSN = {0020-9910,1432-1297},
   MRCLASS = {35Q31 (35A01 35B10 35B65 35D30 76B03)},
  MRNUMBER = {3090182},
MRREVIEWER = {Francesco\ Fanelli},
       DOI = {10.1007/s00222-012-0429-9},
       URL = {https://doi.org/10.1007/s00222-012-0429-9},
}

@article {EPP25,
    AUTHOR = {Enciso, Alberto and Pe\~nafiel-Tom\'as, Javier and
              Peralta-Salas, Daniel},
     TITLE = {An extension theorem for weak solutions of the 3d
              incompressible {E}uler equations and applications to singular
              flows},
   JOURNAL = {Forum Math. Pi},
  FJOURNAL = {Forum of Mathematics. Pi},
    VOLUME = {13},
      YEAR = {2025},
     PAGES = {Paper No. e21, 84},
      ISSN = {2050-5086},
   MRCLASS = {35Q31 (35D30)},
  MRNUMBER = {4965553},
       DOI = {10.1017/fmp.2025.10012},
       URL = {https://doi.org/10.1017/fmp.2025.10012},
}

@article {EPPpp,
	AUTHOR = {Enciso, Alberto and Pe\~nafiel-Tom\'as, Javier and
              Peralta-Salas, Daniel},
	TITLE = {H\"older continuous dissipative solutions of ideal {MHD} with nonzero helicity},
	JOURNAL = {arxiv:2507.23749},
	FJOURNAL = {	arXiv:2508.23749},
	YEAR = {2025},
}

@article {Eyi15,
    AUTHOR = {Eyink, Gregory L.},
     TITLE = {Turbulent General Magnetic Reconnection},
   JOURNAL = {Astrophys. J.},
    VOLUME = {807},
      YEAR = {2015},
     PAGES = {29 pp.},
       DOI = {10.1088/0004-637X/807/2/137},
       URL = {https://doi.org/10.1088/0004-637X/807/2/137},
}

@article {FL20,
    AUTHOR = {Faraco, Daniel and Lindberg, Sauli},
     TITLE = {Proof of {T}aylor's conjecture on magnetic helicity
              conservation},
   JOURNAL = {Comm. Math. Phys.},
  FJOURNAL = {Communications in Mathematical Physics},
    VOLUME = {373},
      YEAR = {2020},
    NUMBER = {2},
     PAGES = {707--738},
      ISSN = {0010-3616,1432-0916},
   MRCLASS = {35Q35 (76W05 78A25)},
  MRNUMBER = {4056647},
MRREVIEWER = {Steven\ David\ London},
       DOI = {10.1007/s00220-019-03422-7},
       URL = {https://doi.org/10.1007/s00220-019-03422-7},
}

@article {FLMV21,
    AUTHOR = {Faraco, Daniel and Lindberg, Sauli and MacTaggart, David and
              Valli, Alberto},
     TITLE = {On the proof of {T}aylor's conjecture in multiply connected
              domains},
   JOURNAL = {Appl. Math. Lett.},
  FJOURNAL = {Applied Mathematics Letters. An International Journal of Rapid
              Publication},
    VOLUME = {124},
      YEAR = {2022},
     PAGES = {Paper No. 107654, 7},
      ISSN = {0893-9659,1873-5452},
   MRCLASS = {76F02},
  MRNUMBER = {4317735},
MRREVIEWER = {Matteo\ Caggio},
       DOI = {10.1016/j.aml.2021.107654},
       URL = {https://doi.org/10.1016/j.aml.2021.107654}
}

@article {FLSz21,
    AUTHOR = {Faraco, Daniel and Lindberg, Sauli and Sz\'ekelyhidi, Jr.,
              L\'aszl\'o},
     TITLE = {Bounded solutions of ideal {MHD} with compact support in
              space-time},
   JOURNAL = {Arch. Ration. Mech. Anal.},
  FJOURNAL = {Archive for Rational Mechanics and Analysis},
    VOLUME = {239},
      YEAR = {2021},
    NUMBER = {1},
     PAGES = {51--93},
      ISSN = {0003-9527,1432-0673},
   MRCLASS = {76W05 (35Q60)},
  MRNUMBER = {4198715},
MRREVIEWER = {Paola\ Trebeschi},
       DOI = {10.1007/s00205-020-01570-y},
       URL = {https://doi.org/10.1007/s00205-020-01570-y},
}

@article {FLSz24,
    AUTHOR = {Faraco, Daniel and Lindberg, Sauli and Sz\'ekelyhidi, Jr.,
              L\'aszl\'o},
     TITLE = {Magnetic helicity, weak solutions and relaxation of ideal
              {MHD}},
   JOURNAL = {Comm. Pure Appl. Math.},
  FJOURNAL = {Communications on Pure and Applied Mathematics},
    VOLUME = {77},
      YEAR = {2024},
    NUMBER = {4},
     PAGES = {2387--2412},
      ISSN = {0010-3640,1097-0312},
   MRCLASS = {76W05},
  MRNUMBER = {4705296},
       DOI = {10.1002/cpa.22168},
       URL = {https://doi.org/10.1002/cpa.22168},
}

@book {Federer,
    AUTHOR = {Federer, Herbert},
     TITLE = {Geometric measure theory},
    SERIES = {Die Grundlehren der mathematischen Wissenschaften},
    VOLUME = {Band 153},
 PUBLISHER = {Springer-Verlag New York, Inc., New York},
      YEAR = {1969},
     PAGES = {xiv+676},
   MRCLASS = {28.80 (26.00)},
MRREVIEWER = {J.\ E.\ Brothers},
}

@article {ForsterSzekelyhidi18,
    AUTHOR = {F\"orster, Clemens and Sz\'ekelyhidi, Jr., L\'aszl\'o},
     TITLE = {Piecewise constant subsolutions for the {M}uskat problem},
   JOURNAL = {Comm. Math. Phys.},
  FJOURNAL = {Communications in Mathematical Physics},
    VOLUME = {363},
      YEAR = {2018},
    NUMBER = {3},
     PAGES = {1051--1080},
      ISSN = {0010-3616,1432-0916},
   MRCLASS = {35Q35 (35D30 35R35 76S05)},
  MRNUMBER = {3858828},
MRREVIEWER = {Beno\^it\ P.\ Desjardins},
       DOI = {10.1007/s00220-018-3245-2},
       URL = {https://doi.org/10.1007/s00220-018-3245-2},
}

@article {GKN23,
	AUTHOR = {Vikram Giri and Hyunju Kwon and Matthew Novack},
	TITLE = {The ${L}^3$-based strong {O}nsager theorem},
	JOURNAL = {arXiv:2305.18509},
	FJOURNAL = {arXiv:2305.18509},
	YEAR = {2023},
}

@article {GKS21,
    AUTHOR = {Gebhard, Bj\"{o}rn and Kolumb\'{a}n, J\'{o}zsef J. and
              Sz\'{e}kelyhidi, L\'{a}szl\'{o}},
     TITLE = {A new approach to the {R}ayleigh-{T}aylor instability},
   JOURNAL = {Arch. Ration. Mech. Anal.},
  FJOURNAL = {Archive for Rational Mechanics and Analysis},
    VOLUME = {241},
      YEAR = {2021},
    NUMBER = {3},
     PAGES = {1243--1280},
      ISSN = {0003-9527,1432-0673},
   MRCLASS = {76E17 (35Q31 76F25)},
  MRNUMBER = {4284526},
MRREVIEWER = {Beno\^{i}t\ P.\ Desjardins},
       DOI = {10.1007/s00205-021-01672-1},
       URL = {https://doi.org/10.1007/s00205-021-01672-1},
}

@article {GebhardKolumban22a,
	AUTHOR = {Gebhard, Bj\"{o}rn and Kolumb\'{a}n, J\'{o}zsef J.},
	TITLE = {Relaxation of the {B}oussinesq system and applications to the {R}ayleigh-{T}aylor instability},
	JOURNAL = {NoDEA Nonlinear Differential Equations Appl.},
	FJOURNAL = {NoDEA. Nonlinear Differential Equations and Applications},
	VOLUME = {29},
	YEAR = {2022},
	NUMBER = {1},
	PAGES = {Paper No. 7, 38},
	ISSN = {1021-9722},
	MRCLASS = {35Q35 (76B03 76F25)},
	MRNUMBER = {4353502},
	DOI = {10.1007/s00030-021-00739-y},
	URL = {https://doi.org/10.1007/s00030-021-00739-y},
}

@article {Gilbert88,
	AUTHOR = {A.D. Gilbert},
	TITLE = {Fast dynamo action in the Ponomarenko dynamo},
	JOURNAL = {Geophysical \& Astrophysical Fluid Dynamics},
	FJOURNAL = {Geophysical \& Astrophysical Fluid Dynamics},
	YEAR = {1988},
	VOLUME = {44},
	PAGES = {241--258},
    publisher = {Taylor \& Francis},
    doi = {10.1080/03091928808208888},
    URL = {https://doi.org/10.1080/03091928808208888},
    eprint = {https://doi.org/10.1080/03091928808208888},
}

@incollection {Gilbert03,
    AUTHOR = {Gilbert, Andrew D.},
     TITLE = {Dynamo theory},
 BOOKTITLE = {Handbook of mathematical fluid dynamics, {V}ol. {II}},
     PAGES = {355--441},
 PUBLISHER = {North-Holland, Amsterdam},
      YEAR = {2003},
      ISBN = {0-444-51287-X},
   MRCLASS = {76W05 (35Q35 35Q60 78A25 85A15 86A25)},
  MRNUMBER = {1984156},
MRREVIEWER = {Iuliana\ Oprea},
       DOI = {10.1016/S1874-5792(03)80011-3},
       URL = {https://doi.org/10.1016/S1874-5792(03)80011-3},
}

@article {HitruhinLindberg24,
    AUTHOR = {Hitruhin, Lauri and Lindberg, Sauli},
     TITLE = {Relaxation of the kinematic dynamo equations},
   JOURNAL = {Proc. Amer. Math. Soc.},
  FJOURNAL = {Proceedings of the American Mathematical Society},
    VOLUME = {152},
      YEAR = {2024},
    NUMBER = {12},
     PAGES = {5265--5278},
      ISSN = {0002-9939,1088-6826},
   MRCLASS = {35Q35 (76W05)},
  MRNUMBER = {4855882},
       DOI = {10.1090/proc/16992},
       URL = {https://doi.org/10.1090/proc/16992},
}

@article {Isett18,
    AUTHOR = {Isett, Philip},
     TITLE = {A proof of {O}nsager's conjecture},
   JOURNAL = {Ann. of Math. (2)},
  FJOURNAL = {Annals of Mathematics. Second Series},
    VOLUME = {188},
      YEAR = {2018},
    NUMBER = {3},
     PAGES = {871--963},
      ISSN = {0003-486X,1939-8980},
   MRCLASS = {35Q31 (35A02 35D30 76B03 76F02 76F05)},
  MRNUMBER = {3866888},
MRREVIEWER = {Benedetta\ Ferrario},
       DOI = {10.4007/annals.2018.188.3.4},
       URL = {https://doi.org/10.4007/annals.2018.188.3.4},
}

@book {Kampschulte,
    AUTHOR = {Kampschulte, Malte},
    TITLE = {Gradient flows and a generalized Wasserstein distance in the space of Cartesian currents},
    YEAR = {2017},
    PUBLISHER = {PhD Thesis, RWTH Aachen University},
}

@article {Kap25,
    AUTHOR = {K\"apyl\"a, P.J.},
     TITLE = {Connecting mean-field theory with dynamo simulations},
   JOURNAL = {Living Rev Sol Phys},
    VOLUME = {22},
      YEAR = {2025},
    NUMBER = {3},
     PAGES = {77 pp.},
       DOI = {10.1007/s41116-025-00042-3},
       URL = {https://doi.org/10.1007/s41116-025-00042-3},
}

@book {KR80,
    AUTHOR = {Krause, F. and R\"adler, K.-H.},
     TITLE = {Mean-Field Magnetohydrodynamics and Dynamo Theory},
 PUBLISHER = {Pergamon Press},
      YEAR = {1980},
     PAGES = {271},
      ISBN = {978-0-08-025041-0},
       DOI = {10.1016/C2013-0-03269-0},
       URL = {https://doi.org/10.1016/C2013-0-03269-0},
}

@book {Krantz-Parks,
    AUTHOR = {Krantz, Steven G. and Parks, Harold R.},
     TITLE = {Geometric integration theory},
    SERIES = {Cornerstones},
 PUBLISHER = {Birkh\"auser Boston, Inc., Boston, MA},
      YEAR = {2008},
     PAGES = {xvi+339},
      ISBN = {978-0-8176-4676-9},
   MRCLASS = {49Q15 (26-01 26B20 28-01 49Q05 58A25 58C35)},
  MRNUMBER = {2427002},
MRREVIEWER = {Andreas\ Bernig},
       DOI = {10.1007/978-0-8176-4679-0},
       URL = {https://doi.org/10.1007/978-0-8176-4679-0},
}

@article {LZZpp,
    AUTHOR = {Li, Yachun and Zeng, Zirong and Zhang, Deng},
     TITLE = {Non-uniqueness of weak solutions to 3{D} magnetohydrodynamic
              equations},
   JOURNAL = {J. Math. Pures Appl. (9)},
  FJOURNAL = {Journal de Math\'ematiques Pures et Appliqu\'ees. Neuvi\`eme
              S\'erie},
    VOLUME = {165},
      YEAR = {2022},
     PAGES = {232--285},
      ISSN = {0021-7824,1776-3371},
   MRCLASS = {35A02 (35Q30 76D05 76W05)},
  MRNUMBER = {4470114},
       DOI = {10.1016/j.matpur.2022.07.009},
       URL = {https://doi.org/10.1016/j.matpur.2022.07.009},
}

@article {Mengual22,
    AUTHOR = {Mengual, Francisco},
     TITLE = {H-principle for the 2-dimensional incompressible porous media
              equation with viscosity jump},
   JOURNAL = {Anal. PDE},
  FJOURNAL = {Analysis \& PDE},
    VOLUME = {15},
      YEAR = {2022},
    NUMBER = {2},
     PAGES = {429--476},
      ISSN = {2157-5045,1948-206X},
   MRCLASS = {35Q35 (76F25 76S05)},
  MRNUMBER = {4409883},
MRREVIEWER = {Luisa\ da Cunha e Costa Consiglieri},
       DOI = {10.2140/apde.2022.15.429},
       URL = {https://doi.org/10.2140/apde.2022.15.429},
}

@article {MengualSzekelyhidi23,
    AUTHOR = {Mengual, Francisco and Sz\'{e}kelyhidi, Jr., L\'{a}szl\'{o}},
     TITLE = {Dissipative {E}uler flows for vortex sheet initial data
              without distinguished sign},
   JOURNAL = {Comm. Pure Appl. Math.},
  FJOURNAL = {Communications on Pure and Applied Mathematics},
    VOLUME = {76},
      YEAR = {2023},
    NUMBER = {1},
     PAGES = {163--221},
      ISSN = {0010-3640,1097-0312},
   MRCLASS = {76B47},
  MRNUMBER = {4544797},
MRREVIEWER = {Alessandro\ Morando},
}

@article {MiaoYe24,
    AUTHOR = {Miao, Changxing and Ye, Weikui},
     TITLE = {On the weak solutions for the {MHD} systems with controllable
              total energy and cross helicity},
   JOURNAL = {J. Math. Pures Appl. (9)},
  FJOURNAL = {Journal de Math\'ematiques Pures et Appliqu\'ees. Neuvi\`eme
              S\'erie},
    VOLUME = {181},
      YEAR = {2024},
     PAGES = {190--227},
      ISSN = {0021-7824,1776-3371},
   MRCLASS = {35A02 (35D30 35Q30 76D05 76W05)},
  MRNUMBER = {4677154},
       DOI = {10.1016/j.matpur.2023.12.010},
       URL = {https://doi.org/10.1016/j.matpur.2023.12.010},
}

@article {MYYpp,
    AUTHOR = {Miao, Changxing and Nie, Yao and Ye, Weikui},
     TITLE = {On {O}nsager-type conjecture for the {E}ls\"asser energies of
              the ideal {MHD} equations},
   JOURNAL = {Ann. PDE},
  FJOURNAL = {Annals of PDE. Journal Dedicated to the Analysis of Problems
              from Physical Sciences},
    VOLUME = {11},
      YEAR = {2025},
    NUMBER = {2},
     PAGES = {Paper No. 31, 77},
      ISSN = {2524-5317,2199-2576},
   MRCLASS = {35Q35 (35A02 76W05)},
  MRNUMBER = {4991127},
       DOI = {10.1007/s40818-025-00224-0},
       URL = {https://doi.org/10.1007/s40818-025-00224-0},
}

@book {MD19,
    AUTHOR = {Moffatt, Keith and Dormy, Emmanuel},
     TITLE = {Self-exciting fluid dynamos},
    SERIES = {Cambridge Texts in Applied Mathematics},
 PUBLISHER = {Cambridge University Press, Cambridge},
      YEAR = {2019},
     PAGES = {xviii+520},
      ISBN = {978-1-108-71705-2; 978-1-107-06587-1},
   MRCLASS = {85A30 (76W05 86A25)},
  MRNUMBER = {3930621},
       DOI = {10.1017/9781107588691},
       URL = {https://doi.org/10.1017/9781107588691},
}

@article {ModenaSattig20,
	AUTHOR = {Modena, Stefano and Sattig, Gabriel},
	TITLE = {Convex integration solutions to the transport equation with full dimensional concentration},
	JOURNAL = {Ann. Inst. H. Poincar\'{e} Anal. Non Lin\'{e}aire},
	FJOURNAL = {Ann. Inst. H. Poincar\'{e} Anal. Non Lin\'{e}aire},
	VOLUME = {37(5)},
	YEAR = {2020},
	PAGES = {1075–1108},
}

@article {ModenaSzekelyhidi18,
	AUTHOR = {Modena, Stefano and Sz\'{e}kelyhidi, Jr., L\'{a}szl\'{o}},
	TITLE = {Non-uniqueness for the transport equation with {S}obolev vector fields},
	JOURNAL = {Ann. PDE},
	FJOURNAL = {Annals of PDE. Journal Dedicated to the Analysis of Problems from Physical Sciences},
	VOLUME = {4},
	YEAR = {2018},
	NUMBER = {2},
	PAGES = {Paper No. 18, 38},
	ISSN = {2524-5317},
	MRCLASS = {35F50 (35A02 35Q35)},
	MRNUMBER = {3884855},
	MRREVIEWER = {Rodica Luca},
	DOI = {10.1007/s40818-018-0056-x},
	URL = {https://doi.org/10.1007/s40818-018-0056-x},
}

@article {NavarroFernandezVillinger25,
author = {Navarro-Fern\'andez, Victor and Villinger, David},
title =  {{Spectral instability in the smooth Ponomarenko dynamo}},
journal = {arXiv:2509.19201},
year = 2025,
}

@article {GR23,
    AUTHOR = {Giri, Vikram and Radu, R\u azvan-Octavian},
     TITLE = {The {O}nsager conjecture in 2{D}: a {N}ewton-{N}ash iteration},
   JOURNAL = {Invent. Math.},
  FJOURNAL = {Inventiones Mathematicae},
    VOLUME = {238},
      YEAR = {2024},
    NUMBER = {2},
     PAGES = {691--768},
      ISSN = {0020-9910,1432-1297},
   MRCLASS = {35Q31 (35D30 76B03)},
  MRNUMBER = {4809443},
MRREVIEWER = {Qingtian\ Zhang},
       DOI = {10.1007/s00222-024-01291-z},
       URL = {https://doi.org/10.1007/s00222-024-01291-z},
}

@article {NunezSanz00,
	AUTHOR = {
        M. Nuñez and J. Sanz},
	TITLE = {Uniform growth rates for the magnetic field in a kinematic dynamo},
JOURNAL = {Journal of Physics},
    VOLUME = {33},
      YEAR = {2000},
     PAGES = {3605--3611},
}

@article {Ponomarenko73,
	AUTHOR = {Y. B. Ponomarenko},
	TITLE = {Theory of the hydromagnetic generator},
	JOURNAL = {J. Appl. Mech. Tech. Phys.},
	FJOURNAL = {J. Appl. Mech. Tech. Phys.},
    VOLUME = {14},
	YEAR = {1973},
    PAGES = {775--778},
}

@article {Soward87,
	AUTHOR = {A.M. Soward},
	TITLE = {Fast dynamo action in a steady flow},
	JOURNAL = {J. Fluid Mech.},
	FJOURNAL = {J. Fluid Mech.},
	YEAR = {1987},
    volume={180},
    pages={267–-295},
    DOI={10.1017/S0022112087001800},
}

@article {Rindler-Space-time,
    AUTHOR = {Rindler, Filip},
     TITLE = {Space-time integral currents of bounded variation},
   JOURNAL = {Calc. Var. Partial Differential Equations},
  FJOURNAL = {Calculus of Variations and Partial Differential Equations},
    VOLUME = {62},
      YEAR = {2023},
    NUMBER = {2},
     PAGES = {Paper No. 54, 31},
      ISSN = {0944-2669,1432-0835},
   MRCLASS = {49Q15 (28A75 49Q20 53Z05 74C05)},
  MRNUMBER = {4525735},
MRREVIEWER = {Xiangyu\ Liang},
       DOI = {10.1007/s00526-022-02332-2},
       URL = {https://doi.org/10.1007/s00526-022-02332-2},
}

@article {Row25,
	AUTHOR = {Rowan, Keefer},
	TITLE = {A subsequentially fast dynamo on $\mathbb{T}^3$},
	JOURNAL = {arXiv:2505.23936},
	YEAR = {2025},
}

@article {Tobias21,
    AUTHOR = {Tobias, S. M.},
     TITLE = {The turbulent dynamo},
   JOURNAL = {J. Fluid Mech.},
  FJOURNAL = {Journal of Fluid Mechanics},
    VOLUME = {912},
      YEAR = {2021},
     PAGES = {Paper No. P1, 76},
      ISSN = {0022-1120,1469-7645},
   MRCLASS = {76W05 (76F99 86A25)},
  MRNUMBER = {4218307},
       DOI = {10.1017/jfm.2020.1055},
       URL = {https://doi.org/10.1017/jfm.2020.1055},
}

@article {Zeldovich57,
	AUTHOR = {Zeldovich, Y. B.},
	TITLE = {The magnetic field in the two-dimensional motion of a conducting turbulent fluid},
	JOURNAL = {Sov. Phys.},
	FJOURNAL = {Sov. Phys.},
    VOLUME = {51},
	YEAR = {1957},
    NUMBER = {3},
    PAGES = {493--497},
}

@article {ZMRS84,
	AUTHOR = {Zeldovich, Y. B. and Ruzmaikin, A. A. and Molchanov, S. A. and Sokoloff, D. D.},
	TITLE = {Kinematic dynamo problem in a linear velocity field},
	JOURNAL = {J. Fluid Mech.},
	FJOURNAL = {J. Fluid Mech.},
	VOLUME = {144},
	YEAR = {1984},
	PAGES = {1-11},
}

@article {BonicattoDelNinpp,
	AUTHOR = {Bonicatto, Paolo and  Del Nin, Giacomo},
	TITLE = {Well-posedness of the transport of normal currents by time-dependent vector fields},
	JOURNAL = {arXiv:2504.15974},
	FJOURNAL = {arXiv:2504.15974},
	YEAR = {2025},
}

@article {DeLellisSzekelyhidi17,
	AUTHOR = {De Lellis, Camillo and Sz\'{e}kelyhidi, Jr., L\'{a}szl\'{o}},
	TITLE = {High dimensionality and h-principle in {PDE}},
	JOURNAL = {Bull. Amer. Math. Soc. (N.S.)},
	FJOURNAL = {American Mathematical Society. Bulletin. New Series},
	VOLUME = {54},
	YEAR = {2017},
	NUMBER = {2},
	PAGES = {247--282},
	ISSN = {0273-0979},
	MRCLASS = {35Q31 (35A01 35D30 53A99 53C21 76F02)},
	MRNUMBER = {3619726},
	MRREVIEWER = {Benedetta Ferrario},
	DOI = {10.1090/bull/1549},
	URL = {https://doi.org/10.1090/bull/1549},
}

@article{SattigSzekelyhidi2023,
  author    = {Gabriel Sattig and L\'aszl\'o Sz\'ekelyhidi Jr.},
  title     = {The Baire Category Method for Intermittent Convex Integration},
  journal   = {Acta Math. Hungar.},
  volume    = {171},
  number    = {1},
  pages     = {88--106},
  year      = {2023},
  doi       = {10.1007/s10474-023-01380-0},
  eprint    = {arXiv:2305.04644}
}

@article{FazekasKolumban2025,
  author    = {Borb{\'a}la Fazekas and József J. Kolumb{\'a}n},
  title     = {Estimating the convex relaxation of the ideal magnetohydrodynamics equations},
  journal   = {arXiv:2505.10230},
  year      = {2025},
}

@book{Markfelder21,
    AUTHOR = {Markfelder, Simon},
     TITLE = {Convex integration applied to the multi-dimensional
              compressible {E}uler equations},
    SERIES = {Lecture Notes in Mathematics},
    VOLUME = {2294},
 PUBLISHER = {Springer, Cham},
      YEAR = {[2021] \copyright 2021},
     PAGES = {x+239},
      ISBN = {978-3-030-83784-6; 978-3-030-83785-3},
   MRCLASS = {76N10 (35Q31 76-02)},
  MRNUMBER = {4385531},
       DOI = {10.1007/978-3-030-83785-3},
       URL = {https://doi.org/10.1007/978-3-030-83785-3},
}

@article{ChiodaroliFeireislKreml2015,
  author    = {Elisabetta Chiodaroli and Eduard Feireisl and Ond{\v r}ej Kreml},
  title     = {On the weak solutions to the equations of a compressible heat conducting gas},
  journal   = {Annales de l’I.H.P. Analyse non linéaire},
  volume    = {32},
  number    = {1},
  pages     = {225--243},
  year      = {2015},
  doi       = {10.1016/j.anihpc.2013.11.005},
  eprint    = {arXiv:1307.0640},
  mrnumber  = {3303948},
  zbl       = {1315.35160}
}

@book{Watsonbook,
    AUTHOR = {Watson, G. N.},
     TITLE = {A {T}reatise on the {T}heory of {B}essel {F}unctions},
 PUBLISHER = {Cambridge University Press, Cambridge; The Macmillan Company,
              New York},
      YEAR = {1944},
     PAGES = {vi+804},
   MRCLASS = {33.0X},
  MRNUMBER = {10746},
MRREVIEWER = {G.\ Szeg\"o},
}

@article {DEIJ22,
    AUTHOR = {Drivas, Theodore D. and Elgindi, Tarek M. and Iyer, Gautam and
              Jeong, In-Jee},
     TITLE = {Anomalous dissipation in passive scalar transport},
   JOURNAL = {Arch. Ration. Mech. Anal.},
  FJOURNAL = {Archive for Rational Mechanics and Analysis},
    VOLUME = {243},
      YEAR = {2022},
    NUMBER = {3},
     PAGES = {1151--1180},
      ISSN = {0003-9527,1432-0673},
   MRCLASS = {76F02 (76R50)},
  MRNUMBER = {4381138},
MRREVIEWER = {Anna\ L.\ Mazzucato},
       DOI = {10.1007/s00205-021-01736-2},
       URL = {https://doi.org/10.1007/s00205-021-01736-2},
}

@article {VishikFriedlander93,
    AUTHOR = {Vishik, Misha M. and Friedlander, Susan},
     TITLE = {Dynamo theory methods for hydrodynamic stability},
   JOURNAL = {J. Math. Pures Appl. (9)},
  FJOURNAL = {Journal de Math\'ematiques Pures et Appliqu\'ees. Neuvi\`eme
              S\'erie},
    VOLUME = {72},
      YEAR = {1993},
    NUMBER = {2},
     PAGES = {145--180},
      ISSN = {0021-7824},
   MRCLASS = {76E99 (35Q35 76C99 76D05 76E25 76W05)},
  MRNUMBER = {1216094},
}

@article{Tay74,
  title = {Relaxation of Toroidal Plasma and Generation of Reverse Magnetic Fields},
  author = {Taylor, J. B.},
  journal = {Phys. Rev. Lett.},
  volume = {33},
  issue = {19},
  pages = {1139--1141},
  numpages = {0},
  year = {1974},
  month = {Nov},
  publisher = {American Physical Society},
  doi = {10.1103/PhysRevLett.33.1139},
  url = {https://link.aps.org/doi/10.1103/PhysRevLett.33.1139}
}

\begin{flushleft}
	\quad\\
	Giacomo Del Nin\\
	\textsc{Max Planck Institute for Mathematics in the Sciences, 
		04103 Leipzig, Germany}\\
	\textit{E-mail address:} giacomo.delnin@mis.mpg.de
\end{flushleft}

\begin{flushleft}
	\quad\\
	Daniel Faraco\\
	\textsc{Departamento de Matem\'aticas, Universidad Aut\'onoma de Madrid, Instituto de
Ciencias Matem\'aticas, CSIC-UAM-UC3M-UCM, 
		28049 Madrid, Spain}\\
	\textit{E-mail address:} 
daniel.faraco@uam.es 
\end{flushleft}

\begin{flushleft}
	\quad\\
	Sauli Lindberg\\
	\textsc{Department of Mathematics and Statistics, University of Helsinki, 
	00014 Helsingin yliopisto, Finland}\\
	\textit{E-mail address:} 
sauli.lindberg@helsinki.fi
\end{flushleft}

\begin{flushleft}
	\quad\\
	Francisco Mengual\\
	\textsc{Universidad de Sevilla, IMUS, 
		41012 Sevilla, Spain}\\
	\textit{E-mail address:} fmengual@us.es
\end{flushleft}

\end{document}